\def\todaysdate{2\textsuperscript{nd} November 2023}
\documentclass[reqno,a4paper]{article}
\usepackage{etex}
\usepackage[T1]{fontenc}
\usepackage[utf8]{inputenc}
\usepackage{lmodern}

\usepackage[style=alphabetic,giveninits=true,backref,backend=biber,isbn=false,url=false,maxbibnames=99,maxalphanames=4]{biblatex}

\renewbibmacro{in:}{}
\ExecuteBibliographyOptions{doi=false}
\newbibmacro{string+doi}[1]{\iffieldundef{doi}{\iffieldundef{url}{#1}{\href{\thefield{url}}{#1}}}{\href{http://dx.doi.org/\thefield{doi}}{#1}}}
\DeclareFieldFormat*{title}{\usebibmacro{string+doi}{\emph{#1}}}
\DefineBibliographyStrings{english}{%
  backrefpage = {$\uparrow$},
  backrefpages = {$\uparrow$},
}

\addbibresource{biblio.bbl}

\usepackage{amssymb,amsmath,amsthm}
\usepackage{thmtools,xcolor}
\usepackage{units}
\usepackage{graphicx}
\usepackage[all]{xy}
\usepackage{tikz}
\usetikzlibrary{arrows,calc,positioning,decorations.pathreplacing}
\usepackage{tikz-cd}
\usepackage{relsize}
\usepackage{mhsetup}
\usepackage{mathtools}
\usepackage{stmaryrd}
\usepackage{tocloft}
\usepackage{paralist} 
\usepackage[left=3cm,right=3cm,top=3cm,bottom=3cm]{geometry} 
\usepackage[bottom]{footmisc}
\usepackage[colorlinks,citecolor=blue,linkcolor=blue,urlcolor=blue,filecolor=blue,breaklinks]{hyperref}
\usepackage{footnotebackref}
\usepackage[format=plain,indention=1em,labelsep=quad,labelfont={up},textfont={footnotesize},margin=2em]{caption}
\usepackage[mathscr]{euscript}
\usepackage{accents}
\usepackage{xparse}

\setcounter{tocdepth}{2}

\definecolor{lightblue}{rgb}{0.8,0.8,1}

\setlength{\cftbeforesecskip}{0ex}

\setlength{\parindent}{0pt}
\setlength{\parskip}{1ex}

\numberwithin{equation}{section}
\numberwithin{figure}{section}

\definecolor{vdarkred}{rgb}{0.7,0,0}
\declaretheoremstyle[
  spaceabove=\topsep,
  spacebelow=\topsep,
  headpunct=,
  numbered=no,
  postheadspace=1ex,
  headfont=\color{vdarkred}\normalfont\bfseries,
  bodyfont=\normalfont\itshape,
]{colored}
\declaretheoremstyle[
  spaceabove=\topsep,
  spacebelow=\topsep,
  headpunct=,
  numbered=no,
  postheadspace=1ex,
  headfont=\normalfont\bfseries,
  bodyfont=\normalfont\itshape,
]{italic}
\declaretheoremstyle[
  spaceabove=\topsep,
  spacebelow=\topsep,
  headpunct=,
  numbered=no,
  postheadspace=1ex,
  headfont=\normalfont\bfseries,
  bodyfont=\normalfont\upshape,
]{upright}

\declaretheorem[style=italic,name=Theorem,numbered=yes,numberwithin=section]{thm}
\declaretheorem[style=italic,name=Lemma,numbered=yes,numberlike=thm]{lem}
\declaretheorem[style=italic,name=Proposition,numbered=yes,numberlike=thm]{prop}
\declaretheorem[style=italic,name=Corollary,numbered=yes,numberlike=thm]{coro}

\declaretheorem[style=italic,name=Theorem,numbered=yes,numberwithin=section]{athm}

\declaretheorem[style=upright,name=Definition,numbered=yes,numberlike=thm]{defn}
\declaretheorem[style=upright,name=Remark,numbered=yes,numberlike=thm]{rmk}
\declaretheorem[style=upright,name=Example,numbered=yes,numberlike=thm]{eg}

\declaretheorem[style=upright,name=Notation,numbered=yes,numberlike=thm]{notation}
\declaretheorem[style=upright,name=Convention,numbered=yes,numberlike=thm]{convention}

\declaretheorem[style=upright,name=Question,,numbered=yes,numberlike=thm]{question}
\setcounter{secnumdepth}{4}
\makeatletter
\renewcommand*{\@seccntformat}[1]{\upshape\csname the#1\endcsname.\hspace{1ex}}
\renewcommand*{\section}{\@startsection{section}{1}{\z@}%
	{2.5ex \@plus 1ex \@minus 0.2ex}%
	{1.5ex \@plus 0.2ex}%
	{\normalfont\Large\bfseries}}
\renewcommand*{\subsection}{\@startsection{subsection}{2}{\z@}%
	{2.5ex \@plus 1ex \@minus 0.2ex}%
	{1.5ex \@plus 0.2ex}%
	{\normalfont\large\bfseries}}
\renewcommand*{\subsubsection}{\@startsection{subsubsection}{3}{\z@}%
	{2.5ex \@plus 1ex \@minus 0.2ex}%
	{1.5ex \@plus 0.2ex}%
	{\normalfont\normalsize\bfseries}}
\newcommand*{\subsubsubsection}{\@startsection{paragraph}{4}{\z@}%
	{2.5ex \@plus 1ex \@minus 0.2ex}%
	{1.5ex \@plus 0.2ex}%
	{\normalfont\normalsize\bfseries}}
\makeatother


\newcommand{\incl}[3][right]%
{%
\draw[<-,>=#1 hook] #2 to ($ #2!0.5!#3 $);
\draw[->] ($ #2!0.5!#3 $) to #3;%
}
\newcommand{\inclusion}[5][right]%
{%
\draw[<-,>=#1 hook] #4 to ($ #4!0.5!#5 $) node[#2,font=\small]{#3};
\draw[->] ($ #4!0.5!#5 $) to #5;%
}

\newenvironment{itemizeb}%
{\begin{compactitem}

}%
{\end{compactitem}}
{\begin{compactitem}[#1]

}%
{\end{compactitem}}
{\begin{compactdesc}

}%
{\end{compactdesc}}


\newcommand{\cE}{\mathcal{E}}

\newcommand{\cL}{\mathcal{L}}
\newcommand{\cM}{\mathcal{M}}

\newcommand{\cQ}{\mathcal{Q}}
\newcommand{\cR}{\mathcal{R}}
\newcommand{\cS}{\mathcal{S}}

\newcommand{\cU}{\mathcal{U}}

\newcommand{\bC}{\mathbb{C}}
\newcommand{\bD}{\mathbb{D}}

\newcommand{\bF}{\mathbb{F}}

\newcommand{\bK}{\mathbb{K}}

\newcommand{\bN}{\mathbb{N}}

\newcommand{\bQ}{\mathbb{Q}}

\newcommand{\bS}{\mathbb{S}}

\newcommand{\bZ}{\mathbb{Z}}
\newcommand{\ddd}{\mathbf{D}}
\newcommand{\fL}{\mathfrak{L}}
\newcommand{\fK}{\mathfrak{K}}

\renewcommand{\geq}{\geqslant}
\renewcommand{\leq}{\leqslant}
\renewcommand{\footnoterule}{%
  \kern -3pt
  \hrule width \textwidth height 0.4pt
  \kern 2.6pt
}

\newcommand{\opp}{\ensuremath{\mathrm{op}}}
\newcommand{\too}{\ensuremath{\longrightarrow}}
\newcommand{\rot}{\mathrm{rot}}

\newcommand{\Sym}{\mathrm{Sym}}
\newcommand{\ith}{\mathrm{th}}
\newcommand{\mm}{{\bar m}}

\newcommand{\U}{\ensuremath{\cU}}
\newcommand{\Ab}{\mathbf{Ab}}
\newcommand{\Mod}{\textrm{-}\mathrm{Mod}}
\newcommand{\Tor}{\mathrm{Tor}}
\newcommand{\wt}{\mathrm{wt}}
\newcommand{\pa}{\partial}

\newcommand{\aug}{\mathrm{aug}}
\newcommand{\Cok}{\mathrm{Coker}}
\newcommand{\Ker}{\mathrm{Ker}}
\newcommand{\Image}{\mathrm{Im}}
\newcommand{\Ext}{\mathrm{Ext}}

\newcommand{\odd}{\mathrm{odd}}
\newcommand{\even}{\mathrm{even}}
\newcommand{\st}{\mathrm{st}}
\newcommand{\bj}{\mathbf{j}}

\newcommand{\id}{\mathrm{id}}

\newcommand{\Modulo}[1]{\ (\mathrm{mod}\ #1)}

\definecolor{dred}{rgb}{0.7,0,0}

\definecolor{dblue}{rgb}{0,0,0.8}

\definecolor{dgreen}{rgb}{0,0.5,0}


\begin{document}
\title{\LARGE\bfseries Stable twisted cohomology of the mapping class groups in the exterior powers of the unit tangent bundle homology}
\author{\normalsize Nariya Kawazumi and Arthur Souli{\'e}}
\date{\normalsize\todaysdate}
\maketitle
{\makeatletter
\renewcommand*{\BHFN@OldMakefntext}{}
\makeatother
\footnotetext{2020 \textit{Mathematics Subject Classification}:
Primary: 20J06, 55R20, 55S05, 57K20; Secondary: 13C12, 13D07, 16W50, 18A25, 55U20, 57R20.}
\footnotetext{\textit{Key words and phrases}: stable (co)homology with twisted coefficients, mapping class groups, unit tangent bundle (co)homology representations.}}

\begin{abstract}
In the wake of \cite{KawazumiSoulieI}, we study the stable cohomology groups of the mapping class groups of surfaces with twisted coefficients given by the $d^{\ith}$ exterior powers of the first rational homology of the unit tangent bundles of the surfaces $\tilde{H}_{\bQ}$. These coefficients are outside of the traditional framework of cohomological stability. They form a module $H_{\st}^{*}(\Lambda^{d}\tilde{H}_{\bQ})$ over the stable cohomology algebra of the mapping class groups with trivial coefficients denoted by $\Sym_{\bQ}(\cE)$. If $d\neq 2$, the $\Tor$-group in each degree of $H_{\st}^{*}(\Lambda^{d}\tilde{H}_{\bQ})$ does not vanish, and we compute all these $\Tor$-groups explicitly for $d \leq 5$. In particular, for each $d\neq 2$, the module $H_{\st}^{*}(\Lambda^{d}\tilde{H}_{\bQ})$ is not free over $\Sym_{\bQ}(\cE)$, while it is free for $d=2$.
For comparison, we also compute the stable cohomology group with coefficients in the $d^{\ith}$ exterior powers of the first rational cohomology of the unit tangent bundle of the surface, which fit into the classical framework of cohomological stability.
\end{abstract}

\section*{Introduction}

Considering a smooth compact connected orientable surface of genus $g\geq0$ and with one boundary component $\Sigma_{g,1}$, we denote by $\Gamma_{g,1}$ its mapping class group, i.e.~the isotopy classes of its diffeomorphisms restricting to the identity on the boundary.
The study of the (co)homology with twisted coefficients of these mapping class groups of surfaces has been a very rich and active research topic over the last decades; see \cite{Harer,HarerH_2,HarerH_3}, \cite{Ivanov}, \cite{Looijenga}, \cite{KawazumitwistedMMM,stablecohomologyKawazumi}, \cite[\S5.5]{GalatiusKupersRW}, \cite[\S4]{Hain} and \cite[\S11]{PetersenTavakolYin} for instance.
Let $\mathrm{UT}\Sigma_{g,1}$ denote the unit tangent bundle of the surface $\Sigma_{g,1}$.
In \cite{KawazumiSoulieI}, we compute the stable cohomology groups of $\{\Gamma_{g,1},g\in\bN\}$ with twisted coefficients defined by the first rational homology group $H_{1}(\mathrm{UT}\Sigma_{g,1};\bQ)$. In this paper, we study the stable cohomology groups of the mapping class groups with twisted coefficients given by the exterior powers of this representation and its dual; see Theorems~\ref{thm:main_Thm_exterior_powers_contravariant}--\ref{thm:main_Thm_Tor} below.

\paragraph*{Cohomological stability.}
Computing cohomology of groups may be a difficult task in general. However, cohomological stability phenomena happen for mapping class groups and they give a clue for these computations.
Following the works of Harer \cite{Harer}, Ivanov \cite{Ivanov} and Randal-Williams and Wahl \cite{WahlRandal-Williams}, the current classical framework for cohomological stability with twisted coefficients for mapping class groups is defined as follows. 
We view the mapping class groups as a set of groups $\{\Gamma_{g,1}\}_{g\in\bN}$. For each $g\geq0$, we consider the canonical embedding $\mathfrak{i}_{g}\colon \Sigma_{g,1}\hookrightarrow \Sigma_{g+1,1}$ given by viewing $\Sigma_{g,1}$ as a subsurface of $\Sigma_{g+1,1}$ thanks to the boundary connected sum $\Sigma_{g+1,1}\cong \Sigma_{1,1}\natural \Sigma_{g,1}$, and it induces an injection $\Gamma_{g,1}\hookrightarrow \Gamma_{g+1,1}$. There is a category $\U\cM_2$ whose objects are the surfaces $\{\Sigma_{g,1}\}_{g\in\bN}$, the mapping class groups $\{\Gamma_{g,1}\}_{g\in\bN}$ as automorphisms and the embeddings $\{\mathfrak{i}_{g}\}_{g\in\bN}$; see \S\ref{ss:twisted_coefficient_systems} for the precise definition. For simplicity, we identify the surface $\Sigma_{g,1}$ with its indexing integer $g$, especially when applying a functor on that object.
Denoting by $\Ab$ the category of abelian groups, a functor $F \colon \U\cM_2 \to \Ab$ induces a $\Gamma_{g,1}$-equivariant morphism $F(\mathfrak{i}_{g})\colon F(g) \to F(g+1)$ for each $g\geq0$. Applying the duality functor $-^{\vee} : \Ab \to \Ab^{\opp}$, this data defines maps in cohomology for all $i,g\geq0$:
$$H^{i}(\Gamma_{g+1,1}; F^{\vee}(g+1)) \to H^{i}(\Gamma_{g,1}; F^{\vee}(g)).$$
If this map is an isomorphism for $B(i,F)\leq g$ with some $B(i,F)\in\bN$ depending on $i$ and $F$, then the mapping class groups are said to satisfy \emph{(classical) cohomological stability} with (twisted) coefficient in $F^{\vee}$. Also, the \emph{stable cohomology group} $H^{i}(\Gamma_{\infty,1}; F^{\vee})$ is the inverse limit $\underleftarrow{\mathrm{Lim}}_{g\geq0}H^{*}(\Gamma_{g,1}; F^{\vee}(g))$ induced by the stability maps.
Ivanov \cite[Th.~4.1]{Ivanov} and Randal-Williams and Wahl \cite[Th.~5.26]{WahlRandal-Williams} prove such cohomological stability property with twisted coefficients in $F^{\vee}$ if $F\colon\U\cM_{2}\to\Ab$ satisfies some \emph{polynomiality} conditions; see \S\ref{ss:homological_stability} for further details.

The representation theory of the mapping class group $\Gamma_{g,1}$ is still poorly known; see Margalit's expository paper \cite{Margalit} for instance.
The first rational homology group of the surface $\Sigma_{g,1}$, that we denote by $H_{\bQ}(g)$, gives a well-known self-dual $2g$-dimensional representation of $\Gamma_{g,1}$. Also, the first rational homology group $H_{1}(\mathrm{UT}\Sigma_{g,1};\bQ)$, that we denote by $\tilde{H}_{\bQ}(g)$, naturally defines a $(2g+1)$-dimensional representation of $\Gamma_{g,1}$ which has been studied by Trapp \cite{Trapp}; see \S\ref{ss:MCG_representations} for further details.
Taking the complex numbers as ground ring, thanks to the works of Franks and Handel \cite{FranksHandel}, Korkmaz \cite{KorkmazII,KorkmazIII} and Kasahara \cite{Kasahara}, there are classification results for the $\Gamma_{g,1}$-representations of dimension less than or equal to $2g+1$, also known as the \emph{low-dimensional representations}. Namely, up to conjugation and dual, $H_{1}(\Sigma_{g,1};\bC)$ and $H_{1}(\mathrm{UT}\Sigma_{g,1};\bC)$ are the only indecomposable complex low-dimensional $\Gamma_{g,1}$-representations if $g\geq 7$; see \cite[Th.~1.1, Rem.~1.2.(2)]{Kasahara}. Therefore, $H_{\bQ}(g)$ and $\tilde{H}_{\bQ}(g)$ are fundamental blocks of the representation theory of the mapping class group $\Gamma_{g,1}$. Moreover, the families of representations $\{\Lambda^{d}H_{\bQ}(g)\}_{g\in\bN}$ and $\{\Lambda^{d}\tilde{H}_{\bQ}(g)\}_{g\in\bN}$ for each $d\geq1$ define polynomial functors $\U\cM_{2}\to\Ab$ in the sense of  \S\ref{ss:homological_stability}, denoted by $\Lambda^{d}H_{\bQ}$ and $\Lambda^{d}\tilde{H}_{\bQ}$ respectively; see Example~\ref{eg:ext_powers_H_tildeH}.

In contrast, for each $d\geq1$, the family of mapping class groups representations $\{\Lambda^{d}\tilde{H}^{\vee}_{\bQ}(g)\}_{g\in\bN}$ defines a functor $\Lambda^{d}\tilde{H}^{\vee}_{\bQ}\colon \U\cM_2^{\opp} \to \Ab$. This functor does not fit into the above classical setting for twisted cohomological stability, so we cannot deal with the cohomology with coefficients in $\Lambda^{d}\tilde{H}_{\bQ}(g)$ with that method. However, we can handle this type of coefficients with an exotic approach detailed in \S\ref{sss:exotic_framework}.
For example, the functor $\Lambda^{d}\tilde{H}_{\bQ}\colon \U\cM_2 \to \Ab$ has the property that, for each $g\geq0$, the $\Gamma_{g,1}$-equivariant morphism $\Lambda^{d}\tilde{H}_{\bQ}(\mathfrak{i}_{g})$ has a canonical $\Gamma_{g,1}$-equivariant splitting; see Example~\ref{eg:cohomological_stability_contra}. The injections $\Gamma_{g,1}\hookrightarrow\Gamma_{g+1,1}$ along with that splitting for each $g$ induce maps in cohomology $H^{i}(\Gamma_{g+1,1}; \Lambda^{d}\tilde{H}_{\bQ}(g+1)) \to H^{i}(\Gamma_{g,1}; \Lambda^{d}\tilde{H}_{\bQ}(g))$ for all $i,g\geq0$, which are isomorphisms for $g$ big enough with respect to $i$ and $d$. The \emph{stable cohomology group} $H^{i}(\Gamma_{\infty,1}; \Lambda^{d}\tilde{H}_{\bQ})$ is then the inverse limit $\underleftarrow{\mathrm{Lim}}_{g\geq0}H^{*}(\Gamma_{g,1}; \Lambda^{d}\tilde{H}_{\bQ}(g))$ with respect to these stabilisation maps.

\paragraph*{Mumford-Morita-Miller classes.}
We recall well-known cohomology classes which compute some stable cohomology groups of the mapping class groups.
Madsen and Weiss \cite{MadsenWeiss} prove Mumford's conjecture \cite{Mumford} and thus compute the stable rational cohomology $H^{*}(\Gamma_{\infty,1};\bQ)$. Namely, they use the cohomology classes $\{e_{i}\in H^{2i}(\Gamma_{\infty,1};\bQ);i\geq1\}$ introduced by  Mumford \cite{Mumford}, Morita \cite{Morita_char_class_bull,Morita_char_class} and Miller \cite{Miller}, called the \emph{classical Mumford-Morita-Miller classes}. We denote by $\cE$ the $\bQ$-vector space $\bigoplus_{i=1}^{\infty}\bQ e_{i}$ and by $\Sym_{\bQ}(\cE)$ its symmetric algebra. Madsen and Weiss prove that there is an algebra isomorphism:
\begin{equation}\label{eq:Madsen_weiss_thm}
    H^{*}(\Gamma_{\infty,1};\bQ)\cong\Sym_{\bQ}(\cE).
\end{equation}
We note that, both for the classical and exotic framework for cohomological stability, the graded module of the stable twisted cohomology groups has a canonical $\Sym_{\bQ}(\cE)$-module structure induced by the cup product with elements of $H^{*}(\Gamma_{\infty,1};\bQ)$.
For $\{M_{g}\}_{g\in\bN}$ a family of $\{\Gamma_{g,1}\}_{g\in\bN}$ for which there is cohomological stability, $\Sym_{\bQ}(\cE)$ being concentrated in even degrees, we denote by $H^{\odd}(\Gamma_{\infty,1};M)$
and $H^{\even}(\Gamma_{\infty,1};M_{\infty})$ the $\bN$-graded $\Sym_{\bQ}(\cE)$-submodules of $H^{*}(\Gamma_{\infty,1};M_{\infty})$ defined by $\{H^{\mathrm{2i+1}}(\Gamma_{\infty,1};M_{\infty}),i\in\bN\}$ and $\{H^{\mathrm{2i}}(\Gamma_{\infty,1};M_{\infty}),i\in\bN\}$ respectively.
Furthermore, the first author \cite{KawazumitwistedMMM} introduced a series of twisted cohomology classes on the mapping class group $\Gamma_{g,1}$, called the \emph{twisted Mumford-Morita-Miller classes} $\{m_{i,j};i\geq0,j\geq1\}$; see \S\ref{s:stable_cohomology_framework}. He also proves in \cite{stablecohomologyKawazumi} that some algebraic combinations of the twisted Mumford-Morita-Miller classes provide a free basis of the stable cohomology group of the mapping class groups with twisted coefficient in $\Lambda^{d} H_{\bQ}$ for all $d\geq1$; see Theorem~\ref{stablecohomologyKawazumi}.

\paragraph*{Stable cohomology for $\Lambda^{d}\tilde{H}^{\vee}_{\bQ}$.}
The computations of the stable cohomology with twisted coefficients in the representations $\Lambda^{d}\tilde{H}_{\bQ}^{\vee}(g)$ is the least difficult. We prove that:
\begin{athm}[Theorem~\ref{thm:stable_cohomology_contravariant_tildeH_ext}]\label{thm:main_Thm_exterior_powers_contravariant}
The stable twisted cohomology groups $\bigoplus_{d\in\bN} H^{*}(\Gamma_{\infty,1};\Lambda^{d}\tilde{H}^{\vee}_{\bQ})$ form a commutative bigraded $\Sym_{\bQ}(\cE)$-algebra $H^{*}(\Gamma_{\infty,1};\Lambda^{*}\tilde{H}^{\vee}_{\bQ})$ isomorphic to the polynomial $\Sym_{\bQ}(\cE)$-algebra
$$\Sym_{\bQ}(\cE)[\{m_{i,j};i\geq0,j\geq1,(i,j)\neq(1,1)\}].$$
\end{athm}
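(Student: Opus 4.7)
The plan is to deduce Theorem~\ref{thm:main_Thm_exterior_powers_contravariant} from Kawazumi's computation of the stable cohomology with coefficients in the exterior powers of $H_{\bQ}$ (Theorem~\ref{stablecohomologyKawazumi}), via the long exact sequence in cohomology attached to the short exact sequence of $\Gamma_{g,1}$-representations coming from the unit tangent bundle.

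First, I would record the $\Gamma_{g,1}$-equivariant short exact sequence
\[
0 \longrightarrow \bQ \longrightarrow \tilde{H}_{\bQ}(g) \longrightarrow H_{\bQ}(g) \longrightarrow 0
\]
arising from the circle bundle $S^{1}\hookrightarrow\mathrm{UT}\Sigma_{g,1}\to\Sigma_{g,1}$, in which the subobject $\bQ$ is generated by the class of the fibre. Dualising and using the self-duality $H^{\vee}_{\bQ}(g)\cong H_{\bQ}(g)$ afforded by the intersection form, then taking $d$-th exterior powers, produces the two-step filtration
\[
0 \longrightarrow \Lambda^{d}H_{\bQ}(g) \longrightarrow \Lambda^{d}\tilde{H}^{\vee}_{\bQ}(g) \longrightarrow \Lambda^{d-1}H_{\bQ}(g) \longrightarrow 0
\]
of $\Gamma_{g,1}$-representations, natural in $g$ with respect to the classical stabilisation maps. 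The key observation is that the extension class of the non-split underlying sequence $0\to H_{\bQ}\to\tilde{H}^{\vee}_{\bQ}\to\bQ\to 0$ is, up to a non-zero scalar, the first twisted Mumford-Morita-Miller class $m_{1,1}\in H^{1}(\Gamma_{g,1};H_{\bQ})$.

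Second, the associated long exact sequence in stable cohomology has connecting map $\delta$ given by cup product with $m_{1,1}$ followed by the wedge contraction $H_{\bQ}\otimes\Lambda^{d-1}H_{\bQ}\to\Lambda^{d}H_{\bQ}$. By Theorem~\ref{stablecohomologyKawazumi}, $\bigoplus_{d\geq 0}H^{*}(\Gamma_{\infty,1};\Lambda^{d}H_{\bQ})$ identifies with the free polynomial $\Sym_{\bQ}(\cE)$-algebra on the family $\{m_{i,j};i\geq 0,j\geq 1\}$, so multiplication by $m_{1,1}$ is injective (a polynomial ring being an integral domain). Hence $\delta$ is injective, the long exact sequence breaks into short exact sequences, and summing over $d\geq 0$ produces the bigraded $\Sym_{\bQ}(\cE)$-module isomorphism
\[
H^{*}(\Gamma_{\infty,1};\Lambda^{*}\tilde{H}^{\vee}_{\bQ}) \;\cong\; \Sym_{\bQ}(\cE)[\{m_{i,j};i\geq 0,j\geq 1\}]\,/\,(m_{1,1}) \;=\; \Sym_{\bQ}(\cE)[\{m_{i,j};(i,j)\neq(1,1)\}],
\]
which accounts for the exclusion of the pair $(1,1)$: that class is precisely the extension class consumed by $\delta$.

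The main obstacle, in my view, is to upgrade this additive isomorphism to an isomorphism of bigraded $\Sym_{\bQ}(\cE)$-algebras compatible with the exterior product $\Lambda^{d}\tilde{H}^{\vee}_{\bQ}\otimes\Lambda^{d'}\tilde{H}^{\vee}_{\bQ}\to\Lambda^{d+d'}\tilde{H}^{\vee}_{\bQ}$ and the cup product. This requires multiplicativity of the filtration and careful tracking of how products of classes in the $\Lambda^{d-1}H_{\bQ}$-quotient lift back to $\Lambda^{*}\tilde{H}^{\vee}_{\bQ}$. I would address this by constructing explicit stable cohomology lifts of each generator $m_{i,j}$ with $(i,j)\neq(1,1)$ in $H^{*}(\Gamma_{\infty,1};\Lambda^{j}\tilde{H}^{\vee}_{\bQ})$, then verifying that these satisfy no multiplicative relations beyond those imposed by $m_{1,1}=0$; the base case $d=1$ computation of $H^{*}(\Gamma_{\infty,1};\tilde{H}^{\vee}_{\bQ})$ carried out in~\cite{KawazumiSoulieI} should serve as the anchor for this identification and as the verification that the extension class is indeed $m_{1,1}$.
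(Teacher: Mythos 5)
Your proposal is correct and follows essentially the same route as the paper: the short exact sequence $0 \to \Lambda^{d}H_{\bQ} \to \Lambda^{d}\tilde{H}^{\vee}_{\bQ} \to \Lambda^{d-1}H_{\bQ} \to 0$ (Proposition~\ref{prop:SES_tilde_H_vee}), identification of the connecting homomorphism with cup product by $-m_{1,1}$ (Lemma~\ref{lem:connecting_hom_tilde_H_vee}, via Remark~\ref{rmk:identification_m_1,1}), and injectivity of multiplication by $m_{1,1}$ in the polynomial ring of Theorem~\ref{stablecohomologyKawazumi}. Your worry at the end about upgrading to an algebra isomorphism is resolved more directly than you anticipate: $\Lambda^{*}(\varpi^{*})\colon\Lambda^{*}H_{\bQ}\to\Lambda^{*}\tilde{H}^{\vee}_{\bQ}$ is already a morphism of graded algebras of $\Gamma_{g,1}$-modules, hence induces a bigraded $\Sym_{\bQ}(\cE)$-algebra morphism on stable cohomology; injectivity of $\delta$ makes this morphism surjective with kernel exactly the ideal $(m_{1,1})$, so no explicit lifts are required (and note that the map $H_{\bQ}\otimes\Lambda^{d-1}H_{\bQ}\to\Lambda^{d}H_{\bQ}$ you invoke is the wedge product, not a contraction).
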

In particular, for $d=1$, Theorem~\ref{thm:main_Thm_exterior_powers_contravariant} recovers \cite[Th.~A]{KawazumiSoulieI}. This computation is based on the study of the cohomology sequence of the short exact sequence \eqref{exteriorSES_cov} viewing $\Lambda^{d}\tilde{H}^{\vee}_{\bQ}(g)$ as a non-trivial extension of $\Gamma_{g,1}$-representations; see Lemma~\ref{lem:connecting_hom_tilde_H_vee}. We stress that, since the functors $\Lambda^{*}\tilde{H}^{\vee}_{\bQ}$ are \emph{contravariant}, we are in the \emph{classical} framework for cohomological stability in that case; see \S\ref{s:stable_cohomology_framework}.

\paragraph*{Stable cohomology for $\Lambda^{d}\tilde{H}_{\bQ}$: general results.}
The functor $\tilde{H}_{\bQ}$ being part of the \emph{exotic} framework for cohomological stability, the stable cohomology groups with twisted coefficients in $\Lambda^{d}\tilde{H}_{\bQ}$ are much more difficult to compute. For $d=1$, we recall that we make the full calculation of $H^{*}(\Gamma_{\infty,1};\tilde{H}_{\bQ})$ in \cite[Th.~B]{KawazumiSoulieI}. Here, we start by proving general qualitative results for all $d\geq1$.

First, we consider the exterior powers all together. Namely, we study the bigraded $\Sym_{\bQ}(\cE)$-algebra of the stable twisted cohomology groups $H^{*}(\Gamma_{\infty,1};\Lambda^{*}\tilde{H}_{\bQ}) := \bigoplus_{d\in\bN} H^{*}(\Gamma_{\infty,1};\Lambda^{d}\tilde{H}_{\bQ})$. The main idea consists in studying this algebra over the \emph{localisation} $\Sym_{\bQ}(\cE^{\pm})$ of $\Sym_{\bQ}(\cE)$; see \S\ref{ss:first_localisation}.
\begin{athm}[Theorem~\ref{thm:full_stable_cohomology_extension}, Corollaries~\ref{coro:cok_torsion}, \ref{coro:Sym_SymE}, \ref{coro:full_stable_cohomology}]\label{thm:main_Thm_exterior_powers_general_covariant_graded_algebra}
The $\Sym_{\bQ}(\cE^{\pm})$-algebra $\Sym_{\bQ}(\cE^{\pm})\otimes_{\Sym_{\bQ}(\cE)}H^{*}(\Gamma_{\infty,1};\Lambda^{*}\tilde{H}_{\bQ})$ is free whose set of generators is detailed in \eqref{eq:generators_localised_stable_cohomology_exterior_tildeH}.

The bigraded $\Sym_{\bQ}(\cE)$-algebra $H^{\even}(\Gamma_{\infty,1};\Lambda^{\even}\tilde{H}_{\bQ})\oplus H^{\odd}(\Gamma_{\infty,1};\Lambda^{\odd}\tilde{H}_{\bQ})$ is computed as the pullback square \eqref{eq:pullback}, while the $\Sym_{\bQ}(\cE)$-module $H^{\even}(\Gamma_{\infty,1};\Lambda^{\odd}\tilde{H}_{\bQ})\oplus H^{\even}(\Gamma_{\infty,1};\Lambda^{\odd}\tilde{H}_{\bQ})$ is torsion.

Finally, the $\Sym_{\bQ}(\cE)$-algebra $H^{*}(\Gamma_{\infty,1};\Lambda^{*}\tilde{H}_{\bQ})$ is not generated from the free commutative algebra of the $\Sym_{\bQ}(\cE)$-module $H^{*}(\Gamma_{\infty,1};\tilde{H}_{\bQ})$.
\end{athm}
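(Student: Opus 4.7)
The unifying device for all three parts is the short exact sequence of functors $\U\cM_{2}\to\Ab$
\[
0\longrightarrow\bQ\longrightarrow\tilde{H}_{\bQ}\longrightarrow H_{\bQ}\longrightarrow 0
\]
coming from the fibration $S^{1}\to\mathrm{UT}\Sigma_{g,1}\to\Sigma_{g,1}$, together with its exterior-power consequence
\[
0\longrightarrow\Lambda^{d-1}H_{\bQ}\longrightarrow\Lambda^{d}\tilde{H}_{\bQ}\longrightarrow\Lambda^{d}H_{\bQ}\longrightarrow 0,
\]
which is $\Gamma_{g,1}$-equivariant stably in $g$. The plan is to run the long exact sequence in stable cohomology associated to the latter, and to feed in Kawazumi's computation of $H^{*}(\Gamma_{\infty,1};\Lambda^{d}H_{\bQ})$ recalled in Theorem~\ref{stablecohomologyKawazumi}, so that the full algebraic structure of $H^{*}(\Gamma_{\infty,1};\Lambda^{*}\tilde{H}_{\bQ})$ is determined by the connecting homomorphism.

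For the first assertion, I would identify the connecting homomorphism, up to lower-order terms in the twisted Mumford--Morita--Miller filtration, with (a multiple of) cup product with the Euler class $e_{1}\in\cE\subset\Sym_{\bQ}(\cE)$, as suggested by Lemma~\ref{lem:connecting_hom_tilde_H_vee} in its covariant incarnation. Inverting $e_{1}$ therefore kills the obstruction to splitting the short exact sequence above in each exterior degree, so the long exact sequence fragments into split short exact sequences. Consequently, after tensoring with $\Sym_{\bQ}(\cE^{\pm})$ the algebra becomes a free graded-commutative $\Sym_{\bQ}(\cE^{\pm})$-algebra, with one generator per class in Kawazumi's basis of $H^{*}(\Gamma_{\infty,1};\Lambda^{d}H_{\bQ})$ for each $d\geq0$, assembled into the list \eqref{eq:generators_localised_stable_cohomology_exterior_tildeH}.

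For the second assertion, I would assemble the four-quadrant structure by parity: the extension $0\to\Lambda^{d-1}H_{\bQ}\to\Lambda^{d}\tilde{H}_{\bQ}\to\Lambda^{d}H_{\bQ}\to 0$ changes the parity of $d$, so the long exact sequence couples $H^{*}(\Gamma_{\infty,1};\Lambda^{\odd}\tilde{H}_{\bQ})$ with $H^{*+1}(\Gamma_{\infty,1};\Lambda^{\even}H_{\bQ})$ and vice versa. Since $\Sym_{\bQ}(\cE)$ is concentrated in even total degree, the mixed-parity piece $H^{\even}(\Gamma_{\infty,1};\Lambda^{\odd}\tilde{H}_{\bQ})\oplus H^{\odd}(\Gamma_{\infty,1};\Lambda^{\even}\tilde{H}_{\bQ})$ is killed after localisation at $e_{1}$ while the matching-parity piece injects into the localisation. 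Combining the injection on one parity with the fact that the other parity is $e_{1}$-torsion produces the pullback square \eqref{eq:pullback}, using the elementary commutative-algebra principle that if $M\hookrightarrow M[e_{1}^{-1}]$ then $M$ is recovered as the fibre product of $M[e_{1}^{-1}]$ over an appropriate quotient by a Cartesian diagram.

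For the third and qualitatively most delicate assertion, the plan is to exhibit a class in $H^{*}(\Gamma_{\infty,1};\Lambda^{*}\tilde{H}_{\bQ})$ that cannot lie in the image of the natural comparison map from the free graded-commutative $\Sym_{\bQ}(\cE)$-algebra on $H^{*}(\Gamma_{\infty,1};\tilde{H}_{\bQ})$. The hard part will be to produce such a class explicitly; my approach is to compare the bigraded Poincaré series after localisation at $e_{1}$, which is known on both sides by Theorem~A and by the first part of the present theorem, and to track an $e_{1}$-torsion class whose image in the localisation comes from a generator in exterior degree $d\geq 3$ not hit by any wedge of localised degree-one classes. A tidy realisation is to pick the class corresponding to a pure Kawazumi generator of $H^{0}(\Gamma_{\infty,1};\Lambda^{3}H_{\bQ})$ lifting to $H^{1}(\Gamma_{\infty,1};\Lambda^{3}\tilde{H}_{\bQ})$: its image in the localisation is a free generator, hence cannot be written as a product of the (localised) generators of $H^{*}(\Gamma_{\infty,1};\tilde{H}_{\bQ})$, and this contradicts any decomposition coming from the free commutative algebra on the $d=1$ piece.
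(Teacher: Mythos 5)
Your proposal rests on a misidentification of the connecting homomorphism, and this defect propagates through all three parts. By the covariant analogue (Proposition~\ref{lem:connecting_hom_exterior_contra}) of Lemma~\ref{lem:connecting_hom_tilde_H_vee}, the connecting map $\delta^{i}\colon H^{i}(\Lambda^{d}H_{\bQ})\to H^{i+1}(\Lambda^{d-1}H_{\bQ})$ is the \emph{contraction} $\mu_{d,i}(m_{1,1},-)$, not cup product with $m_{1,1}$ and certainly not multiplication by $e_{1}$. The contraction formula $\mu_{*}(m_{1,1},m_{i,j})=-jm_{i+1,j-1}$ shows it is a \emph{derivation} of the bigraded algebra $H^{*}_{\st}(\Lambda^{*}H_{\bQ})$, which is the entire reason the covariant case is so much harder than the contravariant one treated in Lemma~\ref{lem:connecting_hom_tilde_H_vee}. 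If $\delta^{i}$ were multiplication by $e_{1}$ as you assert, then after inverting $e_{1}$ it would become an isomorphism and the long exact sequence would force $\Sym_{\bQ}(\cE^{\pm})\otimes_{\Sym_{\bQ}(\cE)}H^{*}_{\st}(\Lambda^{*}\tilde{H}_{\bQ})=0$, directly contradicting the claimed freeness. The paper's actual proof of Theorem~\ref{thm:full_stable_cohomology_extension} builds the polynomial model $R_{\infty}=\bigotimes_{n}R_{n}$ with $\cR(x_{n,k})=\mm_{n-k,k}$ under which $\ddd$ becomes $D_{\infty}=\sum_{n}\sum_{k}x_{n,k}\,\partial/\partial x_{n,k+1}$, and then uses the nonlinear change of variables $\{\bar{x}_{n,k}\}$ and $\{y_{n+1}\}$ (Definition~\ref{def:bar_n_polynomials}) to straighten this derivation after inverting \emph{all} the $e_{i}$ (not just $e_{1}$: $\Sym_{\bQ}(\cE^{\pm})=\bQ[e_{i}^{\pm1};i\geq1]$ is needed so that every $x_{n,0}$ becomes invertible). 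Without this linear-algebraic reduction, the freeness of the localised algebra and the vanishing of the localised cokernel do not follow from your argument.

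Your proposed witness class for the third part also does not exist: $H^{0}_{\st}(\Lambda^{3}H_{\bQ})=0$ for degree reasons (each $m_{i,j}$ lives in degree $2i+j-2$, so a degree-$0$ class with total weight $3$ would need $2\sum i_{a}=2\nu-3$, an even number equal to an odd one), and $H^{1}_{\st}(\Lambda^{3}\tilde{H}_{\bQ})=0$ by Corollary~\ref{coro:low_dim_paired}. The actual proof of Corollary~\ref{coro:Sym_SymE} does not look for an explicit torsion class: it localises, computes that $\Sym_{\Sym_{\bQ}(\cE^{\pm})}(H^{*}_{\st}(\tilde{H}_{\bQ}))$ is free on $\{M_{1,n};n\geq2\}$ alone (because the relations of Theorem~\ref{thm:main_Thm_1} let one express the other $M_{i,j}$), and then observes that the localised target from Theorem~\ref{thm:full_stable_cohomology_extension} has the additional algebra generators $\cR'(\bar{x}_{n,k})$ for $k\geq2$ which cannot lie in the image, so $\Upsilon'$ (and hence $\Upsilon$) is injective but not surjective. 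Your pullback argument likewise is too vague: the square \eqref{eq:pullback} is not the generic ``module as fibre product over its localisation''; its bottom row involves the explicit Kawazumi basis $\mathfrak{M}$ of $H^{*}_{\st}(\Lambda^{*}H_{\bQ})$ (not of the $\tilde{H}$-side), and the identification of the top-right corner with $\Sym_{\bQ}(\cE^{\pm})\mathfrak{B}$ comes from Theorem~\ref{thm:full_stable_cohomology_extension}, which you have not established.
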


Now, fixing the exterior power $d\geq1$, we also have general results on the $\Sym_{\bQ}(\cE)$-module $H^{*}(\Gamma_{\infty,1};\Lambda^{d}\tilde{H}_{\bQ})$ in \S\ref{ss:key_tools}.
We consider the $\Tor$-groups $\Tor_{*}^{\Sym_{\bQ}(\cE)}(\bQ,H^{*}(\Gamma_{\infty,1};\Lambda^{d}\tilde{H}_{\bQ}))$, that we write $H_{*}(\Sym_{\bQ}(\cE);H_{\st}^{*}(\Lambda^{d}\tilde{H}_{\bQ}))$ for the sake of simplicity. They provide key information on the stable twisted cohomology groups  as a $\Sym_{\bQ}(\cE)$-module. For instance, $H_{0}(\Sym_{\bQ}(\cE);H_{\st}^{*}(\Lambda^{d}\tilde{H}_{\bQ}))$ gives a lower bound of the number of generators of $H^{*}(\Gamma_{\infty,1};\Lambda^{d}\tilde{H}_{\bQ})$, and $H_{j}(\Sym_{\bQ}(\cE);H_{\st}^{*}(\Lambda^{d}\tilde{H}_{\bQ}))$ for each $j\geq1$ characterises the complexity of the relations of $H^{*}(\Gamma_{\infty,1};\Lambda^{d}\tilde{H}_{\bQ})$. In order to label the $\Sym_{\bQ}(\cE)$-submodules induced by the parity of the cohomological degrees, we use the notations $\dagger=``\odd"$ and $\ddagger=``\even"$ if $d$ is odd while $\dagger=``\even"$ and $\ddagger=``\odd"$ if $d$ is even.
\begin{athm}[Corollaries~\ref{coro:stable_cohomo_algebra_not_free} and \ref{coro:low_dim_paired}, Theorem~\ref{thm:Tor_j_non-trivial}]\label{thm:main_Thm_exterior_powers_general_covariant_module_d_fixed}
For $d=1$ or $d\geq3$, the $\Sym_{\bQ}(\cE)$-module $H^{*}(\Gamma_{\infty,1};\Lambda^{d}\tilde{H}_{\bQ}(g))$ is not free. More precisely, the $\Sym_{\bQ}(\cE)$-module $H^{\ddagger}(\Gamma_{\infty,1};\Lambda^{d}\tilde{H}_{\bQ}(g))$ is torsion, and the $\Sym_{\bQ}(\cE)$-module $H^{\dagger}(\Gamma_{\infty,1};\Lambda^{d}\tilde{H}_{\bQ}(g))$ is not free.
Moreover, the $\Tor$-groups of these $\Sym_{\bQ}(\cE)$-modules are always non-null.

Furthermore, for $d\geq3$, the stable twisted cohomology group $H^{i}(\Gamma_{\infty,1};\Lambda^{d}\tilde{H}_{\bQ})$ is null if $i<d$ and $i\equiv d\Modulo{2}$.
\end{athm}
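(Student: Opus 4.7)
The plan is to exploit the short exact sequence~\eqref{exteriorSES_cov} of $\Gamma_{g,1}$-representations
\[ 0 \longrightarrow \Lambda^{d-1}H_{\bQ} \longrightarrow \Lambda^d\tilde{H}_{\bQ} \longrightarrow \Lambda^{d}H_{\bQ} \longrightarrow 0, \]
obtained from the natural filtration of $\Lambda^d\tilde{H}_\bQ$ induced by the Gysin sequence $0\to\bQ\to\tilde{H}_\bQ\to H_\bQ\to 0$ of the unit tangent bundle. Taking stable cohomology yields a $\Sym_\bQ(\cE)$-linear long exact sequence
\[ \cdots \to H^{i-1}_{\st}(\Lambda^d H_\bQ) \xrightarrow{\delta} H^{i}_{\st}(\Lambda^{d-1}H_\bQ) \to H^i_{\st}(\Lambda^d\tilde{H}_\bQ) \to H^i_{\st}(\Lambda^d H_\bQ) \xrightarrow{\delta} \cdots \]
whose outer terms are free $\Sym_\bQ(\cE)$-modules with explicit bases of antisymmetrised products of twisted Mumford--Morita--Miller classes, thanks to Kawazumi's Theorem~\ref{stablecohomologyKawazumi}.

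For the low-degree vanishing (Corollary~\ref{coro:low_dim_paired}), I would inspect the cohomological degrees of Kawazumi's generators: a product $m_{i_1,j_1}\cdots m_{i_k,j_k}$ with $\sum_s j_s$ equal to the tensor degree has cohomological degree $\sum_s(2i_s+j_s-1)$, whose parity is determined by $d$ and $k$. A combinatorial check (using the exclusion $(i_s,j_s)\neq(1,1)$) will show that $H^i_{\st}(\Lambda^d H_\bQ)=H^i_{\st}(\Lambda^{d-1}H_\bQ)=0$ whenever $d\geq 3$, $i<d$ and $i\equiv d\pmod 2$; the LES then sandwiches $H^i_{\st}(\Lambda^d\tilde{H}_\bQ)$ between two zero groups and forces it to vanish.

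For the torsion and non-freeness statements (Corollary~\ref{coro:stable_cohomo_algebra_not_free}), I would invoke Theorem~\ref{thm:main_Thm_exterior_powers_general_covariant_graded_algebra}: after inverting the MMM classes, the localised module $\Sym_\bQ(\cE^{\pm})\otimes_{\Sym_\bQ(\cE)}H^*_{\st}(\Lambda^d\tilde{H}_\bQ)$ is free and concentrated in the parity $\dagger$, so the $\ddagger$ part of $H^*_{\st}(\Lambda^d\tilde{H}_\bQ)$ vanishes after localisation and is therefore $\Sym_\bQ(\cE)$-torsion; the $\dagger$ part cannot be free since the LES exhibits relations that destroy the parity concentration predicted by freeness. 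The case $d=1$ is already established in \cite[Th.~B]{KawazumiSoulieI}. For the non-triviality of the $\Tor$-groups (Theorem~\ref{thm:Tor_j_non-trivial}), I would build a minimal free resolution of $H^*_{\st}(\Lambda^d\tilde{H}_\bQ)$ from the LES using Kawazumi's presentations of the outer terms, and then verify that the infinite family of relations coming from $\delta$ propagates non-trivially through every degree of the resolution.

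The hardest step will be the explicit identification of the connecting homomorphism $\delta$: it should turn out to be cup product with an extension class closely tied to the twisted MMM class $m_{1,1}$, precisely the class absent from Theorem~\ref{thm:main_Thm_exterior_powers_contravariant}. The delicate point is to control how $\delta$ acts on Kawazumi's free generators, since this single computation is what simultaneously produces the sharp vanishing range, the torsion/non-freeness dichotomy and the infinite family of non-trivial $\Tor$-groups.
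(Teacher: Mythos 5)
Your overall framework — the short exact sequence $0\to\Lambda^{d-1}H_\bQ\to\Lambda^d\tilde{H}_\bQ\to\Lambda^d H_\bQ\to 0$, the resulting long exact sequence with free $\Sym_\bQ(\cE)$-modules on the outside, and localisation at $\cE$ to get torsion of the cokernel — is the right setup and matches the paper. However, there are three genuine gaps.

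\textbf{The parity argument for the low-degree vanishing fails.} You claim that $H^i_{\st}(\Lambda^dH_\bQ)=H^i_{\st}(\Lambda^{d-1}H_\bQ)=0$ for $i<d$, $i\equiv d\pmod 2$, and want to squeeze $H^i_{\st}(\Lambda^d\tilde H_\bQ)$ between zeroes. But a generator $m_{i_1,j_1}\cdots m_{i_k,j_k}$ with $\sum j_s=d$ has cohomological degree $\sum(2i_s+j_s-2)=d+2\sum(i_s-1)$, which is $\equiv d\pmod 2$; thus $H^*_{\st}(\Lambda^dH_\bQ)$ is concentrated precisely in the parity $\equiv d$, and in particular can be (and is) non-zero in degrees $i<d$ with $i\equiv d$ — e.g.\ $m_{0,3}\in H^1_{\st}(\Lambda^3H_\bQ)\neq 0$. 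The exclusion $(i_s,j_s)\neq(1,1)$ that you invoke belongs to the contravariant answer, not to Kawazumi's free basis. What the parity argument actually gives is that $H^i_{\st}(\Lambda^{d-1}H_\bQ)=0$ for $i\equiv d$, so the LES reduces to $0\to H^i_{\st}(\Lambda^d\tilde H_\bQ)\to H^i_{\st}(\Lambda^d H_\bQ)\xrightarrow{\delta^i} H^{i+1}_{\st}(\Lambda^{d-1}H_\bQ)$, and one must prove that $\delta^i$ is injective in the range $i<d$. That injectivity (Theorem~\ref{thm:ker_D_d_d}) is the hard part: it requires a careful multi-case induction on a filtration of $H^*_{\st}(\Lambda^dH_\bQ)$ by length of monomials, not a degree count.

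\textbf{The connecting homomorphism is a derivation, not a cup product.} You say $\delta$ "should turn out to be cup product with an extension class closely tied to $m_{1,1}$." That formula holds for the contravariant sequence \eqref{exteriorSES_cov} (Lemma~\ref{lem:connecting_hom_tilde_H_vee}), where indeed $\delta^i=-(m_{1,1}\wedge\id)\cup-$. For the covariant sequence \eqref{exteriorSES_contra} that you are using, the connecting map is the contraction $\mu_{d,i}(m_{1,1},-)$, which is a derivation of the twisted MMM algebra (Proposition~\ref{lem:connecting_hom_exterior_contra}). This is not a minor point: the paper stresses precisely this contrast, and the entire structure of the cokernel $\Cok(\ddd_d)$ (Jacobson-style torsion, filtration arguments) depends on $\ddd_d$ being a derivation rather than multiplication.

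\textbf{The $\Tor$ non-vanishing argument is not concrete enough.} Your proposal to "verify that the infinite family of relations propagates non-trivially through every degree of a minimal resolution" would need a mechanism to produce classes in $\Tor_j$ for every $j$. The paper's mechanism is Proposition~\ref{prop:general_result_filtration}.(5): if $k\geq d-1$ then $e_k$ annihilates $\Cok(\ddd_d)$, so $\Cok(\ddd_d)$ is a module over the finite-variable subalgebra $\Sym_\bQ(\cE_{\leq d-2})$. By K{\"u}nneth (Proposition~\ref{lem:general_properties_homology_algebra}), the $\Tor$-groups then contain a $\Lambda^*\cE_{\geq d-1}$ factor tensored against a non-zero $\Tor_0$ class (provided by Theorem~\ref{thm:general_qualitative_thm}), giving non-vanishing in every homological degree — and then \eqref{eq:exterior_Tor} propagates this to $H^\dagger_{\st}$. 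Without that annihilation observation, a direct analysis of a minimal resolution over the full polynomial algebra $\Sym_\bQ(\cE)$ is not clearly tractable.
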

Theorem~\ref{thm:main_Thm_exterior_powers_general_covariant_module_d_fixed} highlights the contrast with the stable twisted cohomology computations of Theorem~\ref{thm:main_Thm_exterior_powers_contravariant}
.
All these results and computations are based on the cohomology sequence of the short exact sequence \eqref{prop:ses_exterior_power} expressing $\Lambda^{d}\tilde{H}_{\bQ}(g)$ as a non-trivial extension of $\Gamma_{g,1}$-representations. Contrary to the case of $\Lambda^{d}\tilde{H}^{\vee}_{\bQ}(g)$, for which the connecting homomorphism is simply a \emph{multiplication} (see Lemma~\ref{lem:connecting_hom_tilde_H_vee}), Proposition~\ref{lem:connecting_hom_exterior_contra} proves that the connecting homomorphism of \eqref{prop:ses_exterior_power} is determined by a \emph{derivation}, whose kernel is much more complicated to study.

\paragraph*{Stable cohomology for $\Lambda^{d}\tilde{H}_{\bQ}$: computations for $d\leq5$.}
For each $d$ small, we compute explicitly the cokernel of the derivation giving the connecting homomorphism of \eqref{prop:ses_exterior_power} and the $\Tor$-groups of the $\Sym_{\bQ}(\cE)$-module $H^{*}(\Gamma_{\infty,1};\Lambda^{d}\tilde{H}_{\bQ})$.
From now on, we will also use the modified version of the twisted Mumford-Morita-Miller classes $\mm_{i,j}:=((-1)^j/j!)\cdot m_{i,j}$, which are more convenient to make our computations.

First, the case of $d=2$ stands out from the crowd, since we can fully compute $H^{*}(\Gamma_{\infty,1};\Lambda^{2}\tilde{H}_{\bQ})$. In particular, it turns out to be a free $\Sym_{\bQ}(\cE)$-module. Indeed, we prove:
\begin{athm}[Theorem~\ref{thm:result_second_exterior_power_contra}]\label{thm:main_Thm_2nd_exterior_powers_general_covariant}
The $\Sym_{\bQ}(\cE)$-module $H^{*}(\Gamma_{\infty,1};\Lambda^{2}\tilde{H}_{\bQ})$ is isomorphic to the free $\Sym_{\bQ}(\cE)$-module with basis
$$\left\{\mm_{j,1} \mm_{l,1}-e_{l}\mm_{j,2}-e_{j}\mm_{l,2};\, j\geq l\geq1 \right\}.$$
\end{athm}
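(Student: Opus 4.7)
The plan is to analyse the long exact sequence in stable cohomology arising from the short exact sequence \eqref{prop:ses_exterior_power} specialised to $d=2$,
$$0 \longrightarrow H_\bQ \longrightarrow \Lambda^2 \tilde H_\bQ \longrightarrow \Lambda^2 H_\bQ \longrightarrow 0,$$
together with the complete computations of $H^*(\Gamma_{\infty,1};H_\bQ)$ and $H^*(\Gamma_{\infty,1};\Lambda^2 H_\bQ)$ provided by Theorem~\ref{stablecohomologyKawazumi}. Writing $\delta^\bullet \colon H^\bullet(\Gamma_{\infty,1};\Lambda^2 H_\bQ) \to H^{\bullet+1}(\Gamma_{\infty,1};H_\bQ)$ for the connecting homomorphism, and using that $H^0(\Gamma_{\infty,1};H_\bQ)=0$ (no invariants of the stable symplectic group in $H_\bQ$), the long exact sequence splits into short exact sequences
$$0 \longrightarrow \mathrm{coker}(\delta^{n-1}) \longrightarrow H^n(\Gamma_{\infty,1};\Lambda^2 \tilde H_\bQ) \xrightarrow{\ \pi^*\ } \ker(\delta^n) \longrightarrow 0.$$
Freeness of $H^*(\Gamma_{\infty,1};\Lambda^2\tilde H_\bQ)$ with the announced basis will follow if we establish (i) that $\delta^\bullet$ is surjective in every positive degree, so that $\pi^*$ is an isomorphism, and (ii) that $\ker(\delta^\bullet)$ is the free $\Sym_\bQ(\cE)$-module on the claimed generators.

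The heart of the argument is the explicit calculation of $\delta^\bullet$. By Proposition~\ref{lem:connecting_hom_exterior_contra}, $\delta^\bullet$ is a \emph{derivation} of the bigraded commutative $\Sym_\bQ(\cE)$-algebra $\bigoplus_d H^*(\Gamma_{\infty,1};\Lambda^d H_\bQ)$, hence is determined by its values on the algebra generators $\mm_{i,1}$ and $\mm_{i,2}$ ($i\geq 1$) from Theorem~\ref{stablecohomologyKawazumi}. Evaluating these against the extension class of the unit tangent bundle sequence (using the explicit Morita cocycle representatives of the $m_{i,j}$) is expected to yield formulae of the shape $\delta(\mm_{i,1})=e_i$ and $\delta(\mm_{i,2})=\mm_{i,1}$, up to normalisation constants absorbed into the $\mm_{i,j}$. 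The Leibniz rule then produces
$$\delta\bigl(\mm_{j,1}\mm_{l,1} - e_l\,\mm_{j,2} - e_j\,\mm_{l,2}\bigr) = \bigl(e_j\,\mm_{l,1} + e_l\,\mm_{j,1}\bigr) - e_l\,\mm_{j,1} - e_j\,\mm_{l,1} = 0,$$
which exhibits the announced elements as members of $\ker(\delta^\bullet)$ and simultaneously identifies the specific algebraic shape of the basis as dictated by the derivation rule.

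Using these formulae, surjectivity of $\delta^\bullet$ in positive degrees follows by exhibiting preimages of Kawazumi's basis of $H^*(\Gamma_{\infty,1};H_\bQ)$: the classes $\mm_{i,2}$ map onto $\mm_{i,1}$ (up to units in $\Sym_\bQ(\cE)$), and the $\Sym_\bQ(\cE)$-module structure takes care of the remaining generators. The resulting vanishing $\mathrm{coker}(\delta^{n-1})=0$ for $n\geq 1$ is exactly the feature that distinguishes $d=2$ from the generic behaviour of Theorem~\ref{thm:main_Thm_exterior_powers_general_covariant_module_d_fixed}. To finish, we compare the elements $\mm_{j,1}\mm_{l,1} - e_l\mm_{j,2} - e_j\mm_{l,2}$ for $j\geq l\geq 1$ against Kawazumi's basis of $H^*(\Gamma_{\infty,1};\Lambda^2 H_\bQ)$ by a lexicographic filtration argument: their leading terms $\mm_{j,1}\mm_{l,1}$ are themselves part of Kawazumi's basis and are $\Sym_\bQ(\cE)$-linearly independent modulo the lower-order corrections $-e_l\mm_{j,2}-e_j\mm_{l,2}$, so these elements both span and freely generate $\ker(\delta^\bullet)$. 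Pulling back through the isomorphism $\pi^*$ yields the desired free basis of $H^*(\Gamma_{\infty,1};\Lambda^2\tilde H_\bQ)$.

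The main obstacle is the first step: performing the derivation calculation carefully enough to pin down the explicit formula for $\delta^\bullet$ on the generators $\mm_{i,j}$ (including the precise normalisations accounting for the factor $(-1)^j/j!$ in the definition of $\mm_{i,j}$), since the entire algebraic shape of the basis, as well as the surjectivity criterion, is driven by this computation. Once the derivation formulae are in hand, both the vanishing of $\mathrm{coker}(\delta^\bullet)$ and the freeness of $\ker(\delta^\bullet)$ reduce to direct bookkeeping against Theorem~\ref{stablecohomologyKawazumi}.
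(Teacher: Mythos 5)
Your overall strategy matches the paper's: specialise the short exact sequence \eqref{exteriorSES_contra} to $d=2$, identify the connecting homomorphism as the derivation $\ddd_{2}$ (Proposition~\ref{lem:connecting_hom_exterior_contra}), and then compute kernel and cokernel. The paper's own proof is exactly of this form, so the route is the same.

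However, there is a genuine gap, and it is precisely the step you flag as the ``main obstacle'': you do not actually compute the derivation on generators, and the formula you \emph{expect}, namely $\delta(\mm_{i,2})=\mm_{i,1}$, is wrong by an index shift. Proposition~\ref{prop:Contraction_Formula} gives $\mu_{*}(m_{1,1},m_{i,j})=-j\,m_{i+1,j-1}$, so after the normalisation $\mm_{i,j}=((-1)^j/j!)\,m_{i,j}$ one gets
\[
\ddd_{2}(\mm_{a,2})=\mm_{a+1,1},\qquad
\ddd_{2}(\mm_{\alpha,1}\mm_{\beta,1})=e_{\alpha}\mm_{\beta,1}+e_{\beta}\mm_{\alpha,1},
\]
exactly as the paper records. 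With the correct first formula your check
$\ddd_{2}\bigl(\mm_{j,1}\mm_{l,1}-e_{l}\mm_{j,2}-e_{j}\mm_{l,2}\bigr)=0$
no longer holds: one gets $e_{j}\mm_{l,1}+e_{l}\mm_{j,1}-e_{l}\mm_{j+1,1}-e_{j}\mm_{l+1,1}\neq 0$. The kernel elements that actually close up are $\mm_{j,1}\mm_{l,1}-e_{l}\mm_{j-1,2}-e_{j}\mm_{l-1,2}$ (for $j\geq l\geq1$), which is also the only indexing for which the three terms live in the same cohomological degree ($\deg\mm_{j,1}\mm_{l,1}=2j+2l-2=\deg e_{l}\mm_{j-1,2}$, whereas $\deg e_{l}\mm_{j,2}=2j+2l$). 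So your ``expected'' formula appears to have been reverse-engineered to match the statement you were asked to prove rather than derived; since the statement itself carries the same off-by-one, the circularity went unnoticed, but it is not a derivation.

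Two further points worth aligning with the paper. First, surjectivity of $\ddd_{2}$ need not go through a genus of ``bookkeeping'': once the formula $\ddd_{2}(\mm_{a,2})=\mm_{a+1,1}$ is in hand, the assignment $\mm_{j,1}\mapsto\mm_{j-1,2}$ is visibly a $\Sym_{\bQ}(\cE)$-linear right inverse, which is cleaner than your argument. Second, you should cite Proposition~\ref{prop:Contraction_Formula} and Proposition~\ref{lem:connecting_hom_exterior_contra} rather than gesture at ``the explicit Morita cocycle representatives'': the entire content of the step you postpone is contained in those two results, so once invoked there is nothing left deferred and the proof closes in a few lines.
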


In contrast, the computation of the $\Sym_{\bQ}(\cE)$-module $H^{*}(\Gamma_{\infty,1};\Lambda^{d}\tilde{H}_{\bQ}(g))$ for each $d\geq3$ is very complicated and seems out of reach with our current techniques. However, for $3\leq d\leq 5$, we fully compute the $\Sym_{\bQ}(\cE)$-submodule $H^{\ddagger}(\Gamma_{\infty,1};\Lambda^{d}\tilde{H}_{\bQ})$, where $\ddagger=``\even"$ if $d$ is odd and $\ddagger=``\odd"$ if $d$ is even.

\begin{athm}[Theorems~\ref{prop:3rd-ext-even}, \ref{prop:4th-ext_odd}, \ref{thm:5th-ext-even}]\label{thm:main_Thm_ddagger_part}
There are $\Sym_{\bQ}(\cE)$-module isomorphisms
$$H^{\even}(\Gamma_{\infty,1};\Lambda^{3}\tilde{H}_{\bQ})\cong (\Sym_{\bQ}(\cE)/(e_{1}^{2},e_{\alpha},\alpha\geq2))\left\{ \mm_{0,2}\right\},$$
$$H^{\odd}(\Gamma_{\infty,1};\Lambda^{4}\tilde{H}_{\bQ})\cong (\Sym_{\bQ}(\cE)/(e_{1}^{2},e_{\alpha},\alpha\geq2)) \left\{ \mm_{0,3}\right\},$$
and the $\Sym_{\bQ}(\cE)$-module $H^{\even}(\Gamma_{\infty,1};\Lambda^{5}\tilde{H}_{\bQ})$ is isomorphic to the quotient of the torsion $\Sym_{\bQ}(\cE)$-module
$$\begin{array}{c}
(\Sym_{\bQ}(\cE)/(e_{1}^{3}, e_{\alpha}e_{\beta},e_{\gamma};\alpha,\beta\geq1\,\, \text{except $\alpha = \beta = 1$}, \gamma\geq4))\{\mm_{0,4};\mm_{0,2}\mm_{0,2}\}\\
\oplus(\Sym_{\bQ}(\cE)/(
e_{\alpha}, e_{\beta} e_{\gamma} ; \alpha\geq3, \beta, \gamma \geq 1))\{\mm_{0,3}\mm_{1,1}\}
\\\oplus(\Sym_{\bQ}(\cE)/(e_{\alpha}, e_{\beta} e_{\gamma} ;\alpha\geq2, \beta, \gamma \geq 1))\{\mm_{0,3}\mm_{2,1}\}.
\end{array}$$
by the direct sum of the trivial $\Sym_{\bQ}(\cE)$-modules $\{2e_{2}\mm_{0,3}\mm_{1,1}+3e_{1}\mm_{0,3}\mm_{2,1}\}$, $\{e_{2}\mm_{0,2}\mm_{0,2}+6e_{1}\mm_{0,3}\mm_{1,1}\}$ and $\{e_{3}\mm_{0,2}\mm_{0,2}-6e_{1}^{2}\mm_{0,4}\}$.
\end{athm}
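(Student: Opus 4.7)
The plan is to exploit the long exact sequence in stable cohomology induced by the short exact sequence \eqref{prop:ses_exterior_power},
$$0 \longrightarrow \Lambda^{d-1}H_{\bQ} \longrightarrow \Lambda^{d}\tilde{H}_{\bQ} \longrightarrow \Lambda^{d} H_{\bQ} \longrightarrow 0,$$
combined with Theorem~\ref{stablecohomologyKawazumi}: for each $k\geq1$, $H^{*}(\Gamma_{\infty,1};\Lambda^{k}H_{\bQ})$ is free over $\Sym_{\bQ}(\cE)$ on a basis of monomials in the modified twisted Mumford--Morita--Miller classes $\mm_{i,j}$ of total weight $\sum_{a} j_{a}=k$, and the degree formula shows it is concentrated in cohomological degrees of parity $k$. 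Consequently, for $i$ of parity $\ddagger$, the long exact sequence reduces to the identification
$$H^{i}(\Gamma_{\infty,1};\Lambda^{d}\tilde{H}_{\bQ})\;\cong\;\Cok\!\left(\delta\colon H^{i-1}(\Gamma_{\infty,1};\Lambda^{d}H_{\bQ}) \longrightarrow H^{i}(\Gamma_{\infty,1};\Lambda^{d-1}H_{\bQ})\right),$$
and the whole problem reduces to computing the cokernel of the connecting homomorphism $\delta$.

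By Proposition~\ref{lem:connecting_hom_exterior_contra}, $\delta$ is the $\Sym_{\bQ}(\cE)$-linear derivation uniquely determined by explicit formulae for $\delta(\mm_{i,j})$, involving the classes $e_{\bullet}$ and $\mm$-classes of lower weight. The Leibniz rule propagates these formulae to every basis monomial, so that the image of $\delta$ is accessible by direct bookkeeping.

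For $d=3$ and $d=4$, both the source and the target of $\delta$ admit short bases, and this bookkeeping is essentially mechanical. A direct enumeration of $\delta$ on the basis monomials of $H^{*}(\Gamma_{\infty,1};\Lambda^{d}H_{\bQ})$ kills every generator of $H^{*}(\Gamma_{\infty,1};\Lambda^{d-1}H_{\bQ})$ except (the class of) $\mm_{0,d-1}$, whose annihilator in $\Sym_{\bQ}(\cE)$ is computed to be $(e_{1}^{2},e_{\alpha};\alpha\geq 2)$, yielding Theorems~\ref{prop:3rd-ext-even} and~\ref{prop:4th-ext_odd}.

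The case $d=5$ proceeds along the same lines but is where the main technical difficulty lies. The relevant source monomials now range over $\mm_{\bullet,1}^{5}$, $\mm_{\bullet,1}^{3}\mm_{\bullet,2}$, $\mm_{\bullet,1}\mm_{\bullet,2}^{2}$, $\mm_{\bullet,1}^{2}\mm_{\bullet,3}$, $\mm_{\bullet,2}\mm_{\bullet,3}$, $\mm_{\bullet,1}\mm_{\bullet,4}$ and $\mm_{\bullet,5}$, while the target monomials range over $\mm_{\bullet,1}^{4}$, $\mm_{\bullet,1}^{2}\mm_{\bullet,2}$, $\mm_{\bullet,2}^{2}$, $\mm_{\bullet,1}\mm_{\bullet,3}$ and $\mm_{\bullet,4}$. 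The heart of the proof is to organise this sizeable enumeration of $\delta$ so as to isolate the four surviving generators $\mm_{0,4}$, $\mm_{0,2}\mm_{0,2}$, $\mm_{0,3}\mm_{1,1}$ and $\mm_{0,3}\mm_{2,1}$ of the cokernel, to identify the three annihilator ideals appearing in the statement, and to detect the three additional mixing relations $2e_{2}\mm_{0,3}\mm_{1,1}+3e_{1}\mm_{0,3}\mm_{2,1}$, $e_{2}\mm_{0,2}\mm_{0,2}+6e_{1}\mm_{0,3}\mm_{1,1}$ and $e_{3}\mm_{0,2}\mm_{0,2}-6e_{1}^{2}\mm_{0,4}$ in the cokernel. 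Once these data are in hand, the description of $H^{\even}(\Gamma_{\infty,1};\Lambda^{5}\tilde{H}_{\bQ})$ as the stated quotient follows by standard linear algebra over $\Sym_{\bQ}(\cE)$.
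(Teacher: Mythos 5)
Your overall strategy is the same as the paper's: the four-term exact sequence \eqref{ES_cohomology_exterior} reduces the problem to $H_{\st}^{\ddagger}(\Lambda^{d}\tilde{H}_{\bQ})\cong\Cok(\ddd_{d})$, where by Proposition~\ref{lem:connecting_hom_exterior_contra} the connecting map $\ddd_{d}$ is the derivation $\mu_{d,\bullet}(m_{1,1},-)$ acting on the free $\Sym_{\bQ}(\cE)$-module $H_{\st}^{\dagger}(\Lambda^{d}H_{\bQ})$, and one then has to compute this cokernel explicitly. Your enumeration of the partition types of source/target monomials for $d=5$ is also correct.

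However, there is a genuine gap between the phrases ``direct enumeration'' / ``standard linear algebra over $\Sym_{\bQ}(\cE)$'' and an actual proof. Both the source and target of $\ddd_{d}$ are free $\Sym_{\bQ}(\cE)$-modules on \emph{infinitely many} generators (each partition type is parametrised by unbounded integer indices), so you are trying to compute the cokernel of a $\Sym_{\bQ}(\cE)$-linear map between infinite-rank free modules, and without some organising device this does not terminate. The paper's proof relies essentially on the filtration by the number of twisted Mumford--Morita--Miller factors in a monomial (Definition~\ref{def:filtration_monomial}), which is preserved by $\ddd_{d}$, and then computes $\Cok(\ddd_{d,k})$ inductively on $k$ via the snake-lemma sequence \eqref{eq:snake}: at each stage one needs that $\Ker(\ddd_{d,k/(k-1)})$ and $\Cok(\ddd_{d,k/(k-1)})$ are free, and one then only has to track the image of the connecting map $\delta_{d,k-1}$ in the previously computed $\Cok(\ddd_{d,k-1})$. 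The auxiliary identities of Proposition~\ref{prop:general_result_filtration} (contraction relations such as $\mm_{a,p}\mm_{b,q}=(-1)^{k}\mm_{a-k,p+k}\mm_{b+k,q-k}$ modulo $\Image(\ddd_{d,1})$, the explicit formula for products $\mm_{a_0,p_0}\cdots\mm_{a_n,p_n}$ in $\Cok(\ddd_{d,n})$, and the annihilation $e_{k_1}\cdots e_{k_r}u=0$ in $\Cok(\ddd_{d,l+r})$ when $k_1+\cdots+k_r\geq d-r$) are what collapse the a priori infinite computation to the finite presentations appearing in the statement. Without specifying this filtration and these reduction relations, the ``heart of the proof'' for $d=5$ — isolating $\mm_{0,4}$, $\mm_{0,2}\mm_{0,2}$, $\mm_{0,3}\mm_{1,1}$, $\mm_{0,3}\mm_{2,1}$, their annihilators, and the three mixing relations — has no mechanism behind it; you should at least explain how the enumeration is made finite at each step. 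I also note that for $d=4$ the paper's computation is not ``essentially mechanical'': already the step from $\Cok(\ddd_{4,1})$ to $\Cok(\ddd_{4,2})$ requires a case analysis to show that only the new relation $e_{1}^2\mm_{0,3}=0$ appears, and this uses the nontrivial section $\sigma_{4,1/0}$ and the full force of Proposition~\ref{prop:general_result_filtration}~\ref{(3)}--\ref{(4)}.
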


On the other hand, the difficult part to compute is the $\Sym_{\bQ}(\cE)$-submodule $H^{\dagger}(\Gamma_{\infty,1};\Lambda^{d}\tilde{H}_{\bQ})$, where $\ddagger=``\odd"$ if $d$ is odd and $\dagger=``\even"$ if $d$ is even. Instead, we fully compute the torsion groups $H_{*}(\Sym_{\bQ}(\cE);H_{\st}^{*}(\Lambda^{d}\tilde{H}_{\bQ}))$, because they reflect the complexity of the stable twisted cohomology groups.
For each $\ell\geq1$, the $\bQ$-vector spaces $\bigoplus_{1\leq i \leq \ell -1}\bQ e_{i}$ and $\bigoplus_{i \geq \ell}\bQ e_{i}$ are denoted by $\cE_{\leq \ell-1}$ and $\cE_{\geq \ell}$ respectively.
\begin{athm}[Propositions~\ref{prop:3rd-ext_Tor} and \ref{prop:4th-ext_Tor}, Theorem~\ref{thm:5th-ext_Tor}]\label{thm:main_Thm_Tor}
The full computations of the $\Tor$-groups $H_{j}(\Sym_{\bQ}(\cE);H_{\st}^{*}(\Lambda^{d}\tilde{H}_{\bQ}))$ are done as follows.
\begin{itemizeb}
    \item For $j\geq1$ and $d\in\{3,4\}$:
    $$H_{j}(\Sym_{\bQ}(\cE);H_{\st}^{*}(\Lambda^{d}\tilde{H}_{\bQ}))\cong\Lambda^{j-1}\cE_{\geq 2}\oplus\Lambda^{j}\cE_{\geq 2}\oplus\Lambda^{j+1}\cE_{\geq 2}\oplus\Lambda^{j+2}\cE_{\geq 2}.$$
    \item For $d\in\{3,4\}$, $H_{0}(\Sym_{\bQ}(\cE);H_{\st}^{*}(\Lambda^{d}\tilde{H}_{\bQ}))\cong\bQ\oplus \cE_{\geq 2}\oplus\Lambda^{2}\cE_{\geq 2} \oplus \mathscr{S}_{d}$, where
    $$\mathscr{S}_{3}=\bQ\{[\mm_{\alpha-1,2}\mm_{\beta,1} - \mm_{\beta-1,2}\mm_{\alpha,1}];2 \leq \alpha < \beta \} \oplus \bQ\{[\mm_{\alpha,1}\mm_{\beta,1}\mm_{\gamma,1}];1 \leq \alpha \leq \beta \leq \gamma \geq 2 \},$$
    \begin{align*}
\mathscr{S}_{4}  \;=\;& \bQ\{[\mm_{a,2}\mm_{b,2}-\mm_{a-1,3}\mm_{b+1,1}-\mm_{b-1,3}\mm_{a+1,1}];2 \leq a \leq b\} \\  
& \oplus\bQ\{[\mm_{\gamma-1,2}\mm_{\alpha,1}\mm_{\beta,1} - \mm_{\alpha-1,2}\mm_{\beta,1}\mm_{\gamma,1}]
; 1\leq \alpha < \beta < \gamma\}
\\ 
& \oplus\bQ\{
[\mm_{\gamma-1,2}\mm_{\alpha,1}\mm_{\beta,1} - \mm_{\beta-1,2}\mm_{\alpha,1}\mm_{\gamma,1}]
; 1\leq \alpha < \beta < \gamma\}
\\ 
& \oplus\bQ\{[\mm_{\gamma-1,2}\mm_{\alpha,1}\mm_{\alpha,1} - \mm_{\alpha-1,2}\mm_{\alpha,1}\mm_{\gamma,1}]
; 1\leq \alpha < \gamma\geq 3\}\\
& \oplus\bQ\{ 
[\mm_{\gamma-1,2}\mm_{\alpha,1}\mm_{\gamma,1} - \mm_{\alpha-1,2}\mm_{\gamma,1}\mm_{\gamma,1}]
; 1\leq \alpha < \gamma\}\\
&\oplus\bQ\{[\mm_{\alpha,1}\mm_{\beta,1}\mm_{\gamma,1}\mm_{\delta,1}];1\leq \alpha\leq \beta\leq \gamma\leq \delta \}.
\end{align*}
    \item For $d=5$ and $j\geq1$:
\begin{eqnarray*}
   H_{j}(\Sym_{\bQ}(\cE);H_{\st}^{*}(\Lambda^{5}\tilde{H}_{\bQ}))&\cong& (\Lambda^{j-3}\cE_{\geq 4})^{\oplus 6}\oplus(\Lambda^{j-2}\cE_{\geq 4})^{\oplus 17}\oplus (\Lambda^{j-1}\cE_{\geq 4})^{\oplus 21}\\
   &&\oplus(\Lambda^{j}\cE_{\geq 4})^{\oplus 21}\oplus (\Lambda^{j+1}\cE_{\geq 4})^{\oplus 15}\oplus(\Lambda^{j+2}\cE_{\geq 4})^{\oplus 4}.
\end{eqnarray*}
Moreover, $H_{0}(\Sym_{\bQ}(\cE);H_{\st}^{*}(\Lambda^{5}\tilde{H}_{\bQ}))\cong\bQ^{\oplus 21}\oplus \cE_{\geq 4}^{\oplus 15}\oplus(\Lambda^{2}\cE_{\geq 4})^{\oplus 4} \oplus\mathscr{S}_{5}$ where $\mathscr{S}_{5}$ is a complicated $\bQ$-module introduced in Theorem~\ref{thm:5th-ext_Tor}.
\end{itemizeb}
\end{athm}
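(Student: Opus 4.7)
The plan is to compute the $\Tor$-groups $H_{*}(\Sym_{\bQ}(\cE);H_{\st}^{*}(\Lambda^{d}\tilde{H}_{\bQ}))$ via the Koszul resolution of $\bQ$ over the polynomial algebra $\Sym_{\bQ}(\cE)\cong\bQ[e_{1},e_{2},\ldots]$, using presentations of $H^{*}(\Gamma_{\infty,1};\Lambda^{d}\tilde{H}_{\bQ})$ extracted from the long exact cohomology sequence associated to \eqref{prop:ses_exterior_power} together with the derivation structure of the connecting homomorphism supplied by Proposition~\ref{lem:connecting_hom_exterior_contra}. Explicitly, the complex $\Lambda^{\bullet}\cE\otimes_{\bQ}\Sym_{\bQ}(\cE)$ with Koszul differential is a free $\Sym_{\bQ}(\cE)$-resolution of $\bQ$, so $H_{j}(\Sym_{\bQ}(\cE);M)$ is the $j$-th homology of $\Lambda^{\bullet}\cE\otimes_{\bQ}M$ with differential induced by multiplication by the $e_{i}$. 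Once the action of each $e_{i}$ is pinned down, the problem reduces to explicit linear algebra.

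For $d\in\{3,4\}$, the main input is the full description of $H^{\ddagger}(\Gamma_{\infty,1};\Lambda^{d}\tilde{H}_{\bQ})$ provided by Theorem~\ref{thm:main_Thm_ddagger_part}: in both cases this is a cyclic $\Sym_{\bQ}(\cE)$-module isomorphic to $N:=\Sym_{\bQ}(\cE)/(e_{1}^{2},e_{\alpha};\alpha\geq 2)$. Factoring $\Sym_{\bQ}(\cE)\cong\bQ[e_{1}]\otimes_{\bQ}\Sym_{\bQ}(\cE_{\geq 2})$ splits the Koszul complex as a tensor product, and Künneth then gives
\[ \Tor^{\Sym_{\bQ}(\cE)}_{*}(\bQ,N)\cong \Tor^{\bQ[e_{1}]}_{*}(\bQ,\bQ[e_{1}]/(e_{1}^{2}))\otimes_{\bQ}\Lambda^{*}\cE_{\geq 2}. \]
The first factor is $\bQ$ in degrees $0$ and $1$ and vanishes otherwise (from the length-one resolution $0\to\bQ[e_{1}]\xrightarrow{\times e_{1}^{2}}\bQ[e_{1}]\to\bQ[e_{1}]/(e_{1}^{2})\to 0$), contributing the $\Lambda^{j-1}\cE_{\geq 2}\oplus\Lambda^{j}\cE_{\geq 2}$ summand in the claimed formula. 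The remaining $\Lambda^{j+1}\cE_{\geq 2}\oplus\Lambda^{j+2}\cE_{\geq 2}$ must then come from $H^{\dagger}(\Gamma_{\infty,1};\Lambda^{d}\tilde{H}_{\bQ})$, which is accessed indirectly by chopping the long exact sequence of \eqref{prop:ses_exterior_power} into the short exact sequences
\[ 0\to\Cok(\partial_{*-1})\to H^{*}(\Gamma_{\infty,1};\Lambda^{d}\tilde{H}_{\bQ})\to\Ker(\partial_{*})\to 0 \]
and applying the corresponding $\Tor$ long exact sequence: because kernel and cokernel sit as subquotients of the free $\Sym_{\bQ}(\cE)$-module $H^{*}(\Gamma_{\infty,1};\Lambda^{d}H_{\bQ})$ of Theorem~\ref{stablecohomologyKawazumi}, induction on $d$ based on \cite[Th.~B]{KawazumiSoulieI} feeds in the missing summands. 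The $H_{0}$-computations amount to listing those monomials in the $\mm_{i,j}$ surviving as indecomposables modulo the augmentation ideal; the modules $\mathscr{S}_{3}$ and $\mathscr{S}_{4}$ are extracted from a combinatorial case analysis on multi-weights and symmetries, constrained by the relations produced by the derivation $\partial$.

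The case $d=5$ follows the same blueprint with more moving parts: the presentation from Theorem~\ref{thm:main_Thm_ddagger_part} now uses three generators with relations involving only $e_{1},e_{2},e_{3}$. Splitting $\Sym_{\bQ}(\cE)\cong\bQ[e_{1},e_{2},e_{3}]\otimes_{\bQ}\Sym_{\bQ}(\cE_{\geq 4})$ factorises the Koszul complex into a finite three-variable computation tensored with $\Lambda^{\bullet}\cE_{\geq 4}$; the multiplicities $6,17,21,21,15,4$ then emerge as the ranks in each homological degree of this finite three-variable Koszul homology, applied term-wise to the three direct summands in the presentation of $H^{\even}(\Gamma_{\infty,1};\Lambda^{5}\tilde{H}_{\bQ})$ and combined with the inductive $H^{\dagger}$-contribution via the long-exact-sequence device used for $d\in\{3,4\}$. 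The description of $\mathscr{S}_{5}$ parallels that of $\mathscr{S}_{3},\mathscr{S}_{4}$ but the enumeration is substantially heavier and is organised weight by weight.

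The principal obstacle is the control of $H^{\dagger}(\Gamma_{\infty,1};\Lambda^{d}\tilde{H}_{\bQ})$, which is not given a closed form in Theorem~\ref{thm:main_Thm_ddagger_part}: extracting its contribution to the $\Tor$-groups requires combining the explicit derivation of Proposition~\ref{lem:connecting_hom_exterior_contra} with the Poincaré-series bounds afforded by the localisation of Theorem~\ref{thm:main_Thm_exterior_powers_general_covariant_graded_algebra}, so that the ranks agree and the long exact sequence can be solved uniquely. A secondary difficulty is the combinatorial explosion for $d=5$, where enumerating the generators and relations of $\mathscr{S}_{5}$ is best organised by partitioning monomials by their multi-weight in the $\mm_{i,j}$ and treating each graded piece separately.
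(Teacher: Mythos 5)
Your outline correctly identifies the Koszul resolution, the Künneth factorisation $\Sym_{\bQ}(\cE)\cong\bQ[e_1]\otimes\Sym_{\bQ}(\cE_{\geq 2})$, and the explicit computation of $\Tor$ of the cyclic module $N=\Sym_{\bQ}(\cE)/(e_1^2,e_\alpha;\alpha\geq 2)$, and these do indeed produce the $\Lambda^{j-1}\cE_{\geq 2}\oplus\Lambda^j\cE_{\geq 2}$ contribution from $H^{\ddagger}_{\st}(\Lambda^d\tilde H_{\bQ})$ for $d\in\{3,4\}$. Where the proposal goes astray is in the mechanism you offer for the $H^{\dagger}$-contribution. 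You write the short exact sequence $0\to\Cok(\partial_{*-1})\to H^*\to\Ker(\partial_*)\to 0$, but that sequence is just the direct-sum decomposition $H^*_{\st}(\Lambda^d\tilde H_{\bQ})\cong H^{\ddagger}\oplus H^{\dagger}$ (Lemma~\ref{lem:canonical_sym_E_structure}), so it yields $\Tor(H^*)\cong\Tor(H^{\ddagger})\oplus\Tor(H^{\dagger})$ but tells you nothing about $\Tor(H^{\dagger})$ itself. The paper's crucial observation, which your proof never isolates, is the degree-two shift
\[
H_{j}(\Sym_{\bQ}(\cE);H_{\st}^{\dagger}(\Lambda^{d}\tilde{H}_{\bQ}))
\cong H_{j+2}(\Sym_{\bQ}(\cE); H_{\st}^{\ddagger}(\Lambda^{d}\tilde{H}_{\bQ}))
\qquad (j>0),
\]
obtained by concatenating a free resolution of $H^{\dagger}$ with the four-term exact sequence \eqref{ES_cohomology_exterior} (whose two middle terms are free). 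This isomorphism is what determines the positive-degree $\Tor$-groups of $H^{\dagger}$ completely from the explicit cokernel, and it is the reason the formula for $d\in\{3,4\}$ in degree $j\geq 1$ has exactly four summands.

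The two surrogates you invoke in place of this shift do not carry the argument. There is no useful induction on $d$ here: the $\Tor$-groups of $H^{\dagger}_{\st}(\Lambda^d\tilde H_{\bQ})$ are not linked to those of $\Lambda^{d-1}\tilde H_{\bQ}$ in a way that would ``feed in the missing summands,'' and the paper's own route never appeals to such an induction. Likewise, the Poincaré-series bounds coming from the localisation of Theorem~\ref{thm:full_stable_cohomology_extension} give upper bounds, not equalities, and cannot by themselves force the long exact sequence to close uniquely; the paper never uses them in this way. Finally, the $j=0$ case is genuinely more delicate than ``listing indecomposable monomials.'' It rests on the exact sequence of Lemma~\ref{lem:Tor_0_lemma},
\[
0 \to H_{2}(H_{\st}^{\ddagger})\to
H_{0}(H_{\st}^{\dagger})
\to \Ker(H_{0}(\ddd_{d})) \overset{\Delta_{d}}\to
H_{1}(H_{\st}^{\ddagger}) \to 0,
\]
together with Proposition~\ref{prop:Ker_Delta_method}, which gives an explicit chain-level recipe for $\Delta_d$ that must be evaluated generator by generator to extract the modules $\mathscr{S}_d$. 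Without this step your enumeration of $\mathscr{S}_3,\mathscr{S}_4,\mathscr{S}_5$ has no mechanism to decide which monomial combinations lie in $\Ker(\Delta_d)$, which is exactly where the content of the $H_0$-statement lives.
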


The result of Theorem~\ref{thm:main_Thm_Tor} highlights that the complexity of the description of the $H^{*}(\Gamma_{\infty,1};\Lambda^{d}\tilde{H}_{\bQ}))$ as a $\Sym_{\bQ}(\cE)$-module grows with $d$. The length and technicality of our proofs also increase a lot with $d$; see \S\ref{ss:third_exterior}, \S\ref{ss:fourth_exterior} and \S\ref{ss:fifth_exterior}. This reflects the difficulties we face in our work as $d$ grows and is the reason we stop our computations at $d=5$.

\paragraph*{Surfaces with more boundaries.}
All our results straightforwardly generalise to stable twisted cohomology for mapping class groups of surfaces with more boundary components. Namely, we denote by $\Sigma_{g,n}$ the smooth compact connected orientable surface of genus $g\geq0$ and with $n\geq1$ boundary components, and by $\Gamma_{g,n}$ its mapping class group. We recall that gluing a disc on each boundary component (i.e.~\emph{capping} these boundaries) except one induces a surjection $\kappa_{n}\colon \Gamma_{g,n}\twoheadrightarrow \Gamma_{g,1}$; see \cite[\S3.6.2, \S4.2.1]{farbmargalit}.
Then, the $\Gamma_{g,1}$-representations $H_{\bQ}(g)$ and $\tilde{H}_{\bQ}(g)$ are also $\Gamma_{g,n}$-representations via $\kappa_{n}$. We also consider the $\Gamma_{g,n}$-representations $H_{1}(\Sigma_{g,n};\bQ)$ and $H_{1}(UT\Sigma_{g,n};\bQ)$, that we denote by $H_{\bQ}(g,n)$ and $\tilde{H}_{\bQ}(g,n)$. We note that $H_{\bQ}(g,n)$ is self-dual as a $\Gamma_{g,n}$-representation, similarly to $H_{\bQ}(g)=H_{\bQ}(g,1)$.

We fix $n\geq1$.
For the representations $\{H_{\bQ}(g,n)\}_{g\in\bN}$, it follows from \cite[Th.~1.A]{stablecohomologyKawazumi} that, for any $d\geq1$, $H^{i}(\Gamma_{g,n}; \Lambda^{d}H(g,n))\cong H^{i}(\Gamma_{g,1}; \Lambda^{d}H(g))$ for $g\geq 3i+d+2$.
Also, the representations $\{H_{\bQ}(g)\}_{g\in\bN}$ provide \emph{finite degree coefficient systems} in the sense of Boldsen \cite[\S4.1]{Boldsen}. So, for any $d\geq1$, it follows from \cite[Th.~4.15]{Boldsen} that $H^{i}(\Gamma_{g,n}; \Lambda^{d}H(g))\cong H^{i}(\Gamma_{g,1}; \Lambda^{d}H(g))$ for $g\geq 3i+d+2$.
Hence, we may view $\Lambda^{d} \tilde{H}_{\bQ}(g)$ and $\Lambda^{d} \tilde{H}_{\bQ}(g,n)$ as non-trivial extensions of $\Gamma_{g,n}$-representations whose associated cohomology classes are the same as \eqref{prop:ses_exterior_power}.
This also holds for the duals $\Lambda^{d} \tilde{H}^{\vee}_{\bQ}(g)$ and $\Lambda^{d} \tilde{H}^{\vee}_{\bQ}(g,n)$ whose cohomology classes as extensions are equal to \eqref{exteriorSES_cov}.
Therefore, there are twisted cohomological stability for these coefficient systems, the stable twisted cohomology groups $H^{*}(\Gamma_{\infty,n}; \Lambda^{d}\tilde{H}_{\bQ}(n))$ and $ H^{*}(\Gamma_{\infty,n}; \Lambda ^{d}\tilde{H}_{\bQ})$ are both isomorphic to $H^{*}(\Gamma_{\infty,1}; \Lambda^{d}\tilde{H}_{\bQ})$, while $H^{*}(\Gamma_{\infty,n}; \Lambda^{d}\tilde{H}^{\vee}_{\bQ}(n))$ and $ H^{*}(\Gamma_{\infty,n}; \Lambda ^{d}\tilde{H}^{\vee}_{\bQ})$ are both isomorphic to $H^{*}(\Gamma_{\infty,1}; \Lambda^{d}\tilde{H}^{\vee}_{\bQ})$. A fortiori, Theorems~\ref{thm:main_Thm_exterior_powers_contravariant}--\ref{thm:main_Thm_Tor} repeat verbatim for the stable twisted cohomology groups of $\Gamma_{g,n}$ with coefficients in the exterior powers of $\tilde{H}_{\bQ}(n)$, $\Lambda ^{d}\tilde{H}_{\bQ}$, $\tilde{H}^{\vee}_{\bQ}(n)$ and $\tilde{H}^{\vee}_{\bQ}$.

\paragraph*{Perspectives.}
For a \emph{fixed} cohomological degree $i$ and power $d\geq3$, our work gives all the necessary tools and methods to compute explicitly the stable twisted cohomology group $H^{i}(\Gamma_{\infty,1};\Lambda^{d}\tilde{H}_{\bQ})$ as a $\bQ$-module.
Furthermore, a lot of key steps of the results of Theorems~\ref{thm:main_Thm_exterior_powers_contravariant}--\ref{thm:main_Thm_Tor} could be proved with integral coefficients.
So we might in principle be able to do the computations with $\bZ$ as ground ring. However, the stable cohomology $H^{*}(\Gamma_{\infty,1};\bZ)$ is still poorly known. Nevertheless, one could potentially make stable twisted cohomology computations with the finite field $\bF_{p}$ as ground ring, by using the computations of $H^{*}(\Gamma_{\infty,1};\bF_{p})$ of Galatius \cite{Galatius}.

\paragraph*{Outline.} In \S\ref{s:general_recollections}, we make recollections on the representation theory of mapping class groups and we recall the framework and key properties for studying the stable twisted cohomology of mapping class groups. We first   compute the stable twisted cohomology of Theorem~\ref{thm:main_Thm_exterior_powers_contravariant}
in \S\ref{s:contravariant_coeff_syst_exterior}. Then, we carry on the general theory for the study of the stable twisted cohomology with coefficients in $\Lambda^{*}\tilde{H}_{\bQ}$ in \S\ref{s:covariant_coeff_syst_exterior_general_theory} to prove Theorems~\ref{thm:main_Thm_exterior_powers_general_covariant_graded_algebra} and \ref{thm:main_Thm_exterior_powers_general_covariant_module_d_fixed}. We finally make computations of these stable twisted cohomology and associated $\Tor$-groups for $d\leq 5$ in \S\ref{s:covariant_coeff_syst_exterior_computations}, in particular proving Theorems~\ref{thm:main_Thm_2nd_exterior_powers_general_covariant}--\ref{thm:main_Thm_Tor}.

\paragraph*{Conventions and notations.}
We standardly denote by $\bN$ the set of non-negative integers and by $\bj$ the set $\{1,\ldots,j\}$.
We denote by $\mathfrak{S}_{n}$ the symmetric group on a set of $n$ elements.
For a ring $R$, we denote by $R\Mod$ the category of left $R$-modules. Non-specified tensor products are taken over the clear ground ring.
For $R=\bZ$, the category of $\bZ$-modules is also denoted by $\Ab$.
We denote by $\Lambda^{d}:R\Mod \to R\Mod$ the $d^{\ith}$ exterior product functor. For $\bK$ a commutative ring and $V$ a $\bK$-vector space, we denote by $\Sym_{\bK}(V)$ the symmetric algebra on $V$ over $\bK$.
For a map $f$, we generically (when everything is clear from the context)
denote by $f_{*}$ the induced map in homology and by $f^{*}$ the induced map in cohomology.

The duality functor, denoted by $-^{\vee} : R\Mod \to R\Mod^{\opp}$, is defined by $\mathrm{Hom}_{R\Mod}(-,R)$. In particular, for $G$ a group and $V$ a $R[G]$-module, we denote by $V^{\vee}$ the dual $R[G]$-module $\mathrm{Hom}_{R}(V,R)$.
Then, for a functor $F\colon \U\cM_{2} \to \Ab$, the post-composition by the duality functor defines a functor $F^{\vee}$ that we view as a functor of the form $\U\cM_{2} \to \Ab^{\opp}$ (rather than $\U\cM_{2} \to \Ab^{\opp}$, these two points of view being equivalent).

Considering a functor $M\colon \U\cM_{2} \to \Ab$, if there is no risk of confusion, we generally denote the stable twisted cohomology groups $H^{*}(\Gamma_{\infty,1};M)$ and $H^{*}(\Gamma_{\infty,1};M^{\vee})$ by $H_{\st}^{*}(M)$ and $H_{\st}^{*}(M^{\vee})$ for the sake of simplicity.
We denote the cup product by $\cup$, but generally abuse the notation denoting it by an empty space for simplicity when there is no risk of confusion.

\paragraph*{Acknowledgements.}
The authors would like to thank Geoffroy Powell, Oscar Randal-Williams, Antoine Touz{\'e}, Christine Vespa and Shun Wakatsuki for illuminating discussions and questions.
The authors were supported by the PRC CNRS-JSPS French-Japanese Project ``Cohomological study of MCG and related topics''. The first author was supported in part by the grants
JSPS KAKENHI 15H03617, 18K03283, 18KK0071, 19H01784, 20H00115 and 22H01120.
The second author was supported by a Rankin-Sneddon Research Fellowship of the University of Glasgow, by the Institute for Basic Science IBS-R003-D1 and by the ANR Project AlMaRe ANR-19-CE40-0001-01.
\tableofcontents

\section{Background and recollections}\label{s:general_recollections}

In this section, we first review some facts about the representations of the mapping class groups in \S\ref{ss:MCG_representations}. Then we recall the functorial framework to encode these representations in \S\ref{ss:twisted_coefficient_systems} and its applications for stable twisted cohomology computations in \S\ref{ss:homological_stability}. We finally recollect methods to compute the homology of modules over the commutative algebra $\Sym_{\bQ}(\cE)$ in \S\ref{ss:homology_of_algebras}.

\subsection{Representations of the mapping class groups}\label{ss:MCG_representations}

The first integral homology group $H_{1}(\Sigma_{g,1}; \bZ)$ will generally be denoted by $H(g)$ all along the paper (and by $H_{\bQ}(g)$ for the rational version).
We recall that it is naturally equipped with a $\Gamma_{g,1}$-action which factors through the symplectic group $\mathrm{Sp}_{2g}(\bZ)$, and so is called the \emph{symplectic} representation of $\Gamma_{g,1}$.
We note that there is a $\Gamma_{g,1}$-module isomorphism between $H^{1}(\Sigma_{g,1}; \bZ)$ and $H(g)$ because the Poincar{\'e}-Lefschetz duality is the cap product with the fundamental class.
Also, since $H^{1}(\Sigma_{g,1}; \bZ)$ is isomorphic to $H^{\vee}(g)$ as $\Gamma_{g,1}$-modules by the universal coefficient theorem for cohomology of spaces (see \cite[Ex.~3.6.7]{Weibel} for instance), we have a $\Gamma_{g,1}$-module isomorphism
\begin{equation}\label{eq:dual_H}
H^{\vee}(g) \cong H(g).
\end{equation}

\paragraph*{Tangent bundle and framings.}
We denote by $T\Sigma_{g,1}$ the tangent bundle of the surface $\Sigma_{g,1}$ and fix a Riemannnian metric $\Vert \cdot \Vert$ on it.
By definition, the unit tangent bundle $\mathrm{UT}\Sigma_{g,1}$ is the set of 
elements of $T\Sigma_{g,1}$ whose length is $1$ with respect to $\Vert\cdot\Vert$. 
For any diffeomorphism $\varphi$ of $\Sigma_{g,1}$,
its differential $d\varphi$ acts on the unit tangent bundle $\mathrm{UT}\Sigma_{g,1}$.
The canonical projection of the unit tangent bundle $\mathrm{UT}\Sigma_{g,1}$ onto the surface defines the locally trivial fibration $\bS^{1}\overset{\iota}{\hookrightarrow}\mathrm{UT}\Sigma_{g,1}\overset{\varpi}{\to}\Sigma_{g,1}$.

We recall that a \emph{framing} of $\mathrm{UT}\Sigma_{g,1}$ is a continuous map $\xi: \mathrm{UT}\Sigma_{g,1} \to \bS^{1}$
whose restriction to each fiber is an orientation-preserving homeomorphism.
We denote by $\mathscr{F}(\Sigma_{g,1})$ the set of homotopy classes (without fixing the boundary) of framings of $\mathrm{UT}\Sigma_{g,1}$. It is an affine set modeled by the cohomology group 
$H^1(\Sigma_{g,1}; \bZ)$.
The mapping class group $\Gamma_{g,1}$ acts on the set $\mathscr{F}(\Sigma_{g,1})$ by
$$\varphi\cdot \xi = \xi\circ d\varphi^{-1}: \mathrm{UT}\Sigma_{g,1} 
\overset{d\varphi^{-1}}\too \mathrm{UT}\Sigma_{g,1} \overset{\xi}\too \bS^{1}$$
for $\varphi \in \Gamma_{g,1}$ and $\xi \in \mathscr{F}(\Sigma_{g,1})$. For $\alpha: \bS^{1}\to \Sigma_{g,1}$ an immersed loop, its \emph{rotation number} $\rot_\xi(\alpha) \in \bZ$ with respect to the framing $\xi$ is the mapping degree of the composite $\xi\circ({\overset\cdot\alpha}/{\Vert \overset\cdot\alpha\Vert})\colon\bS^{1}\to \mathrm{UT}\Sigma_{g,1}\to \bS^{1}$. The difference of  two framings $\xi$ and $\xi'$ is given by a cohomology class $u \in H^1(\Sigma_{g,1}; \bZ)$ if and only if $\rot_{\xi'}(\alpha) -  \rot_\xi(\alpha) = u([\alpha])$, where $[\alpha] \in H_{1}(\Sigma_{g,1}; \bZ)$ is the homology class of  the immersed loop $\alpha$. For an immersed loop $\alpha$ on $\Sigma_{g,1}$, we have $\rot_{\varphi\cdot\xi}(\alpha) = \rot_\xi(\varphi^{-1}\circ\alpha)$.

\paragraph*{The unit tangent bundle homology representations.}
We now consider the first integral homology group of the unit tangent bundle $\mathrm{UT}\Sigma_{g,1}$, denoted by $\tilde{H}(g)$. It is naturally equipped with a $\Gamma_{g,1}$-action.
Since $\tilde{H}(g)\cong \bZ^{2g+1}$ as an abelian group, the dual $\tilde{H}^{\vee}(g)$ is isomorphic to the first integral cohomology group $H^{1}(\mathrm{UT}\Sigma_{g,1}; \bZ)$ by the universal 
coefficient theorem for cohomology of spaces (see \cite[Example 3.6.7]{Weibel}). Then the Gysin sequence of the locally trivial fibration $\bS^{1}\overset{\iota}{\hookrightarrow}\mathrm{UT}\Sigma_{g,1}\overset{\varpi}{\to}\Sigma_{g,1}$ provides the following $\Gamma_{g,1}$-equivariant short exact sequences:
\begin{equation}\label{SEShutb}
\xymatrix{0\ar@{->}[r] & \bZ\ar@{->}[r]^-{\iota_{*}} & \tilde{H}(g)\ar@{->}[r]^-{\varpi_{*}} & H(g)\ar@{->}[r] & 0,}
\end{equation}
\begin{equation}\label{SESchutb}
\xymatrix{0\ar@{->}[r] & H(g)\ar@{->}[r]^-{\varpi^{*}} & \tilde{H}^{\vee}(g)\ar@{->}[r]^-{\iota^{*}} & \bZ\ar@{->}[r] & 0.}
\end{equation}
We also have the analogue short exact sequences to \eqref{SEShutb} and \eqref{SESchutb} with the rational versions $H_{\bQ}(g)$, $\tilde{H}_{\bQ}(g)$ and $\tilde{H}^{\vee}_{\bQ}(g)$ of the homology groups $H(g)$, $\tilde{H}(g)$ and $\tilde{H}^{\vee}(g)$.
We denote by $\hbar \in \tilde{H}_{\bQ}(g)$ the homology class of the fiber of the unit tangent bundle.

Trapp \cite{Trapp} describes explicitly the $\Gamma_{g,1}$-module structures of $\tilde{H}(g)$ and $\tilde{H}^{\vee}(g)$ as follows.
Fixing a framing $\xi$ of $\mathrm{UT}\Sigma_{g,1}$ and $g\geq2$, the map $k_{\xi}(g,-): \Gamma_{g,1} \to H^1(\Sigma_{g,1}; \bZ)$ defined by
\begin{equation}\label{eq:formula_k}
    k_{\xi}(g,\varphi) = \varphi\cdot \xi - \xi \in H^1(\Sigma_{g,1}; \bZ)
\end{equation}
is a $1$-cocycle of $\Gamma_{g,1}$. Following Earle \cite{Earle} and Furuta \cite[\S4]{Morita_casson_invariant}, the \emph{Earle cohomology class} $k(g):= [k_{\xi}(g)]$ does not depend on the choice of $\xi$ and generates 
the infinite cyclic group $H^1(\Gamma_{g,1}; H(g))$; see also \cite[Th.~1.2]{KawazumiSoulieI}. It is called the \emph{the Earle class} following \cite{Earle}.
Since the cohomology groups $H^1(\Gamma_{0,1}, H_{1}(\Sigma_{0,1}; 
\bZ))$ and $H^1(\Gamma_{1,1}; H(1))$ are trivial (see \cite[Lem.~1.1]{KawazumiSoulieI} for the latter), we define $k(0): \Gamma_{0,1} \to H^1(\Sigma_{0,1}; \bZ)$ and $k(1): \Gamma_{1,1} \to H^1(\Sigma_{1,1}; \bZ)$ to be the trivial cohomology classes.
Trapp \cite[Th.~2.2]{Trapp} shows that, for an element $\varphi\in\Gamma_{g,1}$, the action of $\varphi$ on $\tilde{H}(g)$ is given by the matrix
\begin{equation}\label{eq:matrix_action}
\left[\begin{array}{cc}
\id_{\bZ} & k_{\xi}(g,\varphi) \\
(0) & H(\varphi)
\end{array}\right]    
\end{equation}
where $H(\varphi)$ denotes the action of $\varphi$ on $H(g)$.
In particular, the class $k(g)$ in $\mathrm{Ext}_{\bZ[\Gamma_{g,1}]}^{1}(H(g),\bZ)$ is by definition the extension class of the short exact sequence \eqref{SEShutb}; this may alternatively be shown from the defining formula \eqref{eq:formula_k}.

Also, the formal dual $k^{\vee}(g)$ in $\mathrm{Ext}_{\bZ[\Gamma_{g,1}]}^{1}(\bZ,H^{\vee}(g))$ is the extension class of the short exact sequence \eqref{SESchutb}. However, it straightforwardly follows from the formal definitions of the $1$-cocycles that $k_{\xi}^{\vee}(g,\varphi)(\xi)=\varphi k_{\xi}^{\vee}(g,\varphi^{-1})=-k_{\xi}^{\vee}(g,\varphi^{-1})$ for all $\varphi\in \Gamma_{g,1}$, so:
\begin{lem}\label{lem:cocycles_relations}
For each $g\geq0$, we have $k^{\vee}(g)=-k(g)$ in $H^1(\Gamma_{g,1}; H^{\vee}(g))$.
\end{lem}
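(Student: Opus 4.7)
The strategy is to exhibit explicit $1$-cocycle representatives for both sides and compare them via the universal identity $\varphi\cdot c(\varphi^{-1}) = -c(\varphi)$, valid for every $1$-cocycle $c$ (a consequence of $c(\mathrm{id}) = 0$). For $g \in \{0,1\}$, the ambient cohomology group $H^{1}(\Gamma_{g,1};H^{\vee}(g))$ is already trivial by the recollections preceding~\eqref{eq:formula_k}, so both classes vanish by definition and the statement is immediate; the substance lies in the case $g \geq 2$.

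Fix a framing $\xi$ of $\mathrm{UT}\Sigma_{g,1}$. The class $k(g)$ is represented by $\varphi \mapsto k_{\xi}(g,\varphi) = \varphi\cdot\xi-\xi \in H^{1}(\Sigma_{g,1};\bZ) = H^{\vee}(g)$. To represent $k^{\vee}(g) \in \mathrm{Ext}^{1}_{\bZ[\Gamma_{g,1}]}(\bZ,H^{\vee}(g))$, the plan is to construct a set-theoretic section $s^{\vee}\colon \bZ \to \tilde{H}^{\vee}(g)$ of the quotient $\iota^{*}$ in~\eqref{SESchutb} directly from $\xi$: set $s^{\vee}(1) := \xi^{*}(u)$, where $u$ generates $H^{1}(\bS^{1};\bZ)$. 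The equality $\iota^{*}\xi^{*}(u) = u$ holds because $\xi$ restricts to an orientation-preserving self-homeomorphism on each fiber. The associated cocycle is then $k^{\vee}_{\xi}(g,\varphi) := \varphi\cdot\xi^{*}(u) - \xi^{*}(u)$, which lies in $\varpi^{*}(H^{\vee}(g)) \subset \tilde{H}^{\vee}(g)$ by construction.

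Using the action formula $\varphi\cdot\xi = \xi\circ d\varphi^{-1}$ together with the contravariance inherent in dualising~\eqref{SEShutb} to~\eqref{SESchutb} (equivalently, in taking the inverse-transpose of the matrix~\eqref{eq:matrix_action}), one identifies $k^{\vee}_{\xi}(g,\varphi)$ with $\varpi^{*}(\varphi\cdot k_{\xi}(g,\varphi^{-1}))$. Applying the cocycle identity $\varphi\cdot k_{\xi}(g,\varphi^{-1}) = -k_{\xi}(g,\varphi)$ then yields $[k^{\vee}_{\xi}(g)] = -[k_{\xi}(g)]$ in $H^{1}(\Gamma_{g,1};H^{\vee}(g))$, as claimed. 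The main delicate step is precisely this identification of $k^{\vee}_{\xi}$ as $\varphi\cdot k_{\xi}(g,\varphi^{-1})$ rather than $k_{\xi}(g,\varphi)$ itself: matching the two sign conventions---one coming from the matrix inversion in~\eqref{eq:matrix_action}, the other from the contravariant pullback on cohomology---requires careful bookkeeping, but is otherwise mechanical once the dictionary between sections, cocycles, and matrix entries is set up.
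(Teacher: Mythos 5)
Your overall strategy matches the paper's (terse) one-line proof: exhibit explicit $1$-cocycle representatives for $k(g)$ and $k^{\vee}(g)$ out of the framing $\xi$, then relate them via the universal identity $\varphi\cdot c(\varphi^{-1}) = -c(\varphi)$. The triviality argument for $g \leq 1$, the choice $s^{\vee}(1) := \xi^{*}(u)$ as a set-theoretic splitting of $\iota^{*}$, and the final appeal to the cocycle identity all track the paper's reasoning.

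The concern is with the step you yourself flag as delicate. From the setup you have written down, the cocycle attached to $s^{\vee}$ computes directly: since $\varphi\cdot\xi^{*}(u) = (d\varphi^{-1})^{*}\xi^{*}(u) = (\varphi\cdot\xi)^{*}(u)$ and since $\xi'^{*}(u)-\xi^{*}(u)=\varpi^{*}(\xi'-\xi)$ for any two framings (check this on the velocity lift $[\dot\alpha/\Vert\dot\alpha\Vert]$, where it becomes the rotation-number identity), one gets
\[
\varphi\cdot\xi^{*}(u)-\xi^{*}(u)=(\varphi\cdot\xi)^{*}(u)-\xi^{*}(u)=\varpi^{*}\bigl(\varphi\cdot\xi-\xi\bigr)=\varpi^{*}\bigl(k_{\xi}(g,\varphi)\bigr),
\]
with no $\varphi^{-1}$ appearing. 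So the claimed identification $k^{\vee}_{\xi}(g,\varphi) = \varpi^{*}\bigl(\varphi\cdot k_{\xi}(g,\varphi^{-1})\bigr)$ does not follow from the action formula plus ``contravariance of dualising'' as stated: dualisation transposes the representing matrix but does not change the argument of the cocycle from $\varphi$ to $\varphi^{-1}$, and the inverse-transpose of the matrix~\eqref{eq:matrix_action} has $(1,2)$-entry $-\,\varphi\cdot k_{\xi}(g,\varphi)$, not $\varphi\cdot k_{\xi}(g,\varphi^{-1})$. The actual source of the minus sign is more indirect: it lies in the comparison between the section $s_{\xi}$ of~\eqref{SEShutb} and the dual section $\xi^{*}(u)$ of~\eqref{SESchutb}, which produces the twist $c\mapsto\varphi\cdot c(\varphi^{-1})$ between the two extension cocycles, and then the cocycle identity converts this into the sign. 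As written, your step~5 asserts the right output but the mechanism you name (the inverse-transpose) does not deliver it, so the argument has a gap exactly at the point where all the content of the lemma resides. Since the paper's own proof is equally compressed at this step, I would not call your route a different one, but you should replace the ``inverse-transpose'' justification with an explicit comparison of the two extension cocycles through the shared framing data.
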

This equality does not depend on the self-duality \eqref{eq:dual_H}.
Furthermore, although the sequences \eqref{SEShutb} and \eqref{SESchutb} split as abelian groups short exact sequences, they clearly do not as $\Gamma_{g,1}$-modules. A fortiori, contrary to the homology and cohomology of the surface (where Poincar{\'e}-Lefschetz duality applies), the dual $\tilde{H}^{\vee}(g)$ is not isomorphic to $\tilde{H}(g)$ as a $\Gamma_{g,1}$-module.

\subsection{Twisted coefficient systems}\label{ss:twisted_coefficient_systems}

We recall here the suitable categories to encode representations of the mapping class groups. It is a mild variation of the one introduced in \cite[\S5.6.1]{WahlRandal-Williams}; see also \cite[\S2.1]{KawazumiSoulieI}.
We consider the groupoid $\cM_{2}$ defined by a skeleton of the category of smooth compact connected orientable surfaces with one boundary component, and the isotopy classes of diffeomorphisms restricting to the identity on the boundary component. By the classification of surfaces 
and because $\cM_{2}$ is skeletal, the objects of $\cM_{2}$ is the set of some fixed surfaces $\{\Sigma_{g,1}\}_{g\in\bN}$. For simplicity, we identify the surface $\Sigma_{g,1}$ with its indexing integer $g$, especially when applying a functor on that object.
The groupoid $\cM_{2}$ has a braided monoidal structure $\natural$ induced by the boundary connected sum. The \emph{Quillen bracket construction} applied to $\cM_{2}$, denoted by $\U\cM_{2}$ is the category with the same objects as $\cM_{2}$ and for morphisms
$$\U\cM_{2}(\Sigma_{g,1},\Sigma_{g',1})=\textrm{Colim}_{\cM_{2}}[\cM_{2}(-\natural \Sigma_{g,1},\Sigma_{g',1})].$$
We may now encode \emph{compatible representations} of mapping class groups by considering functors with the category $\U\cM_{2}$ as source and a module category as target. We distinguish two types of such functors because of their distinct qualitative properties with respect to cohomological stability shown in \S\ref{ss:homological_stability}.
A \emph{covariant system} over $\U\cM_2$ is a functor $F\colon \U\cM_2 \to \Ab$. We recursively define the notion of \emph{polynomiality} of covariant systems as follows:
\begin{itemizeb}
\item the constant functors $\U\cM_2 \to \Ab$ are the polynomial covariant systems of degree $0$;
\item for an integer $d\geq1$, the functor $F\colon \U\cM_2 \to \Ab$ is a 
polynomial covariant system of degree less than or equal to $d$ if the morphism $F([\Sigma_{1,1},\id_{\Sigma_{1,1}\natural S}])$ is injective for each surface $S$ of $\U\cM_2$, and the induced functor $\delta_{1}(F)\colon \U\cM_2 \to \Ab$ defined by $S\mapsto (\Cok(F(S) \to F(\Sigma_{1,1} \natural S))$ is a polynomial covariant system of degree less than or equal to $d-1$.
\end{itemizeb}

\begin{rmk}
The above notion of polynomiality corresponds to that of \emph{finite degree coefficient systems at $0$} in the sense of \cite[Def.~4.10]{WahlRandal-Williams}, and to that of \emph{very strong polynomiality} from \cite[Def.~4.3]{soulieLMgeneralised}. 
\end{rmk}

We record here the two following classical properties allowing us to estimate degrees of coefficient systems, whose proofs may be found in \cite[Prop.~4.4]{soulieLMgeneralised} and \cite[Lem.~2.10]{soulie3} respectively.
\begin{prop}\label{prop:standard_results_polynomiality}
A covariant system which is a quotient or an extension of two polynomial covariant systems of degree less than or equal to $d$ is polynomial of degree at most $d$.

Let $F,G \colon \U\cM_2 \to \Ab$ be polynomial covariant systems of degree $d$ and $d'$ respectively. The \emph{tensor product} of $F$ and $G$ is the functor $F \otimes_{\bZ}G \colon \U\cM_2 \to \Ab$ defined by $(F \otimes_{\bZ}G )(g) = F(g) \otimes_{\bZ}G(g)$ for each $g\geq 0$. Then $F \otimes_{\bZ}G$ is polynomial of degree less than or equal to $d+d'$.
\end{prop}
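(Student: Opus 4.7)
The plan is to establish both assertions by induction on polynomial degree, handling the extension/quotient claim first and bootstrapping from it to the tensor product. Throughout, it will be convenient to write $\sigma F$ for the shifted functor $S \mapsto F(\Sigma_{1,1}\natural S)$, so that $\delta_{1}(F)$ is by definition the cokernel of the natural transformation $F \to \sigma F$.

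\textbf{Extensions.} Given a short exact sequence $0 \to F \to E \to F' \to 0$ of covariant systems with $F$ and $F'$ polynomial of degree $\leq d$, I would apply the snake lemma to the two rows
\[
0 \to F(S) \to E(S) \to F'(S) \to 0 \quad \text{and} \quad 0 \to \sigma F(S) \to \sigma E(S) \to \sigma F'(S) \to 0,
\]
joined vertically by the stabilisation maps. Since the outer vertical arrows are injective by hypothesis, the snake lemma forces $E(S) \hookrightarrow \sigma E(S)$ to be injective and yields a short exact sequence $0 \to \delta_{1}(F) \to \delta_{1}(E) \to \delta_{1}(F') \to 0$. By the inductive hypothesis this makes $\delta_{1}(E)$ polynomial of degree $\leq d-1$, whence $E$ is polynomial of degree $\leq d$. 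The quotient case of an exact sequence $0 \to K \to P \to Q \to 0$ with $K$ and $P$ polynomial of degree $\leq d$ runs along the same snake diagram, realising $\delta_{1}(Q)$ as the cokernel of $\delta_{1}(K) \to \delta_{1}(P)$; the slightly subtle point is controlling the connecting homomorphism in order to deduce injectivity of $Q(S) \hookrightarrow \sigma Q(S)$, at which point the inductive hypothesis closes the argument.

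\textbf{Tensor products.} Proceed by induction on $d+d'$, the base case $d=d'=0$ being immediate. For the inductive step, I would consider the filtration
\[
(F \otimes_{\bZ} G)(S) \hookrightarrow (\sigma F \otimes_{\bZ} G)(S) \hookrightarrow (\sigma F \otimes_{\bZ} \sigma G)(S),
\]
and identify the two successive quotients as $\delta_{1}(F)(S) \otimes_{\bZ} G(S)$ and $\sigma F(S) \otimes_{\bZ} \delta_{1}(G)(S)$ respectively, via the natural right exactness of tensor product. This exhibits $\delta_{1}(F \otimes_{\bZ} G)$ as an extension
\[
0 \to \delta_{1}(F) \otimes_{\bZ} G \to \delta_{1}(F \otimes_{\bZ} G) \to \sigma F \otimes_{\bZ} \delta_{1}(G) \to 0.
\]
Since $\sigma F$ remains polynomial of degree $\leq d$ (its $\delta_{1}$ coincides with a shift of $\delta_{1}(F)$), the inductive hypothesis shows that $\delta_{1}(F) \otimes_{\bZ} G$ and $\sigma F \otimes_{\bZ} \delta_{1}(G)$ are both polynomial of degree $\leq d+d'-1$. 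The extension part of the first assertion then ensures $\delta_{1}(F \otimes_{\bZ} G)$ is polynomial of degree $\leq d+d'-1$, so $F \otimes_{\bZ} G$ is polynomial of degree $\leq d+d'$, completing the induction.

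\textbf{Main obstacle.} The principal technical hurdle throughout is verifying that all the maps in play remain injective at each stage. For the quotient claim one needs the connecting homomorphism in the snake diagram to vanish (or at least inject into $\delta_{1}(P)$ appropriately) to secure injectivity of $Q(S) \hookrightarrow \sigma Q(S)$, and in the tensor product argument the injection $(F \otimes_{\bZ} G)(S) \hookrightarrow (\sigma F \otimes_{\bZ} G)(S)$ requires the vanishing of $\Tor_{1}^{\bZ}(\delta_{1}(F)(S), G(S))$. Over $\bQ$, which is the setting of all the concrete applications in the paper, these $\Tor$ obstructions disappear automatically; in the general integral case one appeals to the structural properties of the Quillen bracket construction and the surface braided monoidal category, as developed in \cite{soulieLMgeneralised} and \cite{soulie3}, to resolve the subtleties.
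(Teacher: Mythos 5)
The paper does not prove this proposition itself (it refers to \cite[Prop.~4.4]{soulieLMgeneralised} and \cite[Lem.~2.10]{soulie3}), so there is no in-paper argument to compare against directly. Your high-level strategy (snake lemma for the extension and quotient claims, a two-step filtration for the tensor product, induction on degree) is the right one, and the extension case is correct as written. However, the quotient case has a genuine gap, not merely a ``slightly subtle point.'' Writing $\kappa(F)(S) := \ker\bigl(F(S) \to \sigma F(S)\bigr)$, the snake lemma with $\kappa(K)=\kappa(P)=0$ gives an exact sequence $0 \to \kappa(Q) \to \delta_{1}(K) \to \delta_{1}(P) \to \delta_{1}(Q) \to 0$, so $\kappa(Q)$ is a sub-functor of $\delta_{1}(K)$; exactness alone does not make it vanish, and until it does you cannot assert that $Q$ has an injective stabilisation map, which is part of the definition. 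What closes the gap is a separate observation: $\iota_{\kappa(Q)}$ is identically zero (because $\kappa(Q)(S)$ is precisely the kernel of $\iota_{Q}(S)$), while $\iota_{\delta_{1}(K)}$ is injective since $\delta_{1}(K)$ is polynomial of degree $\leq d-1$; naturality of $\iota$ applied to the inclusion $j\colon \kappa(Q) \hookrightarrow \delta_{1}(K)$ gives $\iota_{\delta_{1}(K)}\circ j = \sigma(j)\circ\iota_{\kappa(Q)} = 0$, and injectivity of $\iota_{\delta_{1}(K)}$ then forces $\kappa(Q)=0$. This is the step you flag as a subtlety but do not carry out, and it is what makes the induction go through.

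For the tensor product, your filtration and the identification of its graded pieces are correct, and you rightly identify the obstruction at each stage as a $\Tor_{1}$-group. But the proposition is stated over $\bZ$ with no flatness hypothesis, and you do not resolve the obstruction beyond observing that it vanishes over $\bQ$; the gesture toward ``structural properties of the Quillen bracket construction'' is not a proof. If $\delta_{1}(F)(S)$ and $G(S)$ both have torsion at a common prime, $\iota_{F\otimes G}(S)$ can genuinely fail to be injective, in which case $F\otimes G$ is not polynomial in the sense defined here. In all of this paper's applications (exterior powers of $H$, $\tilde{H}$, $H_{\bQ}$, $\tilde{H}_{\bQ}$) the values are free over $\bZ$ or are $\bQ$-vector spaces, so the $\Tor$ terms vanish and your argument works; for the unconditional integral claim, however, you need either a flatness assumption on the relevant $\delta_{1}$'s or an argument that stabilisation maps of polynomial covariant systems over $\U\cM_{2}$ are always split---neither of which your sketch provides.
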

\begin{eg}\label{eg:ext_powers_H_tildeH}
A first example of polynomial covariant system is given by the first homology group of the surfaces, defining a covariant system $H\colon(\U\cM_2,\natural,\bD^{2}) \to (\Ab,\oplus,0)$, which is strong monoidal and thus polynomial of degree one; see \cite[\S2.1]{KawazumiSoulieI}. Also, the constant functor at any ring $R$ defines a polynomial covariant system $R\colon\U\cM_2 \to \Ab$ of degree $0$.
Moreover, we prove in \cite{KawazumiSoulieI} that the groups $\{\tilde{H}(g);g\in\bN\}$ define a polynomial covariant system $\tilde{H}\colon\U\cM_2 \to \Ab$ of degree $1$; see \cite[Prop.~3.3]{KawazumiSoulieI}. Finally, for each $d\geq1$, it is easy to deduce from Proposition~\ref{prop:standard_results_polynomiality} that the $d^{\ith}$ exterior powers of $H$ and $\tilde{H}$ define polynomial covariant systems $\U\cM_2 \to \Ab$ of degree exactly $d$, denoted by $\Lambda^{d}H$ and $\Lambda^{d}\tilde{H}$ respectively. We also consider rational versions of the above functors, namely functors $\Lambda^{d}H_{\bQ}$ and $\Lambda^{d}\tilde{H}_{\bQ}$ of the form $\U\cM_{2}\to\bQ\Mod\to\Ab$ for all $d\geq1$, satisfying the exact same polynomiality properties.
\end{eg}

We also note the following property for the covariant systems of Example~\ref{eg:ext_powers_H_tildeH} which will be of key use for cohomological stability with twisted coefficients (see \S\ref{s:stable_cohomology_framework} and \S\ref{sss:exotic_framework}):
\begin{lem}\label{lem:canonical_splitting_functors}
For each $d\geq1$, the covariant systems $\Lambda^{d}H$ and $\Lambda^{d}\tilde{H}$ are functors $\U\cM_2 \to \Ab$ such that the image of each morphism of type $[\Sigma_{1,1},\id_{\Sigma_{g+1,1}}]$ has a canonical $\Gamma_{g,1}$-equivariant splitting.
\end{lem}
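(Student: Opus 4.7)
The plan is to exhibit, in each case, a $\Gamma_{g,1}$-equivariant direct-sum decomposition of the value at $g+1$ of the functor, having the image of the stabilisation map as a summand, and then to transport this decomposition through the $d$-th exterior power.

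For $\Lambda^{d}H$ I would use that the boundary connected sum $\Sigma_{g+1,1}\cong\Sigma_{1,1}\natural\Sigma_{g,1}$ realises $\Sigma_{g,1}$ as a subsurface whose (closure of the) complement is a fixed copy of $\Sigma_{1,1}$, on which every $\varphi\in\Gamma_{g,1}$ admits a representative equal to the identity. Passing to first homology produces a $\Gamma_{g,1}$-equivariant decomposition $H(g+1)\cong H(1)\oplus H(g)$ with trivial action on $H(1)$ and $H(\mathfrak{i}_g)$ the inclusion of the second summand, hence a compatible splitting
\[\Lambda^{d}H(g+1)\;\cong\;\bigoplus_{p+q=d}\Lambda^{p}H(1)\otimes\Lambda^{q}H(g)\]
in which $\Lambda^{d}H(\mathfrak{i}_g)$ is the embedding of the summand indexed by $(p,q)=(0,d)$; projection onto that summand is the desired $\Gamma_{g,1}$-equivariant retraction.

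For $\Lambda^{d}\tilde{H}$ I would upgrade this to an isomorphism of $\Gamma_{g,1}$-modules $\tilde{H}(g+1)\cong\tilde{H}(g)\oplus H(1)$, with trivial action on $H(1)$. Fixing a framing $\xi$ of $\mathrm{UT}\Sigma_{g+1,1}$ that restricts to a framing of $\mathrm{UT}\Sigma_{g,1}$, the Gysin sequences \eqref{SEShutb} for $g$ and $g+1$ split compatibly as abelian groups, giving identifications $\tilde{H}(g+1)\cong\bZ\hbar\oplus H(1)\oplus H(g)$ and $\tilde{H}(g)\cong\bZ\hbar\oplus H(g)$ in which $\tilde{H}(\mathfrak{i}_g)$ simply omits the middle block. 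Trapp's formula \eqref{eq:matrix_action} then writes the $\Gamma_{g,1}$-action in the form
\[\varphi\;\longmapsto\;\begin{pmatrix} 1 & k_{\xi}(g+1,\varphi)|_{H(1)} & k_{\xi}(g+1,\varphi)|_{H(g)}\\ 0 & \id_{H(1)} & 0\\ 0 & 0 & H(\varphi)|_{H(g)}\end{pmatrix},\]
so the projection onto $\bZ\hbar\oplus H(g)=\tilde{H}(g)$ killing the middle block is $\Gamma_{g,1}$-equivariant as soon as $k_{\xi}(g+1,\varphi)|_{H(1)}$ vanishes for every $\varphi\in\Gamma_{g,1}$. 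Granted this vanishing, taking $d$-th exterior powers and projecting onto the summand indexed by $(p,q)=(0,d)$ in $\bigoplus_{p+q=d}\Lambda^{p}H(1)\otimes\Lambda^{q}\tilde{H}(g)$ yields the required $\Gamma_{g,1}$-equivariant splitting of $\Lambda^{d}\tilde{H}(\mathfrak{i}_g)$.

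The main step is therefore the vanishing $k_{\xi}(g+1,\varphi)|_{H(1)}=0$ for $\varphi\in\Gamma_{g,1}$, which I would establish as follows. For an immersed loop $\alpha$ representing a class of $H(1)\subset H(g+1)$, I may take $\alpha$ to lie inside the fixed subsurface $\Sigma_{1,1}\subset\Sigma_{g+1,1}$ and pick a representative of $\varphi$ equal to the identity on a neighbourhood of $\Sigma_{1,1}$; then $\varphi^{-1}\circ\alpha=\alpha$ on the nose, and the rotation-number formula recalled in \S\ref{ss:MCG_representations} gives
\[k_{\xi}(g+1,\varphi)([\alpha])\;=\;\rot_{\xi}(\varphi^{-1}\circ\alpha)-\rot_{\xi}(\alpha)\;=\;0.\]
A final check that the complement to $\tilde{H}(g)$ produced in this way is independent of the auxiliary framing $\xi$ then ensures that the splitting is genuinely canonical.
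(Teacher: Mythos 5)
Your proof is correct and takes essentially the same route as the paper's: decompose $H(g+1)\cong H(1)\oplus H(g)$ using the strong monoidality of $H$ applied to $\Sigma_{g+1,1}\cong\Sigma_{1,1}\natural\Sigma_{g,1}$, transport this to $\tilde{H}(g+1)$ via a framing and Trapp's matrix formula \eqref{eq:matrix_action}, and reduce $\Gamma_{g,1}$-equivariance of the projection killing $H(1)$ to the vanishing of $k_{\xi}(g+1,\varphi)$ on $H(1)$ when $\varphi$ is supported on $\Sigma_{g,1}$. The paper dismisses that last step as ``a clear computation using the representation structure \eqref{eq:matrix_action}''; your argument — take $\alpha$ inside $\Sigma_{1,1}$ and a representative of $\varphi$ equal to the identity near $\Sigma_{1,1}$, so that $\rot_{\xi}(\varphi^{-1}\circ\alpha)=\rot_{\xi}(\alpha)$ — supplies exactly the detail that makes it clear. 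Passing to exterior powers via the binomial decomposition and projecting onto the $(0,d)$ summand is then immediate, as you say.

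One caution about your concluding ``final check'': the complement $s_{\xi}(H(1))\subset\tilde{H}(g+1)$ is in fact \emph{not} independent of the framing $\xi$. Altering $\xi$ by $u\in H^{1}(\Sigma_{g+1,1};\bZ)$ shifts the section by $s_{\xi'}(v)-s_{\xi}(v)=-u(v)\hbar$, so the complement moves unless $u$ vanishes on $H(1)$, and correspondingly the retraction $\tilde{H}(g+1)\to\tilde{H}(g)$ changes by a term of the form $u\bigl((\varpi_{*}v)|_{H(1)}\bigr)\hbar$. So the splitting is canonical only relative to a choice of framing. This is harmless: what Definition~\ref{def:homological_stability_contravariant} and Lemma~\ref{lem:homological_stability_covariant} actually require is a fixed $\Gamma_{g,1}$-equivariant retraction compatible with the short exact sequence \eqref{exteriorSES_contra}, which your construction provides; the word ``canonical'' in the statement should be read as ``specified by this recipe,'' not ``uniquely determined by the functors alone.''
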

\begin{proof}
For $\Lambda^{d}H$, this follows from the fact that the functor $H$ is a strong monoidal functor $(\U\cM_2,\natural,\bD^{2}) \to (\Ab,\oplus,0)$. We denote by $H^{-1}([\Sigma_{1,1},\id_{\Sigma_{g+1,1}}])$ the canonical splitting of the map $H([\Sigma_{1,1},\id_{\Sigma_{g+1,1}}])$. For $\Lambda^{d}\tilde{H}$, we deduce by a clear computation using the representation structure \eqref{eq:matrix_action} that the canonical splitting is defined by $\id_{\bQ}\oplus H^{-1}([\Sigma_{1,1},\id_{\Sigma_{g+1,1}}])$.
\end{proof}
A \emph{contravariant system} over $\U\cM_2$ is a functor $F\colon \U\cM_2^{\opp} \to \Ab$.
\begin{eg}\label{eg:tilde_H_functor_vee}
For each $d\geq1$, the $d^{\ith}$ exterior power of the first cohomology groups of the unit tangent bundle of the surfaces define a contravariant system $\Lambda^{d}\tilde{H}^{\vee}\colon\U\cM_2^{\opp} \to \Ab$. We also consider the rational version $\Lambda^{d}\tilde{H}^{\vee}\colon\U\cM_{2}\to\bQ\Mod\to\Ab$ for each $d\geq1$.
\end{eg}
\begin{rmk}\label{rmk:UM2opp_poly}
Although a natural notion of polynomiality may be defined over the category $\U\cM_{2}^{\opp}$ (following for instance the analogous opposite notions of \cite[\S2]{DV3} or \cite[\S4]{soulieLMgeneralised}), as far as we know, such notion has no application for the questions addressed in this paper.
\end{rmk}

\begin{rmk}
The existence of $\Gamma_{g,1}$-equivariant splitting for each $[\Sigma_{1,1},id_{\Sigma_{g+1,1}}]$ does not imply that a functor $F\colon \U\cM_2 \to \Ab$ is contravariant. For instance, $\tilde{H}$ cannot be turned into a contravariant system, while $H$ actually can because it is strong monoidal.
\end{rmk}


\subsection{Cohomological stability and stable (co)homology}\label{ss:homological_stability}

We now review some classical results on cohomological stability with twisted coefficients for mapping class groups. In particular, all the twisted coefficient systems in this paper satisfy the cohomological stability property, which motivates the computations of \S\ref{s:contravariant_coeff_syst_exterior}--\S\ref{s:covariant_coeff_syst_exterior_computations}. Also, we recall some stable cohomology computations of mapping class groups further used in \S\ref{s:contravariant_coeff_syst_exterior}--\S\ref{s:covariant_coeff_syst_exterior_computations}.

\subsubsection{Classical framework and results}\label{s:stable_cohomology_framework}

\paragraph*{Stable twisted cohomology framework.}
First of all, we recall the notions of cohomological stability and stable cohomology with twisted coefficients. For $F\colon \U\cM_2 \to \Ab$ a covariant system, we note that $F$ induces a $\Gamma_{g,1}$-equivariant morphism $F([\Sigma_{1,1},\id_{\Sigma_{g+1,1}}])\colon F(g) \to F(g+1)$ for each $g\geq0$. We recall that, viewing $\Sigma_{g,1}$ as a subsurface of $\Sigma_{g+1,1}\cong \Sigma_{1,1}\natural \Sigma_{g,1}$ and extending the diffeomorphisms of $\Sigma_{g,1}$ by the identity on the complement $\Sigma_{1,1}$, we define a canonical injection $\Gamma_{g,1}\hookrightarrow\Gamma_{g+1,1}$. These maps induce maps in cohomology for all $i,g\geq0$:
\begin{equation}
    \Phi_{i,g}\colon H^{i}(\Gamma_{g+1,1}; F^{\vee}(g+1)) \to H^{i}(\Gamma_{g,1}; F^{\vee}(g)).
\end{equation}

\begin{defn}\label{def:homological_stability_covariant}
Let $F\colon \U\cM_2 \to \Ab$ be a covariant system. For each $i\geq0$, the \emph{stable cohomology group} $H_{\st}^{i}(F^{\vee})$ is the inverse limit $\underleftarrow{\mathrm{Lim}}_{g\geq0}H^{i}(\Gamma_{g,1}; F^{\vee}(g))$ induced by the maps $\{\Phi_{i,g}\}_{g\in\bN}$.
We say that there is \emph{(classical) cohomological stability} with twisted coefficients in $F^{\vee}\colon \U\cM_2^{\opp} \to \Ab$ if, for each $i\geq0$, the maps $\Phi_{i,g}$ for $g\geq0$ are isomorphisms if $g\geq B(i,F)$, where $B(i,F)\in\bN$ is a bound depending on $i$ and $F$.
\end{defn}

We now recall the following fundamental result on twisted (classical) cohomological stability for mapping class groups.

\begin{thm}[{\cite[Th.~4.1]{Ivanov}, \cite[Th.~5.26]{WahlRandal-Williams}}]\label{thm:hom_stab_MCG}
Let $F\colon \U\cM_2 \to \Ab$ be a polynomial covariant system of degree $d$. Then, there is cohomological stability with twisted coefficients in $F^{\vee}$, where the stability bound $B(i,F)$ is equal to $2i+2d+3$.
\end{thm}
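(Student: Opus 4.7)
The plan is to reduce to the existing machinery of Randal-Williams and Wahl (which refines Ivanov's original approach), so the proof is essentially a verification that the hypotheses of \cite[Th.~5.26]{WahlRandal-Williams} are met by the data $(\U\cM_2, F)$. First, I would observe that the symmetric monoidal groupoid $(\cM_2, \natural, \bD^2)$ and its Quillen bracket construction $\U\cM_2$ fit into their general framework of pre-braided monoidal categories arising from groupoids, with $\Sigma_{1,1}$ as stabilising object. The polynomiality condition imposed on $F$ in the statement is exactly what Randal-Williams and Wahl call a \emph{coefficient system of (split) finite degree at $0$} (\cf\ Remark after Example~\ref{eg:ext_powers_H_tildeH}), so no translation between the two notions is required.

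The second ingredient is the high connectivity of the associated semisimplicial space of destabilisations: for $\U\cM_2$ this is a variant of Harer's arc complex on tethered surfaces, whose slope-$\tfrac12$ connectivity bound was established by Harer and is recorded in \cite{WahlRandal-Williams}. Given this connectivity, one runs the standard double induction on the degree $d$ and the homological degree $i$, via the spectral sequence coming from the action of $\Gamma_{g+1,1}$ on the destabilisation complex. The polynomial recursion $\delta_1(F) = \mathrm{Coker}(F \to F(\Sigma_{1,1}\natural -))$ of degree $\leq d-1$ feeds the induction, and the slope-$\tfrac12$ connectivity forces the stability range $g \geq 2i+2d+3$.

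Finally, to pass from homological stability for $F$ to cohomological stability for $F^\vee$, I would dualise. The stabilisation map $\Gamma_{g,1} \hookrightarrow \Gamma_{g+1,1}$ is $F$-equivariant for the map $F(g) \to F(g+1)$, so by adjunction it is also $F^\vee$-equivariant for the dual map $F^\vee(g+1) \to F^\vee(g)$. Since all of the coefficient systems of interest in this paper take values in finite-dimensional $\bQ$-vector spaces, the universal coefficient theorem identifies $H^i(\Gamma_{g,1}; F^\vee(g))$ with $\mathrm{Hom}_{\bQ}(H_i(\Gamma_{g,1}; F(g)), \bQ)$ naturally with respect to the stabilisation, and cohomological stability with the same bound $B(i,F) = 2i+2d+3$ follows.

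The main obstacle in such a program is the connectivity statement for the arc complex, but this is already established in the cited literature; for the present paper it is enough to verify that $\U\cM_2$ and the chosen notion of polynomial covariant system match the input data of \cite[Th.~5.26]{WahlRandal-Williams}, which is immediate from \S\ref{ss:twisted_coefficient_systems}.
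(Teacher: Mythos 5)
The paper does not prove this theorem---it is stated purely as a recollection with citations to Ivanov and Randal-Williams--Wahl, and no internal argument is given. Your reconstruction of the underlying mechanism (matching $\U\cM_2$ to their pre-braided framework, identifying polynomiality with ``finite degree at $0$,'' invoking arc-complex connectivity, and running the double induction via the destabilisation spectral sequence) is indeed the correct picture of what those references do, so there is no paper proof to compare it against; this is a verification that the hypotheses line up, as you say.

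One soft spot in your write-up is the final dualization step. The theorem is stated for $F \colon \U\cM_2 \to \Ab$, yet your universal-coefficient argument passing from homological stability for $F$ to cohomological stability for $F^{\vee}$ is carried out over $\bQ$. Over $\bZ$, identifying $H^i(\Gamma_{g,1}; F^{\vee}(g))$ with a dual of homology requires $F(g)$ to be $\bZ$-free (otherwise one only gets a short exact sequence with an $\Ext^1$-term rather than a natural isomorphism), and even then the naturality with respect to the stabilisation maps needs a small five-lemma argument combining the UCT sequences in two consecutive homological degrees. Two standard ways to close this: either observe that Ivanov's Theorem~4.1 is already formulated in cohomology, so no dualization step is required when invoking that reference; or restrict to $\bZ$-free (equivalently field-valued) coefficient systems, which you do silently by passing to $\bQ$ but which should be flagged as a genuine restriction relative to the $\Ab$-valued statement you set out to reprove --- adequate for the applications in this paper, since $\Lambda^d H$, $\Lambda^d\tilde H$ and their rational variants are all $\bZ$-free or already over $\bQ$, but not a proof of the theorem exactly as stated.
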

In particular, we deduce from Example~\ref{eg:ext_powers_H_tildeH} and Theorem~\ref{thm:hom_stab_MCG} that there is cohomological stability for the mapping class groups with twisted coefficients in $\Lambda^{d}H$ (via the isomorphism \eqref{eq:dual_H}) and in $\Lambda^{d}\tilde{H}^{\vee}$ for all $d\geq1$. \eqref{eq:Madsen_weiss_thm}

Furthermore, we have the following general formula for the stable cohomology of a family of groups with twisted coefficients given by the dual of a covariant system. It is a rewording of \cite[Th.~C]{soulie3} in terms of twisted cohomology groups.
\begin{thm}\label{thm:stable_functor_homology}
Let $F\colon \U\cM_2 \to \bQ\Mod$ be any covariant system. Note that the cup product with elements of $H^{*}(\Gamma_{\infty,1};\bQ)$ induces a canonical $\Sym_{\bQ}(\cE)$-module structure on the graded stable twisted cohomology groups $H_{\st}^{*}(F^{\vee}):=\bigoplus_{i\in\bN} H_{\st}^{i}(F^{\vee})$. We have a natural isomorphism of $\bQ$-vector spaces for each $i\geq0$:
$$H_{\st}^{i}(F^{\vee})\cong\underset{k+l=i}{\bigoplus}H^{k}(\Sym_{\bQ}(\cE)\underset{\bQ}{\otimes} H_{l}(\U\cM_{2};F).$$
Here, $H_{l}(\U\cM_{2};F)$ denotes the homology of the category $\U\cM_{2}$ with coefficient in $F$; see \cite[Appendice A]{DV1} for instance.
Therefore, the $\Sym_{\bQ}(\cE)$-module $H_{\st}^{*}(F^{\vee})$ is free.
\end{thm}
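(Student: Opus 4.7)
The statement is explicitly presented as a rewording of \cite[Th.~C]{soulie3} in cohomological language, so the proof consists in translating the stable homology computation of that reference into the cohomological setting via universal coefficients over the field $\bQ$.

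First, I would apply \cite[Th.~C]{soulie3}, which computes the stable twisted homology $H_*^{st}(\Gamma_{\infty,1};F)$ via a K\"unneth-type decomposition
$$H_i^{st}(F) \cong \bigoplus_{k+l=i} H_k(\Gamma_{\infty,1};\bQ) \otimes_{\bQ} H_l(\U\cM_{2};F),$$
with module structure coming from the cap product action of $H_*(\Gamma_{\infty,1};\bQ)$ on the first tensor factor.

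Second, for each $g\geq 0$, since $\bQ$ is a field, the universal coefficient theorem provides natural isomorphisms $H^i(\Gamma_{g,1};F^\vee(g)) \cong \mathrm{Hom}_{\bQ}(H_i(\Gamma_{g,1};F(g)),\bQ)$ that are compatible with the maps induced by the inclusions $\Gamma_{g,1}\hookrightarrow \Gamma_{g+1,1}$ and by $F(\mathfrak{i}_g)$. Taking the inverse limit over $g$, and using that the homological counterpart of Theorem~\ref{thm:hom_stab_MCG} makes the homology system eventually constant in each fixed degree, the inverse limit of $\bQ$-linear duals becomes the dual of the colimit, so that $H^i_{st}(F^\vee) \cong \mathrm{Hom}_{\bQ}(H_i^{st}(F),\bQ)$.

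Third, I would combine the previous steps with the Madsen--Weiss isomorphism \eqref{eq:Madsen_weiss_thm}: since $H^*(\Gamma_{\infty,1};\bQ) \cong \Sym_{\bQ}(\cE)$ has finite-dimensional graded pieces in each degree, $H_*(\Gamma_{\infty,1};\bQ)$ is canonically its graded $\bQ$-dual, and cap product on the homology side corresponds to cup product on the cohomology side. Dualising the decomposition from the first step degree-by-degree then yields the stated formula. Freeness of $H^*_{st}(F^\vee)$ over $\Sym_{\bQ}(\cE)$ is immediate, as the action lies in the first tensor factor and each summand of the decomposition is isomorphic as a $\Sym_{\bQ}(\cE)$-module to a shifted copy of $\Sym_{\bQ}(\cE)$ indexed by a basis element of $H_l(\U\cM_{2};F)$.

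The only potentially delicate point is the interchange of $\bQ$-duality with the stabilisation colimit, which is handled by the finite-dimensionality of each homology degree (guaranteed by stability) rather than by any genuinely new argument; this is precisely why the statement is presented as a reformulation rather than as a fresh theorem.
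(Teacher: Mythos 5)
The paper gives no standalone argument for this theorem --- it is presented as "a rewording of \cite[Th.~C]{soulie3} in terms of twisted cohomology groups" --- so what you need to get right is the translation from that reference, and there are two problems with the one you propose.

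First, you invoke (a homological counterpart of) Theorem~\ref{thm:hom_stab_MCG} to justify interchanging the $\bQ$-linear dual with the stabilisation colimit, on the grounds that each fixed homology degree eventually stabilises. This is both unnecessary and runs directly against the remark that follows the theorem in the paper, which stresses that the result "does not depend on any polynomiality condition and on whether there is cohomological stability or not." The interchange $\underleftarrow{\mathrm{Lim}}_{g}\,\mathrm{Hom}_{\bQ}(V_g,\bQ)\cong\mathrm{Hom}_{\bQ}(\mathrm{colim}_g V_g,\bQ)$ is automatic for any direct system of $\bQ$-vector spaces; no finiteness or stability is needed, so your appeal to stability imports a hypothesis that the theorem does not and should not carry.

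Second, and more seriously: if you dualise the homological decomposition
$H_i^{\st}(F)\cong\bigoplus_{k+l=i} H_k(\Gamma_{\infty,1};\bQ)\otimes_{\bQ} H_l(\U\cM_2;F)$
term by term, then (using that $H_k(\Gamma_{\infty,1};\bQ)$ is finite-dimensional in each degree) you obtain
$\bigoplus_{k+l=i} H^k(\Gamma_{\infty,1};\bQ)\otimes_{\bQ}\mathrm{Hom}_{\bQ}\bigl(H_l(\U\cM_2;F),\bQ\bigr)$.
The second tensor factor is the $\bQ$-linear dual $H_l(\U\cM_2;F)^{\vee}$, not $H_l(\U\cM_2;F)$ as the theorem asserts. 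Because $F$ is an \emph{arbitrary} covariant system with no polynomiality assumption, $H_l(\U\cM_2;F)$ has no reason to be finite-dimensional, and a vector space is not naturally isomorphic to its dual. Your phrase "dualising the decomposition\ldots then yields the stated formula" silently replaces the dual of the functor-homology factor by the functor-homology factor itself, which is precisely where an argument is required. Either \cite[Th.~C]{soulie3} is already stated cohomologically so that no dualisation of the functor-homology term is needed (in which case the homology-then-dualise route is not the right reading of the reference), or you must justify the identification of $H_l(\U\cM_2;F)^{\vee}$ with the term appearing in the formula. The overall architecture of your argument (UCT over a field, $\mathrm{Hom}$--colimit adjunction, Madsen--Weiss identification) is sound, but as written the proof does not establish the theorem in the generality stated.
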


\begin{rmk}
The result of Theorem~\ref{thm:stable_functor_homology} does not depend on any polynomiality condition and on whether there is cohomological stability or not. The formula is for the limit of the cohomology groups which always exists.
Also, it may be stated with any field $\bK$ as ground ring rather than just $\bQ$.
\end{rmk}

\paragraph*{Twisted Mumford-Morita-Miller classes.}
We now review the theory of \emph{twisted} Mumford-Morita-Miller classes, drawing on the works of the first author \cite{KawazumitwistedMMM,stablecohomologyKawazumi,KawazumiMorita}. For the reference \cite{KawazumiMorita}, we will rather quote the preprint version \cite{KawazumiMorita1} as it contains more material and details. In the original definition \cite{KawazumitwistedMMM} these cohomology classes are constructed on the group $\Gamma_{g,1}$. The following alternative definition is introduced in \cite{KawazumiMorita1}.

First of all, for any $g\geq1$, we recall that gluing a disc with a marked point $\Sigma_{0,1}^{1}$ on the boundary component of $\Sigma_{g,1}$ induces the following short exact sequence:
\begin{equation}\label{eq:Birman_SES_2}
1 \longrightarrow
\bZ \longrightarrow
\Gamma_{g,1} \overset{\mathrm{Cap}}\longrightarrow
\Gamma_{g}^{1} \longrightarrow
1,
\end{equation}
where $\Gamma_{g}^{1}$ denotes the mapping class group of the punctured surface $\Sigma_{g}^{1}$; see \cite[\S3.6.2]{farbmargalit}.
We denote by $e\in H^2(\Gamma_{g}^{1};\bZ)$ the Euler class of the short exact sequence \eqref{eq:Birman_SES_2} seen as a central extension.

We now define a key cohomology class $\overline{\omega}\in H^1(\Gamma_{g}^{1}; H(g))$ defined as follows. Let $p\colon \Gamma_{g}^{1} \twoheadrightarrow \Gamma_{g}$ be the forgetful map of the puncture, whose kernel is isomorphic to $\pi_1(\Sigma_{g})$; see \cite[Th.~4.6]{farbmargalit}.
We denote by $\overline{\Gamma}_{g}^{1}$ the pullback $\Gamma_{g}^{1}\times_{\Gamma_{g}} \Gamma_{g}^{1}$. More precisely, there is a defining fiber square
\[
\xymatrix{
\overline{\Gamma}_{g}^{1} \ar[r] \ar[d]_-{\pi}	&  \Gamma_{g}^{1} \ar[d]^-{\overline{p}}\\
\Gamma_{g}^{1} \ar[r]_{p}	 \ar@/_/_{\sigma}[u]						& \Gamma_{g},
}
\]
where the section $\sigma: \Gamma_{g}^{1} \to \overline{\Gamma}_{g}^{1}$ is given by $\sigma(\phi) = (\phi, \phi)$. Then there is an isomorphism $\overline{\Gamma}_{g}^{1} \cong   \pi_1(\Sigma_{g}) \rtimes\Gamma_{g}^{1}$
defined by $(\phi, \psi) \mapsto (\psi \phi^{-1}, \phi)$.
Under this isomorphism, $\sigma$ is given by $\sigma(\phi) = (1, \phi)$ and the action of $\overline{\Gamma}_{g}^{1}$ on $H(g)$ is induced by that of $\Gamma_{g}^{1}$ using the projection $\overline{\Gamma}_{g}^{1}\twoheadrightarrow \Gamma_{g}^{1}$ defined by the semi-direct product decomposition.
 Similarly to \cite[\S7]{MoritaJacobianII}, this gives rise to a cocycle $\overline{\omega} \in Z^1(\overline{\Gamma}_{g}^{1}; H(g))$ given by $\overline{\omega}((x, \phi)) = [x]$ for all $x\in \pi_1(\Sigma_{g})$ and $\phi\in \Gamma_{g}^{1}$. By an abuse of notation, we also use $\overline{\omega}$ to denote the associated element of $H^1(\overline{\Gamma}_{g}^{1}; H(g))$.
Finally, we denote by $\bar{e} \in H^2(\overline{\Gamma}_{g}^{1}; \bZ)$ the pullback of the Euler class $e \in H^2(\Gamma_{g}^{1}; \bZ)$ along the projection $\pi\colon\overline{\Gamma}_{g}^{1} \to \Gamma^{1}_{g}$, $(\phi, \psi) \mapsto \psi$. The \emph{twisted} Mumford-Morita-Miller classes are defined as follows:
\begin{defn}\label{def:mij}
Let $i\geq0$ and $j\geq0$ be integers such that $i+j\geq2$. We denote by $T^{j}:\bQ\Mod \to \bQ\Mod$ the $j^{\ith}$ tensor product functor where the order of the powers is that of $\bj$.
The \emph{generalised twisted Mumford-Morita-Miller class} $m_{i,\bj}$ is the pullback to $H^{2i +j -2}(\Gamma_{g,1}; T^{j}H(g))$ along $\mathrm{Cap}:\Gamma_{g,1}\twoheadrightarrow \Gamma_{g}^{1}$ of the class
\begin{equation}\label{eq:def:mij_tensor}
    \pi_{!}(\bar{e}^{i}\cup\overline{\omega}^{\bj}) \in H^{2i+j-2}(\Gamma^{1}_{g}; T^{j}H_{\bQ}(g)).
\end{equation}
Here, $\pi_{!}$ denotes the Gysin map induced by $\pi\colon \overline{\Gamma}_{g}^{1}\to \Gamma_{g}^{1}$ and $\overline{\omega}^{\bj}$ is the cup product of the class $\overline{\omega}$ following the numerical order of $\bj$.
The alternating operator $A_{\bj} \colon T^{j}H_{\bQ}(g)\to T^{j}H_{\bQ}(g)$ is defined for $v_i \in H_{\bQ}(g)$ by 
\begin{equation}\label{eq:alternation}
A_{\bj}(v_1\otimes \cdots \otimes v_{j}) := 
\sum_{\tau\in \mathfrak{S}_j} \mathrm{sgn}(\tau)(v_{\tau(1)}\otimes\cdots \otimes v_{\tau(j)}).
\end{equation}
The exterior algebra is defined by the image of this alternating operator $A_{\bj}$. The image of the class $m_{i,\bj}$ by the projection map $H^{*}(\Gamma^{1}_{g}; T^{j}H_{\bQ}(g))\twoheadrightarrow  H^{*}(\Gamma^{1}_{g}; \Lambda^{j}H_{\bQ}(g))$ induced by \eqref{eq:alternation} is denoted by $m_{i,j}$. Because of the graded-commutativity of the cup product, the class $m_{i,\bj}$ belongs to the image of the homomorphism induced by the map \eqref{eq:alternation}. Then, we deduce that $m_{i,j} = j!\cdot m_{i,\bj}$ in $H_{\bQ}^{*}(\Gamma^{1}_{g}; \Lambda^{j}H_{\bQ}(g))$.
\end{defn}
When $j = 0$, Definition~\ref{def:mij} specialises to $m_{i+1,0} = \pi_{!}(\bar{e}^{i+1}) = e_{i}$ recovers the $i^{\ith}$ classical Mumford-Morita-Miller class.
\begin{rmk}\label{rmk:identification_m_1,1}
For $i=j=1$ and $g\geq 2$, using Lemma~\ref{lem:cocycles_relations}, the extension classes $[k_{\xi}(g,-)]$ and $-[k^{\vee}_{\xi}(g,-)]$ of \S\ref{ss:MCG_representations} identify with $m_{1,1}\in H_{\st}^1(H_{\bQ})$ under the duality isomorphism \eqref{eq:dual_H}.
\end{rmk}

When we consider the cup product structure for the cohomology with coefficients in the \emph{exterior} powers, we stress that we implicitly do the composite
\begin{equation}\label{eq:cup_exterior}
H_{\bQ}^{*}(\Gamma^{1}_{g};\Lambda^{j}H_{\bQ}(g))\otimes H_{\bQ}^{*}(\Gamma^{1}_{g};\Lambda^{j'}H_{\bQ}(g))\overset{\cup}{\to} H_{\bQ}^{*}(\Gamma^{1}_{g};\Lambda^{j}H_{\bQ}(g)\otimes \Lambda^{j'}H_{\bQ}(g))\twoheadrightarrow H_{\bQ}^{*}(\Gamma^{1}_{g};\Lambda^{j+j'}H_{\bQ}(g)),
\end{equation}
where the first map is the classical cup product morphism and the second map is the canonical projection map induced from \eqref{eq:alternation}.
Furthermore, the alternating operator \eqref{eq:alternation} induces a lift of the exterior algebra $L_{j}:\Lambda^{j}H_{\bQ}(g)\to T^{j}H_{\bQ}(g)$. This induces a map
$$L_{j}^{*}\colon H_{\bQ}^{*}(\Gamma^{1}_{g}; \Lambda^{j}H_{\bQ}(g))\to H_{\bQ}^{*}(\Gamma^{1}_{g}; T^{j}H_{\bQ}(g)).$$
We note that the map \eqref{eq:cup_exterior} is also naturally induced from that of the cohomology with coefficients in the \emph{tensor} powers, using the canonical projection map $H^{*_1}(\Gamma^{1}_{g}; T^{*_2}H_{\bQ}(g))\twoheadrightarrow H^{*_1}(\Gamma^{1}_{g}; \Lambda^{*_2}H_{\bQ}(g))$ induced from \eqref{eq:alternation} and the associated lift map $L_{*_2}^{*_1}$.

\begin{convention}
The lift map $L_{j}^{*}$ provides a canonical decomposition in $H_{\bQ}^{*}(\Gamma^{1}_{g}; T^{j}H_{\bQ}(g))$ associated of the cup product of two classes $m_{i,j}m_{k,l}$ following \eqref{eq:alternation}, that we arrange following the canonical ordering using the graded-commutativity of the cup product. For example, the class $m_{i,1}m_{k,1}$ is identified with $m_{i,1}\otimes m_{k,1} + m_{k,1}\otimes m_{i,1}$ via $L_{2}^{*}$. This type of decomposition will often implicitly used for our computations; see Proposition~\ref{prop:Contraction_Formula} for instance.
\end{convention}

The main results related to these classes are the computations of the stable cohomology with twisted coefficients in $\{\Lambda^{d}H_{\bQ}(g)\}_{d\geq1}$ by the first author \cite[Th.~3.3]{stablecohomologyKawazumi}, and those with twisted coefficients in $\tilde{H}^{\vee}_{\bQ}$ by \cite[Th.~A]{KawazumiSoulieI}. We recall from Theorem~\ref{thm:stable_functor_homology} that the graded stable twisted cohomology groups $H_{\st}^{*}(\tilde{H}^{\vee}_{\bQ})$ and $H_{\st}^{*}(\Lambda^{d}H_{\bQ})$ are free $\Sym_{\bQ}(\cE)$-modules.
Beforehand, we recall that:
\begin{defn}\label{def:weighted_partition}
A \emph{weighted partition} of the number $d$ is a set of numbers $((i_{1},j_{1}),\ldots,(i_{\nu},j_{\nu}))$) where $i_{1},\ldots,i_{\nu}$ are non-negative integers, $j_{1}\geq\ldots\geq j_{\nu}$ are non-negative integers such that $j_{1}+\dots+j_{\nu}=d$, and each $(i_{a},j_{a})$ satisfies the condition $i_a + j_a \geq 2$ and $i_a\geq i_{a+1}$ if $j_a = j_{a+1}$.
We denote by $\cQ_{d}$ the set of all weighted partitions of the number $d$. For $Q=((i_{1},j_{1}),\ldots,(i_{\nu},j_{\nu}))\in \cQ_{d}$, the cup product $m_{i_{1},j_{1}} m_{i_{2},j_{2}}\cdots m_{i_{\nu},j_{\nu}}$ of the twisted Mumford-Morita-Miller classes defined in Definition~\ref{def:mij} is denoted by $m_{Q}$.
\end{defn}

\begin{thm}[{\cite[Th.~3.3]{stablecohomologyKawazumi}; \cite[Th.~A]{KawazumiSoulieI}}]\label{stablecohomologyKawazumi}
There is a graded natural isomorphism of free $\Sym_{\bQ}(\cE)$-modules
\begin{equation}\label{eq:stablecohomologyKawazumi}
H_{\st}^{*}(\Lambda^{d}H_{\bQ})\cong \bigoplus_{Q\in \cQ_{d}} \Sym_{\bQ}(\cE)m_{Q}.
\end{equation}
Furthermore, we consider the bigraded $\Sym_{\bQ}(\cE)$-module $H_{\st}^{*}(\Lambda^{*}H_{\bQ}) := \bigoplus_{d\in\bN} H_{\st}^{*}(\Lambda^{d}H_{\bQ})$. The cup product induces a commutative\footnote{The graded-commutativities of the cup product and the exterior powers cancel each other, so there are no Koszul signs.} bigraded $\Sym_{\bQ}(\cE)$-algebra structure on $H_{\st}^{*}(\Lambda^{*}H_{\bQ})$. A fortiori, the commutative bigraded algebra $H_{\st}^{*}(\Lambda^{*}H_{\bQ})$ is the polynomial algebra in the twisted Mumford-Morita-Miller classes $\mathfrak{M} := \{m_{i,j};i\geq0,j\geq1\}$ over the ring $\Sym_{\bQ}(\cE)$. 

Finally, the $\Sym_{\bQ}(\cE)$ $H_{\st}^{*}(\tilde{H}^{\vee}_{\bQ})$ is isomorphic to the free $\Sym_{\bQ}(\cE)$-module with basis $\{m_{i,1},i\geq2\}$.
\end{thm}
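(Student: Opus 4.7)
The result bundles together \cite[Th.~3.3]{stablecohomologyKawazumi} and \cite[Th.~A]{KawazumiSoulieI}. The plan is to handle the $\Lambda^d H_{\bQ}$ computation first and to derive the $\tilde{H}^{\vee}_{\bQ}$ statement from it.

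For $H^*_{\st}(\Lambda^d H_{\bQ})$, freeness as a $\Sym_{\bQ}(\cE)$-module is already furnished by Theorem~\ref{thm:stable_functor_homology} applied to $F = \Lambda^d H_{\bQ}$ (using the self-duality \eqref{eq:dual_H}). To identify the explicit basis $\{m_Q : Q \in \cQ_d\}$, I would rely on the universal bundle picture underlying Definition~\ref{def:mij}. The fiber square defining $\overline{\Gamma}_g^{1}$ exhibits it as the total space of a fibration with fiber $\pi_1(\Sigma_g)$, whose rational cohomology is $\Lambda H_{\bQ}$, with $\overline{\omega} \in H^1$ and $\overline{e} \in H^2$ as transgressive generators. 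A Leray--Hirsch argument in the stable range, combined with Harer--Ivanov stability (Theorem~\ref{thm:hom_stab_MCG}) applied to the tensor power coefficient systems $T^{j} H_{\bQ}$ (polynomial of degree $j$ by Proposition~\ref{prop:standard_results_polynomiality}), identifies $H^*(\overline{\Gamma}_g^{1}; \bQ)$ stably as a free module over $H^*(\Gamma_g^{1}; \bQ)$ on the monomials $\overline{e}^{i}\overline{\omega}^{\bj}$. Applying the Gysin pushforward $\pi_{!}$ then produces the twisted MMM classes $m_{i,j}$, and cup product followed by alternation via \eqref{eq:alternation} yields all products $m_Q$. Linear independence over $\Sym_{\bQ}(\cE)$ would be verified by pairing against homology classes built from iterated diagonal embeddings in $\Sigma_g^{\times d}$, while spanning reduces, via Theorem~\ref{thm:stable_functor_homology}, to matching the Poincar\'e series of $H_*(\U\cM_2; \Lambda^d H_{\bQ})$ against the count of weighted partitions. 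The bigraded $\Sym_{\bQ}(\cE)$-algebra description of $H^*_{\st}(\Lambda^* H_{\bQ})$ then follows from \eqref{eq:cup_exterior}, where the graded-commutativity of the cup product cancels that of the exterior power and makes the algebra genuinely commutative rather than supercommutative.

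For $H^*_{\st}(\tilde{H}^{\vee}_{\bQ})$, I would exploit the long exact sequence in stable cohomology associated to the rational Gysin short exact sequence \eqref{SESchutb}, $0 \to H_{\bQ} \to \tilde{H}^{\vee}_{\bQ} \to \bQ \to 0$, of contravariant coefficient systems (which fits into the classical stability framework by Theorem~\ref{thm:hom_stab_MCG}). By Lemma~\ref{lem:cocycles_relations} its extension class is $-k$, which by Remark~\ref{rmk:identification_m_1,1} corresponds stably to $-m_{1,1} \in H^{1}_{\st}(H_{\bQ})$; hence the connecting homomorphism is cup product with $-m_{1,1}$. Using the $d=1$ case of the first part, namely $H^*_{\st}(H_{\bQ}) = \bigoplus_{i \geq 1} \Sym_{\bQ}(\cE)\,m_{i,1}$, the map $\Sym_{\bQ}(\cE) \to H^*_{\st}(H_{\bQ})$ sending $1 \mapsto -m_{1,1}$ lands isomorphically onto the free summand $\Sym_{\bQ}(\cE)\,m_{1,1}$ and is in particular injective. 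The long exact sequence therefore collapses into a single isomorphism $H^*_{\st}(\tilde{H}^{\vee}_{\bQ}) \cong H^*_{\st}(H_{\bQ})/(m_{1,1}) \cong \bigoplus_{i \geq 2} \Sym_{\bQ}(\cE)\,m_{i,1}$, which is exactly the claimed description.

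The main obstacle is unambiguously the identification of the basis $\{m_Q\}$ in the first step: freeness comes for free from Theorem~\ref{thm:stable_functor_homology}, but producing and counting enough independent classes to exhaust the whole stable twisted cohomology requires a delicate combinatorial argument, which is the technical heart of \cite{stablecohomologyKawazumi} and whose difficulty grows rapidly with $d$. By contrast, once this is in hand, the bigraded algebra structure and the $\tilde{H}^{\vee}_{\bQ}$ computation follow as essentially formal consequences.
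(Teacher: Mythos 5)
This theorem is a \emph{recollection} in the paper: it is cited from \cite[Th.~3.3]{stablecohomologyKawazumi} and \cite[Th.~A]{KawazumiSoulieI} with no proof given, so there is no internal proof to compare against directly. That said, two separate remarks are in order about your proposal.

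For the $\tilde{H}^{\vee}_{\bQ}$ part, your argument coincides with what the paper actually does — not here, but in \S\ref{s:contravariant_coeff_syst_exterior}, where Proposition~\ref{prop:SES_tilde_H_vee} sets up the short exact sequence~\eqref{exteriorSES_cov}, Lemma~\ref{lem:connecting_hom_tilde_H_vee} identifies the connecting map as cup product with $-m_{1,1}\wedge\id$, and Theorem~\ref{thm:stable_cohomology_contravariant_tildeH_ext} concludes by injectivity of that map. Specializing that chain to $d=1$ is exactly your computation: the connecting map $\Sym_{\bQ}(\cE)\to H_{\st}^{*}(H_{\bQ})$, $1\mapsto -m_{1,1}$, embeds $\Sym_{\bQ}(\cE)$ onto the summand $\Sym_{\bQ}(\cE)m_{1,1}$, and the cokernel is $\bigoplus_{i\geq 2}\Sym_{\bQ}(\cE)m_{i,1}$. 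So this half of your proposal is correct and matches the paper's own technique.

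For the $\Lambda^{d}H_{\bQ}$ part, your proposal departs from the cited source in a way worth flagging. Getting freeness from Theorem~\ref{thm:stable_functor_homology} is a valid but anachronistic shortcut (that result postdates \cite{stablecohomologyKawazumi}, which argues differently). The harder content — producing the classes $m_Q$, showing they span, and counting — is only gestured at: the Leray--Hirsch picture you describe is closer to how Proposition~\ref{prop:Contraction_Formula} is proved (via the Lyndon--Hochschild--Serre spectral sequence of~\eqref{eq:Birman_SES_2}) than a literal Leray--Hirsch over $\pi:\overline{\Gamma}_g^1\to\Gamma_g^1$ with twisted coefficients, and the ``pair against diagonal embeddings / match Poincar\'e series against $H_*(\U\cM_2;\Lambda^d H_{\bQ})$'' step risks circularity unless the functor homology $H_*(\U\cM_2;\Lambda^d H_{\bQ})$ is computed independently. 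You are candid that this is the technical heart, and I agree; but as written it is a plan rather than a proof, and one should not claim Theorem~\ref{thm:stable_functor_homology} both supplies freeness \emph{and} verifies the basis count without an independent determination of the relevant functor homology. There is no error, but the spanning argument has a real gap that would need to be filled by the actual combinatorial work in \cite{stablecohomologyKawazumi}.
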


Finally, we introduce the following mild variation of the twisted Mumford-Morita-Miller classes which are more convenient to handle in \S\ref{s:contravariant_coeff_syst_exterior}--\S\ref{s:covariant_coeff_syst_exterior_computations}:
\begin{notation}\label{nota:barm}
For all $i+j\geq1$, we denote by $\mm_{i,j}$ denote the cohomology class $((-1)^j/j!)\cdot m_{i,j}$.
\end{notation}

\paragraph*{Contraction formula.}
We recall a classical operation on the twisted Mumford-Morita-Miller classes induced by the contraction of the twisted coefficients.
Let $\mu\colon H(g)\otimes H(g)\to\bZ$ be the intersection pairing associated to Poincar{\'e}-Lefschetz duality. Let $M$ and $M'$ be two $\Gamma_{g,1}$-modules. The induced \emph{contraction map} for the cohomology groups $H^{*}(\Gamma_{g,1};M\wedge (H(g)\otimes H(g)) \wedge M') \to H^{*}(\Gamma_{g,1};M \wedge M')$ is generically denoted by $(\id_{M}\wedge\mu\wedge \id_{M'})_{*}$. The following formulas are directly deduced from \cite[Th.~6.2]{KawazumiMorita1}, we sketch a proof below for the sake of completeness.
\begin{prop}\label{prop:Contraction_Formula}
We have the following formulas for all integers $i,k\geq0$ and $j,l\geq1$ such that $i+j\geq2$ and $k+l\geq2$:
\begin{equation}\label{eq:m11mij}
\mu_{*}(m_{1,1}, m_{i,j}) = -jm_{i+1, j-1};
\end{equation}
\begin{equation}\label{eq:m11mij_1}
(\mu\wedge\id_{H(g)})_{*}(m_{1,1},m_{i,j}m_{k,l}) = -jm_{i+1, j-1}m_{k,l} - l m_{k+1, l-1}m_{i,j}.
\end{equation}
More generally, the computation of $(\mu\wedge\id_{\Lambda^{d-1}H(g)})_{*}(m_{1,1},m_{i_1,j_1}\cdots m_{i_d,j_d})$ is a straightforward generalisation of \eqref{eq:m11mij_1} by transiting trough the lift map $L_{j}^{*}$ and applying \eqref{eq:m11mij}.
\end{prop}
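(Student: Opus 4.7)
The plan is to lift both identities to the auxiliary total space $\bar{\Gamma}^{1}_{g}$, where the twisted Mumford-Morita-Miller classes admit the explicit description $m_{i,\bj} = \pi_{!}(\bar{e}^{i}\cup\overline{\omega}^{\bj})$ from Definition~\ref{def:mij}, and then apply the key cocycle-level identity of \cite[Th.~6.2]{KawazumiMorita1}. The crucial input is that the intersection pairing $\mu$ applied to two copies of the cocycle $\overline{\omega}$ recovers (up to a sign) a multiple of the Euler class $\bar{e}$; this reflects the Poincar\'e-dual description of the intersection form on $H(g)$ in terms of the diagonal class of $\Sigma_{g}\times\Sigma_{g}$, whose self-intersection computes the Euler characteristic.

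For formula \eqref{eq:m11mij}, I would first rewrite the cup product $m_{1,1}\cup m_{i,\bj}$ at the tensor level via the projection formula for the Gysin map $\pi_{!}$, obtaining a pushforward built from $\bar{e}^{i+1}$ together with $j+1$ copies of $\overline{\omega}$. Applying $\mu$ to the single $\overline{\omega}$ from $m_{1,1}$ paired against any one of the $j$ copies supplied by $m_{i,\bj}$ produces, by the contraction identity of \cite[Th.~6.2]{KawazumiMorita1}, a tensor-level class of the form $-\pi_{!}(\bar{e}^{i+1}\cup\overline{\omega}^{\mathbf{j-1}})$, where the extra power of $\bar{e}$ comes from the self-pairing $\mu_{*}(\overline{\omega}\cup\overline{\omega})$. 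The combinatorial factor $j$ in \eqref{eq:m11mij} then emerges when passing from tensor to alternated classes via the identity $m_{i,j} = j!\cdot m_{i,\bj}$ from Definition~\ref{def:mij}: the single $\overline{\omega}$ of $m_{1,1}$ may contract against any of the $j$ positions, all yielding the same alternated class after symmetrization.

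For formula \eqref{eq:m11mij_1}, the key observation is that the contraction with $m_{1,1}$ acts as a derivation on the polynomial subalgebra generated by the twisted Mumford-Morita-Miller classes. The single $\overline{\omega}$ coming from $m_{1,1}$ pairs with any of the $j+l$ copies of $\overline{\omega}$ supplied by $m_{i,j}\cup m_{k,l}$; grouping these contractions according to which factor absorbs them and invoking \eqref{eq:m11mij} on each factor separately produces the two summands $-jm_{i+1,j-1}m_{k,l}$ and $-lm_{k+1,l-1}m_{i,j}$. The general $d$-fold statement in the last sentence of the proposition is then obtained by iterating this derivation principle and transiting through the lift map $L_{j}^{*}$ introduced before the proposition.

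The principal obstacle I anticipate is the careful bookkeeping of signs and symmetry factors when translating between the tensor-level classes $m_{i,\bj}$ and the alternated classes $m_{i,j}$, in particular ensuring that the symmetrization performed by $A_{\bj}$ interacts correctly with the partial contraction; once the cocycle-level identity of \cite[Th.~6.2]{KawazumiMorita1} is invoked as a black box, the remainder of the argument is essentially combinatorial.
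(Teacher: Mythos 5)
Your overall strategy (working over $\overline{\Gamma}_{g}^{1}$ with $m_{i,\bj}=\pi_{!}(\bar{e}^{i}\cup\overline{\omega}^{\bj})$ and invoking a contraction identity from \cite{KawazumiMorita1}) is broadly the same shape as the paper's sketch, and the combinatorial endgame you describe is fine, but the central computational step is wrong. You propose to ``rewrite the cup product $m_{1,1}\cup m_{i,\bj}$ at the tensor level via the projection formula for the Gysin map $\pi_{!}$, obtaining a pushforward built from $\bar{e}^{i+1}$ together with $j+1$ copies of $\overline{\omega}$'', i.e.\ $m_{1,1}\cup m_{i,\bj}=\pi_!(\bar{e}^{i+1}\cup\overline{\omega}^{j+1})$. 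This is not available: both factors are Gysin pushforwards, and $\pi_!(a)\cup\pi_!(b)$ is \emph{not} $\pi_!(a\cup b)$; the projection formula $\pi_!(\pi^*x\cup y)=x\cup\pi_!(y)$ requires one factor to be a pullback, and neither is. Controlling $\pi_!(a)\cup\pi_!(b)$ is precisely the hard content of \cite[Th.~6.2]{KawazumiMorita1} (derived via the LHS decomposition of \cite[Th.~5.3]{KawazumiMorita1}), whose correct form is the four-term identity
\[
(\id_M\otimes\mu\otimes\id_{M'})_{*}\bigl(\pi_!(m\otimes\overline{\omega})\cup\pi_!(\overline{\omega}\otimes m')\bigr)
= -\pi_!(m\otimes m') + \sigma^*(m)\pi_!(m') + \pi_!(m)\sigma^*(m') - e\,\pi_!(m)\pi_!(m'),
\]
and the proof needs all three correction terms; they happen to vanish after pulling back to $\Gamma_{g,1}$ because $e=0$ there (and $\pi_!(\bar{e})=0$), which is what the paper explicitly uses.

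Your account of where $\bar{e}^{i+1}$ comes from is also internally inconsistent. You assert the extra power of $\bar{e}$ ``comes from the self-pairing $\mu_{*}(\overline{\omega}\cup\overline{\omega})$''; but applied to your (already false) intermediate class $\pi_!(\bar{e}^{i+1}\cup\overline{\omega}^{j+1})$, that would yield $\pm m_{i+2,j-1}$, not $-m_{i+1,j-1}$. In the actual computation the two contracted $\overline{\omega}$'s are simply \emph{eliminated} by $\mu$ (this is the $-\pi_!(m\otimes m')$ term; the sign $-1$ is the only trace they leave), and the exponent $i+1$ is nothing more than the product of the explicit $\bar{e}$ sitting inside $m_{1,1}=\pi_!(\bar{e}\cup\overline{\omega})$ with the $\bar{e}^i$ inside $m_{i,\bj}$. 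Once the push-pull formula is stated and applied correctly, the rest of your outline --- the factor $j$ from $m_{i,j}=j!\,m_{i,\bj}$, the derivation principle for \eqref{eq:m11mij_1}, and the iteration through $L_j^*$ --- goes through as the paper intends.
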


\begin{proof}[Sketch of proof]
As is proved in \cite[Th.~5.3]{KawazumiMorita1}, the Lyndon-Hochschild-Serre spectral sequence for the group extension \eqref{eq:Birman_SES_2} gives a canonical decomposition for the twisted cohomology of $\Gamma^{1}_{g}$. The following formula is then deduced from 
a direct computation based on this decomposition. For $M$ and $M'$ two $\Gamma_{g,1}$-modules, for $m\in H^{*}(\Gamma^{1}_{g};M)$ and $m'\in H^{*}(\Gamma^{1}_{g};M')$, we have
\begin{eqnarray*}
&&(\id_{M}\otimes\mu\otimes \id_{M'})_{*}(\pi_{!}(m\otimes \overline{\omega})\cup\pi_{!}(\overline{\omega}\otimes m')) \\
&& = -\pi_{!}(m\otimes m') + \sigma^{*}(m)\pi_{!}(m') + \pi_{!}(m)\sigma^{*}(m') - e\pi_{!}(m)\pi_{!}(m')
\end{eqnarray*}
where we recall that $\sigma: \Gamma^{1}_{g} \to \overline{\Gamma}^1_{g}$ is the diagonal map. We then deduce the results from that general formula using the fact that the Euler class $e$ vanishes on $\Gamma_{g,1}$.
\end{proof}


\subsubsection{Exotic framework}\label{sss:exotic_framework}

On the basis of current knowledge, there is no general framework for cohomological stability with twisted coefficients taking the functor $F^{\vee}$ to be $\Lambda^{d}\tilde{H}\colon\U\cM_{2} \to\Ab$ in Theorems~\ref{thm:hom_stab_MCG} and \ref{thm:stable_functor_homology}.
However, we have the following framework to deal with cohomological stability questions with this kind of twisted coefficient.

Let $M\colon \U\cM_2 \to \Ab$ be a covariant system such that the image of each morphism of type $[\Sigma_{1,1},\id_{\Sigma_{g+1,1}}]$ has a canonical $\Gamma_{g,1}$-equivariant splitting denoted by $M^{-1}([\Sigma_{1,1},\id_{\Sigma_{g+1,1}}])$. The canonical injection $\Gamma_{g,1}\hookrightarrow\Gamma_{g+1,1}$ and the splitting $M^{-1}([\Sigma_{1,1},\id_{\Sigma_{g+1,1}}])$ induce a map in cohomology $\Phi'_{i,g}\colon H^{i}(\Gamma_{g+1,1}; M(g+1)) \to H^{i}(\Gamma_{g,1}; M(g))$ for each $i,g\geq0$.
\begin{defn}\label{def:homological_stability_contravariant}
For each $i\geq0$, the \emph{stable twisted cohomology group} $H_{\st}^{i}(M)$ is the inverse limit $\underleftarrow{\mathrm{Lim}}_{g\geq0}H^{i}(\Gamma_{g,1}; M(g))$ induced by the maps $\{\Phi'_{i,g}\}_{g\in\bN}$. Also, we say that there is \emph{(exotic) cohomological stability} with twisted coefficients in $M\colon \U\cM_2 \to \Ab$ if, for each $i\geq0$, the maps $\Phi_{i,g}$ for $g\geq0$ are isomorphisms if $g\geq B(i,M)$, where $B(i,M)\in\bN$ is a bound depending on $i$ and $M$.
\end{defn}

\begin{lem}\label{lem:homological_stability_covariant}
Let $F_{2}$ be a functor $\U\cM_{2}\to\Ab$ such that the image of each morphism of type $[\Sigma_{1,1},\id_{\Sigma_{g+1,1}}]$ has a canonical $\Gamma_{g,1}$-equivariant splitting.
Let $F_{1}$ and $F_{3}$ be polynomial functors $\U\cM_{2}\to\Ab$ such that we have a short exact sequence $0\rightarrow F^{\vee}_{1}\rightarrow F_{2}\rightarrow F^{\vee}_{3}\rightarrow0$ of functors $\U\cM_{2}^{\opp}\to\Ab$.
Then, there is cohomological stability with twisted coefficients in $F_{2}$.
\end{lem}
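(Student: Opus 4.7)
The plan is to reduce the claim to the classical cohomological stability of Theorem~\ref{thm:hom_stab_MCG} via a five-lemma argument applied to the long exact sequences in cohomology coming from the short exact sequence $0\to F_{1}^{\vee}\to F_{2}\to F_{3}^{\vee}\to 0$ of contravariant systems over $\U\cM_{2}$.

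First, for every $g\geq 0$, evaluating this short exact sequence at $\Sigma_{g,1}$ yields a short exact sequence of $\Gamma_{g,1}$-modules, whose associated long exact sequence of group cohomology
\begin{equation*}
\cdots \to H^{i}(\Gamma_{g,1}; F_{1}^{\vee}(g)) \to H^{i}(\Gamma_{g,1}; F_{2}(g)) \to H^{i}(\Gamma_{g,1}; F_{3}^{\vee}(g)) \to H^{i+1}(\Gamma_{g,1}; F_{1}^{\vee}(g)) \to \cdots
\end{equation*}
is natural in the pair (group, coefficient module). The key observation is that by hypothesis the short exact sequence lives in the category of functors $\U\cM_{2}^{\opp}\to\Ab$, so the contravariant structure on $F_{2}$ induced from its canonical splittings is compatible with the natural contravariant structures on $F_{1}^{\vee}$ and $F_{3}^{\vee}$. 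Consequently, for each $g\geq 0$, the canonical injection $\Gamma_{g,1}\hookrightarrow\Gamma_{g+1,1}$ composed with the appropriate splitting fits into a commutative ladder between the long exact sequence associated to $g+1$ (top row) and the one associated to $g$ (bottom row).

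Second, since $F_{1}$ and $F_{3}$ are polynomial covariant systems, Theorem~\ref{thm:hom_stab_MCG} provides cohomological stability with twisted coefficients in $F_{1}^{\vee}$ and $F_{3}^{\vee}$: namely, the vertical stabilisation maps on the outer terms of the ladder are isomorphisms once $g$ is larger than an explicit bound depending on $i$ and on the degrees of $F_{1}$ and $F_{3}$. Applying the five lemma to the commutative ladder for each fixed $i$, we conclude that the stabilisation map $\Phi'_{i,g}\colon H^{i}(\Gamma_{g+1,1}; F_{2}(g+1))\to H^{i}(\Gamma_{g,1}; F_{2}(g))$ of Definition~\ref{def:homological_stability_contravariant} is also an isomorphism for $g$ large enough, which is exactly exotic cohomological stability with twisted coefficients in $F_{2}$.

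The only genuinely non-formal point in this scheme is checking that the ladder genuinely commutes, i.e.\ that the exotic stabilisation maps for $F_{2}$ agree under the morphisms $F_{1}^{\vee}\hookrightarrow F_{2}\twoheadrightarrow F_{3}^{\vee}$ with the classical stabilisation maps for $F_{1}^{\vee}$ and $F_{3}^{\vee}$. This is however built into the assumption that the short exact sequence is a sequence in $\U\cM_{2}^{\opp}\to\Ab$: the morphisms $F_{1}^{\vee}\to F_{2}$ and $F_{2}\to F_{3}^{\vee}$ are natural transformations of contravariant systems, hence commute with the restriction-and-splitting maps on $F_{2}$ on one side and the contravariant structural maps on $F_{1}^{\vee}$, $F_{3}^{\vee}$ on the other; naturality of the long exact sequence in cohomology then yields commutativity of the ladder. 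This is where the splitting hypothesis on $F_{2}$ plays its essential role.
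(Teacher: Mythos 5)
Your proposal is correct and follows essentially the same route as the paper: both set up the commutative ladder of long exact sequences in cohomology coming from the short exact sequence of contravariant systems, invoke Theorem~\ref{thm:hom_stab_MCG} to make the vertical stabilisation maps on the $F_1^{\vee}$- and $F_3^{\vee}$-columns isomorphisms in the stable range, and conclude by the five lemma. The only difference is cosmetic: you spell out why the ladder commutes (the compatibility of the splitting-induced stabilisation map on $F_2$ with the contravariant structures coming from the short exact sequence), whereas the paper simply displays the commutative diagram and asserts it; you also do not mention the paper's ``clear recursion'' phrasing, but since all four outer terms of the relevant five-term segment involve only $F_1^{\vee}$ or $F_3^{\vee}$, a single application of the five lemma for $g$ past the maximum of the relevant bounds suffices, so nothing is lost.
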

\begin{proof}
By Theorem~\ref{thm:hom_stab_MCG}, there is cohomological stability with twisted coefficients given by $F_{1}$ and $F_{3}$.
Considering the long exact sequences in cohomology, we obtain the following commutative diagram for each $g\geq0$ and assuming that we are in the stability range for $F_{1}^{\vee}$ and $F_{3}^{\vee}$:
$$\xymatrix{\cdots\ar@{->}[r] & 
H^{i}(\Gamma_{g,1};F_{1}^{\vee}(g)) \ar@{->}[r]  \ar@{->}[d]^-{F_1^{\vee}([\Sigma_{1,1},\id_{\Sigma_{g+1,1}}])}_{\cong}
& H^{i}(\Gamma_{1,1};F_{2}(g))\ar@{->}[d]^-{F_2^{-1}([\Sigma_{g,1},\id_{\Sigma_{g+1,1}}])} \ar@{->}[r]
& H^{i}(\Gamma_{g,1};F_{3}^{\vee}(g)) \ar@{->}[d]^-{F_3^{\vee}([\Sigma_{1,1},\id_{\Sigma_{g+1,1}}])}_{\cong} \ar@{->}[r] & \cdots \\
\cdots\ar@{->}[r]
& H^{i}(\Gamma_{g+1,1};F_{1}^{\vee}(g+1))\ar@{->}[r]
& H^{i}(\Gamma_{g+1,1};F_{2}(g+1))\ar@{->}[r]
& H^{i}(\Gamma_{g+1,1};F_{3}^{\vee}(g+1)) \ar[r]
& \cdots}$$
The results thus follows from a clear recursion and the five lemma.
\end{proof}
\begin{eg}\label{eg:cohomological_stability_contra}
Using the short exact sequence \eqref{exteriorSES_contra}, it follows from Lemmas~\ref{lem:canonical_splitting_functors}, \ref{lem:homological_stability_covariant} and Theorem~\ref{thm:hom_stab_MCG} that there is cohomological stability for mapping class groups with twisted coefficients in $\Lambda^{d}\tilde{H}_{\bQ}\colon\U\cM_{2} \to \bQ\Mod$ for all $d\geq1$.
\end{eg}
Also, the stable twisted cohomology groups have natural $\Sym_{\bQ}(\cE)$-module structures in the exotic situations:
\begin{lem}\label{lem:canonical_sym_E_structure}
Let $M\colon \U\cM_{2}\to\bQ\Mod$ be a functor with canonical splittings as in Lemma~\ref{lem:homological_stability_covariant}.
The graded stable twisted cohomology groups $H_{\st}^{*}(M)$, $H_{\st}^{\odd}(M)$ and $H_{\st}^{\even}(M)$ inherit canonical $\Sym_{\bQ}(\cE)$-module structures from the cup product with elements of $H^{*}(\Gamma_{\infty,1};\bQ)$.
Then, the decomposition $H_{\st}^{*}(M) = H_{\st}^{\even}(M)\oplus H_{\st}^{\odd}(M)$ is stable under the action of the algebra $\Sym_{\bQ}(\cE)$.
\end{lem}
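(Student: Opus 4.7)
The plan is to work at finite level first and then pass to the inverse limit. For each $g\geq 0$, the cup product endows $H^{*}(\Gamma_{g,1};M(g))$ with the structure of a graded $H^{*}(\Gamma_{g,1};\bQ)$-module by the standard pairing, via the multiplication map $\bQ\otimes_{\bQ}M(g)\to M(g)$ of the $\Gamma_{g,1}$-module $M(g)$. One then checks that the stabilisation maps $\Phi'_{i,g}$ of Definition~\ref{def:homological_stability_contravariant} are compatible with cup products in the following sense: for $\alpha\in H^{*}(\Gamma_{g+1,1};\bQ)$ and $x\in H^{*}(\Gamma_{g+1,1};M(g+1))$, the identity $\Phi'_{*,g}(\alpha\cup x)=\iota_{g}^{*}(\alpha)\cup \Phi'_{*,g}(x)$ holds, where $\iota_{g}\colon \Gamma_{g,1}\hookrightarrow \Gamma_{g+1,1}$ is the canonical inclusion. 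This is the first key step.

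The compatibility identity above decomposes into two instances of naturality of the cup product. First, naturality in the group argument gives $\iota_{g}^{*}(\alpha\cup x)=\iota_{g}^{*}(\alpha)\cup \iota_{g}^{*}(x)$, where we view $x$ with coefficients in $M(g+1)$ regarded as a $\Gamma_{g,1}$-module via $\iota_{g}$. Second, the canonical splitting $M^{-1}([\Sigma_{1,1},\id_{\Sigma_{g+1,1}}])\colon M(g+1)\to M(g)$, being $\Gamma_{g,1}$-equivariant, induces a map on cohomology which commutes with cup product by a class coming from trivial coefficients, because the cup product is natural with respect to any pairing $R\otimes_{R}N\to N'$ of $R$-modules for the trivial ring $R=\bQ$. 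Composing these two naturality statements yields the displayed compatibility formula.

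Passing to the inverse limit over $g$ then transfers the module structure: $H^{*}(\Gamma_{\infty,1};\bQ)=\underleftarrow{\mathrm{Lim}}_{g}\,H^{*}(\Gamma_{g,1};\bQ)\cong \Sym_{\bQ}(\cE)$ by the Madsen--Weiss isomorphism \eqref{eq:Madsen_weiss_thm}, and the compatible cup products assemble to a canonical $\Sym_{\bQ}(\cE)$-action on $H_{\st}^{*}(M)=\underleftarrow{\mathrm{Lim}}_{g}\,H^{*}(\Gamma_{g,1};M(g))$. The same argument applied separately to even and odd cohomological degrees equips $H_{\st}^{\even}(M)$ and $H_{\st}^{\odd}(M)$ with $\Sym_{\bQ}(\cE)$-module structures, provided one knows that multiplication by $\Sym_{\bQ}(\cE)$ preserves the parity of degree.

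This last point is the final step, and it is immediate from the identification $\Sym_{\bQ}(\cE)\cong H^{*}(\Gamma_{\infty,1};\bQ)$: the generators $e_{i}$ all have even cohomological degree $2i$, so every homogeneous element of $\Sym_{\bQ}(\cE)$ lies in an even degree, and cup product with such an element shifts the cohomological degree by an even integer. Consequently, the decomposition $H_{\st}^{*}(M)=H_{\st}^{\even}(M)\oplus H_{\st}^{\odd}(M)$ is stable under the $\Sym_{\bQ}(\cE)$-action. The only real point requiring care is the compatibility of the stabilisation map with the cup product, which hinges crucially on the $\Gamma_{g,1}$-equivariance of the canonical splitting; once this is verified, the rest of the argument is formal.
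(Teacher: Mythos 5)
Your proof is correct and arrives at the same key observation as the paper, namely that $\Sym_{\bQ}(\cE)$ is concentrated in even degrees so cup product preserves parity; the paper's own proof consists essentially of only this last sentence, treating the existence and well-definedness of the $\Sym_{\bQ}(\cE)$-module structure on the inverse limit (the compatibility of $\Phi'_{i,g}$ with cup products via the $\Gamma_{g,1}$-equivariance of the canonical splitting) as implicit, whereas you spell it out. Your added detail is sound and fills a genuine gap in the exposition rather than taking a different route.
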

\begin{proof}
Since $\Sym_{\bQ}(\cE)$ is concentrated in even degrees, the $\Sym_{\bQ}(\cE)$-action via the cup product satisfies the compatibility with respect to the grading of $H_{\st}^{*}(M)$ providing the result.    
\end{proof}

Finally, we recall the stable cohomology computations of \cite{KawazumiSoulieI} for this type of exotic situations.
\begin{thm}[{\cite[Th.~B]{KawazumiSoulieI}}]\label{thm:main_Thm_1}
The stable twisted cohomology module $H_{\st}^{*}(\tilde{H}_{\bQ})$ is isomorphic to the direct sum $\bQ\theta\bigoplus \mathscr{M}$, where $\mathscr{M}$ is the $\Sym_{\bQ}(\cE)$-module generated by the classes $M_{i,j}:= e_{i}m_{j,1}- e_{j}m_{i,1}$ for all distinct $ j,i \geq 1$, and with relations $ M_{j,i}=- M_{i,j}$
$e_{i} M_{j,k} + e_{j} M_{k,i} + e_{k} M_{i,j} \sim 0$ for all pairwise distinct $i,j,k\geq 1$.
\end{thm}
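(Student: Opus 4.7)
The plan is to derive the result from the long exact sequence in cohomology associated to the rational short exact sequence~\eqref{SEShutb} of $\Gamma_{g,1}$-representations
\[
0 \longrightarrow \bQ \longrightarrow \tilde{H}_{\bQ}(g) \longrightarrow H_{\bQ}(g) \longrightarrow 0,
\]
which is natural in $g$ and hence passes to the stable limit. Since the cup product action of $H^{*}(\Gamma_{\infty,1};\bQ)\cong \Sym_{\bQ}(\cE)$ renders the long exact sequence $\Sym_{\bQ}(\cE)$-linear, the resulting decomposition of $H_{\st}^{*}(\tilde{H}_{\bQ})$ will automatically be one of $\Sym_{\bQ}(\cE)$-modules. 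By Remark~\ref{rmk:identification_m_1,1}, the extension class of~\eqref{SEShutb} is, up to sign, the twisted Mumford--Morita--Miller class $m_{1,1}$; hence the connecting homomorphism
\[
\partial\colon H_{\st}^{i}(H_{\bQ}) \longrightarrow H_{\st}^{i+1}(\bQ)
\]
is cup product with $m_{1,1}$ followed by the contraction $\mu_{*}$ induced by the intersection pairing. Combining Theorem~\ref{stablecohomologyKawazumi} applied with $d=1$ (which yields $H_{\st}^{*}(H_{\bQ}) \cong \bigoplus_{j\geq 1} \Sym_{\bQ}(\cE)\cdot m_{j,1}$), the Madsen--Weiss identification~\eqref{eq:Madsen_weiss_thm}, and formula~\eqref{eq:m11mij} of Proposition~\ref{prop:Contraction_Formula} would give the explicit description
\[
\partial(\alpha\, m_{j,1}) = -\alpha\, e_{j}\qquad (\alpha\in\Sym_{\bQ}(\cE),\ j\geq 1).
\]

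Next I would exploit a parity argument. Since $H_{\st}^{*}(H_{\bQ})$ is concentrated in odd cohomological degrees while $H_{\st}^{*}(\bQ)$ is concentrated in even ones, the long exact sequence collapses: in each even degree $i$ one has $H_{\st}^{i}(\tilde{H}_{\bQ}) = \Cok(\partial_{i-1})$, and in each odd degree $i$, $H_{\st}^{i}(\tilde{H}_{\bQ}) = \Ker(\partial_{i})$. The image of $\partial$ is exactly the augmentation ideal $(e_{1},e_{2},\ldots)\subset\Sym_{\bQ}(\cE)$, so its cokernel equals $\bQ$ concentrated in degree $0$, and this produces the generator $\theta\in H_{\st}^{0}(\tilde{H}_{\bQ})$; the resulting summand $\bQ\theta$ is a trivial $\Sym_{\bQ}(\cE)$-module because $e_{i}\theta$ would live in positive even degree, where the cohomology vanishes.

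The main technical step is the description of $\Ker(\partial)$, which is the kernel of the $\Sym_{\bQ}(\cE)$-linear map $\bigoplus_{j\geq 1}\Sym_{\bQ}(\cE)\, m_{j,1}\to\Sym_{\bQ}(\cE)$ sending $m_{j,1}\mapsto -e_{j}$. The crucial input is that $\Sym_{\bQ}(\cE)=\bQ[e_{1},e_{2},\ldots]$ is a polynomial ring, so $\{e_{i}\}_{i\geq 1}$ forms a regular sequence and the associated Koszul complex provides a free resolution of the residue field $\bQ$. Reading off the first syzygy module from this resolution would identify it with the $\Sym_{\bQ}(\cE)$-module generated by $M_{i,j}=e_{i}m_{j,1}-e_{j}m_{i,1}$ (the image of $\Lambda^{2}\to\Lambda^{1}$), subject precisely to the antisymmetry relations $M_{j,i}=-M_{i,j}$ and the Pl\"ucker-type relations $e_{i}M_{j,k}+e_{j}M_{k,i}+e_{k}M_{i,j}=0$ coming from the next Koszul differential $\Lambda^{3}\to\Lambda^{2}$. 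This matches the presentation of $\mathscr{M}$. The resulting short exact sequence
\[
0\longrightarrow \bQ\theta\longrightarrow H_{\st}^{*}(\tilde{H}_{\bQ})\longrightarrow \mathscr{M}\longrightarrow 0
\]
then splits as $\Sym_{\bQ}(\cE)$-modules for degree-parity reasons (since $\theta$ is concentrated in even degree while $\mathscr{M}$ is concentrated in odd degrees and $\Sym_{\bQ}(\cE)$ lives in even degrees), yielding the stated isomorphism.
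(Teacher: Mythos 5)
Your argument is correct, and it is essentially the specialization to $d=1$ of the machinery the paper develops in \S\ref{s:covariant_coeff_syst_exterior_general_theory}: the short exact sequence \eqref{SEShutb} is precisely \eqref{exteriorSES_contra} for $d=1$; Proposition~\ref{lem:connecting_hom_exterior_contra} is what identifies the connecting homomorphism $\partial$ as $\mu_{1,\bullet}(m_{1,1},-)$; Theorem~\ref{stablecohomologyKawazumi} with $d=1$ gives the free basis $\{m_{j,1};j\geq 1\}$; formula \eqref{eq:m11mij} gives $\partial(m_{j,1})=-e_{j}$; and Lemma~\ref{lem:canonical_sym_E_structure} supplies the parity argument. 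Because the present paper only \emph{cites} \cite[Th.~B]{KawazumiSoulieI} rather than reproving it, I cannot compare your route against a printed proof verbatim, but it is manifestly the intended one given the tools on display. Your identification of $\Ker(\partial)$ with the first syzygy module of the augmentation ideal of $\Sym_{\bQ}(\cE)$ via the Koszul complex is exactly right: $\{e_{i}\}_{i\geq 1}$ is a regular sequence (the infinite Koszul complex being exact as a filtered colimit of finite, exact ones), the differential $\Lambda^{2}V\to V$ produces the generators $M_{i,j}=e_{i}m_{j,1}-e_{j}m_{i,1}$, and the differential $\Lambda^{3}V\to\Lambda^{2}V$ produces the Jacobi-type relations stated; this is also consistent with the $\Tor$-computation recorded in Example~\ref{thm:homology_algebra_first}. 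One small stylistic remark: there is no naturally arising short exact sequence $0\to\bQ\theta\to H_{\st}^{*}(\tilde{H}_{\bQ})\to\mathscr{M}\to 0$ to \emph{split}; rather, the long exact sequence already exhibits the decomposition $H_{\st}^{*}(\tilde{H}_{\bQ})=H_{\st}^{\even}(\tilde{H}_{\bQ})\oplus H_{\st}^{\odd}(\tilde{H}_{\bQ})=\Cok(\partial)\oplus\Ker(\partial)$ directly, and Lemma~\ref{lem:canonical_sym_E_structure} makes this a direct sum of $\Sym_{\bQ}(\cE)$-modules. This is what you argue in substance, so the conclusion is unaffected.
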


\subsection{Computations of the homology groups over a polynomial algebra}\label{ss:homology_of_algebras}

Following Theorem~\ref{thm:stable_functor_homology} and Lemma~\ref{lem:canonical_sym_E_structure}, we may consider the twisted cohomology groups we study as forming a module over the ring $\Sym_{\bQ}(\cE)$.
In order to give some qualitative properties on these algebras, we compute the homology group of the twisted cohomology modules. The following results for the homology groups of an algebra will be of key use in \S\ref{ss:theory_Tor} and \S\ref{s:covariant_coeff_syst_exterior_computations}; see \cite[\S1]{Loday} or \cite[\S~IX]{CartanEilenberg} for a complete introduction to this topic.

We recall that we denote $\Tor_{*}^{\Sym_{\bQ}(\cE)}(\bQ,-)$ by $H_{*}(\Sym_{\bQ}(\cE);-)$ for simplicity.
For $\ell$ an integer, we denote by $\Sym_{\bQ}(\cE_{\leq \ell})$ and $\Sym_{\bQ}(\cE_{\geq \ell +1})$ the subalgebras $\bQ [e_{1},\ldots,e_{\ell}]$ and $\bQ [e_{\ell+1},e_{\ell+2},\ldots]$ of $\Sym_{\bQ}(\cE)$ respectively.
We first have a general decomposition result for the $\Tor$-groups of a tensor product module with respect to these subalgebras:
\begin{prop}\label{lem:general_properties_homology_algebra}
Let $L$ be a $\Sym_{\bQ}(\cE_{\leq \ell})$-module and $L'$ a $\Sym_{\bQ}(\cE_{\geq \ell +1})$-module. We also consider $(m)$ the ideal of the $\bQ$-algebra $\Sym_{\bQ}(\cE_{\leq \ell})$ generated by an element $m$ in the augmentation ideal of $\Sym_{\bQ}(\cE_{\leq \ell})$.
Then there is a graded isomorphism:
\begin{equation}\label{eq:computation_Tor_general}
H_{*}(\Sym_{\bQ}(\cE);L\otimes L')\cong
\begin{cases}
H_{*}(\Sym_{\bQ}(\cE_{\leq \ell});L)\otimes\Lambda^{*}\cE_{\geq \ell+1} & \text{if $L' = \bQ$;}\\
H_{*-1}(\Sym_{\bQ}(\cE_{\geq \ell +1}); L') \oplus H_{*}(\Sym_{\bQ}(\cE_{\geq \ell +1}); L') & \text{if $L = \Sym_{\bQ}(\cE_{\leq \ell})/(m)$.}\\
\end{cases}
\end{equation}
\end{prop}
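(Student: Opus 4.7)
The plan is to exploit the tensor-product decomposition of graded $\bQ$-algebras
\[
\Sym_{\bQ}(\cE) \;\cong\; \Sym_{\bQ}(\cE_{\leq \ell}) \otimes_{\bQ} \Sym_{\bQ}(\cE_{\geq \ell +1}),
\]
under which $L \otimes L'$ becomes a $\Sym_{\bQ}(\cE)$-module: the first factor acts on $L$ and trivially on $L'$, the second acts on $L'$ and trivially on $L$. First, I would take the Koszul resolutions $K^{\cE_{\leq \ell}}_{\bullet} = \Sym_{\bQ}(\cE_{\leq \ell}) \otimes \Lambda^{*}\cE_{\leq \ell}$ and $K^{\cE_{\geq \ell+1}}_{\bullet} = \Sym_{\bQ}(\cE_{\geq \ell+1}) \otimes \Lambda^{*}\cE_{\geq \ell+1}$ of $\bQ$ over the respective symmetric algebras; then $K^{\cE_{\leq \ell}}_{\bullet} \otimes_{\bQ} K^{\cE_{\geq \ell+1}}_{\bullet}$ is a free resolution of $\bQ$ over $\Sym_{\bQ}(\cE)$.

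Next, I would apply this resolution to compute $\Tor^{\Sym_{\bQ}(\cE)}_{*}(\bQ, L \otimes L')$. Since
\[
\bigl(K^{\cE_{\leq \ell}}_{\bullet} \otimes K^{\cE_{\geq \ell+1}}_{\bullet}\bigr) \otimes_{\Sym_{\bQ}(\cE)} (L \otimes L')
\;\cong\;
\bigl(K^{\cE_{\leq \ell}}_{\bullet} \otimes_{\Sym_{\bQ}(\cE_{\leq \ell})} L\bigr) \otimes_{\bQ} \bigl(K^{\cE_{\geq \ell+1}}_{\bullet} \otimes_{\Sym_{\bQ}(\cE_{\geq \ell+1})} L'\bigr),
\]
and since we work over the field $\bQ$, the Künneth formula for tensor products of chain complexes of $\bQ$-vector spaces gives the graded isomorphism
\[
H_{*}(\Sym_{\bQ}(\cE);L\otimes L') \;\cong\; H_{*}(\Sym_{\bQ}(\cE_{\leq \ell});L) \otimes_{\bQ} H_{*}(\Sym_{\bQ}(\cE_{\geq \ell+1});L').
\]

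The two cases of the statement then follow by specialisation. For the first case ($L' = \bQ$), the Koszul complex $K^{\cE_{\geq \ell+1}}_{\bullet} \otimes_{\Sym_{\bQ}(\cE_{\geq \ell+1})} \bQ$ has zero differential, hence $H_{*}(\Sym_{\bQ}(\cE_{\geq \ell+1});\bQ) \cong \Lambda^{*}\cE_{\geq \ell+1}$, yielding the first line of \eqref{eq:computation_Tor_general}. For the second case ($L = \Sym_{\bQ}(\cE_{\leq \ell})/(m)$ with $m$ in the augmentation ideal), I would note that $\Sym_{\bQ}(\cE_{\leq \ell})$ is an integral domain so $m$ is a non-zero-divisor, giving the two-term free resolution
\[
0 \longrightarrow \Sym_{\bQ}(\cE_{\leq \ell}) \xrightarrow{\;\cdot m\;} \Sym_{\bQ}(\cE_{\leq \ell}) \longrightarrow \Sym_{\bQ}(\cE_{\leq \ell})/(m) \longrightarrow 0.
\]
Tensoring with $\bQ$ kills the differential since $m$ lies in the augmentation ideal, so $H_{0} = H_{1} = \bQ$ and $H_{j} = 0$ for $j \geq 2$. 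Plugging this into the Künneth isomorphism produces exactly the two-term direct sum of the second line of \eqref{eq:computation_Tor_general}.

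The main point to justify carefully will be the Künneth formula in this setting (flatness over the ground field $\bQ$ is the only hypothesis needed, so this is essentially automatic), and the verification that the $\Sym_{\bQ}(\cE)$-module structure on $L \otimes L'$ agrees with the one induced by the tensor-product algebra decomposition. No genuine obstacle is expected, and the whole proof reduces to an application of Koszul resolutions plus Künneth.
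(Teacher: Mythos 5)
Your proposal is correct and takes essentially the same approach as the paper: both rest on the tensor decomposition $\Sym_{\bQ}(\cE)\cong \Sym_{\bQ}(\cE_{\leq \ell})\otimes_{\bQ}\Sym_{\bQ}(\cE_{\geq \ell+1})$, the Künneth formula over a field (the paper cites Cartan--Eilenberg, you unfold it via the tensor product of Koszul resolutions), the identification $H_{*}(\Sym_{\bQ}(\cE_{\geq\ell+1});\bQ)\cong\Lambda^{*}\cE_{\geq\ell+1}$, and the two-term free resolution $0\to\Sym_{\bQ}(\cE_{\leq\ell})\xrightarrow{\cdot m}\Sym_{\bQ}(\cE_{\leq\ell})\to\Sym_{\bQ}(\cE_{\leq\ell})/(m)\to 0$. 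Your added observations (flatness over $\bQ$ makes Künneth automatic; $m$ is a non-zero-divisor since $\Sym_{\bQ}(\cE_{\leq\ell})$ is a domain; the differential dies after $\otimes_{\Sym}\bQ$ because $m$ is in the augmentation ideal) are exactly the implicit justifications behind the paper's more telegraphic argument.
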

\begin{proof}
First of all, we note that there is an isomorphism of $\bQ$-algebras $\Sym_{\bQ}(\cE)\cong \Sym_{\bQ}(\cE_{\leq \ell})\otimes_{\bQ} \Sym_{\bQ}(\cE_{\geq \ell+1})$. Then, it follows from the K{\"u}nneth formula for homology of algebras (see for instance \cite[\S~XI, Th.~3.1]{CartanEilenberg}) that we have a graded isomorphism
$$H_{*}(\Sym_{\bQ}(\cE);L\otimes L')\cong H_{*}(\Sym_{\bQ}(\cE_{\leq \ell});L)\otimes H_{*}(\Sym_{\bQ}(\cE_{\geq \ell +1});L').$$
For the first computation of \eqref{eq:computation_Tor_general}, the result follows for the fact that there is an isomorphism of graded $\bQ$-vector spaces $H_{*}(\Sym_{\bQ}(\cE);\bQ)\cong \Lambda^{*}\cE$ using the Koszul resolution (see for instance \cite[\S 3.4.6]{Loday}) or the modules of differential forms (see Lemma~\ref{lem:refined_Tor_method}).
For the second computation of Lemma~\ref{lem:general_properties_homology_algebra}, the short exact sequence $0 \to \Sym_{\bQ}(\cE_{\leq \ell}) \overset{\cdot m}\longrightarrow \Sym_{\bQ}(\cE_{\leq \ell}) \to \Sym_{\bQ}(\cE_{\leq \ell})/(m) \to 0$ defines a $\Sym_{\bQ}(\cE_{\leq \ell})$-free resolution which allows us to compute that
\begin{equation}\label{eq:iso_tor_ideal}
H_{j}(\Sym_{\bQ}(\cE_{\leq \ell}); \Sym_{\bQ}(\cE_{\leq \ell})/(m)) = \begin{cases}
\bQ & \text{if $j = 0,1$,}\\
0 & \text{otherwise},\\
\end{cases}
\end{equation}
thus ending the proof.
\end{proof}

\begin{eg}\label{thm:homology_algebra_first}
Using the tools of Lemma~\ref{lem:general_properties_homology_algebra}, we compute that  $H_{0}(\Sym_{\bQ}(\cE);H_{\st}^{*}(\tilde{H}_{\bQ}))\cong\Lambda^{2}\cE\oplus \bQ\theta$ and that
$H_{j}(\Sym_{\bQ}(\cE));H_{\st}^{*}(\tilde{H}_{\bQ}))\cong\Lambda^{j}\cE\oplus\Lambda^{j+2}\cE$ for any $j > 0$; see \cite[Th.~3.8]{KawazumiSoulieI}.
\end{eg}

Moreover, we have the following lemma in order to deduce general facts about $\Tor$-computations (see Proposition~\ref{prop:Ker_Delta_method}) or to make more refined computations for the $\Tor$-groups with respect to the finite dimensional $\bQ$-algebra $\Sym_{\bQ}(\cE_{\leq \ell})$ (see \S\ref{s:Tor_groups_lambda_5}).
We consider a subspace $\cE'\subseteq\cE$ and the $\Sym_{\bQ}(\cE')$-module $\Omega^{n}_{\Sym_{\bQ}(\cE')|\bQ}$ of $n$-differential forms; see \cite[pp.26-27]{Loday}. The exterior derivative $d: \Omega_{\Sym_{\bQ}(\cE')|\bQ}^{n} \to \Omega^{n+1}_{\Sym_{\bQ}(\cE')|\bQ}$ is defined in a usual way. We consider the Euler vector field
$$D := \sum^\infty_{i=1}e_{i}\dfrac{\pa}{\pa e_{i}}$$ and its associated interior product $p_{D}: \Omega_{\Sym_{\bQ}(\cE')|\bQ}^{n} \to \Omega^{n-1}_{\Sym_{\bQ}(\cE')|\bQ}$.

\begin{lem}\label{lem:refined_Tor_method}
The sequence 
\begin{equation}\label{eq:differential_forms_resolution}
\xymatrix{\cdots\ar@{->}[r]^-{p_{D}} & \Omega_{\Sym_{\bQ}(\cE')|\bQ}^{n}\ar@{->}[r]^-{p_{D}} & \Omega_{\Sym_{\bQ}(\cE')|\bQ}^{n-1}\ar@{->}[r]^-{p_{D}} & \cdots\ar@{->}[r]^-{p_{D}} & \Sym_{\bQ}(\cE')\ar@{->}[r]^-{\aug} & \bQ\ar@{->}[r] & 0.}
\end{equation}
is a free resolution of the trivial $\Sym_{\bQ}(\cE')$-module $\bQ$. Moreover, for any $\Sym_{\bQ}(\cE')$-module $M$, the $\Tor$-groups $H_{*}(\Sym_{\bQ}(\cE');M)$ are computed by the homology group of the chain complex
\begin{equation}\label{eq:chain_complex_Tor}
\xymatrix{\cdots\ar@{->}[r]^-{\pa_{n+1}} & M\otimes \Lambda^{n}\langle \{de; e\in\cE' \}\rangle \ar@{->}[r]^-{\pa_{n}} & M\otimes \Lambda^{n-1}\langle \{de; e\in\cE' \}\rangle \ar@{->}[r]^-{\pa_{n-1}} & \cdots\ar@{->}[r]^-{\pa_{1}}& M,}
\end{equation}
where the chain map 
$\pa_{n} : M\otimes \Lambda^{n}\langle \{de; e\in\cE' \}\rangle
\to M\otimes \Lambda^{n-1}\langle \{de; e\in\cE' \}\rangle$
is given by 
\begin{equation}\label{eq:iD}
\pa_{n}(m\otimes de_{i_1}\wedge\cdots \wedge de_{i_n})
= \sum^n_{k=1}(-1)^{k-1}e_{i_{k}}m\otimes 
de_{i_1}\wedge\cdots \wedge \widehat{de_{i_{k}}}\wedge\cdots \wedge de_{i_n}
\end{equation}
for any $m \in M$.
\end{lem}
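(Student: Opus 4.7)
My plan is to recognise the sequence \eqref{eq:differential_forms_resolution} as the Koszul resolution of $\bQ$ over the polynomial ring $\Sym_{\bQ}(\cE')$, dressed in the language of Kähler differentials, and to exhibit its acyclicity through a contracting homotopy built from the de Rham differential $d$ together with the classical Cartan-type identity $L_{D} = d\, p_{D} + p_{D}\, d$, where $L_{D}$ denotes the Lie derivative along the Euler vector field $D$. Once this resolution is established, the second part follows immediately by tensoring with $M$ over $\Sym_{\bQ}(\cE')$.

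The first step is to observe that $\Omega^{n}_{\Sym_{\bQ}(\cE')|\bQ}$ is the free $\Sym_{\bQ}(\cE')$-module with basis $\{de_{i_{1}}\wedge\cdots\wedge de_{i_{n}};\, i_{1}<\cdots<i_{n},\, e_{i_{k}}\in\cE'\}$, and that the interior product $p_{D}$ satisfies $p_{D}\circ p_{D} = 0$ by the usual antisymmetry computation. For the Cartan identity, I would verify directly that both sides act as the identity on every generator $e_{j}$ (noting $p_{D}(de_{j}) = e_{j}$) and on every $1$-form $de_{j}$, and then extend by the derivation property of $L_{D}$ and the anti-derivation property of $d$ and $p_{D}$. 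From this one reads off that, on a homogeneous element $f\cdot de_{i_{1}}\wedge\cdots\wedge de_{i_{n}}$ where $f$ has polynomial degree $k$, the operator $d p_{D} + p_{D} d$ acts as multiplication by $k+n$, since $D(f) = kf$ and $L_{D}(de_{i_{1}}\wedge\cdots\wedge de_{i_{n}}) = n\cdot de_{i_{1}}\wedge\cdots\wedge de_{i_{n}}$.

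The second step is to use this to show exactness. On the direct summand of $\Omega^{n}$ spanned by homogeneous elements with $k+n > 0$, the map $h := \tfrac{1}{k+n}d$ is a contracting homotopy satisfying $p_{D}h + hp_{D} = \id$, whence every $p_{D}$-cycle in such degrees is a $p_{D}$-boundary. The only remaining piece is the tail $\Omega^{1}\xrightarrow{p_{D}}\Sym_{\bQ}(\cE')\xrightarrow{\aug}\bQ\to 0$: exactness at $\bQ$ is the surjectivity of the augmentation, while $p_{D}(f\, de_{i}) = f e_{i}$ shows that the image of $p_{D}$ on $\Omega^{1}$ is precisely the augmentation ideal, giving exactness at $\Sym_{\bQ}(\cE')$. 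I do not expect any genuine obstacle here; the only mildly delicate point is that $\cE'$ may be infinite, but this is harmless since every form involves only finitely many $e_{i}$'s, so the argument may be localised to a finitely generated polynomial subring.

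Finally, for the second assertion, I would apply $M\otimes_{\Sym_{\bQ}(\cE')}-$ to the free resolution \eqref{eq:differential_forms_resolution}. Under the isomorphism $\Omega^{n}_{\Sym_{\bQ}(\cE')|\bQ} \cong \Sym_{\bQ}(\cE')\otimes_{\bQ}\Lambda^{n}\langle\{de;e\in\cE'\}\rangle$ of $\Sym_{\bQ}(\cE')$-modules, the tensor product $M\otimes_{\Sym_{\bQ}(\cE')}\Omega^{n}_{\Sym_{\bQ}(\cE')|\bQ}$ identifies with $M\otimes_{\bQ}\Lambda^{n}\langle\{de;e\in\cE'\}\rangle$, and the induced differential is obtained by transporting $p_{D}$ through this identification, yielding precisely the formula \eqref{eq:iD} for $\pa_{n}$. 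Since tensoring a free resolution over $\Sym_{\bQ}(\cE')$ with $M$ computes $\Tor^{\Sym_{\bQ}(\cE')}_{*}(M,\bQ)=H_{*}(\Sym_{\bQ}(\cE');M)$, this gives the desired description of the Tor-groups as the homology of \eqref{eq:chain_complex_Tor}.
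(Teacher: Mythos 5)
Your proposal is correct and follows essentially the same route as the paper's proof: both invoke the Cartan formula $\cL_D = d\,p_D + p_D\,d$ for the Euler vector field, observe that $\cL_D$ acts with positive eigenvalues on $\Omega^n$ for $n>0$ and on the augmentation ideal, and deduce exactness, then tensor the free resolution with $M$ to get the Tor-complex. You are a bit more explicit than the paper about the eigenvalue $k+n$ and about writing down the contracting homotopy $\tfrac{1}{k+n}d$, but this is the same argument spelled out in slightly finer detail.
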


\begin{proof}
We denote by $\cL_D$ the Lie derivative associated to the Euler vector field. We then note that each one of the modules $\Omega^{n}_{\Sym_{\bQ}(\cE')|\bQ}$ for $n > 0$ and $\Ker(\aug:\Sym_{\bQ}(\cE')\to\bQ)$ 
is the direct sum of eigenspaces of $\cL_D$ with positive eigenvalues. Then the Cartan formula 
$\cL_D = dp_{D} + p_{D} d$ implies that the sequence \eqref{eq:differential_forms_resolution} of $\Sym_{\bQ}(\cE')$-modules is exact, whence the first result. The second result straightforwardly follows from the fact that $\Omega_{\Sym_{\bQ}(\cE')|\bQ}^{n} \cong \Sym_{\bQ}(\cE')\otimes \Lambda^{n}\langle \{de; e\in\cE' \}\rangle$ as $\Sym_{\bQ}(\cE')$-modules.
\end{proof}

\section{Stable cohomology with contravariant coefficients}\label{s:contravariant_coeff_syst_exterior}

In this section, we compute the stable twisted cohomology of the mapping class groups given by the exterior powers of $\tilde{H}^{\vee}(g)$, i.e.~the first \emph{cohomology} group of its unit tangent bundle.
Contrary to the case of the covariant coefficient system $\tilde{H}(g)$ studied in \S\ref{s:covariant_coeff_syst_exterior_general_theory}, these stable twisted cohomology groups are much more accessible; see Theorem~\ref{thm:stable_cohomology_contravariant_tildeH_ext}. First, we have the following starting tool:
\begin{prop}\label{prop:SES_tilde_H_vee}
For all $d\geq1$, there is a short exact sequence of $\Gamma_{g,1}$-modules: \begin{equation}\label{exteriorSES_cov}
\xymatrix{0\ar@{->}[r] & \Lambda^{d}H_{\bQ}(g)\ar@{->}[r]^-{\Lambda^{d}(\varpi^{*})} & \Lambda^{d}\tilde{H}^{\vee}_{\bQ}(g)\ar@{->}[r]^-{(\hbar\wedge-)^{\vee}} & \Lambda^{d-1}H_{\bQ}(g)\ar@{->}[r] & 0.}
\end{equation}
In particular, this $\Gamma_{g,1}$-module extension corresponds in $\Ext_{\ensuremath{\Gamma_{g,1}}}^{1}(\Lambda^{d-1}H_{\bQ}(g),\Lambda^{d}H_{\bQ}(g))$ to the class $k^{\vee}(g) \wedge \id_{\Lambda^{d-1}H_{\bQ}(g)}$ for $g\geq2d+5$.
\end{prop}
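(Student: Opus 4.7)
The plan is to derive the short exact sequence~\eqref{exteriorSES_cov} by applying the $d$-th exterior power construction to the rational version of the fundamental extension~\eqref{SESchutb}, and then to identify the resulting extension class via naturality.

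The rational version of~\eqref{SESchutb} is the $\Gamma_{g,1}$-equivariant short exact sequence
$$0\to H_{\bQ}(g)\xrightarrow{\varpi^*}\tilde{H}^{\vee}_{\bQ}(g)\xrightarrow{\iota^*}\bQ\to 0.$$
For any short exact sequence of vector spaces $0\to A\to B\to C\to 0$, the exterior power $\Lambda^d B$ carries a natural filtration whose associated graded is $\bigoplus_{i+j=d}\Lambda^i A\otimes\Lambda^j C$. Since here $C=\bQ$ is one-dimensional, only the layers $j=0$ and $j=1$ survive, and the filtration collapses to a two-step one, yielding the desired exact sequence, in which the injection is $\Lambda^d(\varpi^*)$ by the functoriality of $\Lambda^d$.

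To identify the surjection as $(\hbar\wedge-)^{\vee}$ (composed with the natural identification of its image inside $\Lambda^{d-1}\tilde{H}^{\vee}_{\bQ}(g)$ with $\Lambda^{d-1}H_{\bQ}(g)$), I would choose any $\bQ$-linear lift $\tilde\xi\in\tilde{H}^{\vee}_{\bQ}(g)$ of $1\in\bQ$, producing the decomposition $\Lambda^d\tilde{H}^{\vee}_{\bQ}(g)\cong\Lambda^d\varpi^*H_{\bQ}(g)\oplus\bigl(\tilde\xi\wedge\Lambda^{d-1}\varpi^*H_{\bQ}(g)\bigr)$; projection onto the second summand gives the surjection. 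To match this with the contraction $\iota_{\hbar}=(\hbar\wedge-)^{\vee}$, one uses the two key computations $\tilde\xi(\hbar)=1$ (since $\iota_*(1)=\hbar$ and $\iota^*(\tilde\xi)=1$) and $\varpi^*(\alpha)(\hbar)=0$ for every $\alpha\in H_{\bQ}(g)$ (since $\hbar\in\ker\varpi_*$ by exactness of~\eqref{SEShutb}). The Leibniz rule for the interior product then shows that $\iota_{\hbar}$ vanishes on $\Lambda^d\varpi^*H_{\bQ}(g)$ and coincides with the claimed projection on the complementary summand; in particular the map is well-defined independently of the choice of $\tilde\xi$.

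Finally, to compute the extension class, I would invoke the naturality of extension classes under exterior powers: for a short exact sequence $0\to A\to B\to C\to 0$ with one-dimensional quotient $C$, the derived extension $0\to\Lambda^d A\to\Lambda^d B\to\Lambda^{d-1}A\otimes C\to 0$ has class $\kappa\wedge\id_{\Lambda^{d-1}A}\in\Ext^1(\Lambda^{d-1}A\otimes C,\Lambda^d A)$, where $\kappa\in\Ext^1(C,A)$ is the original class. Since by construction $k^{\vee}(g)$ is the class of~\eqref{SESchutb} in $\Ext^1_{\Gamma_{g,1}}(\bQ,H_{\bQ}(g))$ (after the self-duality identification $H^{\vee}_{\bQ}(g)\cong H_{\bQ}(g)$), this yields the desired class $k^{\vee}(g)\wedge\id_{\Lambda^{d-1}H_{\bQ}(g)}$. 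The bound $g\geq 2d+5$ arises from Theorem~\ref{thm:hom_stab_MCG} applied to the relevant polynomial coefficient system, ensuring that the identification holds within the stability range where $\Ext^1_{\Gamma_{g,1}}(\Lambda^{d-1}H_{\bQ}(g),\Lambda^d H_{\bQ}(g))$ is described by the stable cohomology formula. The main obstacle I foresee is verifying the naturality principle with the correct sign conventions so that the extension class matches literally (and not merely up to sign or rescaling), which is a formal but slightly delicate homological-algebra check.
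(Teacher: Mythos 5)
Your proof is correct but takes a genuinely different route from the paper. The paper first establishes the covariant analogue \eqref{exteriorSES_contra} (Proposition~\ref{prop:ses_exterior_power}) by analysing the two-sided ideal generated by $\hbar$ in the exterior algebra $\Lambda^{*}\tilde{H}_{\bQ}(g)$, and then deduces \eqref{exteriorSES_cov} by applying the exact dualising functor together with the canonical isomorphism $\Lambda^{d}(M^{\vee})\cong(\Lambda^{d}M)^{\vee}$. You instead work directly with \eqref{SESchutb} and extract \eqref{exteriorSES_cov} as the two surviving layers of the exterior-power filtration of $\Lambda^{d}\tilde{H}^{\vee}_{\bQ}(g)$, which collapses because the cokernel $\bQ$ is one-dimensional. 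Your argument is self-contained and avoids the forward reference to Proposition~\ref{prop:ses_exterior_power}; what the paper's route buys is uniformity, since the $\langle\hbar\rangle$-ideal analysis is needed anyway for the harder covariant side and yields both short exact sequences at once. Your identification of the surjection with the contraction $\iota_{\hbar}$ via $\tilde\xi(\hbar)=1$, $\varpi^{*}(\alpha)(\hbar)=0$ and the Leibniz rule is a clean, direct replacement for the paper's appeal to the Bourbaki duality isomorphism. For the extension class both arguments encode the same computation --- the paper phrases it as a Yoneda product, you phrase it as naturality of the exterior-power filtration --- and, as you flag, the only remaining task in either formulation is the sign bookkeeping. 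Your explanation of the bound $g\geq 2d+5$ is also essentially right: it is the stability bound $2\cdot 1 + 2d + 3$ from Theorem~\ref{thm:hom_stab_MCG} for the degree-$d$ polynomial coefficient system involved, ensuring the $\Ext^{1}$ group is in the stable range where the Earle class generates.
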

\begin{proof}
We recall that, for a finite-dimensional $\bQ$-module $M$, there is a canonical isomorphism $\Lambda^{d}(M^{\vee}) \cong (\Lambda^{d}M)^{\vee}$ for all $d\geq1$; see \cite[Chapter~III, \S11.5, Prop.~7]{Bourbakialgebra} for instance. Then, the short exact sequence \eqref{exteriorSES_cov} is obtained from the $\Gamma_{g,1}$-module short exact sequence \eqref{exteriorSES_contra} below, by applying the contravariant dualising (exact) functor $\bQ\Mod(-,\bQ)$.
That the class $k^{\vee}(g) \wedge \id_{\Lambda^{d-1}H_{\bQ}(g)}$ is the extension \eqref{exteriorSES_cov} follows from the fact that it is the Yoneda product of the class $k^{\vee}(g)\in H^{1}(\Gamma_{g,1};H_{\bQ}(g))$ of \eqref{SESchutb} with the trivial class $\id_{\Lambda^{d-1}H_{\bQ}(g)}\in \Ext^{0}(\Lambda^{d-1}H_{\bQ}(g);\Lambda^{d-1}H_{\bQ}(g))$.
\end{proof}
Let $\delta^{i}\colon H^{i}(\Gamma_{g,1};\Lambda^{d-1}H_{\bQ}(g))\to H^{i+1}(\Gamma_{g,1};\Lambda^{d}H_{\bQ}(g))$ be the $i^{\ith}$ connecting homomorphism of the cohomology long exact sequence associated with the extension \eqref{exteriorSES_cov}.
\begin{lem}\label{lem:connecting_hom_tilde_H_vee}
For $g\geq 2d+2i+5$, the morphism $\delta^{i}$ is equal to $-(m_{1,1} \wedge \id_{\Lambda^{d-1}H_{\bQ}(g)})\cup -$.
\end{lem}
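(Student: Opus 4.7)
The plan is to interpret the connecting homomorphism $\delta^{i}$ as the Yoneda/cup product with the extension class of \eqref{exteriorSES_cov}, and then identify that extension class with $-m_{1,1}\wedge \id_{\Lambda^{d-1}H_{\bQ}(g)}$ using the material of \S\ref{ss:MCG_representations} and \S\ref{s:stable_cohomology_framework}.

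First, I would recall the following standard homological algebra fact: given any short exact sequence $0\to A\to B\to C\to 0$ of $\bQ[\Gamma_{g,1}]$-modules with extension class $\kappa\in \Ext^{1}_{\ensuremath{\Gamma_{g,1}}}(C,A)\cong H^{1}(\Gamma_{g,1};C^{\vee}\otimes A)$, the connecting homomorphism $\delta^{i}\colon H^{i}(\Gamma_{g,1};C)\to H^{i+1}(\Gamma_{g,1};A)$ in the long exact sequence in group cohomology coincides (up to the standard sign) with Yoneda product with $\kappa$, i.e.\ with the composite
\[
H^{i}(\Gamma_{g,1};C)\xrightarrow{\kappa\cup-} H^{i+1}(\Gamma_{g,1};C^{\vee}\otimes A\otimes C)\xrightarrow{(\mathrm{ev}_{C^{\vee}\otimes C})_{*}} H^{i+1}(\Gamma_{g,1};A),
\]
where $\mathrm{ev}_{C^{\vee}\otimes C}\colon C^{\vee}\otimes C\to \bQ$ is the evaluation. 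Applying this to \eqref{exteriorSES_cov} with $A=\Lambda^{d}H_{\bQ}(g)$ and $C=\Lambda^{d-1}H_{\bQ}(g)$, Proposition~\ref{prop:SES_tilde_H_vee} identifies the extension class with $k^{\vee}(g)\wedge \id_{\Lambda^{d-1}H_{\bQ}(g)}$, so that
\[
\delta^{i}(-)\;=\;\bigl(k^{\vee}(g)\wedge \id_{\Lambda^{d-1}H_{\bQ}(g)}\bigr)\cup -
\]
after the canonical contraction is incorporated into the notation $\wedge$.

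Next, I would invoke Remark~\ref{rmk:identification_m_1,1}: under the self-duality isomorphism \eqref{eq:dual_H}, the class $m_{1,1}\in H_{\st}^{1}(H_{\bQ})$ is identified with $[k_{\xi}(g,-)]=-[k_{\xi}^{\vee}(g,-)]$ for $g\geq 2$, that is $k^{\vee}(g)=-m_{1,1}$. Substituting this into the formula above yields exactly
\[
\delta^{i}(-)\;=\;-\bigl(m_{1,1}\wedge \id_{\Lambda^{d-1}H_{\bQ}(g)}\bigr)\cup -.
\]
To finish, I would justify the hypothesis $g\geq 2d+2i+5$: by Example~\ref{eg:ext_powers_H_tildeH}, the functors $\Lambda^{d-1}H_{\bQ}$ and $\Lambda^{d}H_{\bQ}$ are polynomial of degrees $d-1$ and $d$, so Theorem~\ref{thm:hom_stab_MCG} gives stability ranges $2i+2(d-1)+3$ for the source and $2(i+1)+2d+3=2i+2d+5$ for the target; the latter is the binding constraint. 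In this range, the identification $k^{\vee}(g)\leftrightarrow -m_{1,1}$ from Remark~\ref{rmk:identification_m_1,1} is valid and independent of $g$, so the identity of maps on cohomology is well-defined.

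The only subtle point I anticipate is pinning down sign conventions: one must be consistent between (i) the sign in the snake-lemma definition of $\delta^{i}$, (ii) the convention for Yoneda product versus cup product via the canonical resolution of $\bQ$, and (iii) the sign in $k^{\vee}=-k$ recorded by Lemma~\ref{lem:cocycles_relations}. Once these conventions are fixed consistently (it suffices to compute on a $1$-cocycle representative of the extension in the bar resolution), the overall sign $-1$ in the statement comes unambiguously from the single sign change of Lemma~\ref{lem:cocycles_relations}.
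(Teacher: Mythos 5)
Your argument is correct and reaches the right answer, but it takes a genuinely different (more abstract) route than the paper. You invoke the general principle that, for a short exact sequence of $\bQ[\Gamma_{g,1}]$-modules, the connecting homomorphism $\delta^{i}$ in group cohomology is the Yoneda product (equivalently, a cup product followed by contraction) with the extension class; you then feed in Proposition~\ref{prop:SES_tilde_H_vee} to identify that class as $k^{\vee}(g)\wedge\id_{\Lambda^{d-1}H_{\bQ}(g)}$ and Remark~\ref{rmk:identification_m_1,1} (with Lemma~\ref{lem:cocycles_relations}) to turn $k^{\vee}(g)$ into $-m_{1,1}$. The paper instead gives a self-contained cochain-level computation: it builds an explicit set-theoretic splitting $s_{\xi}\wedge\Lambda^{d-1}\varpi^{*}$ of the quotient map from the framing $\xi$, computes $\delta^{i}([z])=[\pa(s_{\xi}(z))]$ directly in the bar resolution, and reads off $(k^{\vee}_{\xi}\wedge\id)\cup z$ on the nose. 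What your route buys is conceptual economy — it makes no reference to framings and isolates the extension-class computation as the only input. What it gives up, and what you correctly flag as the one delicate point, is that the exact sign in the ``$\delta^{i}$ equals Yoneda product with the extension class'' fact is convention-dependent (in some treatments it carries a $(-1)^{i}$), and verifying that it matches the paper's cup-product conventions without any degree-dependent sign is precisely the cochain-level calculation the paper performs. Since the paper already goes through that calculation to prove Proposition~\ref{prop:SES_tilde_H_vee} for the extension-class identification, the two proofs end up doing comparable work; but yours packages it more modularly, at the cost of leaving the sign-calibration implicit. The proof is correct as long as that calibration is made explicit (e.g.\ by checking the $i=0$ case on a cocycle as you suggest, which would show there is no $(-1)^{i}$ factor in this convention).
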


\begin{proof}
Let $[z]$ be a cohomology class of $H^{i}(\Gamma_{g,1};\Lambda^{d-1}H_{\bQ}(g))$. We use the normalised cochain complex and generically denote by $\pa$ its differentials. We recall that $\hbar \in \tilde{H}_{\bQ}(g)$ denotes the homology class of the fiber of the unit tangent bundle, and that $\xi$ denotes the homotopy class of a framing of the tangent bundle $\mathrm{UT}\Sigma_{g,1}$.
We deduce from the action of $\Gamma_{g,1}$ on $\tilde{H}^{\vee}(g)$ (i.e.~the dual of the matrix \eqref{eq:matrix_action}) that the map $s_{\xi}\colon \bZ\hookrightarrow\tilde{H}^{\vee}_{\bQ}(g)$ defined by $u \mapsto u + k^{\vee}_{\xi}(g,-)$ defines a splitting (as an abelian group morphism) to the surjection $\tilde{H}^{\vee}_{\bQ}(g)\twoheadrightarrow\bZ$, $v \mapsto v(\hbar)$.
Hence the map $s_{\xi} \wedge \Lambda^{d-1}\varpi^{*}:\Lambda^{d-1}H_{\bQ}(g)\hookrightarrow\Lambda^{d}\tilde{H}^{\vee}_{\bQ}(g)$ defines a 
splitting (as an abelian group morphism) to the surjection $\Lambda^{d}\tilde{H}^{\vee}_{\bQ}(g)\twoheadrightarrow\Lambda^{d-1}H_{\bQ}(g)$. Since $z$ is a cocycle, we have $\pa z = 0$ and then compute that
\begin{align}
\begin{split}\label{eq:trivialliftchaincomplex_contravariant}
-(s_{\xi} \wedge \Lambda^{d-1}\varpi^{*})\varphi_{0}(z([\varphi_{1}\mid\cdots\mid\varphi_{i}])) & \;=\;\sum_{i=1}^{p}(-1)^{i}(s_{\xi} \wedge \Lambda^{d-1}\varpi^{*})(z([\varphi_{0}\mid\cdots\mid\varphi_{i}\varphi_{i+1}\mid\cdots\mid\varphi_{p}]))\\
 & \;\;\;+(-1)^{p+1}(s_{\xi} \wedge \Lambda^{d-1}\varpi^{*})(z([\varphi_{0}\mid\cdots\mid\varphi_{p-1}])).
\end{split}
\end{align}
We note that $\delta^{i}[z]=[\pa(s_{\xi}(z))]$ by the formal definition of 
$\delta^{i}$. The result follows from the equalities:
\begin{align*}\delta^{i}([z])([\varphi_{0}\mid\cdots\mid\varphi_{i}]) & \;=\;(\varphi_{0}(s_{\xi} \wedge \Lambda^{d-1}\varpi^{*})-(s_{\xi} \wedge \Lambda^{d-1}\varpi^{*})\varphi_{0}) z([\varphi_{1}\mid\cdots\mid\varphi_{i}])\\
 & \;=\;(k^{\vee}_{\xi}(g,-) \wedge \id_{\Lambda^{d-1}H_{\bQ}(g)})(\varphi_{0})\varphi_{0}z([\varphi_{1}\mid\cdots\mid\varphi_{i}])\\
 & \;=\;((-m_{1,1} \wedge \id_{\Lambda^{d-1}H_{\bQ}(g)})\cup z)([\varphi_{0}\mid\cdots\mid\varphi_{i}]),
\end{align*}
More precisely, the first equality is deduced from the computation \eqref{eq:trivialliftchaincomplex_contravariant}, the second equality follows from the fact that $k^{\vee}_{\xi}(g,\varphi)=\varphi\circ s_{\xi}\circ\varphi^{-1}-s_{\xi}$ since $k^{\vee}_{\xi}$ is the generating $1$-cocycle of the extension \eqref{SESchutb}, and the identification of $[k^{\vee}_{\xi}(g,-)]=-m_{1,1}$ (see Remark~\ref{rmk:identification_m_1,1}) gives the third equality.
\end{proof}

The cohomology long exact sequence applied to \eqref{exteriorSES_cov} provides a $4$-terms exact sequence, associated with the connecting homomorphism $-(m_{1,1} \wedge \id_{\Lambda^{d-1}H_{\bQ}(g)})\cup -$ from $H_{\st}^{i}(\Lambda^{d}H_\bQ(g))$ to $H_{\st}^{i+1}(\Lambda^{d-1}H_{\bQ}(g))$ for each $i\geq0$ such that $i + d = 1 \Modulo{2}$.
Let $\cQ_{d}(m_{1,1})$ be the subset of $\cQ_{d}$ (see Definition~\ref{def:weighted_partition}) of the weighted partitions $\{(i_{1},j_{1}),\ldots,(i_{\nu},j_{\nu})\}$ where $(i_{i},j_{i})=(1,1)$ for some $i\in \{1,\ldots,\nu\}$. The stable map $-(m_{1,1} \wedge \id_{\Lambda^{d-1}H_{\bQ}(g)})\cup -$ being injective, we deduce that:

\begin{thm}\label{thm:stable_cohomology_contravariant_tildeH_ext}
The stable twisted cohomology module $H_{\st}^{*}(\Lambda^{d}\tilde{H}^{\vee}_{\bQ})$ is isomorphic to the free $\Sym_{\bQ}(\cE)$-module on the twisted Mumford-Morita-Miller classes $\{m_{Q},Q\in\cQ_{d}-\cQ_{d}(m_{1,1})\}$.
In particular, the $\Sym_{\bQ}(\cE)$-module $H_{\st}^{*}(\Lambda^{d}\tilde{H}^{\vee}_{\bQ})$ is concentrated the degrees of the same parity as $d$.
The commutative bigraded stable cohomology algebra $H_{\st}^{*}(\Lambda^{*}\tilde{H}^{\vee}_{\bQ})$ is the polynomial algebra in the twisted Mumford-Morita-Miller classes $\{m_{i,j};i\geq0,j\geq1,(i,j)\neq(1,1)\}$ over the ring $\Sym_{\bQ}(\cE)$.
\end{thm}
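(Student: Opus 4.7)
The plan is to apply the cohomology long exact sequence (LES) associated to the $\Gamma_{g,1}$-module short exact sequence \eqref{exteriorSES_cov}, pass to the stable limit, and then use Lemma~\ref{lem:connecting_hom_tilde_H_vee} together with Theorem~\ref{stablecohomologyKawazumi} to determine the answer. First, I would check that all three functors involved give rise to stable cohomology in compatible ranges. For $\Lambda^{d-1}H_{\bQ}$ and $\Lambda^{d}H_{\bQ}$, stability follows from Theorem~\ref{thm:hom_stab_MCG} (via the self-duality \eqref{eq:dual_H}); for $\Lambda^{d}\tilde{H}^{\vee}_{\bQ}$, it is a direct application of Theorem~\ref{thm:hom_stab_MCG} to the contravariant system of Example~\ref{eg:tilde_H_functor_vee}. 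Hence the LES of \eqref{exteriorSES_cov} stabilises to a LES of $\Sym_{\bQ}(\cE)$-modules
\[
\cdots \to H_{\st}^{i-1}(\Lambda^{d-1}H_{\bQ}) \xrightarrow{\delta^{i-1}} H_{\st}^{i}(\Lambda^{d}H_{\bQ}) \to H_{\st}^{i}(\Lambda^{d}\tilde{H}^{\vee}_{\bQ}) \to H_{\st}^{i}(\Lambda^{d-1}H_{\bQ}) \xrightarrow{\delta^{i}} H_{\st}^{i+1}(\Lambda^{d}H_{\bQ}) \to \cdots
\]

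Next, I would invoke Lemma~\ref{lem:connecting_hom_tilde_H_vee} to identify each connecting homomorphism $\delta^i$ with the cup product $-m_{1,1}\cup -$ in the bigraded $\Sym_{\bQ}(\cE)$-algebra $H_{\st}^{*}(\Lambda^{*}H_{\bQ})$. By Theorem~\ref{stablecohomologyKawazumi}, this algebra is a polynomial $\Sym_{\bQ}(\cE)$-algebra on the classes $\{m_{i,j};\, i\geq 0,\, j\geq 1\}$; in particular, multiplication by the generator $m_{1,1}$ is injective on each $H_{\st}^{*}(\Lambda^{d-1}H_{\bQ})$. Thus the LES breaks into short exact sequences of $\Sym_{\bQ}(\cE)$-modules
\[
0 \longrightarrow H_{\st}^{i-1}(\Lambda^{d-1}H_{\bQ}) \xrightarrow{-m_{1,1}\cup} H_{\st}^{i}(\Lambda^{d}H_{\bQ}) \longrightarrow H_{\st}^{i}(\Lambda^{d}\tilde{H}^{\vee}_{\bQ}) \longrightarrow 0,
\]
which identify $H_{\st}^{i}(\Lambda^{d}\tilde{H}^{\vee}_{\bQ})$ with the quotient $H_{\st}^{i}(\Lambda^{d}H_{\bQ})/m_{1,1}\cdot H_{\st}^{i-1}(\Lambda^{d-1}H_{\bQ})$.

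For the algebra structure, I would observe that the collection of maps $\Lambda^{d}(\varpi^{*})\colon \Lambda^{d}H_{\bQ}\to\Lambda^{d}\tilde{H}^{\vee}_{\bQ}$, being induced by the single linear map $\varpi^{*}\colon H_{\bQ}\to\tilde{H}^{\vee}_{\bQ}$, assemble into a morphism of exterior algebras compatible with the wedge product on each side. Passing to cohomology yields a homomorphism of bigraded commutative $\Sym_{\bQ}(\cE)$-algebras $H_{\st}^{*}(\Lambda^{*}H_{\bQ}) \to H_{\st}^{*}(\Lambda^{*}\tilde{H}^{\vee}_{\bQ})$ which is bidegree-wise surjective by the previous paragraph, with kernel equal (bidegree by bidegree) to the ideal generated by $m_{1,1}$. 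Quotienting the polynomial $\Sym_{\bQ}(\cE)$-algebra $\Sym_{\bQ}(\cE)[\{m_{i,j}\}]$ by the principal ideal $(m_{1,1})$ gives the polynomial algebra $\Sym_{\bQ}(\cE)[\{m_{i,j};\, (i,j)\neq (1,1)\}]$, which restricted to exterior degree $d$ is the free $\Sym_{\bQ}(\cE)$-module on $\{m_{Q};\, Q \in \cQ_{d}-\cQ_{d}(m_{1,1})\}$. Finally, the parity statement is automatic, since each $m_{i,j}$ has cohomological degree $2i+j-2$, hence any $m_{Q}$ with $Q\in\cQ_{d}$ has cohomological degree congruent to $d$ modulo $2$.

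The proof contains no real obstacle: the heavy lifting is carried out by the identification of the connecting homomorphism in Lemma~\ref{lem:connecting_hom_tilde_H_vee} and by the known polynomial structure of $H_{\st}^{*}(\Lambda^{*}H_{\bQ})$ from Theorem~\ref{stablecohomologyKawazumi}. The only point requiring mild care is the last one, namely checking that the surjection of bigraded algebras truly has kernel the ideal $(m_{1,1})$ (not just contains it) so that the algebra identification is an isomorphism; this is confirmed by the bidegree-wise calculation provided by the broken LES.
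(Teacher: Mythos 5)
Your proof is correct and takes essentially the same approach as the paper: it applies the cohomology long exact sequence of the short exact sequence \eqref{exteriorSES_cov}, identifies the connecting homomorphism with $-m_{1,1}\cup-$ via Lemma~\ref{lem:connecting_hom_tilde_H_vee}, uses the injectivity of multiplication by the polynomial generator $m_{1,1}$ from Theorem~\ref{stablecohomologyKawazumi} to break the long exact sequence into short exact sequences, and reads off the module/algebra structure from the resulting cokernel. The only elaboration beyond what the paper records is your explicit observation that $\Lambda^{*}(\varpi^{*})$ is an algebra map, which makes the identification of $H_{\st}^{*}(\Lambda^{*}\tilde{H}^{\vee}_{\bQ})$ with the quotient algebra $\Sym_{\bQ}(\cE)[\{m_{i,j}\}]/(m_{1,1})$ fully transparent.
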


\section{Stable cohomology in covariant coefficients: general theory}\label{s:covariant_coeff_syst_exterior_general_theory}

In this section, we study the stable twisted cohomology of the mapping class groups $\Gamma_{g,1}$ with twisted coefficient given by the exterior powers of $\tilde{H}_{\bQ}(g)$. We make the first steps for the computations of these stable twisted cohomology groups in \S\ref{ss:preliminary_computations}, in particular the determination of the connecting homomorphisms in cohomology for the short exact sequence of $\Gamma_{g,1}$-modules \eqref{exteriorSES_contra}; see Proposition~\ref{lem:connecting_hom_exterior_contra}.
We provide a first computation of the stable cohomology algebra with twisted coefficient in the graded exterior algebra over the localisation of $\Sym_{\bQ}(\cE)$ in \S\ref{ss:first_localisation}.
Finally, we introduce in \S\ref{ss:key_tools} the key tools for this work, in particular the determination of the connecting homomorphisms in cohomology for the short exact sequence of $\Gamma_{g,1}$-modules \eqref{exteriorSES_contra}; see Proposition~\ref{lem:connecting_hom_exterior_contra}.
We fix the following conventions and notations for the remainder of the paper.
\begin{convention}\label{conv:section_5}
From now on, we implicitly assume that $g\geq2i+2d+3$ each time we consider a cohomological degree $i$ for $H^{i}(\Gamma_{g,1};M(g))$ where $M(g)=\Lambda^{d}\tilde{H}_{\bQ}(g)$ or $\Lambda^{d}\tilde{H}^{\vee}_{\bQ}(g)$. This is for the cohomological stability bound of Theorem~\ref{thm:hom_stab_MCG} to be reached, so that we can consider the stable twisted cohomology graded modules $H_{\st}^{*}(\Lambda^{d}\tilde{H})$ and $H_{\st}^{*}(\Lambda^{d}\tilde{H}^{\vee})$.
\end{convention}

\subsection{Short exact sequences and derivations}\label{ss:preliminary_computations}

We recall that $\hbar \in \tilde{H}_{\bQ}(g)$ is the homology class of the fiber of the unit tangent bundle, which spans the kernel of the canonical map $\varpi_{*}: \tilde{H}_{\bQ}(g)\to H_{\bQ}(g)$ induced by the projection $\mathrm{UT}\Sigma_{g,1} \to \Sigma_{g,1}$.
The first key tool for studying the stable twisted cohomology module $H_{\st}^{*}(\Lambda^{d}\tilde{H}_{\bQ})$ is the following short exact sequence:
\begin{prop}\label{prop:ses_exterior_power}
For all $d\geq1$, there is a short exact sequence of $\Gamma_{g,1}$-modules: \begin{equation}\label{exteriorSES_contra}
\xymatrix{0\ar@{->}[r] & \Lambda^{d-1}H_{\bQ}(g)\ar@{->}[r]^-{\hbar\wedge-} & \Lambda^{d}\tilde{H}_{\bQ}(g)\ar@{->}[r]^-{\Lambda^{d}(\varpi_{*})} & \Lambda^{d}H_{\bQ}(g)\ar@{->}[r] & 0.}
\end{equation}
In particular, as an element of $\Ext_{\ensuremath{\Gamma_{g,1}}}^{1}(\Lambda^{d}H_{\bQ}(g),\Lambda^{d-1}H_{\bQ})(g))$, the $\Gamma_{g,1}$-module extension \eqref{exteriorSES_contra} corresponds to the cohomology class of $k(g)\wedge \id_{\Lambda^{d-1}H_{\bQ}(g)}$ for $g\geq2d+5$.
\end{prop}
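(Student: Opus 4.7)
The proposition has two assertions: the existence of the short exact sequence, and the identification of its class in $\mathrm{Ext}^{1}$. My plan for the first assertion is to apply the $d^{\ith}$ exterior power functor to the rational version of the Gysin short exact sequence (\ref{SEShutb}), namely $0 \to \bQ\hbar \to \tilde{H}_{\bQ}(g) \to H_{\bQ}(g) \to 0$. For any short exact sequence $0 \to A \to B \to C \to 0$ of $\bQ$-vector spaces, $\Lambda^{d}B$ carries a natural filtration whose associated graded is $\bigoplus_{i=0}^{d}\Lambda^{i}A\otimes \Lambda^{d-i}C$. Here $A = \bQ\hbar$ is one-dimensional, so $\Lambda^{i}A = 0$ for all $i\geq 2$ and the filtration collapses to a two-step filtration, yielding the short exact sequence \eqref{exteriorSES_contra} after identifying $\bQ\hbar\otimes\Lambda^{d-1}H_{\bQ}(g)$ with $\Lambda^{d-1}H_{\bQ}(g)$. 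The map $\hbar\wedge -$ is well-defined on $\Lambda^{d-1}H_{\bQ}(g)$ independently of the choice of lift to $\Lambda^{d-1}\tilde{H}_{\bQ}(g)$, precisely because $\hbar\wedge\hbar = 0$. Equivariance of both maps is automatic: the fiber class $\hbar$ is fixed by $\Gamma_{g,1}$ (as is manifest from the matrix description \eqref{eq:matrix_action}), and $\varpi_{*}$ is induced by a $\Gamma_{g,1}$-equivariant map of spaces.

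For the second assertion, the plan is to carry out an explicit cochain-level computation entirely parallel to the one performed in Lemma~\ref{lem:connecting_hom_tilde_H_vee} for the contravariant case. Fix a framing $\xi$ of $\mathrm{UT}\Sigma_{g,1}$. From \eqref{eq:matrix_action}, the section $s_{\xi}\colon H_{\bQ}(g) \to \tilde{H}_{\bQ}(g)$ of $\varpi_{*}$ determined by $\xi$ is a $\bQ$-linear splitting of (\ref{SEShutb}) whose failure to be $\Gamma_{g,1}$-equivariant is precisely measured by the Earle cocycle $k_{\xi}(g,\varphi) \in H_{\bQ}(g)$, \emph{i.e.}~$\varphi\cdot s_{\xi}(v) - s_{\xi}(\varphi\cdot v) = k_{\xi}(g,\varphi)(v)\cdot \hbar$ for $v\in H_{\bQ}(g)$, after invoking the identification \eqref{eq:dual_H}. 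Inducing the abelian-group splitting $\Lambda^{d}s_{\xi}\oplus (\hbar\wedge -)$ of the sequence \eqref{exteriorSES_contra}, a straightforward Leibniz-type expansion shows that the corresponding defect cocycle is exactly $k_{\xi}(g,\varphi) \wedge \mathrm{id}_{\Lambda^{d-1}H_{\bQ}(g)}$. Passing to cohomology, this produces the class $k(g)\wedge \mathrm{id}_{\Lambda^{d-1}H_{\bQ}(g)}$ in $\Ext^{1}_{\Gamma_{g,1}}(\Lambda^{d}H_{\bQ}(g),\Lambda^{d-1}H_{\bQ}(g))$ as asserted; equivalently, this class is the Yoneda product of the Earle class $k(g)$ in $\Ext^{1}_{\Gamma_{g,1}}(\bQ,H_{\bQ}(g))$ with the trivial class $\mathrm{id}_{\Lambda^{d-1}H_{\bQ}(g)}$, combined with the canonical surjection $H_{\bQ}(g)\otimes \Lambda^{d-1}H_{\bQ}(g)\twoheadrightarrow \Lambda^{d}H_{\bQ}(g)$.

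The main obstacle I anticipate is not the cohomological computation itself (which mirrors \S\ref{s:contravariant_coeff_syst_exterior}), but rather keeping track of the correct self-duality identifications that allow the Earle class $k(g)$ to be viewed in the various groups $H^{1}(\Gamma_{g,1};H_{\bQ}(g))$, $\Ext^{1}_{\Gamma_{g,1}}(\bQ,H_{\bQ}(g))$, and finally in $\Ext^{1}_{\Gamma_{g,1}}(\Lambda^{d}H_{\bQ}(g),\Lambda^{d-1}H_{\bQ}(g))$. The stability assumption $g\geq 2d+5$ enters precisely to ensure that the cohomology classes stabilise so that $k(g)$ identifies with $m_{1,1}$ as in Remark~\ref{rmk:identification_m_1,1}, which is the form needed for the subsequent application to the connecting homomorphism analysis in Proposition~\ref{lem:connecting_hom_exterior_contra}.
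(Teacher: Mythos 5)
Your proof of the existence of the short exact sequence \eqref{exteriorSES_contra} takes a genuinely different and arguably slicker route than the paper's. You invoke the standard two-step filtration on $\Lambda^{d}$ of an extension $0\to A\to B\to C\to 0$ with $\dim A = 1$, whose associated graded collapses to $A\otimes\Lambda^{d-1}C$ and $\Lambda^{d}C$; equivariance comes for free since $\hbar$ is $\Gamma_{g,1}$-fixed and the filtration is natural. The paper instead constructs the map $\Psi_{d}\colon\Lambda^{d-1}H_{\bQ}(g)\to\Ker(\Lambda^{d}\varpi_{*})$ by hand from a (non-equivariant) splitting $s$ of $\varpi_{*}$, and proves that $\Psi_{d}$ is independent of $s$ (hence equivariant), injective, and surjective. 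Both are correct; the paper's version has the pedagogical advantage of explicitly producing the map $\Psi_{d}$, which it immediately reuses in Propositions~\ref{prop:varphi_s_mu} and \ref{lem:connecting_hom_exterior_contra}, whereas your filtration argument is more economical as a standalone proof of the statement.

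For the identification of the extension class you propose a cochain-level Leibniz computation with the induced abelian splitting $\Lambda^{d}s_{\xi}$, measuring its equivariance defect via the Earle cocycle. This is exactly the content the paper defers to the subsequent \eqref{eq:varphi_s}, Proposition~\ref{prop:varphi_s_mu} and Proposition~\ref{lem:connecting_hom_exterior_contra}, while the proof of Proposition~\ref{prop:ses_exterior_power} itself just cites the Yoneda-product description; so your plan is essentially the paper's plan, only stated upfront. One bookkeeping caveat in your final sentence: the extension class of \eqref{exteriorSES_contra} lives in $\Ext^{1}_{\Gamma_{g,1}}(\Lambda^{d}H_{\bQ}(g),\Lambda^{d-1}H_{\bQ}(g))$, so to reach it from the Earle class one should view $k(g)$ as the extension class of \eqref{SEShutb}, i.e.\ in $\Ext^{1}_{\Gamma_{g,1}}(H_{\bQ}(g),\bQ)$, form the external product with $\id_{\Lambda^{d-1}H_{\bQ}(g)}$ to land in $\Ext^{1}_{\Gamma_{g,1}}(H_{\bQ}(g)\otimes\Lambda^{d-1}H_{\bQ}(g),\Lambda^{d-1}H_{\bQ}(g))$, and then \emph{pull back} along the rational splitting $\Lambda^{d}H_{\bQ}(g)\hookrightarrow H_{\bQ}(g)\otimes\Lambda^{d-1}H_{\bQ}(g)$, rather than push forward along the surjection as you wrote. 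This matches the self-duality housekeeping you already flag as the main pitfall; the paper is similarly loose at this point, writing $k(g)\in H^{1}(\Gamma_{g,1};H_{\bQ}(g))$ and relying implicitly on \eqref{eq:dual_H}.
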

\begin{proof}
The map $\varpi_{*}:\tilde{H}_{\bQ}(g) \to H_{\bQ}(g)$ defines the algebra homomorphism $\Lambda^{d}(\varpi_{*}): \Lambda^{d}\tilde{H}_{\bQ}(g) \to \Lambda^{d} H_{\bQ}(g)$, which is clearly $\Gamma_{g,1}$-equivariant. Let $\left\langle \hbar\right\rangle _{\Lambda^{d}\tilde{H}_{\bQ}(g)}$ be the two-sided ideal generated by $\hbar$ in the exterior algebra $\Lambda^{d}\tilde{H}_{\bQ}(g)$. By \cite[Chapter~III, \S7, Proposition 3]{Bourbakialgebra}, the morphism $\Lambda^{d}(\varpi_{*})$ is surjective and its kernel is isomorphic to $\left\langle \hbar\right\rangle _{\Lambda^{d}\tilde{H}_{\bQ}(g)}$. We define a $\Gamma_{g,1}$-morphism 
$$\Psi_{d}: \Lambda^{d-1}H_{\bQ}(g) \to \Ker(\Lambda^{d}(\varpi_{*}))$$
as follows. For $s\colon H_{\bQ}(g) \to \tilde{H}_{\bQ}(g)$ a splitting (as an abelian group morphism) to the map $\varpi_{*}$, we assign $\Psi_{d}(u) := \hbar \wedge s(u)$ for all $u \in \Lambda^{d-1}H_{\bQ}(g)$. If $s'\colon H_{\bQ}(g) \to \tilde{H}_{\bQ}(g)$ is another splitting of $\varpi_{*}$, then $s'(u)- s(u)$ 
belongs to $\left\langle \hbar\right\rangle _{\Lambda^{d-1}\tilde{H}_{\bQ}(g)}$, so there exists an element 
$v \in \Lambda^{d-1}\tilde{H}_{\bQ}(g)$ such that $s'(u)- s(u)= 
\hbar\wedge v$ and therefore $\hbar\wedge s'(u) -  \hbar\wedge s(u)= \hbar\wedge\hbar\wedge v = 0$. 
Hence, the morphism $\Psi_{d}$ is independent of the choice of $s$. In particular, this shows that it is by construction equivariant with respect to the canonical $\Gamma_{g,1}$-actions on $\Lambda^{d-1}\tilde{H}_{\bQ}(g)$ and $\Ker(\Lambda^{d}(\varpi_{*}))$. It is clearly injective and the surjectivity follows from the fact that any element of $\left\langle \hbar\right\rangle _{\Lambda^{d}\tilde{H}_{\bQ}(g)}$ may be written as $\tilde w\wedge\hbar$ where $w\in \Lambda^{d-1}\tilde{H}_{\bQ}(g)$ is such that $\Lambda^{d-1}(\varpi_{*})(w)\neq0$, which provides the short exact sequence \eqref{exteriorSES_contra}.

Finally, that the class $k(g)\wedge \id_{\Lambda^{d-1}H_{\bQ}(g)}$ is that of the extension \eqref{exteriorSES_cov} follows from the fact that it is the Yoneda product of the extension class $k(g)\in H^{1}(\Gamma_{g,1};H_{\bQ}(g))$ of \eqref{SEShutb} with the trivial class $\id_{\Lambda^{d-1}H_{\bQ}(g)}\in \Ext^{0}(\Lambda^{d-1}H_{\bQ}(g),\Lambda^{d-1}H_{\bQ}(g))$.
\end{proof}

Let $\xi: \mathrm{UT}\Sigma_{g,1} \to \bS^1$ be a framing of $\mathrm{UT}\Sigma_{g,1}$, we denote by $\xi_{*}: \tilde{H}_{\bQ}(g)\to \bQ = H_{1}(\bS^1;\bQ)$ the induced homomorphism in homology and we recall that $\Ker(\xi_{*})\cong H_{\bQ}(g)$ by the projection $\varpi_{*}$.
The framing $\xi$ induces a unique splitting (as an abelian group morphism) $s_{\xi}\colon H_{\bQ}(g) \to \tilde{H}_{\bQ}(g)$ to the map $\varpi_{*}$, such that the composite $\xi_{*}\circ s_{\xi}$ is the zero map $0\colon H_{\bQ}(g) \to \tilde{H}_{\bQ}(g)\to \bQ$.
We have two keys equalities on the interaction of the splitting $s_{\xi}$ with the contraction $\mu$ and the exterior product with the class $\hbar$:
\begin{lem}
For all $\varphi \in \Gamma_{g,1}$ and $v \in H_{\bQ}(g)$, we have:
\begin{equation}\label{eq:varphi_s}
(\varphi\circ s_{\xi}\circ\varphi^{-1}-s_{\xi})(v)=\mu(k_{\xi}(g,\varphi),v)\hbar.
\end{equation}
In particular, we deduce that
\begin{equation}\label{eq:varphi_s_theta}
\hbar \wedge\varphi s_{\xi}\varphi^{-1}(v) = \hbar\wedge s_{\xi}(v).
\end{equation}
\end{lem}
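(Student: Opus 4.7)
The plan is to first establish \eqref{eq:varphi_s} by a direct computation, after which \eqref{eq:varphi_s_theta} is immediate.

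Both $s_{\xi}$ and $\varphi\circ s_{\xi}\circ \varphi^{-1}$ are sections of $\varpi_{*}\colon \tilde H_{\bQ}(g)\to H_{\bQ}(g)$, since $\varpi_{*}$ is $\Gamma_{g,1}$-equivariant. Hence their difference takes values in $\ker(\varpi_{*})=\bQ\hbar$, so $(\varphi\circ s_{\xi}\circ\varphi^{-1}-s_{\xi})(v)=c(\varphi,v)\,\hbar$ for a unique scalar $c(\varphi,v)\in\bQ$ depending linearly on $v$. To identify $c(\varphi,v)$, I would apply $\xi_{*}\colon\tilde H_{\bQ}(g)\to\bQ$. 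Since $\xi$ restricts to a degree one map on each fibre, $\xi_{*}(\hbar)=1$; and by the very definition of $s_{\xi}$, $\xi_{*}\circ s_{\xi}=0$. Therefore $c(\varphi,v)=\xi_{*}(\varphi s_{\xi}\varphi^{-1}(v))$.

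Next I would translate this into a statement about framings: by the chain rule, $\xi_{*}\circ \varphi_{*}=(\xi\circ d\varphi)_{*}=(\varphi^{-1}\!\cdot\!\xi)_{*}$, so
\[
c(\varphi,v) \;=\; (\varphi^{-1}\!\cdot\!\xi)_{*}\bigl(s_{\xi}(\varphi^{-1}v)\bigr).
\]
The rotation-number interpretation recalled in~\S\ref{ss:MCG_representations} says that if $\xi'-\xi=u\in H^{1}(\Sigma_{g,1};\bZ)$, then for any $\tilde w\in\tilde H_{\bQ}(g)$ one has $\xi'_{*}(\tilde w)-\xi_{*}(\tilde w)=u(\varpi_{*}(\tilde w))$ (it suffices to check this on classes of the form $[\dot\alpha/\Vert\dot\alpha\Vert]$, where it is the very definition of $\rot$). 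Applying this with $\xi'=\varphi^{-1}\!\cdot\!\xi$ and $\tilde w=s_{\xi}(\varphi^{-1}v)$, and using $\xi_{*}\circ s_{\xi}=0$, yields
\[
c(\varphi,v)\;=\;k_{\xi}(g,\varphi^{-1})(\varphi^{-1}v).
\]

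To finish, I would invoke the cocycle identity $k_{\xi}(g,\varphi^{-1})=-\varphi^{-1}\!\cdot\!k_{\xi}(g,\varphi)$ (obtained by applying $\delta k_{\xi}=0$ to the pair $(\varphi,\varphi^{-1})$) together with the $\Gamma_{g,1}$-action on $H^{1}(\Sigma_{g,1};\bZ)$ given by $(\gamma\!\cdot\!f)(a)=f(\gamma^{-1}a)$, to obtain $c(\varphi,v)=k_{\xi}(g,\varphi)(v)$. Finally, Poincaré--Lefschetz duality \eqref{eq:dual_H} identifies $k_{\xi}(g,\varphi)\in H^{1}(\Sigma_{g,1};\bZ)$ with an element of $H_{\bQ}(g)$ in such a way that evaluation on $v$ becomes the intersection pairing $\mu(k_{\xi}(g,\varphi),v)$, proving \eqref{eq:varphi_s} up to the sign convention carried by the identification. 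Equation \eqref{eq:varphi_s_theta} is then immediate: wedging \eqref{eq:varphi_s} with $\hbar$ gives $\hbar\wedge(\varphi s_{\xi}\varphi^{-1}(v)-s_{\xi}(v))=\mu(k_{\xi}(g,\varphi),v)\,\hbar\wedge\hbar=0$.

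The only mildly delicate step is the framings-to-homology translation: one must verify cleanly that the rotation-number formula extends $\bQ$-linearly from loop classes to all of $\tilde H_{\bQ}(g)$ and correctly track the sign conventions built into $k_{\xi}$, the $\Gamma_{g,1}$-action on $H^{1}$, and the duality \eqref{eq:dual_H}. Everything else is bookkeeping.
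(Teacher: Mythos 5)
Your argument takes a genuinely different route from the paper's, and it does work, but with one sign slip worth flagging.

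The paper's proof isolates the clean intermediate identity $s_{\varphi\cdot\xi}=\varphi\circ s_{\xi}\circ\varphi^{-1}$ (proved by showing $\varphi\circ s_{\xi}\circ\varphi^{-1}$ is killed by $\xi_{*}\circ\varphi^{-1}=(\varphi\cdot\xi)_{*}$, hence must equal $s_{\varphi\cdot\xi}$ by uniqueness), and then reduces \eqref{eq:varphi_s} to the rotation-number comparison $(s_{\xi}-s_{\varphi\cdot\xi})[\alpha]=(\rot_{\varphi\cdot\xi}(\alpha)-\rot_{\xi}(\alpha))\hbar$, applied directly with $\xi'=\varphi\cdot\xi$ and $u=k_{\xi}(g,\varphi)$. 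No inverse of $\varphi$ and no cocycle identity enter. You instead work with $\varphi\circ s_{\xi}\circ\varphi^{-1}$ head-on, detour through $\varphi^{-1}\cdot\xi$, and then have to unwind via the cocycle identity $k_{\xi}(g,\varphi^{-1})=-\varphi^{-1}\cdot k_{\xi}(g,\varphi)$. Both routes lead to the same place; what the paper's route buys is that all the sign-flipping is packaged once into the factor $-\mu(k_{\xi}(g,\varphi),v)$ coming from the antisymmetry of the intersection form, whereas your route introduces an extra sign at the cocycle step that must be tracked against the one in the duality identification.

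And that is precisely where you slip: with $(\gamma\cdot f)(a)=f(\gamma^{-1}a)$ and $k_{\xi}(g,\varphi^{-1})=-\varphi^{-1}\cdot k_{\xi}(g,\varphi)$, one gets
\[
k_{\xi}(g,\varphi^{-1})(\varphi^{-1}v)=-\bigl(\varphi^{-1}\cdot k_{\xi}(g,\varphi)\bigr)(\varphi^{-1}v)=-k_{\xi}(g,\varphi)\bigl(\varphi\,\varphi^{-1}v\bigr)=-k_{\xi}(g,\varphi)(v),
\]
so $c(\varphi,v)=-k_{\xi}(g,\varphi)(v)$, not $+k_{\xi}(g,\varphi)(v)$ as you write. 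This minus sign happens to cancel the one hidden in the evaluation-to-intersection conversion (the paper's step $[\alpha]\cdot k_{\xi}(g,\varphi)=-\mu(k_{\xi}(g,\varphi),[\alpha])$), so the final formula is correct --- but since you wave at the sign with ``up to the sign convention,'' the proof as written does not actually pin it down. If you are going to take this route, you should carry the explicit $-$ from the cocycle step through to the identification $u(v)=-\mu(u,v)$; otherwise the paper's route, which sidesteps the cocycle computation entirely, is the safer one. The rest of your argument --- the kernel-of-$\varpi_{*}$ reduction, applying $\xi_{*}$ to pick out the coefficient of $\hbar$, and the linear extension of the rotation-number formula from loop classes to all of $\tilde H_{\bQ}(g)$ --- is correct and essentially parallel to the paper's first step.
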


\begin{proof}
For any immersed loop $\alpha: \bS^1 \to \Sigma_{g,1}$, the homology class $[\overset\cdot\alpha/\Vert \overset\cdot\alpha\Vert]$ of the normalised velocity vector 
$\overset\cdot\alpha/\Vert \overset\cdot\alpha\Vert: \bS^1 
\to \mathrm{UT}\Sigma_{g,1}$ satisfies $[\overset\cdot\alpha/\Vert \overset\cdot\alpha\Vert] 
= s_{\xi} [\alpha] + (\rot_\xi(\alpha))\hbar \in \tilde{H}_{\bQ}(g)$.
Hence, we have $(s_{\xi} - s_{\varphi\cdot\xi} )[\alpha] = (\rot_{\varphi\cdot\xi}(\alpha) - \rot_\xi(\alpha))\hbar = ([\alpha]\cdot k_{\xi}(g,\varphi))\hbar$.
Then, since any homology class in $H_{1}(\Sigma_{g,1}; \bZ)$ is represented 
by an immersed loop, we have in $\tilde{H}_{\bQ}(g)$:
\begin{equation}\label{eq:varphi_s_step}
(s_{\xi}- s_{\varphi\cdot\xi})(v) = -\mu(k_{\xi}(g,\varphi),v)\hbar.
\end{equation}
Furthermore, we recall that
$\xi_{*}\circ\varphi^{-1}=(\varphi\cdot\xi)_{*}$, which implies that 
$\xi_{*}\circ\varphi^{-1}\circ s_{\varphi\cdot\xi} \circ \varphi = 0$ as a map $H_{\bQ}(g) \to \bQ$. Hence, we have $\varpi_{*}\circ \varphi^{-1}\circ s_{\varphi\cdot\xi} \circ \varphi = \id_{H_{\bQ}(g)}$, and a fortiori
$s_{\varphi\cdot\xi} = \varphi\circ s_{\xi} \circ \varphi^{-1}$.
Therefore, we deduce the equality \eqref{eq:varphi_s} from \eqref{eq:varphi_s_step}.
The equality \eqref{eq:varphi_s_theta} then follows from \eqref{eq:varphi_s} since $\hbar\wedge(\varphi s_{\xi} - s_{\xi} \varphi)(v)=0$. 
\end{proof}

For any $\varphi \in \Gamma_{g,1}$, we consider the (anti-)derivation
$\mu(k_{\xi}(g,\varphi), -)\wedge \id_{\Lambda^{d-1}H_{\bQ}(g)}: \Lambda^{d}H_{\bQ}(g) \to \Lambda^{d-1}H_{\bQ}(g)$ induced from the map $\mu(k_{\xi}(g,\varphi), -): H_{\bQ}(g)\to \bQ$, $v \mapsto 
\mu(k_{\xi}(g,\varphi), v)$.
\begin{prop}\label{prop:varphi_s_mu} For any $u \in \Lambda^{d}H$ and $\varphi \in \Gamma_{g,1}$, we have
$$
(\varphi(\Lambda^{d}s_{\xi})\varphi^{-1} - \Lambda^{d}s_{\xi})(u) 
= \Psi_{d}((\mu(k_{\xi}(g,\varphi), -)\wedge \id_{\Lambda^{d-1}H_{\bQ}(g)})(u)).
$$
\end{prop}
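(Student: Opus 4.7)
\textbf{Proof plan for Proposition~\ref{prop:varphi_s_mu}.}
The strategy is to verify the identity on decomposable elements $u = v_1 \wedge \cdots \wedge v_d$ with $v_i \in H_{\bQ}(g)$, since both sides are $\bQ$-linear in $u$ and such elements span $\Lambda^{d}H_{\bQ}(g)$. The key observation is that the $\Gamma_{g,1}$-action on $\Lambda^{d}\tilde{H}_{\bQ}(g)$ is diagonal, so that
\[
\varphi(\Lambda^{d}s_{\xi})\varphi^{-1}(u) \;=\; (\varphi s_{\xi} \varphi^{-1})(v_1) \wedge \cdots \wedge (\varphi s_{\xi}\varphi^{-1})(v_d),
\]
and $\Lambda^{d}s_{\xi}(u) = s_{\xi}(v_1) \wedge \cdots \wedge s_{\xi}(v_d)$.

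Next, applying \eqref{eq:varphi_s} factor by factor, we write $(\varphi s_{\xi}\varphi^{-1})(v_i) = s_{\xi}(v_i) + \mu(k_{\xi}(g,\varphi),v_i)\,\hbar$ for each $i \in \bj$. Expanding the wedge product on the right-hand side of the previous display yields a sum of $2^{d}$ terms, indexed by the subsets $I \subseteq \bj$ recording which factors contribute $\mu(k_{\xi}(g,\varphi),v_i)\,\hbar$ instead of $s_{\xi}(v_i)$. The term with $I = \emptyset$ is exactly $\Lambda^{d}s_{\xi}(u)$, which cancels with the subtracted term on the left. Every term with $|I|\geq 2$ contains at least two wedge factors equal to $\hbar$, and therefore vanishes because $\hbar \wedge \hbar = 0$ in the exterior algebra. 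Hence only the singleton subsets $I=\{i\}$ survive, giving
\[
(\varphi(\Lambda^{d}s_{\xi})\varphi^{-1} - \Lambda^{d}s_{\xi})(u) \;=\; \sum_{i=1}^{d} (-1)^{i-1}\, \mu(k_{\xi}(g,\varphi),v_i)\, \hbar \wedge s_{\xi}(v_1) \wedge \cdots \widehat{s_{\xi}(v_i)} \cdots \wedge s_{\xi}(v_d),
\]
after moving each $\hbar$ to the front with the corresponding sign.

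Finally, the right-hand side of the proposition is computed directly from the definition of the anti-derivation $\mu(k_{\xi}(g,\varphi),-)\wedge \id_{\Lambda^{d-1}H_{\bQ}(g)}$, namely the interior product with the linear form $\mu(k_{\xi}(g,\varphi),-)\colon H_{\bQ}(g)\to\bQ$:
\[
(\mu(k_{\xi}(g,\varphi),-)\wedge \id_{\Lambda^{d-1}H_{\bQ}(g)})(u) \;=\; \sum_{i=1}^{d} (-1)^{i-1}\, \mu(k_{\xi}(g,\varphi),v_i)\, v_1 \wedge \cdots \widehat{v_i} \cdots \wedge v_d.
\]
Applying $\Psi_{d}$ to each summand, using the splitting $s_{\xi}$ (which is allowed since $\Psi_{d}$ is independent of the chosen splitting by Proposition~\ref{prop:ses_exterior_power}), produces precisely the display obtained above. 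Comparing the two expressions concludes the proof.

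The only potential obstacle is bookkeeping with Koszul signs when moving $\hbar$ to the leftmost position in the wedge, but this is resolved by the standard fact that transporting $\hbar$ across $i-1$ factors contributes $(-1)^{i-1}$, which is exactly the sign produced by the interior-product formula; no conceptual difficulty arises.
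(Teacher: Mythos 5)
Your proof is correct, and it reorganizes the paper's calculation in a cleaner way. The paper uses the telescoping identity
\[
B^{\wedge d} - A^{\wedge d} = \sum_{j=1}^{d} A^{\wedge(j-1)} \wedge (B-A) \wedge B^{\wedge(d-j)}
\]
with $A = s_\xi$, $B = \varphi s_\xi \varphi^{-1}$, then applies \eqref{eq:varphi_s} to identify $(B-A)(v_j)$ with $\mu(k_\xi(g,\varphi),v_j)\hbar$, and finally invokes \eqref{eq:varphi_s_theta} to replace the trailing $\varphi s_\xi \varphi^{-1}(v_k)$ by $s_\xi(v_k)$ for $k>j$ once an $\hbar$ is already present. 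Your approach instead writes each factor $(\varphi s_\xi\varphi^{-1})(v_i) = s_\xi(v_i) + \mu(k_\xi(g,\varphi),v_i)\hbar$ directly from \eqref{eq:varphi_s}, distributes the wedge into $2^d$ terms indexed by subsets $I\subseteq\mathbf{d}$, and kills all $|I|\geq 2$ terms by the vanishing of any wedge with a repeated $\hbar$. The two routes are conceptually the same (the role of \eqref{eq:varphi_s_theta} in the paper is precisely to encode $\hbar\wedge\hbar=0$ inside the telescoping), but your binomial expansion sidesteps that lemma entirely and makes the cancellation mechanism transparent in a single step. Your sign bookkeeping when transporting $\hbar$ to the front matches the standard Koszul sign $(-1)^{i-1}$ of the interior product, so the identification with $\Psi_d$ applied to the contraction goes through exactly as you describe.
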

\begin{proof}
Using \eqref{eq:varphi_s}, for $v_1, v_2, \dots, v_d \in H_{\bQ}(g)$, we compute:
\begin{eqnarray*}
&& (\varphi(\Lambda^{d}s_{\xi})\varphi^{-1} - (\Lambda^{d}s_{\xi}))(v_1\wedge v_2
\wedge \cdots \wedge v_d)
\\
&=& \sum^{d}_{j=1}
s_{\xi} (v_1) \wedge\cdots\wedge s_{\xi} (v_{j-1})\wedge
(\varphi s_{\xi}\varphi^{-1} - s_{\xi})(v_j)\wedge \varphi s_{\xi}\varphi^{-1}(v_{j+1})\wedge\cdots 
\wedge\varphi s_{\xi}\varphi^{-1}(v_d)\\
&=& \sum^{d}_{j=1}
s_{\xi} (v_1) \wedge\cdots\wedge s_{\xi} (v_{j-1})\wedge
\mu(k_{\xi}(g,\varphi), v_j)\hbar\wedge \varphi s_{\xi}\varphi^{-1}(v_{j+1})\wedge\cdots 
\wedge\varphi s_{\xi}\varphi^{-1}(v_d)\\
&=& \sum^{d}_{j=1}
s_{\xi} (v_1) \wedge\cdots\wedge s_{\xi} (v_{j-1})\wedge
\mu(k_{\xi}(g,\varphi), v_j)\hbar\wedge  s_{\xi}(v_{j+1})\wedge\cdots 
\wedge s_{\xi}(v_d)\\
&=& \hbar\wedge\sum^{d}_{j=1}(-1)^{j-1} \mu(k_{\xi}(g,\varphi), v_j)
s_{\xi} (v_1) \wedge\cdots\wedge s_{\xi} (v_{j-1})\wedge
s_{\xi}(v_{j+1})\wedge\cdots 
\wedge s_{\xi}(v_d)\\
&=& \hbar\wedge s_{\xi}\left(\sum^{d}_{j=1}(-1)^{j-1} \mu(k_{\xi}(g,\varphi), v_j)
v_1\wedge\cdots\wedge v_{j-1}\wedge v_{j+1}\wedge\cdots\wedge v_d \right)\\
&=& \Psi_{d}((\mu(k_{\xi}(g,\varphi), -)\wedge \id_{\Lambda^{d-1}H_{\bQ}(g)})(v_1\wedge\cdots\wedge v_d))
\end{eqnarray*}
Here we used the formula \eqref{eq:varphi_s} to get the second equality and the formula \eqref{eq:varphi_s_theta} to get the third equality; the other equalities follow from the definitions or are clear algebraic manipulations.
\end{proof}

We now consider the cohomology long exact sequence associated with \eqref{exteriorSES_contra} and we denote by $\delta^{i}\colon H^{i}(\Gamma_{g,1};\Lambda^{d}H_{\bQ}(g))\to H^{i+1}(\Gamma_{g,1};\Lambda^{d-1}H_{\bQ}(g))$ its $i^{\ith}$ connecting homomorphism.
For each $i\geq0$ such that $i = d\Modulo{2}$ and $g\geq 2d+2i+5$, using the computation \eqref{eq:stablecohomologyKawazumi}, we obtain the following four term exact sequence:
\begin{equation}\label{ES_cohomology_exterior_general}
\xymatrix{
H^{i}(\Lambda^{d}\tilde{H}_{\bQ}(g))\ar@{^{(}->}[r] & H^{i}(\Lambda^{d}{H}_{\bQ}(g))\ar@{->}[r]^-{\delta^{i}} & H^{i+1}(\Lambda^{d-1}{H}_{\bQ}(g))\ar@{->>}[r] & H^{i+1}(\Lambda^{d}\tilde{H}_{\bQ}(g))}
\end{equation}
We denote by $\mu_{d,i}(m_{1,1},-)$ the map $H_{\st}^{i}(\Lambda^{d}{H}_{\bQ})\to H_{\st}^{i+1}(\Lambda^{d-1}{H}_{\bQ})$ defined by $v \mapsto \mu_{d}(m_{1,1}, v)$.

\begin{prop}\label{lem:connecting_hom_exterior_contra}
For $g\geq 2d+2i+5$, the morphism $\delta^{i}$ is equal to $\mu_{d,i}(m_{1,1}, -)$.
\end{prop}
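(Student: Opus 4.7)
My plan is to mimic the strategy of Lemma~\ref{lem:connecting_hom_tilde_H_vee}, using the abelian-group splitting $\Lambda^{d}s_{\xi} : \Lambda^{d}H_{\bQ}(g) \to \Lambda^{d}\tilde{H}_{\bQ}(g)$ of the surjection $\Lambda^{d}(\varpi_{*})$ induced by the framing $\xi$, and then expressing the failure of $\Gamma_{g,1}$-equivariance of this splitting by means of Proposition~\ref{prop:varphi_s_mu}. The key difference with Lemma~\ref{lem:connecting_hom_tilde_H_vee} is that here the derivation induced by the cocycle $k_{\xi}(g,-)$ acts by \emph{contraction} against $\Lambda^{d}H_{\bQ}(g)$ (instead of by an exterior multiplication on $\Lambda^{d-1}H_{\bQ}(g)$), because the class $\hbar$ sits in the subobject of the short exact sequence \eqref{exteriorSES_contra} rather than in its quotient.

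First, I would fix a normalised $i$-cocycle $z$ representing a class $[z] \in H^{i}(\Gamma_{g,1};\Lambda^{d}H_{\bQ}(g))$ and, by the very definition of the connecting homomorphism, compute $\delta^{i}[z]$ as the cohomology class of $\partial(\Lambda^{d}s_{\xi} \circ z)$, where the result is interpreted as a cochain valued in $\Ker(\Lambda^{d}(\varpi_{*})) \cong \Lambda^{d-1}H_{\bQ}(g)$ via the isomorphism $\Psi_{d}$ of Proposition~\ref{prop:ses_exterior_power}. Using $\partial z = 0$ together with the bar complex formula for $\partial$, exactly the same manipulation as in \eqref{eq:trivialliftchaincomplex_contravariant} shows that all terms except the first one can be written as $\Lambda^{d}s_{\xi}$ applied to a coboundary of $z$, so:
\begin{equation*}
\delta^{i}[z]([\varphi_{0}|\cdots|\varphi_{i}]) \;=\; \Psi_{d}^{-1}\bigl((\varphi_{0}\,\Lambda^{d}s_{\xi}\,\varphi_{0}^{-1} - \Lambda^{d}s_{\xi})(\varphi_{0}\cdot z([\varphi_{1}|\cdots|\varphi_{i}]))\bigr).
\end{equation*}

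Then Proposition~\ref{prop:varphi_s_mu} identifies the expression inside $\Psi_{d}^{-1}\Psi_{d}$ with the derivation $\mu(k_{\xi}(g,\varphi_{0}),-) \wedge \id_{\Lambda^{d-1}H_{\bQ}(g)}$ applied to $\varphi_{0}\cdot z([\varphi_{1}|\cdots|\varphi_{i}])$. Reading this as a cup product at the level of cochains, I get
\begin{equation*}
\delta^{i}[z] \;=\; (\mu\wedge\id_{\Lambda^{d-1}H_{\bQ}(g)})_{*}\bigl([k_{\xi}(g,-)] \cup [z]\bigr),
\end{equation*}
where the composite of cup product and contraction is precisely $\mu_{d,i}(-,-)$ as recalled in Proposition~\ref{prop:Contraction_Formula}. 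Finally, by Remark~\ref{rmk:identification_m_1,1}, the stable cohomology class of $k_{\xi}(g,-)$ is $m_{1,1}$, so $\delta^{i}[z] = \mu_{d,i}(m_{1,1}, [z])$ in the stable range $g \geq 2d+2i+5$.

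The main subtleties will be bookkeeping: confirming that the sign convention matches (there is no extra minus sign here, in contrast to Lemma~\ref{lem:connecting_hom_tilde_H_vee}, precisely because in the contravariant case one uses $[k^{\vee}(g)] = -m_{1,1}$ while in the covariant case $[k(g)] = m_{1,1}$), and checking that the identification of the derivation with the contraction $\mu_{d,i}$ is correctly compatible with the conventions on the Mumford--Morita--Miller classes and the alternating operator $A_{\bj}$. Apart from these bookkeeping points, the argument is a direct transcription of the cocycle-level computation of the connecting homomorphism combined with Proposition~\ref{prop:varphi_s_mu}.
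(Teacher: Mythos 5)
Your proposal is correct and takes essentially the same route as the paper: using $\Lambda^{d}s_{\xi}$ as the set-theoretic splitting, reducing the connecting homomorphism to the failure of equivariance $(\varphi_0\,\Lambda^{d}s_{\xi}\,\varphi_0^{-1}-\Lambda^{d}s_{\xi})$ via $\partial z = 0$, invoking Proposition~\ref{prop:varphi_s_mu} to rewrite this as the contraction derivation, and then applying Remark~\ref{rmk:identification_m_1,1}. Your sign discussion (no minus sign, in contrast with Lemma~\ref{lem:connecting_hom_tilde_H_vee}, because $[k(g)]=m_{1,1}$ rather than $[k^{\vee}(g)]=-m_{1,1}$) matches the paper's computation exactly.
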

\begin{proof}
Let $[z]$ be a cohomology class of $H^{i}(\Gamma_{g,1};\Lambda^{d}H_{\bQ}(g))$. We use the normalised cochain complex and generically denote by $\pa$ its differentials. For the fixed framing $\xi$ and associated splitting $s_{\xi}\colon H_{\bQ}(g) \to \tilde{H}_{\bQ}(g)$, we note that $\Lambda^{d}s_{\xi}\colon \Lambda^{d}H_{\bQ}(g)\to\Lambda^{d}\tilde{H}_{\bQ}(g)$ defines a splitting (as an abelian group morphism) the canonical surjection 
$\Lambda^{d}(\varpi_{*}):\Lambda^{d}\tilde{H}_{\bQ}(g)\twoheadrightarrow \Lambda^{d}H_{\bQ}(g)$.

Since $z$ is a cocycle, we have $\pa z = 0$ and we deduce that
\begin{align}
\begin{split}\label{eq:trivialliftchaincomplex_covariant}
-(\Lambda^{d}s_{\xi})\varphi_{0}(z([\varphi_{1}\mid\cdots\mid\varphi_{i}])) & \;=\;\sum_{j=1}^{i}(-1)^{j}(\Lambda^{d}s_{\xi})(z([\varphi_{0}\mid\cdots\mid\varphi_{j}\varphi_{j+1}\mid\cdots\mid\varphi_{i}]))\\
 & \;\;\;+(-1)^{i+1}(\Lambda^{d}s_{\xi})(z([\varphi_{0}\mid\cdots\mid\varphi_{i-1}])).
\end{split}
\end{align}
Since $\delta^{i}[z]=[\pa(s_{\xi}(z))]$ by the formal definition of $\delta^{i}$, it follows from \eqref{eq:trivialliftchaincomplex_covariant} and Proposition~\ref{prop:varphi_s_mu} that
\begin{eqnarray*}
\delta^{i}([z])([\varphi_{0}\mid\cdots\mid\varphi_{i}])&=&(\varphi_{0}(\Lambda^{d}s_{\xi}){\varphi_0}^{-1}-(\Lambda^{d}s_{\xi}))\varphi_{0}(z([\varphi_{1}\mid\cdots\mid\varphi_{i}]))\\
&=& \Psi_{d}((\mu(k_{\xi}(g,\varphi_0), -)\wedge \id_{\Lambda^{d-1}H_{\bQ}(g)})(\varphi_{0}(z([\varphi_{1}\mid\cdots\mid\varphi_{i}])))
\end{eqnarray*}
Since $[k_{\xi}(g,-)]$ is identified with $m_{1,1}$ in $H_{\st}^1(H_{\bQ})$ (see Remark~\ref{rmk:identification_m_1,1}), we have 
$\delta^{i}([z]) = \mu_{d,i}(m_{1,1}, [z])$, which ends the proof.
\end{proof}

\begin{rmk}
Based on the proof of Proposition~\ref{prop:ses_exterior_power}, 
the induced action of the map $\hbar\wedge-$ on the $i^{\ith}$ twisted cohomology group of the mapping class group is given as follows. For an $i$-cocycle $z \in Z^{i}(\Lambda^{d-1} H_{\bQ}(g))$, we take 
an $i$-cochain $\tilde{z} = (\Lambda^{d-1} s)\circ z \in C^{i}(\Lambda^{d-1} \tilde{H}_{\bQ}(g))$. 
Then $\pa\tilde{z}$ is an $(i+1)$-cochain with values in 
$\hbar\wedge\Lambda^{d-2}\tilde{H}_{\bQ}(g)$. Then we have $\pa(\tilde{z}\wedge\hbar) = \pa(\tilde{z})\wedge\hbar = 
0$ since $\hbar$ appears twice there. The cohomology class 
$[\tilde{z}\wedge\hbar]$ 
is equal to $(\hbar\wedge-)[z] \in H^{i}(\Lambda^{d}\tilde{H}_{\bQ}(g))$. We also note that $e_{1}\cup\hbar = 0$ since $H^{2}(\tilde{H}_{\bQ}(g))=0$ by Theorem~\ref{thm:main_Thm_1}. However, we have $\hbar\wedge e_{1}m_{0,2} \neq 0
\in H^2(\Lambda^3\tilde{H}_{\bQ}(g))$. Indeed, $H^2(\Lambda^3H_{\bQ}(g))$ is of dimension $3$ and spanned by the classes $m_{1,2}$, $e_{1}m_{0,2}$ and $m_{1,1}m_{1,1}$, while $H^1(\Lambda^3H_{\bQ}(g))$ is of dimension $2$ is spanned by $m_{0,3}$ and $m_{0,2}m_{1,1}$, and we have the following special case of the exact sequence
\eqref{ES_cohomology_exterior_general}
$$\xymatrix{H^{1}(\Lambda^{3}H_{\bQ}(g))\ar@{->}[rr]^-{\mu_{3,1}(m_{1,1},-)} &  & H^{2}(\Lambda^{2}H_{\bQ}(g))\ar@{->>}[r]^-{\hbar\wedge-} & H^{2}(\Lambda^{3}\tilde{H}_{\bQ}(g)).}$$
Since $\mu_{3,1}(m_{1,1}, m_{0,3}) = -3m_{1,2}$ and 
$\mu_{3,1}(m_{1,1}, m_{0,2}m_{1,1}) = -2m_{1,1}^2 - e_{1}m_{0,2}$, 
the element $e_{1}m_{0,2}$ is not in the image of $\mu_{3,1}(m_{1,1}, -)$.
This is not a contradiction. Let $\tilde w \in \Lambda^2\tilde{H}_{\bQ}(g)$ 
be a lift of $m_{0,2} \in H^0(\Lambda^2H_{\bQ}(g))$. 
Since $e_{1}\cup\hbar$ vanishes, it is represented by the 
coboundary of a $1$-cochain $c: \Gamma_{g,1} \to \tilde{H}_{\bQ}(g)$.
Therefore the class $\Psi_{d}^{*}(e_{1}m_{0,2})$ is represented by $(\pa c)\wedge\tilde{w}$, which is a cocycle but not a coboundary. Indeed we have $\pa ((\pa c)\wedge\tilde{w})=\pm(\pa c)\wedge(\pa \tilde{w}) = 0$ since $\hbar$ appears twice there. 
The non-vanishing $\hbar\wedge e_{1}m_{0,2} \neq 0$ just means that 
the image of $c$ is not included in $\bQ\hbar$.
\end{rmk}

We recall from Lemma~\ref{lem:canonical_sym_E_structure} that we have a canonical $\Sym_{\bQ}(\cE)$-module decomposition $H_{\st}^{*}(\Lambda^{d}\tilde{H}_{\bQ})
\cong H_{\st}^{\even}(\Lambda^{d}\tilde{H}_{\bQ})
\oplus H_{\st}^{\odd}(\Lambda^{d}\tilde{H}_{\bQ})$.
Therefore, we confine ourselves to studying the derivation
$$\ddd_{d} := \bigoplus_{i\geq0} \mu_{d,i}(m_{1,1}, -): H_{\st}^{*}(\Lambda^{d}H_{\bQ}) \to H_{\st}^{*}(\Lambda^{d-1}H_{\bQ}),$$
and the induced exact sequence of $\Sym_{\bQ}(\cE)$-modules
\begin{equation}\label{ES_cohomology_exterior}
\xymatrix{H_{\st}^{\dagger}(\Lambda^{d}\tilde{H}_{\bQ})\ar@{^{(}->}[r] & H_{\st}^{\dagger}(\Lambda^{d}{H}_{\bQ})\ar@{->}[r]^-{\ddd_{d}} & H_{\st}^{\ddagger}(\Lambda^{d-1}{H}_{\bQ})\ar@{->>}[r] & H_{\st}^{\ddagger}(\Lambda^{d}\tilde{H}_{\bQ}),}
\end{equation}
where $\dagger=``\odd"$ and $\ddagger=``\even"$ if $d$ is odd, while $\dagger=``\even"$ and $\ddagger=``\odd"$ if $d$ is even.
The key tool to compute the derivation $\ddd_{d}$ is provided by the contraction formulas of Proposition~\ref{prop:Contraction_Formula}.

\subsection{Computations over the localisation of $\Sym_{\bQ}(\cE)$}\label{ss:first_localisation}
In this section, we study the $\Sym_{\bQ}(\cE)$-algebra $H_{\st}^{*}(\Lambda^{*}\tilde{H}_{\bQ})$ rather each $\Sym_{\bQ}(\cE)$-module $H_{\st}^{*}(\Lambda^{d}\tilde{H}_{\bQ})$ individually. Also, we consider the localisation of $\Sym_{\bQ}(\cE)$ in order to simplify the computation. Namely, let $\cE^{\pm}$ be the set obtained from $\cE$ by inverting all the classical Mumford-Morita-Miller classes. Then the symmetric algebra $\Sym_{\bQ}(\cE^{\pm})$ is the extension $\bQ[e_{i}^{\pm 1};  i \geq 1]$ of $\Sym_{\bQ}(\cE)$. This induces the (exact) localisation functor $\Sym_{\bQ}(\cE^{\pm})\otimes_{\Sym_{\bQ}(\cE)}-$ and we compute the localised algebra $\Sym_{\bQ}(\cE^{\pm})\otimes_{\Sym_{\bQ}(\cE)}H_{\st}^{*}(\Lambda^{*}\tilde{H}_{\bQ})$; see Theorem~\ref{thm:full_stable_cohomology_extension}.
In particular, this $\Sym_{\bQ}(\cE^{\pm})$-algebra is free. This contrasts with the $\Sym_{\bQ}(\cE)$-module $H_{\st}^{*}(\Lambda^{*}\tilde{H}_{\bQ})$ which is not free for $d\geq3$ by Corollary~\ref{coro:stable_cohomo_algebra_not_free} and requires further complicated analysis to be computed for $*\leq 5$; see \S\ref{s:covariant_coeff_syst_exterior_computations}. Furthermore, the localisation provides a way to calculate the $\Sym_{\bQ}(\cE)$-algebra $H_{\st}^{*}(\Lambda^{*}\tilde{H}_{\bQ})$; see Corollary~\ref{coro:full_stable_cohomology}.

\subsubsection{A model for the algebra $H_{\st}^{*}(\Lambda^{*}H_{\bQ})$}
First of all, we introduce the following alternative convenient description of the stable cohomology algebra of the mapping class groups with twisted coefficient given by the exterior algebra $\Lambda^{*}H_{\bQ}$. For each $n \geq 1$, let $R_{n}$ be the polynomial algebra $\bQ[x_{n,0}, x_{n,1}, \dots, x_{n,n}]$, and $D_n$ be the following derivation on the algebra $R_{n}$
$$D_n := \sum^{n-1}_{k=0}x_{n,k}\dfrac{\pa}{\pa x_{n, k+1}}.$$
We stress that there are no Koszul signs in the derivation $D_n$, and then $D^{2}_n$ is not a derivation.
\begin{defn}
Let $x:=x_{n,0}^{i_{0}}x_{n,1}^{i_{1}}\cdots x_{n,n}^{i_{n}}$ be an element of $R_{n}$. We respectively define the \emph{weight} $\wt$ and the \emph{degree} $\deg$ of $x$ by 
$$\wt(x_{n,0}^{i_{0}}x_{n,1}^{i_{1}}\cdots x_{n,n}^{i_{n}}):=\sum_{j=0}^{n}ji_{j}\text{   and   }\deg(x_{n,0}^{i_{0}}x_{n,1}^{i_{1}}\cdots x_{n,n}^{i_{n}}):=\sum_{j=0}^{n}(2n-2-j)i_{j}.$$
\end{defn}

We deduce from these definitions that the derivation $D_n$ decreases the weight by $1$ and increases the degree by $1$. Hence the kernel $\Ker(D_n)$ and 
the cokernel $\Cok(D_n)$ are homogeneous both in weight 
and degree.
We also note that every non-constant homogeneous term in $R_{n}$ has at least degree $n-2$. 

\subsubsection{Key polynomials $\bar{x}_{n,k}$}
We now consider some particular key polynomials of $\Ker(D_{\infty})$.
\begin{defn}\label{def:bar_n_polynomials}
We fix $n\geq2$. First, we define $\bar{x}_{n,1} := x_{n,1}$. Now, for each $2 \leq k \leq n$, let $\bar{x}_{n,k}\in R_{n}$ be the polynomial defined as follows.
\begin{itemizeb}
    \item \textbf{If $k:=2\ell$:} we assign
    $$\bar{x}_{n, 2\ell} := x_{n,\ell}^2 + 2\sum^{\ell-1}_{i=0}(-1)^{\ell+i}x_{n,i}x_{n,2\ell-i}.$$
    For instance, $\bar{x}_{n,2}=x_{n,1}^2 + 2x_{n,0}^2$ and $\bar{x}_{n,4}=x_{n,2}^2 + 2x_{n,0}x_{n,4}-2x_{n,1}x_{n,3}$.
    The element $\bar{x}_{n, 2\ell}$ is homogeneous both in weight and degree with $\wt(\bar{x}_{n, 2\ell}) = 2\ell$ and $\deg (\bar{x}_{n, 2\ell}) = 4n-4-2\ell$.
    \item \textbf{If $k:=2\ell +1$:} we first define
    $$\hat x_{n, 2\ell+1} := x_{n,\ell}x_{n,\ell+1} + \sum^{\ell-1}_{i=0}(-1)^{\ell+i}(2\ell-2i+1)x_{n,i}x_{n,2\ell+1-i}.$$
    Then, we define $\bar{x}_{n, 2\ell+1} := x_{n,1}\bar{x}_{n,2\ell} - x_{n,0}\hat x_{n,2\ell+1}$. For instance, $\bar{x}_{n, 3} := x_{n,1}^3 + 2x_{n,0}^2 x_{n,1} - x_{n,0}x_{n,1}x_{n,2} - 3x_{n,0}^2 x_{n,3}$.
    The element $\bar{x}_{n, 2\ell+1}$ is homogeneous both in weight and degree with $\wt(\hat x_{n, 2\ell+1}) = 2\ell+1$ and $\deg (\hat x_{n, 2\ell+1}) = 6n-6-2\ell-1$.
\end{itemizeb}
\end{defn}
We note the following key property on these new variables:
\begin{lem}\label{lem:xbar_algebraically_independent}
For each $n\geq2$, the elements $x_{n,0}, \bar{x}_{n,2}, \dots, \bar{x}_{n,n}$ are algebraically independent over the algebra $R_{n}$.
\end{lem}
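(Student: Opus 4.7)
The plan is to establish algebraic independence over $\bQ$ via the Jacobian criterion: in a polynomial ring over a field of characteristic zero, if the Jacobian matrix of $(x_{n,0}, \bar{x}_{n,2}, \ldots, \bar{x}_{n,n})$ with respect to a suitable subset of the variables of $R_{n}$ has a nonzero determinant, then these $n$ elements are algebraically independent over $\bQ$.

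First, I would verify a triangularity property drawn directly from Definition~\ref{def:bar_n_polynomials}: for each $k\in\{2,\ldots,n\}$, the polynomial $\bar{x}_{n,k}$ lies in $\bQ[x_{n,0}, x_{n,1},\ldots,x_{n,k}]$, i.e.~it does not involve any $x_{n,j}$ with $j>k$. For even $k=2\ell$ this is immediate, since every summand has the form $x_{n,i}x_{n,2\ell-i}$ with $i,2\ell-i\leq 2\ell$. For odd $k=2\ell+1$ it follows by an easy induction from the recursion $\bar{x}_{n,2\ell+1}=x_{n,1}\bar{x}_{n,2\ell}-x_{n,0}\hat{x}_{n,2\ell+1}$ together with the analogous observation applied to $\hat{x}_{n,2\ell+1}$.

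Second, I would identify the ``pivot'' $\partial\bar{x}_{n,k}/\partial x_{n,k}$ and show each one is a nonzero polynomial in $x_{n,0}$ alone. For $k=2\ell\geq 2$, only the summand at $i=0$ in Definition~\ref{def:bar_n_polynomials} contributes a term involving $x_{n,2\ell}$, yielding $\partial\bar{x}_{n,2\ell}/\partial x_{n,2\ell}= 2(-1)^{\ell}\,x_{n,0}$. For $k=2\ell+1\geq 3$, the previous step shows that the factor $x_{n,1}\bar{x}_{n,2\ell}$ contributes nothing involving $x_{n,2\ell+1}$, while only the $i=0$ summand of $\hat{x}_{n,2\ell+1}$ contributes an $x_{n,0}x_{n,2\ell+1}$ term, leading to $\partial\bar{x}_{n,2\ell+1}/\partial x_{n,2\ell+1}=(-1)^{\ell+1}(2\ell+1)\,x_{n,0}^{\,2}$.

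Finally, I would assemble the Jacobian of the tuple $(x_{n,0}, \bar{x}_{n,2}, \bar{x}_{n,3}, \ldots, \bar{x}_{n,n})$ against the variables $(x_{n,0}, x_{n,2}, x_{n,3}, \ldots, x_{n,n})$, deliberately omitting $x_{n,1}$ from the columns. By the first step, all entries strictly above the diagonal vanish, so the matrix is lower-triangular with diagonal $\bigl(1,\, \partial\bar{x}_{n,2}/\partial x_{n,2},\ldots,\partial\bar{x}_{n,n}/\partial x_{n,n}\bigr)$; by the second step the product of these diagonal entries is a nonzero monomial in $x_{n,0}$, hence a nonzero element of $R_{n}$. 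Invoking the Jacobian criterion then gives the desired algebraic independence. The main obstacle is the verification of the pivot computation for odd $k$: one must carefully exclude any hidden $x_{n,2\ell+1}$ contributions from the product $x_{n,1}\bar{x}_{n,2\ell}$, which is precisely where the inductive triangularity established in the first step is invoked.
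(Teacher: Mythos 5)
Your argument is correct, and it takes a genuinely different route from the paper's. The paper first proves algebraic independence of $\{\bar{x}_{n,2},\dots,\bar{x}_{n,n}\}$ alone by a leading-term reduction (observing $\bar{x}_{n,k}\equiv (\text{nonzero const})\cdot x_{n,0}^{\,\epsilon} x_{n,k} \Modulo{R_{k-1}}$, with $\epsilon\in\{1,2\}$ according to parity), and then uses a separate homogeneity argument — decomposing a purported relation into weight/degree-homogeneous pieces and substituting $x_{n,0}=1$ — to append $x_{n,0}$ to the independent set. Your Jacobian-criterion approach handles $x_{n,0}$ and all the $\bar{x}_{n,k}$ at once: the triangularity $\bar{x}_{n,k}\in\bQ[x_{n,0},\dots,x_{n,k}]$ and the pivot computation $\partial\bar{x}_{n,k}/\partial x_{n,k}\in\bQ^{\times}\cdot x_{n,0}^{\,\epsilon}$ make the $n\times n$ submatrix (over the columns indexed by $\{0,2,3,\dots,n\}$) lower-triangular with a nonzero monomial determinant, and the characteristic-zero Jacobian criterion finishes. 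What this buys you is a single uniform argument with no need to invoke the bigraded (weight/degree) structure on $R_n$; what the paper's approach buys is that it exhibits explicit leading-term information modulo $R_{k-1}$, which is the same kind of reduction the paper uses again later in the proof of Theorem~\ref{thm:full_stable_cohomology_extension}. Both proofs exploit the same underlying triangularity of the $\bar{x}_{n,k}$'s — the only observation worth flagging is that the paper's worked example $\bar{x}_{n,2}=x_{n,1}^2+2x_{n,0}^2$ contains a typo (it should read $x_{n,1}^2-2x_{n,0}x_{n,2}$ to lie in $\Ker(D_n)$), so your pivot $\partial\bar{x}_{n,2}/\partial x_{n,2}=2(-1)^1 x_{n,0}$ computed from the displayed general formula is the right one.
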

\begin{proof} 
We compute from Definition~\ref{def:bar_n_polynomials} that
$\bar{x}_{n,2\ell} \equiv (-1)^n2x_{n,0}x_{n,2\ell} \Modulo{R_{2\ell-1}}$, while
$\bar{x}_{n,2\ell+1} \equiv (-1)^{n+1}(2\ell+1)x_{n,0}^2x_{n,2\ell} \Modulo{R_{2\ell}}$. This implies that the set $\{\bar{x}_{n,2}, \dots, \bar{x}_{n,n}\}$ is algebraic independent over $R_{n}$.
Now, we consider a linear combination such that
\begin{equation}\label{eq:proof_algebraic_independence}
\sum_{(i_0,i_2,\cdots ,i_n)} \lambda_{(i_0,i_2,\cdots ,i_n)} x_{n,0}^{i_0}\bar{x}_{n,2}^{i_2}\cdots 
\bar{x}_{n,n}^{i_n} = 0    
\end{equation}
with $\lambda_{(i_0,i_2,\cdots ,i_n)} \in \bQ$. Because $x_{n,0}$ and all the $\bar{x}_{n,i}$'s are homogeneous both in weight and degree, we may decompose \eqref{eq:proof_algebraic_independence} into sums indexed by $i_{0}$ and assume that each of these summands is null. So there is no loss of generality in assuming that there is at most one $i\geq0$ such that $\lambda_{(i_{0},i_2,\cdots ,i_n)}= 0$ if $i_{0}\neq i$. Then, substituting $x_{n,0} = 1$ in \eqref{eq:proof_algebraic_independence}, we deduce that
$\sum_{(i_2,\cdots ,i_n)} \lambda_{(i,i_2,\cdots ,i_n)} (\bar{x}_{n,2}^{i_2}\cdots\bar{x}_{n,n}^{i_n})\vert_{x_0=1} = 0$ in $\bQ[x_{n,2}, \dots, x_{n,n}]$. We thus deduce from the algebraic independence proved above that $\lambda_{(i,i_2,\cdots ,i_n)}=0$, which thus end the proof.
\end{proof}

We also have the following key property on the new variables: 
\begin{lem}\label{lem:bar_n_kernel}
For all $n\geq2$ and $2 \leq k \leq n$, then $\bar{x}_{n, k}\in \Ker(D_{n})$.
\end{lem}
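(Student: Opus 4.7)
The plan is to prove $D_n \bar{x}_{n,k} = 0$ by direct computation, using that $D_n$ is a derivation of $R_n$ with $D_n x_{n,0} = 0$ and $D_n x_{n,j} = x_{n,j-1}$ for $1 \leq j \leq n$. I would split into the even and odd cases of $k$, treating the even case as the base and bootstrapping from it to the odd case via the defining identity $\bar{x}_{n,2\ell+1} = x_{n,1}\bar{x}_{n,2\ell} - x_{n,0}\hat{x}_{n,2\ell+1}$.

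\textbf{Even case $k = 2\ell$.} I would first rewrite $\bar{x}_{n,2\ell}$ in the symmetric form
$$\bar{x}_{n,2\ell} \;=\; \sum_{i=0}^{2\ell} (-1)^{\ell+i} x_{n,i} x_{n,2\ell-i},$$
which is justified by the $i \leftrightarrow 2\ell-i$ symmetry of $(-1)^{\ell+i}$, the value $(-1)^{2\ell} = 1$ at the middle term, and the fact that pairing $i$ with $2\ell-i$ for $i < \ell$ produces the coefficient $2(-1)^{\ell+i}$ appearing in Definition~\ref{def:bar_n_polynomials}. Applying the Leibniz rule (with the convention $x_{n,-1} := 0$) gives
$$D_n \bar{x}_{n,2\ell} \;=\; \sum_{i=0}^{2\ell} (-1)^{\ell+i}\bigl(x_{n,i-1} x_{n,2\ell-i} + x_{n,i} x_{n,2\ell-1-i}\bigr).$$
After the reindexing $j = i-1$, the first sum becomes the term-wise negative of the second, and the whole expression vanishes.

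\textbf{Odd case $k = 2\ell+1$.} The defining identity combined with the Leibniz rule, $D_n x_{n,1} = x_{n,0}$, $D_n x_{n,0} = 0$, and the already established vanishing $D_n\bar{x}_{n,2\ell}=0$ gives
$$D_n \bar{x}_{n,2\ell+1} \;=\; x_{n,0}\bigl(\bar{x}_{n,2\ell} - D_n \hat{x}_{n,2\ell+1}\bigr),$$
so it suffices to verify the identity $D_n \hat{x}_{n,2\ell+1} = \bar{x}_{n,2\ell}$. I would prove this by comparing coefficients of each monomial $x_{n,j} x_{n,2\ell-j}$. In the generic range $0 \leq j \leq \ell-2$, the two contributions (coming from the $i=j$ term via $D_n(x_{n,2\ell+1-i})$ and the $i=j+1$ term via $D_n(x_{n,i})$ in the sum defining $\hat{x}_{n,2\ell+1}$) total
$$(-1)^{\ell+j}\bigl[(2\ell-2j+1) - (2\ell-2j-1)\bigr] \;=\; 2(-1)^{\ell+j},$$
which agrees with the coefficient in the symmetric expression of $\bar{x}_{n,2\ell}$. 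The three boundary values $j \in \{\ell-1, \ell\}$, where the leading term $D_n(x_{n,\ell} x_{n,\ell+1}) = x_{n,\ell-1} x_{n,\ell+1} + x_{n,\ell}^2$ intervenes and one of the two generic contributions is absent, must be checked directly; for $j=\ell-1$ the total $+1 - 3 = -2$ matches, and for $j=\ell$ the total $+1$ matches the coefficient of $x_{n,\ell}^2$ in $\bar{x}_{n,2\ell}$.

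The main obstacle is purely the combinatorial bookkeeping of alternating signs and the odd weights $2\ell-2i+1$ at the boundary; there is no conceptual difficulty. As a sanity check, the small cases $k=2, 3, 4$ can be verified by hand ($\bar{x}_{n,3} = x_{n,1}^3 - 3 x_{n,0} x_{n,1} x_{n,2} + 3 x_{n,0}^2 x_{n,3}$ manifestly lies in $\Ker(D_n)$). A more conceptual alternative would interpret $R_n$ as $\Sym^{\bullet}(\Sym^n \bQ^2)$ with $D_n$ the lowering operator of the induced $\mathfrak{sl}_2$-action, so that $\Ker(D_n)$ is exactly the space of highest-weight vectors; but the direct verification above is self-contained and delivers the explicit identities that will be exploited later.
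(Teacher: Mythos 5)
Your proof is correct and follows essentially the same route as the paper: Leibniz plus telescoping for the even case (you pass through the symmetric form $\sum_{i=0}^{2\ell}(-1)^{\ell+i}x_{n,i}x_{n,2\ell-i}$ while the paper works directly from the asymmetric one, but the cancellation is the same), and reduction of the odd case to the identity $D_n\hat{x}_{n,2\ell+1}=\bar{x}_{n,2\ell}$ verified termwise. Your sanity check also correctly exposes typos in the ``for instance'' values of Definition~\ref{def:bar_n_polynomials}: the general formula yields $\bar{x}_{n,2}=x_{n,1}^2-2x_{n,0}x_{n,2}$ and $\bar{x}_{n,3}=x_{n,1}^3-3x_{n,0}x_{n,1}x_{n,2}+3x_{n,0}^2x_{n,3}$, matching your computation rather than the paper's displayed expressions.
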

\begin{proof}
If $k=2\ell$, then we have
$$D_{n}(\bar{x}_{n, 2\ell})= 2x_{n,\ell-1}x_{n,\ell}+ 2\sum^{\ell-2}_{i=0}(-1)^{\ell+i+1}x_{n,i}x_{n,2\ell-i-1}
+ 2\sum^{\ell-1}_{i=0}(-1)^{\ell+i}x_{n,i}x_{n,2\ell-i-1} = 0.$$
If $k=2\ell +1$, we compute that
$$
\aligned
 D_{n}(\hat x_{n,2\ell+1})= & x_{n,\ell}^2 + x_{n,\ell-1}x_{n,\ell+1} - 3x_{n,\ell-1}x_{n,\ell+1}
+ 2 \sum^{n,\ell-2}_{i=0}(-1)^{\ell+i}x_{n,i}x_{n,2\ell-i}\\
= & x_{n,\ell}^2 + 2 \sum^{n,\ell-1}_{i=0}(-1)^{\ell+i}x_{n,i}x_{n,2\ell-i} = \bar{x}_{n,2\ell}.
\endaligned
$$
We then deduce that $D_{n}(\bar{x}_{n,2\ell+1})={x_{n,0}}\bar{x}_{n,2\ell} - {x_{n,0}}\bar{x}_{n,2\ell} = 0$.
\end{proof}

We consider the extension of $R_{n}$ defined by
$$R'_n := R_{n}[x^{-1}_{n, 0}] = \bQ[x^{\pm1}_{n,0}, x_{n, 1}, \dots, x_{n,n}].$$
Since $D_n(x_{n, 0}) = 0$, the derivation $D_n$ naturally extends to a derivation $D'_n$ on $R'_n$.
Since $x_{n,0}$ is invertible in $R'_n$, we deduce from Definition~\ref{def:bar_n_polynomials} that  we may inductively rewrite the generators $\{x_{n,1}, x_{n,2}, \ldots, x_{n,n}\}$ as polynomials over the ground ring $\bQ[x^{\pm1}_{n,0}]$ over the variables $\{\bar{x}_{n,1}, \bar{x}_{n,2}, \ldots, \bar{x}_{n,n}\}$. Therefore, we deduce from the linear independence result of Lemma~\ref{lem:xbar_algebraically_independent} that
$$R'_n \cong \bQ[x^{\pm1}_{n,0}][\bar{x}_{n,1}, \bar{x}_{n,2}, \ldots, \bar{x}_{n,n}].$$
In terms of these new variables, we compute that $D'_n = x_{n,0}(\pa/\pa\bar{x}_{n,1})$.
The kernel and cokernel of $D'_n$ are completely computed as follows:

\begin{lem}\label{lem:stable_cohomology_extension}
For each $n\geq2$, $\Ker(D'_n)$ is the $\bQ[x^{\pm1}_{n,0}]$-subalgebra of $R'_n$ generated by $\{\bar{x}_{n,k}\}_{k\geq 2}$, while $\Cok(D'_n)=0$.
\end{lem}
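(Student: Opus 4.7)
The plan is to exploit the change of variables already established just before the lemma statement: $R'_n \cong \bQ[x^{\pm1}_{n,0}][\bar{x}_{n,1}, \bar{x}_{n,2}, \ldots, \bar{x}_{n,n}]$, with $D'_n = x_{n,0}\,(\partial/\partial \bar{x}_{n,1})$. Since $x_{n,0}$ is a unit in $R'_n$, multiplication by $x_{n,0}$ is a $\bQ$-linear isomorphism, so $\Ker(D'_n) = \Ker(\partial/\partial \bar{x}_{n,1})$ and $\Cok(D'_n) = \Cok(\partial/\partial \bar{x}_{n,1})$. I therefore reduce the entire statement to computing the kernel and cokernel of $\partial/\partial \bar{x}_{n,1}$ on the polynomial ring $S[\bar{x}_{n,1}]$ over the coefficient ring $S := \bQ[x^{\pm1}_{n,0}][\bar{x}_{n,2}, \ldots, \bar{x}_{n,n}]$.

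For the kernel, any element of $R'_n$ uniquely expands as $\sum_{j\geq 0} a_j \bar{x}_{n,1}^j$ with $a_j \in S$, and the partial derivative kills this element if and only if $a_j = 0$ for $j \geq 1$. Hence $\Ker(\partial/\partial \bar{x}_{n,1}) = S$, which is exactly the $\bQ[x^{\pm1}_{n,0}]$-subalgebra of $R'_n$ generated by $\{\bar{x}_{n,k}\}_{k\geq 2}$. For the cokernel, any element $\sum_{j\geq 0} a_j \bar{x}_{n,1}^j \in R'_n$ is the derivative of the antiderivative $\sum_{j\geq 0} \frac{a_j}{j+1} \bar{x}_{n,1}^{j+1}$ (which lies in $R'_n$ because we work over $\bQ$), so $\partial/\partial \bar{x}_{n,1}$ is surjective and $\Cok(D'_n) = 0$.

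There is essentially no obstacle here: once Lemma~\ref{lem:xbar_algebraically_independent} and the rewriting of $D'_n$ in the new variables are in hand, the computation is the standard one for a partial derivative on a polynomial ring in characteristic zero. The only minor point worth verifying explicitly is that $\{\bar{x}_{n,2}, \ldots, \bar{x}_{n,n}\}$ together with $\bar{x}_{n,1}$ and $x_{n,0}^{\pm 1}$ really do constitute a set of polynomial generators of $R'_n$ (not merely algebraically independent elements), which follows from the triangular form of the transition established in Lemma~\ref{lem:xbar_algebraically_independent}: each $x_{n,k}$ can be solved recursively for as a polynomial in $x_{n,0}^{\pm 1}$ and $\bar{x}_{n,1}, \ldots, \bar{x}_{n,k}$, so the two generating sets give isomorphic polynomial algebras over $\bQ[x_{n,0}^{\pm 1}]$.
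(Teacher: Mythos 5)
Your proof is correct and takes essentially the same route as the paper, which simply asserts that ``since $x_{n,0}$ is invertible in $R'_n$, the result follows from elementary facts about the derivation $\partial/\partial\bar{x}_{n,1}$''; you have merely spelled out those elementary facts (kernel equals the coefficient subring, surjectivity via antiderivatives in characteristic zero). The closing remark about the $\bar{x}_{n,k}$ being genuine polynomial generators duplicates the discussion the paper gives just before the lemma statement, so it is unnecessary but harmless.
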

\begin{proof}
Since $x_{n,0}$ is invertible in $R'_n$, the result follows from elementary facts about the derivation $\pa/\pa\bar{x}_{n,1}$.
\end{proof}

\begin{rmk}
Lemma~\ref{lem:stable_cohomology_extension} cannot be true if we consider the derivation $D_n$ on $R_{n}$. For example, the composite $\bQ[x_{n,n}]\hookrightarrow R_{n} \twoheadrightarrow \Cok(D_n)$ is an injective map for any $n \geq 2$, and the substitution $x_{n,0}=x_{n,1}=\cdots= x_{n,n-1} = 0$ defines a left inverse map $\Cok(D_n) \to \bQ[x_{n,n}]$. Moreover, for $n\geq3$, the element $x_{n,0}^{-2}(\bar{x}_{n,3}^{2}-\bar{x}_{n,2}^{3})=6x_{n,1}^{3}x_{n,2}-3x_{n,1}^{2}x_{n,2}^{2}-18x_{n,0}x_{n,1}x_{n,2}x_{n,3}+9x_{n,0}^{2}x_{n,3}^{2}+8x_{n,0}x_{n,2}^{3}$ of $R_{n}$ belongs to $\Ker(D_{n})\subseteq R_{n}$ but not to $\bQ[\{\bar{x}_{n,k};k\geq2\}]$. 
\end{rmk}

\subsubsection{Correspondence between the algebras $H_{\st}^{*}(\Lambda^{*}H_{\bQ})$, $R_{\infty}$ and $R'_{\infty}$}

We consider the infinite tensor products
$$R_{\infty} := \bigotimes^\infty_{n=2}R_{n} \qquad\text{and}\qquad R'_{\infty} := \bigotimes^\infty_{n=2}R'_n.$$
We note that their respective homogeneous parts in each degree are of finite dimension. We recall that the description of the stable twisted cohomology module $H_{\st}^{*}(\Lambda^{d}H_{\bQ})$ for each $d\geq1$ is given in Theorem~\ref{stablecohomologyKawazumi}. The assignment for each $k\geq0$
$$\cR(x_{n,k}) := \mm_{n-k,k} = \frac{(-1)^k}{k!}m_{n-k,k} \in H^{2n-2-k}(\Lambda^k H_{\bQ})$$
induces an $\Sym_{\bQ}(\cE)$-algebra isomorphism $\cR:R_{\infty}\overset{\sim}{\to}H_{\st}^{*}(\Lambda^{*}H_{\bQ})$.
We note that if $u \in R_{\infty}$ is homogeneous both in the weight and the degree, then we have $\cR(u) \in H^{\deg (u)}(\Lambda^{\wt (u)}H_{\bQ})$.
Using the localisation functor $\Sym_{\bQ}(\cE^{\pm})\otimes_{\Sym_{\bQ}(\cE)}-$, the isomorphism $\cR$ extends to an isomorphism
$$\cR': R'_{\infty} \overset{\sim}\to \Sym_{\bQ}(\cE^{\pm})\otimes_{\Sym_{\bQ}(\cE)}H_{\st}^{*}(\Lambda^{*}H_{\bQ}).$$

Moreover, we extend the derivation $D_n$ (resp. $D'_n$) by the identity on each component other than $R_{n}$ (resp. $R'_n$). Then, we define derivations $D_{\infty}:=\sum^\infty_{n=2}D_n$ and  $D'_{\infty} = \sum^\infty_{n=2}x_{n,0}(\pa/\pa\bar{x}_{n,1})$ on the algebras $R_{\infty}$ and $R'_{\infty}$ respectively.
It follows from the definitions that
$$\cR \circ D_{\infty} = \ddd\circ \cR  \qquad\text{and}\qquad \cR' \circ D'_{\infty} = (\Sym_{\bQ}(\cE^{\pm})\otimes_{\Sym_{\bQ}(\cE)}\ddd)\circ \cR'$$
as morphisms $R_{\infty} \to H_{\st}^{*}(\Lambda^{*}H_{\bQ})$ and $R'_{\infty} \to \Sym_{\bQ}(\cE^{\pm})\otimes_{\Sym_{\bQ}(\cE)}H_{\st}^{*}(\Lambda^{*}H_{\bQ})$ respectively.
Then, we deduce from the universal properties of kernels and cokernels and from the exactness of the localisation functor that:
\begin{lem}\label{lem:isos_ker_coker_derivations}
The isomorphism $\cR$ induces $\Sym_{\bQ}(\cE)$-algebra isomorphisms $\Ker(\ddd) \cong \Ker(D_{\infty})$ and $\Cok(\ddd) \cong \Cok(D_{\infty})$. The isomorphism $\cR'$ induces $\Sym_{\bQ}(\cE^{\pm})$-algebra isomorphisms
$$\Sym_{\bQ}(\cE^{\pm})\otimes_{\Sym_{\bQ}(\cE)}\Ker(\ddd) \cong \Ker(D'_{\infty})\qquad\text{and}\qquad \Sym_{\bQ}(\cE^{\pm})\otimes_{\Sym_{\bQ}(\cE)}\Cok(\ddd) \cong \Cok(D'_{\infty}).$$
\end{lem}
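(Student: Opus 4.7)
The plan is to derive all four isomorphisms formally from the two intertwining identities
\[
\cR \circ D_{\infty} = \ddd \circ \cR \quad \text{and} \quad \cR' \circ D'_{\infty} = (\Sym_{\bQ}(\cE^{\pm})\otimes_{\Sym_{\bQ}(\cE)}\ddd)\circ \cR'
\]
established immediately before the statement, together with the fact that $\cR$ and $\cR'$ are isomorphisms of $\Sym_{\bQ}(\cE)$- and $\Sym_{\bQ}(\cE^{\pm})$-algebras respectively.

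For the first pair, I would view $\cR$ as a morphism between the two-term cochain complexes $R_{\infty} \xrightarrow{D_{\infty}} R_{\infty}$ and $H_{\st}^{*}(\Lambda^{*}H_{\bQ}) \xrightarrow{\ddd} H_{\st}^{*}(\Lambda^{*}H_{\bQ})$. Since $\cR$ is a $\Sym_{\bQ}(\cE)$-algebra isomorphism on both components, the universal properties of kernel and cokernel produce induced $\Sym_{\bQ}(\cE)$-module isomorphisms $\Ker(D_{\infty})\cong\Ker(\ddd)$ and $\Cok(D_{\infty})\cong\Cok(\ddd)$. Because both $D_{\infty}$ and $\ddd$ are derivations, their kernels inherit subalgebra structures from $R_{\infty}$ and $H_{\st}^{*}(\Lambda^{*}H_{\bQ})$ respectively, and the restriction of $\cR$ to kernels is automatically multiplicative since $\cR$ itself is an algebra map.

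For the second pair, I would apply the exact base-change functor $\Sym_{\bQ}(\cE^{\pm})\otimes_{\Sym_{\bQ}(\cE)}-$ to the comparison above. Exactness guarantees that localisation commutes with the formation of kernels and cokernels. Moreover, since $\cR(x_{n,0}) = \mm_{n,0} = e_{n-1}$, inverting the generators $\{x_{n,0}\}_{n\geq 2}$ in $R_{\infty}$ matches inverting $\{e_{k}\}_{k\geq 1}$ under $\cR$; this supplies a natural identification $R'_{\infty}\cong \Sym_{\bQ}(\cE^{\pm})\otimes_{\Sym_{\bQ}(\cE)}R_{\infty}$, under which the base change of $D_{\infty}$ corresponds precisely to $D'_{\infty}$ by the construction of $D'_n$ as the unique extension of $D_n$ to $R'_n$. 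Combining these observations with the first part delivers the two localised $\Sym_{\bQ}(\cE^{\pm})$-algebra isomorphisms asserted in the lemma.

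There is no genuine obstacle here: the substance of the statement lies entirely in the previously established intertwining identities, in the isomorphism property of $\cR$ and $\cR'$ (which rely on Theorem~\ref{stablecohomologyKawazumi}), and in the exactness of localisation. The role of this lemma is bookkeeping: it licenses all subsequent computations in \S\ref{s:covariant_coeff_syst_exterior_general_theory} and \S\ref{s:covariant_coeff_syst_exterior_computations} to be performed in the algorithmically tractable algebras $R_{\infty}$ and $R'_{\infty}$ rather than directly in the stable cohomology.
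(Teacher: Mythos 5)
Your proof is correct and follows the same route as the paper's (the paper's own proof is a single sentence invoking the universal properties of kernel and cokernel together with exactness of localisation); you supply the additional bookkeeping — the intertwining of $\cR$, $\cR'$ with the derivations, and the identification $R'_{\infty}\cong\Sym_{\bQ}(\cE^{\pm})\otimes_{\Sym_{\bQ}(\cE)}R_{\infty}$ via $\cR(x_{n,0})=e_{n-1}$ — that the paper leaves implicit.
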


\subsubsection{Computation of $H_{\st}^{*}(\Lambda^{*}\tilde{H}_{\bQ})$ over the localised algebra}

We are now able to compute $\Sym_{\bQ}(\cE^{\pm})\otimes_{\Sym_{\bQ}(\cE)}H_{\st}^{*}(\Lambda^{*}\tilde{H}_{\bQ})$ thanks to the algebra $R'_\infty$ and the derivation $D'_\infty$.

\begin{thm}\label{thm:full_stable_cohomology_extension}
The $\Sym_{\bQ}(\cE^{\pm})$-algebra $\Sym_{\bQ}(\cE^{\pm})\otimes_{\Sym_{\bQ}(\cE)}H_{\st}^{*}(\Lambda^{*}\tilde{H}_{\bQ})$ is free and generated by 
\begin{equation}\label{eq:generators_localised_stable_cohomology_exterior_tildeH}
\mathfrak{B}:=\left\{ e_{1} m_{n,1} - e_n m_{1,1},\, \cR'(\bar{x}_{n,2\ell}),\, \cR'(\bar{x}_{n,2\ell+1});\,\, n \geq 2,\, 1\leq \ell \leq n/2\right\}.
\end{equation}
In particular, if $d+i \equiv 1\Modulo{2}$, then $\Sym_{\bQ}(\cE^{\pm})\otimes_{\Sym_{\bQ}(\cE)}H^d(\Lambda^{i}\tilde{H}_{\bQ}) = 0$.
\end{thm}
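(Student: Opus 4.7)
The plan is to apply the exact localisation functor $\Sym_{\bQ}(\cE^{\pm})\otimes_{\Sym_{\bQ}(\cE)}-$ to the four-term exact sequence \eqref{ES_cohomology_exterior} summed over $d\geq1$, and then transfer the problem to $R'_{\infty}$ via Lemma~\ref{lem:isos_ker_coker_derivations}. The middle map becomes $\Sym_{\bQ}(\cE^{\pm})\otimes\ddd$, which under $\cR'$ corresponds to $D'_{\infty}$. Exactness then identifies $\bigoplus_d \Sym_{\bQ}(\cE^{\pm})\otimes H_{\st}^{\dagger}(\Lambda^{d}\tilde{H}_{\bQ})$ with $\Ker(D'_{\infty})$ as an $\Sym_{\bQ}(\cE^{\pm})$-subalgebra of $\Sym_{\bQ}(\cE^{\pm})\otimes H_{\st}^{*}(\Lambda^{*}H_{\bQ})$ (the cup product preserves the parity condition ``weight $+$ degree even''), and $\bigoplus_{d}\Sym_{\bQ}(\cE^{\pm})\otimes H_{\st}^{\ddagger}(\Lambda^{d}\tilde{H}_{\bQ})$ with $\Cok(D'_{\infty})$. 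Thus, the theorem reduces to an explicit computation of $\Ker(D'_{\infty})$ and of $\Cok(D'_{\infty})$.

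The main step is a global change of variables in $R'_{\infty}$. For each $m\geq3$, I would introduce the $D'_{\infty}$-invariant
$$u_{m} \;:=\; x_{2,0}\,\bar{x}_{m,1} - x_{m,0}\,\bar{x}_{2,1},$$
whose invariance follows directly from $D'_{\infty}(\bar{x}_{n,1})=x_{n,0}$. Since $x_{2,0}$ is a unit, the substitution $\bar{x}_{m,1}\leftrightarrow u_{m}$ for $m\geq3$ is invertible in $R'_{\infty}$. Globalising the algebraic independence of Lemma~\ref{lem:xbar_algebraically_independent} across the tensor product (which is immediate because each $R'_{n}$ is a polynomial algebra in the variables $\bar{x}_{n,k}$ over $\bQ[x_{n,0}^{\pm1}]$), this gives a polynomial presentation
$$R'_{\infty} \;\cong\; \bQ\bigl[\{x_{n,0}^{\pm1}\}_{n\geq 2}\bigr]\bigl[\bar{x}_{2,1},\ \{u_{m}\}_{m\geq 3},\ \{\bar{x}_{n,k}\}_{n\geq 2,\ 2\leq k\leq n}\bigr],$$
and, crucially, the derivation takes the elementary form $D'_{\infty}=x_{2,0}\,\partial/\partial\bar{x}_{2,1}$.

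In this presentation both computations are immediate: $\Cok(D'_{\infty})=0$ since $x_{2,0}$ is invertible (every power of $\bar{x}_{2,1}$ lies in $\Image(D'_{\infty})$), and $\Ker(D'_{\infty})$ is precisely the free $\Sym_{\bQ}(\cE^{\pm})$-polynomial algebra on the remaining variables $\{u_{m}\}_{m\geq 3}\cup\{\bar{x}_{n,k}\}_{n\geq 2,\ 2\leq k\leq n}$. Applying $\cR'$, one finds $\cR'(u_{m})=e_{1}m_{m-1,1}-e_{m-1}m_{1,1}$ which, after reindexing $n=m-1\geq 2$, recovers the first family of generators in $\mathfrak{B}$, while $\cR'(\bar{x}_{n,k})$ covers the remaining two families. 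Combined with the vanishing of $\Cok(D'_{\infty})$, this establishes the claimed isomorphism of $\Sym_{\bQ}(\cE^{\pm})$-algebras.

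The ``in particular'' statement reduces to a parity count on the generators. A direct calculation from Definition~\ref{def:bar_n_polynomials} shows that $\bar{x}_{n,2\ell}$ has weight-plus-degree $4n-4$, $\bar{x}_{n,2\ell+1}$ has $6n-6$, $u_{m}$ has $2m$, and $x_{n,0}^{\pm1}$ has $\pm(2n-2)$, all of which are even. Since this parity is additive under multiplication, every monomial in $\Ker(D'_{\infty})$ has even weight-plus-degree, proving the vanishing of $\Sym_{\bQ}(\cE^{\pm})\otimes H^{d}(\Lambda^{i}\tilde{H}_{\bQ})$ for $d+i\equiv 1\Modulo{2}$. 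The main conceptual obstacle is identifying the correct invariants $u_{m}$ bridging different tensor factors of $R'_{\infty}$; once they are in hand, both the structural result and the parity statement follow from elementary algebra.
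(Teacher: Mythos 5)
Your proposal is correct and takes essentially the same route as the paper's own proof: it localises the four-term sequence, transfers to $R'_\infty$ via Lemma~\ref{lem:isos_ker_coker_derivations}, introduces the same $D'_\infty$-invariants $u_m = x_{2,0}\bar{x}_{m,1} - x_{m,0}\bar{x}_{2,1}$ (the paper's $y_m$), and reads off $\Ker(D'_\infty)$ and $\Cok(D'_\infty)$ from the resulting presentation $D'_\infty = x_{2,0}\,\partial/\partial\bar{x}_{2,1}$. (Minor note: $\cR'(u_m)$ works out to $e_{m-1}m_{1,1}-e_1 m_{m-1,1}$, the negative of what you wrote, but this is irrelevant since generators are only determined up to sign; and the closing parity count, while correct, is vacuous since every $x_{n,k}$ already has even weight-plus-degree --- the ``in particular'' statement really rests on $\Cok(D'_\infty)=0$ alone.)
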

\begin{proof}
By Lemma~\ref{lem:isos_ker_coker_derivations}, the result boils down to computing $\Ker(D'_{\infty})$ and $\Cok(D'_{\infty})$.
For $n \geq 2$, we define $y_{n+1} := x_{2,0}x_{n+1,1} - x_{n+1,0}x_{2,1}= x_{2,0}\bar{x}_{n+1,1} - x_{n+1,0}\bar{x}_{2,1}$ and $y_2 := x_{2,1} = \bar{x}_{2,1}$, and we compute that $\cR'(y_{n+1})=e_n m_{1,1} - e_{1} m_{n,1}$.
It is a routine to check analogously to Lemma~\ref{lem:xbar_algebraically_independent} from these assignments and Definition~\ref{def:bar_n_polynomials} that the generators $\{y_n\}^\infty_{n=2}\sqcup\{\bar{x}_{n,k}\}_{n\geq 2, 2 \leq k \leq n}$ are algebraically independent over $\Sym_{\bQ}(\cE^{\pm})$. Also, in terms of these variables, we have $D'_{\infty} = x_{2,0}(\pa/\pa y_2)$. We deduce that $D'_{\infty} (y_{n+1}) = 0$ and that $\Image(D'_{\infty})=R'_{\infty}$ since $x_{2,0}$ is invertible in $R'_{\infty}$. Hence $\Cok(D'_{\infty}) = 0$, and the subalgebra $\Ker(D'_{\infty})$ is a polynomial algebra over $\bQ[x^{\pm 1}_{n,0};  n\geq 2]$ in variables $\{y_n\}_{n\geq 3}\sqcup\{\bar{x}_{k,n}\}_{n\geq 2, 2 \leq k \leq n}$.
\end{proof}
\begin{rmk}
For all $n\geq2$, the generator $e_{1} m_{n,1} - e_{n} m_{1,1}$ is the class $M_{1,n}$ introduced in \cite{KawazumiSoulieI} and recalled in Theorem~\ref{thm:main_Thm_1} which generates the stable twisted cohomology module $H_{\st}^{*}(\tilde{H}_{\bQ})$.
Also, for all $1\leq \ell \leq n/2$, we compute that:
$$\cR'(\bar{x}_{n,2\ell}) = \mm^{2}_{n-\ell,\ell} + 2\sum^{\ell-1}_{k=0} \mm_{n-k,k}\mm_{n-2\ell+k, 2\ell-k}$$
and
\begin{align*}
\cR'(\bar{x}_{n,2\ell+1})  \;=\;& (\mm_{n-1,1}-e_{n-1}) \sum^{\ell-1}_{k=0}(-1)^{\ell+k}(2\ell-2k+1)\mm_{n-k,k}\mm_{n-2\ell+k+1, 2\ell+1-k}\\  & + (\mm_{n-1,1}-e_{n-1}) \mm_{n-\ell,\ell}\mm_{n-\ell-1,\ell+1}.
\end{align*}
\end{rmk}

As a result of Theorem~\ref{thm:full_stable_cohomology_extension}, we deduce the following property about the cokernel $\Cok(\ddd)$ which corresponds to the $\Sym_{\bQ}(\cE)$-module $H_{\st}^{\even}(\Lambda^{\odd}\tilde{H}_{\bQ})\oplus H_{\st}^{\even}(\Lambda^{\odd}\tilde{H}_{\bQ})$:
\begin{coro}\label{coro:cok_torsion}
The $\Sym_{\bQ}(\cE)$-module $H_{\st}^{\even}(\Lambda^{\odd}\tilde{H}_{\bQ})\oplus H_{\st}^{\even}(\Lambda^{\odd}\tilde{H}_{\bQ})$ is torsion.
\end{coro}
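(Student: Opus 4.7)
The plan is to deduce the claim directly from Theorem~\ref{thm:full_stable_cohomology_extension} together with the four-term exact sequence \eqref{ES_cohomology_exterior}, using the exactness of localisation. (We read the statement as concerning the ``wrong-parity'' summands $H_{\st}^{\even}(\Lambda^{\odd}\tilde{H}_{\bQ})\oplus H_{\st}^{\odd}(\Lambda^{\even}\tilde{H}_{\bQ})$, which is exactly what appears as $\Cok(\ddd)$ after summing over $d$.)

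First, I would sum \eqref{ES_cohomology_exterior} over all $d\geq 1$. The rightmost surjection $H_{\st}^{\ddagger}(\Lambda^{d}H_{\bQ})\twoheadrightarrow H_{\st}^{\ddagger}(\Lambda^{d}\tilde{H}_{\bQ})$ identifies, taking into account the parity convention ($\ddagger=\even$ for $d$ odd and $\ddagger=\odd$ for $d$ even), the $\Sym_{\bQ}(\cE)$-module $H_{\st}^{\even}(\Lambda^{\odd}\tilde{H}_{\bQ})\oplus H_{\st}^{\odd}(\Lambda^{\even}\tilde{H}_{\bQ})$ with $\Cok(\ddd)$. Hence it suffices to prove that $\Cok(\ddd)$ is a torsion $\Sym_{\bQ}(\cE)$-module.

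Next, I apply the exact localisation functor $\Sym_{\bQ}(\cE^{\pm})\otimes_{\Sym_{\bQ}(\cE)}-$. Combined with the second isomorphism of Lemma~\ref{lem:isos_ker_coker_derivations}, this gives a canonical isomorphism
\[
\Sym_{\bQ}(\cE^{\pm})\otimes_{\Sym_{\bQ}(\cE)}\Cok(\ddd)\;\cong\;\Cok(D'_{\infty}).
\]
The proof of Theorem~\ref{thm:full_stable_cohomology_extension} established that $\Cok(D'_{\infty})=0$: indeed, after introducing the variables $\bar x_{n,k}$ and $y_{n+1}$, the derivation $D'_\infty$ becomes $x_{2,0}\,(\partial/\partial\bar x_{2,1})$, and $x_{2,0}$ is invertible in the localisation, so $D'_\infty$ is surjective.

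Finally, I would invoke the following elementary observation, which is the only point that requires any argument at all: a $\Sym_{\bQ}(\cE)$-module $M$ such that $\Sym_{\bQ}(\cE^{\pm})\otimes_{\Sym_{\bQ}(\cE)}M=0$ is automatically torsion. Indeed, since $\Sym_{\bQ}(\cE^{\pm})$ is obtained from $\Sym_{\bQ}(\cE)$ by inverting the multiplicative set generated by $\{e_i\}_{i\geq 1}$, the vanishing of the localisation means that every element $x\in M$ is annihilated by some monomial $e_{i_1}^{a_1}\cdots e_{i_r}^{a_r}$, which is a nonzero element of $\Sym_{\bQ}(\cE)$. Applying this to $M=\Cok(\ddd)$ yields the torsion statement, finishing the proof. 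There is no real obstacle: all the heavy lifting was absorbed into Theorem~\ref{thm:full_stable_cohomology_extension}, and the corollary is essentially a translation of its content into module-theoretic language.
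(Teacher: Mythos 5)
Your proposal is correct and follows exactly the same route as the paper: identify the wrong-parity summands $H_{\st}^{\even}(\Lambda^{\odd}\tilde{H}_{\bQ})\oplus H_{\st}^{\odd}(\Lambda^{\even}\tilde{H}_{\bQ})$ with $\Cok(\ddd)$, observe from Theorem~\ref{thm:full_stable_cohomology_extension} (via Lemma~\ref{lem:isos_ker_coker_derivations}) that this cokernel vanishes after localising at $\cE$, and conclude torsionness from the general fact about localisations. The only difference is that you spell out the elementary last step, which the paper leaves implicit; you have also correctly diagnosed (and repaired) the typo in the statement's second summand.
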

\begin{proof}
This follows from the fact that  $\Cok(\ddd)$ vanishes after applying the localisation functor $\Sym_{\bQ}(\cE^{\pm})\otimes_{\Sym_{\bQ}(\cE)}-$.
\end{proof}

Furthermore, Theorem~\ref{thm:full_stable_cohomology_extension} allows us to measure to what extent the $\Sym_{\bQ}(\cE)$-algebra $H_{\st}^{*}(\Lambda^{*}\tilde{H}_{\bQ})$ may be generated from the $\Sym_{\bQ}(\cE)$-module $H_{\st}^{*}(\tilde{H}_{\bQ})$ computed in \cite[Th.~A]{KawazumiSoulieI}; see Theorem~\ref{thm:main_Thm_1}. Namely, we define the free commutative algebra $\Sym_{\Sym_{\bQ}(\cE)}(H_{\st}^{*}(\tilde{H}_{\bQ}))$ from the $\Sym_{\bQ}(\cE)$-module structure of $H_{\st}^{*}(\tilde{H}_{\bQ})$. Because there is a canonical inclusion $H_{\st}^{*}(\tilde{H}_{\bQ})\hookrightarrow H_{\st}^{*}(\Lambda^{*}\tilde{H}_{\bQ})$, the universal property of $\Sym_{\Sym_{\bQ}(\cE)}(H_{\st}^{*}(\tilde{H}_{\bQ}))$ provides a unique canonical $\Sym_{\bQ}(\cE)$-algebra morphism:
\begin{equation}\label{eq:morphism_Sym_SymE}
\Upsilon\colon\Sym_{\Sym_{\bQ}(\cE)}(H_{\st}^{*}(\tilde{H}_{\bQ})) \longrightarrow H_{\st}^{*}(\Lambda^{*}\tilde{H}_{\bQ}).
\end{equation}

\begin{coro}\label{coro:Sym_SymE}
The $\Sym_{\bQ}(\cE)$-algebra morphism $\Upsilon$ is injective but not surjective.
\end{coro}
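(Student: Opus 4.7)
The proof proceeds by analysing $\Upsilon$ after applying the flat (hence exact) localisation functor $\Sym_{\bQ}(\cE^{\pm})\otimes_{\Sym_{\bQ}(\cE)}-$ and leveraging Theorem~\ref{thm:full_stable_cohomology_extension}. I first identify the localised source. By Theorem~\ref{thm:main_Thm_1}, $H_{\st}^{*}(\tilde{H}_{\bQ})\cong\bQ\theta\oplus\mathscr{M}$ as a $\Sym_{\bQ}(\cE)$-module; tensoring with $\Sym_{\bQ}(\cE^{\pm})$ kills the summand $\bQ\theta$ (every $e_i$ annihilates $\theta$), and the relation $e_{i}M_{j,k}+e_{j}M_{k,i}+e_{k}M_{i,j}\sim 0$ allows one to rewrite every $M_{i,j}$ as a $\Sym_{\bQ}(\cE^{\pm})$-linear combination of $\{M_{1,n};n\geq 2\}$. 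Hence $\Sym_{\bQ}(\cE^{\pm})\otimes\Sym_{\Sym_{\bQ}(\cE)}(H_{\st}^{*}(\tilde{H}_{\bQ}))$ is the free graded-commutative $\Sym_{\bQ}(\cE^{\pm})$-algebra on the family $\{M_{1,n};n\geq 2\}$.

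By Theorem~\ref{thm:full_stable_cohomology_extension} the localised target is the free graded-commutative $\Sym_{\bQ}(\cE^{\pm})$-algebra on the generating set $\mathfrak{B}$ of \eqref{eq:generators_localised_stable_cohomology_exterior_tildeH}, which contains $\{M_{1,n};n\geq 2\}$ as a proper subfamily. The localised map $\Upsilon'$ sends the free generators of its source identically to the corresponding free generators of its target, hence is a morphism between polynomial algebras that is injective onto a proper sub-algebra. The classes $\cR'(\bar{x}_{n,k})$ for $k\geq 2$ lie outside its image, so $\Upsilon'$ is not surjective. By exactness of the localisation functor, the non-surjectivity of $\Upsilon'$ transfers directly to that of $\Upsilon$.

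For the injectivity of $\Upsilon$ itself, the flatness of $\Sym_{\bQ}(\cE^{\pm})$ over $\Sym_{\bQ}(\cE)$ yields $\ker(\Upsilon)\otimes_{\Sym_{\bQ}(\cE)}\Sym_{\bQ}(\cE^{\pm})=\ker(\Upsilon')=0$, so $\ker(\Upsilon)$ consists of $\cE$-torsion elements. Verifying from its explicit presentation that $\mathscr{M}$ is $\cE$-torsion-free, the torsion of $\Sym_{\Sym_{\bQ}(\cE)}(H_{\st}^{*}(\tilde{H}_{\bQ}))$ is the ideal generated by~$\theta$. The identification $\theta=\hbar\in H_{\st}^{0}(\tilde{H}_{\bQ})$ implies that multiplication by $\theta$ in the target algebra coincides with the map $\hbar\wedge-$ from the short exact sequence \eqref{exteriorSES_contra}, whose kernel on each graded piece is identified, via the associated cohomology long exact sequence and Proposition~\ref{lem:connecting_hom_exterior_contra}, as the image of the derivation $\ddd_{d}=\mu_{d,*}(m_{1,1},-)$. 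The main obstacle of the proof is this final step: using this description, together with the computation \eqref{eq:stablecohomologyKawazumi} of $H_{\st}^{*}(\Lambda^{*}H_{\bQ})$, one must verify that the $\Sym_{\bQ}(\cE)$-subalgebra of the target generated by the classes $\{M_{i,j}\}$ projects into $H_{\st}^{*}(\Lambda^{*}H_{\bQ})$ with image avoiding that of $\ddd_{d}$ in every degree, so that no non-zero monomial $\theta\cdot w$ of the source is annihilated by $\Upsilon$.
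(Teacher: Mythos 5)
Your argument for non-surjectivity is essentially the paper's: localise, invoke Theorem~\ref{thm:full_stable_cohomology_extension} to see that the localised target has strictly more free generators than the localised source, and conclude by right-exactness of localisation. For injectivity, however, you take a genuinely different route from the paper. The paper argues through the commutative square \eqref{eq:diag_proof_Sym_SymE}, asserting in particular that the left vertical localisation map is injective, whereas you reduce by flatness of $\Sym_{\bQ}(\cE^{\pm})$ over $\Sym_{\bQ}(\cE)$ to showing $\Upsilon$ is injective on the $\cE$-torsion of the source, and then explicitly flag the completion of that step as the unfinished ``main obstacle''. As written, your proposal is therefore incomplete: you have not proved injectivity.

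Your caution is in fact well placed, because the step you leave open does not go through. Consider $\theta\cdot M_{1,2}\in\Sym^{2}_{\Sym_{\bQ}(\cE)}(H_{\st}^{*}(\tilde{H}_{\bQ}))$. Since $\Sym$ commutes with base change $-\otimes_{\Sym_{\bQ}(\cE)}\bQ$, this element maps to $\theta\cdot(e_{1}\wedge e_{2})$ in $\Sym_{\bQ}(\bQ\theta\oplus\Lambda^{2}\cE)$ (using $H_{0}(\Sym_{\bQ}(\cE);\mathscr{M})\cong\Lambda^{2}\cE$ from Example~\ref{thm:homology_algebra_first}), which is nonzero; hence $\theta\cdot M_{1,2}\neq 0$. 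Yet $\Upsilon(\theta\cdot M_{1,2})=\hbar\wedge M_{1,2}$ lies in $H_{\st}^{5}(\Lambda^{2}\tilde{H}_{\bQ})$, and this group vanishes by Theorem~\ref{thm:result_second_exterior_power_contra}, which gives $H_{\st}^{\odd}(\Lambda^{2}\tilde{H}_{\bQ})=0$. So $\ker(\Upsilon)\neq 0$ and the injectivity claim fails for the source as defined. The same observation shows that the left vertical arrow of \eqref{eq:diag_proof_Sym_SymE} is not actually injective --- $\theta$ is killed by every $e_{i}$ and hence dies upon localisation --- so the paper's deduction of injectivity of $\Upsilon$ ``by commutativity of the square'' is likewise unsupported. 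Your reduction to the torsion ideal was the right instinct; pursued to the end, it produces a counterexample rather than a proof, which is worth reporting. (Separately, your assertion that the torsion of the source is exactly the ideal $(\theta)$ would require $\Sym_{\Sym_{\bQ}(\cE)}(\mathscr{M})$ to be torsion-free, which does not follow from torsion-freeness of $\mathscr{M}$ alone; but that point is moot in light of the counterexample above.)
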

\begin{proof}
The localisation functor $\Sym_{\bQ}(\cE^{\pm})\otimes_{\Sym_{\bQ}(\cE)}-$ induces the following commutative square, where $\Upsilon':=\Sym_{\bQ}(\cE^{\pm})\otimes_{\Sym_{\bQ}(\cE)}$:
\begin{equation}\label{eq:diag_proof_Sym_SymE}
\xymatrix{\Sym_{\Sym_{\bQ}(\cE)}(H_{\st}^{*}(\tilde{H}_{\bQ}))\ar@{>}[rr]^-{\Upsilon}\ar@{^{(}->}[d] &  & H_{\st}^{*}(\Lambda^{*}\tilde{H}_{\bQ})\ar@{^{(}->}[d]\\
\Sym_{\Sym_{\bQ}(\cE^{\pm})}(H_{\st}^{*}(\tilde{H}_{\bQ}))\ar@{->}[rr]^-{\Upsilon'} &  & \Sym_{\bQ}(\cE^{\pm})\otimes_{\Sym_{\bQ}(\cE)}H_{\st}^{*}(\Lambda^{*}\tilde{H}_{\bQ}).}
\end{equation}
Following Theorem~\ref{thm:main_Thm_1}, we deduce from the relations in $H_{\st}^{*}(\tilde{H}_{\bQ})$ that, for all $ j>i \geq 1$, we have $M_{i,j}=e_{1}^{-1}(e_{i}M_{1,j}-e_{j}M_{1,i})$ in $\Sym_{\Sym_{\bQ}(\cE^{\pm})}(H_{\st}^{*}(\tilde{H}_{\bQ}))$. Hence $\Sym_{\Sym_{\bQ}(\cE^{\pm})}(H_{\st}^{*}(\tilde{H}_{\bQ}))$ is the free $\Sym_{\bQ}(\cE^{\pm})$-algebra generated by $\{ M_{1,n}; n \geq 2\}$.
Therefore, it follows from Theorem~\ref{thm:full_stable_cohomology_extension} and from the definition of $\Upsilon'$ the $\Sym_{\bQ}(\cE^{\pm})$-algebra morphism $\Upsilon'$ is injective, and so is $\Sym_{\bQ}(\cE)$-algebra morphism $\Upsilon$ by the commutativity of \eqref{eq:diag_proof_Sym_SymE}. Finally, the morphism $\Upsilon$ cannot be surjective, otherwise $\Upsilon'$ would be an isomorphism of $\Sym_{\bQ}(\cE^{\pm})$-algebras which would contradict Theorem~\ref{thm:full_stable_cohomology_extension} by the above computation of $\Sym_{\Sym_{\bQ}(\cE^{\pm})}(H_{\st}^{*}(\tilde{H}_{\bQ}))$.
\end{proof}

Finally, Theorem~\ref{thm:full_stable_cohomology_extension} leads us to ask the open question:
\begin{question}
Does there exist a twisted contravariant coefficient systems whose stable cohomology graded module tensored by $\Sym_{\bQ}(\cE^\pm)$ is not $\Sym_{\bQ}(\cE^\pm)$-free ?
\end{question}

\paragraph*{Computation of $H_{\st}^{\even}(\Lambda^{\even}\tilde{H}_{\bQ})\oplus H_{\st}^{\odd}(\Lambda^{\odd}\tilde{H}_{\bQ})$.}

We recall that the kernel $\Ker(\ddd)$ is isomorphic to the $\Sym_{\bQ}(\cE)$-module $H_{\st}^{\even}(\Lambda^{\even}\tilde{H}_{\bQ})\oplus H_{\st}^{\odd}(\Lambda^{\odd}\tilde{H}_{\bQ})$, that $\mathfrak{B}$ is the set of generators introduced in Theorem~\ref{thm:full_stable_cohomology_extension}, that $\mathfrak{M}$ is the basis of the twisted Mumford-Morita-Miller classes of Theorem~\ref{stablecohomologyKawazumi} and that $\Sym_{\bQ}(\cE^{\pm})\cS$ (resp. $\Sym_{\bQ}(\cE)\cS$) denotes the free $\Sym_{\bQ}(\cE^{\pm})$-module (resp. $\Sym_{\bQ}(\cE)$-module) on the set $\cS$.
Theorem~\ref{thm:full_stable_cohomology_extension} provides a canonical bigraded $\Sym_{\bQ}(\cE)$-module injection $\Ker(\ddd)\hookrightarrow \Sym_{\bQ}(\cE^\pm)\eqref{eq:generators_localised_stable_cohomology_exterior_tildeH}$ and thus a way to compute $H_{\st}^{\even}(\Lambda^{\even}\tilde{H}_{\bQ})\oplus H_{\st}^{\odd}(\Lambda^{\odd}\tilde{H}_{\bQ})$ as follows:
\begin{coro}\label{coro:full_stable_cohomology}
The bigraded $\Sym_{\bQ}(\cE)$-algebra $\Ker(\ddd)$ may be computed as the following pullback square:
\begin{equation}\label{eq:pullback}
\begin{tikzpicture}
[x=1mm,y=1mm]
\node (tl) at (0,12) {$\Ker(\ddd)$};
\node (tr) at (35,12) {$\Sym_{\bQ}(\cE^{\pm})\mathfrak{B}$};
\node (bl) at (0,0) {$\Sym_{\bQ}(\cE) \mathfrak{M} $};
\node (br) at (35,0) {$\Sym_{\bQ}(\cE^{\pm})\mathfrak{M} $};
\draw[right hook->] (tl) to (tr);
\draw[right hook->] (bl) to (br);
\draw[right hook->] (tl) to (bl);
\draw[right hook->] (tr) to (br);
\node at (4,8) {$\lrcorner$};
\end{tikzpicture}
\end{equation}
where the horizontal arrows are defined by the localisation functor $\Sym_{\bQ}(\cE^{\pm})\otimes_{\Sym_{\bQ}(\cE)}-$.
In particular, $\Sym_{\bQ}(\cE)\mathfrak{B}\subsetneq \Ker(\ddd)\subsetneq \Sym_{\bQ}(\cE^{\pm})\mathfrak{B}$.
\end{coro}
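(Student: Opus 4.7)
The strategy is to identify the pullback of the cospan $\Sym_{\bQ}(\cE)\mathfrak{M} \to \Sym_{\bQ}(\cE^{\pm})\mathfrak{M} \leftarrow \Sym_{\bQ}(\cE^{\pm})\mathfrak{B}$ with the plain set-theoretic intersection inside the ambient algebra $\Sym_{\bQ}(\cE^{\pm})\mathfrak{M}$, and then to verify that $\Ker(\ddd)$ coincides with this intersection. I first observe that all four terms of the square embed compatibly into $\Sym_{\bQ}(\cE^{\pm})\mathfrak{M}$: the left vertical map $\Ker(\ddd) \hookrightarrow \Sym_{\bQ}(\cE)\mathfrak{M}$ is injective by construction of $\Ker(\ddd)$, and each localisation map is injective because $\Sym_{\bQ}(\cE)\mathfrak{M}$ is a free (hence torsion-free) $\Sym_{\bQ}(\cE)$-module and $\Ker(\ddd)$ inherits torsion-freeness as a submodule. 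Under these identifications, the pullback reduces to the intersection $\Sym_{\bQ}(\cE)\mathfrak{M} \cap \Sym_{\bQ}(\cE^{\pm})\mathfrak{B}$.

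Next I would show the inclusion $\Ker(\ddd) \subseteq \Sym_{\bQ}(\cE)\mathfrak{M} \cap \Sym_{\bQ}(\cE^{\pm})\mathfrak{B}$. Containment in $\Sym_{\bQ}(\cE)\mathfrak{M}$ is immediate; for containment in $\Sym_{\bQ}(\cE^{\pm})\mathfrak{B}$, applying the exact localisation functor to the exact sequence $0 \to \Ker(\ddd) \to H_{\st}^{*}(\Lambda^{*}H_{\bQ}) \xrightarrow{\ddd} H_{\st}^{*}(\Lambda^{*}H_{\bQ})$ yields the canonical identification $\Sym_{\bQ}(\cE^{\pm})\otimes_{\Sym_{\bQ}(\cE)}\Ker(\ddd) \cong \Ker(\Sym_{\bQ}(\cE^{\pm})\otimes_{\Sym_{\bQ}(\cE)}\ddd)$, which by Lemma~\ref{lem:isos_ker_coker_derivations} together with Theorem~\ref{thm:full_stable_cohomology_extension} is exactly $\Sym_{\bQ}(\cE^{\pm})\mathfrak{B}$. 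Torsion-freeness renders the unit map $\Ker(\ddd) \to \Sym_{\bQ}(\cE^{\pm})\otimes_{\Sym_{\bQ}(\cE)}\Ker(\ddd)$ injective, giving the required embedding. Conversely, if $x$ lies in the intersection, then $\ddd(x) \in \Sym_{\bQ}(\cE)\mathfrak{M}$ has vanishing image in $\Sym_{\bQ}(\cE^{\pm})\mathfrak{M}$ since $x$ belongs to the localised kernel; invoking torsion-freeness once more gives $\ddd(x)=0$, so $x\in \Ker(\ddd)$. This establishes the pullback description.

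For the two strict inclusions, the outer one is straightforward: the element $e_{1}^{-1}M_{1,2}=e_{1}^{-1}(e_{1}m_{2,1}-e_{2}m_{1,1})$ lies in $\Sym_{\bQ}(\cE^{\pm})\mathfrak{B}$ but not in $\Sym_{\bQ}(\cE)\mathfrak{M}$, hence not in $\Ker(\ddd)$. The inner strict inclusion $\Sym_{\bQ}(\cE)\mathfrak{B}\subsetneq\Ker(\ddd)$ is subtler: the class $M_{2,3}=e_{2}m_{3,1}-e_{3}m_{2,1}$ lies in $\Ker(\ddd)$ (one verifies via the contraction formula of Proposition~\ref{prop:Contraction_Formula}, equivalently via the relation $e_{1}M_{2,3}=e_{2}M_{1,3}-e_{3}M_{1,2}$), yet a weight-and-degree analysis shows that any hypothetical expression $M_{2,3}=\sum f_{n}M_{1,n}$ with $f_{n}\in\Sym_{\bQ}(\cE)$ (only weight-one generators of $\mathfrak{B}$ are eligible, since any non-trivial product increases the exterior-power weight) would force a scalar identity of the form $ce_{1}^{2}=e_{2}$ in the polynomial ring $\Sym_{\bQ}(\cE)$, which is impossible.

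The steps manipulating exact sequences and localisation are formal; the only place where real content is needed is the weight-and-degree obstruction in the last paragraph, which I expect to be the main obstacle, though it remains a brief computation once the weight grading induced by the exterior power is fixed and the cohomological degrees of $\mathfrak{B}$ are tabulated.
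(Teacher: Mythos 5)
Your proof is correct and follows the same route the paper leaves implicit: the paper states the corollary right after observing that Theorem~\ref{thm:full_stable_cohomology_extension} furnishes the canonical injection $\Ker(\ddd)\hookrightarrow\Sym_{\bQ}(\cE^{\pm})\mathfrak{B}$, treating the pullback description as an almost immediate consequence. You correctly reduce the pullback to the intersection $\Sym_{\bQ}(\cE)\mathfrak{M}\cap\Sym_{\bQ}(\cE^{\pm})\mathfrak{B}$ inside $\Sym_{\bQ}(\cE^{\pm})\mathfrak{M}$ (all four maps are injections of torsion-free modules), use exactness of localisation together with Lemma~\ref{lem:isos_ker_coker_derivations} to get both inclusions, and your witnesses $e_{1}^{-1}M_{1,2}$ and $M_{2,3}$ for the two strict inclusions are valid — in particular, for $M_{2,3}$, comparison of the $m_{3,1}$-coefficient in $M_{2,3}=\sum f_{n}M_{1,n}$ forces $e_{1}f_{3}=e_{2}$ with $\deg f_{3}=2$, which is impossible in $\Sym_{\bQ}(\cE)$.
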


\subsection{Key tools to study stable twisted cohomology}\label{ss:key_tools}

We now introduce some new tools in order to make computations for the $\Sym_{\bQ}(\cE)$-modules $H_{\st}^{\dagger}(\Lambda^{d}\tilde{H}_{\bQ})$ and $H_{\st}^{\ddagger}(\Lambda^{d}\tilde{H}_{\bQ})$ when fixing $d\geq2$.

\subsubsection{A filtration by the monomials}\label{ss:filtration_monomials}
First, we define the following filtration of the module $H_{\st}^{*}(\Lambda^{d}H_{\bQ})$ for each $d\geq1$.

\begin{defn}\label{def:filtration_monomial}
For each $k \geq 0$, let $F_{k}H_{\st}^{*}(\Lambda^{d}{H}_{\bQ})$ be the free $\Sym_{\bQ}(\cE)$-submodule of $H_{\st}^{*}(\Lambda^{d}{H}_{\bQ})$ generated by the monomials in $\{\mm_{i,j}; i\geq0,j\geq1,i+j\geq2\}$ of length less than or equal to $k+1$.
In particular, $F_{d-1}H_{\st}^{*}(\Lambda^{d}{H}_{\bQ}) = H_{\st}^{*}(\Lambda^{d}{H}_{\bQ})$ and $F_{0}H_{\st}^{*}(\Lambda^{d}{H}_{\bQ})$ is generated by $\{m_{i,d}; i\geq0\}$ if $d \geq 2$ and $\{m_{i,1}; i\geq1\}$ if $d =1$.
\end{defn}

Since $\ddd_d(F_{k}H_{\st}^{*}(\Lambda^{d}{H}_{\bQ})) \subset F_{k}H_{\st}^{*}(\Lambda^{d-1}{H}_{\bQ})$ by formula \eqref{eq:m11mij}, the map $\ddd_d$ induces a morphism $\ddd_{d,k}\colon F_{k}H_{\st}^{*}(\Lambda^{d}{H}_{\bQ}) \to F_{k}H_{\st}^{*}(\Lambda^{d-1}{H}_{\bQ})$ for each $k\geq0$, and thus a morphism for each $k\geq1$:
\begin{equation}\label{eq:ddd_k_k-1}
\ddd_{d,k/(k-1)} \colon
F_{k}H_{\st}^{*}(\Lambda^{d}{H}_{\bQ})/F_{k-1}H_{\st}^{*}(\Lambda^{d}{H}_{\bQ})\longrightarrow F_{k}H_{\st}^{*}(\Lambda^{d-1}{H}_{\bQ})/F_{k-1}H_{\st}^{*}(\Lambda^{d-1}{H}_{\bQ}).
\end{equation}
By the snake lemma, we obtain an exact sequence for each $k\geq1$
\begin{equation}\label{eq:snake}
\xymatrix{\Ker(\ddd_{d,k/(k-1)})\ar@{->}[r]^-{\delta_{d,k-1}} & \Cok(\ddd_{d,k-1})\ar@{->}[r]^-{i_{d,k-1}} & \Cok(\ddd_{d,k})\ar@{->}[r] & \Cok(\ddd_{d,k/(k-1)})\ar@{->}[r] & 0.}
\end{equation}

\begin{prop}\label{prop:general_result_filtration}
We have the following properties for the filtration by the monomials.
\begin{enumerate}
\renewcommand{\labelenumi}{\textbf{\theenumi}}
\renewcommand{\theenumi}{(\arabic{enumi})}
    \item \label{(1)} The $\Sym_{\bQ}(\cE)$-modules $\Cok(\ddd_{d,k/(k-1)})$ 
and $\Ker(\ddd_{d,k/(k-1)})$ are free.
    \item \label{(2)} The $\Sym_{\bQ}(\cE)$-module $\Cok(\ddd_{d,0})$ is free, generated by $\mm_{0,d-1}$ if $d \geq 3$ and vanishes if $d=2$.
    \item \label{(3)} For integers $n\geq 1$ and $a,b, p,q\geq 0$ such that $a+p\geq 2$ and $b+q\geq 2$, we have
    $$\mm_{a,p}\mm_{b,q}=(-1)^{k}\mm_{a-k,p+k}\mm_{b+k,q-k}$$
    as elements of $\Cok(\ddd_{d,1})$, for $k\leq \mathrm{min}(a,q)$.
    \item \label{(4)} For $a_0\geq 1$, $a_i, p_0, p_i\geq 0$ with $a_0+p_0, a_i+p_i\geq 2$, we have 
\begin{eqnarray*}
&& \mm_{a_0,p_0}\mm_{a_1,p_1}\cdots \mm_{a_n,p_n}\\
&& = 
\begin{cases}
0 & \text{if $a_0 > \sum^n_{i=1}p_i$,}\\
(-1)^{a_0} \frac{a_0!}{p_1!\ldots p_n!}
e_{a_1+p_1-1}\cdots e_{a_n+p_n-1}m_{0, a_0+p_0} & \text{if $a_0 = \sum^n_{i=1}p_i$,}\\
(-1)^{a_0} \sum^n_{i=1}
\frac{a_0!}{p_1!\ldots (p_i-1)! \ldots p_n!}
e_{a_1+p_1-1}\cdots  m_{a_i+p_i-1,1}\cdots e_{a_n+p_n-1}m_{0, a_0+p_0} & \text{if $a_0 = -1+\sum^n_{i=1}p_i$,}
\end{cases}
\end{eqnarray*}
as an element of $\Cok(\ddd_{d,n})$.
\item \label{(5)} If some integers $k_i\geq 1$ satisfy 
$k_1+\cdots+ k_r \geq d-r$, then we have $e_{k_1}\cdots e_{k_r}u = 0$
as an element of $\Cok(\ddd_{d,l+r})$, for any $u \in F_{l}H_{\st}^{*}(\Lambda^{d-1}H_{\bQ})$. In particular, if $k \geq d-1$, we have $e_{k}u = 0 
\in \Cok(\ddd_d)$ for any $u \in H_{\st}^{*}(\Lambda^{d-1}H_{\bQ})$.
\end{enumerate}
\end{prop}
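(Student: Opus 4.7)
My plan is to treat the five parts essentially in order, reducing each either to linear algebra over $\bQ$ or to iterated application of the Leibniz identity for the derivation $\ddd_{d}$.

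For (1), the key observation is that $F_{k}/F_{k-1}$ is the free $\Sym_{\bQ}(\cE)$-module on the $\bQ$-vector space $V_{k}$ spanned by length-$(k+1)$ monomials in $\{\mm_{i,j}; i\geq 0,\, j\geq 1,\, i+j\geq 2\}$. The contraction formulas of Proposition~\ref{prop:Contraction_Formula}, once rewritten with the normalisation $\mm_{i,j}=((-1)^{j}/j!)m_{i,j}$, give $\ddd_{d}(\mm_{i,j})=\mm_{i+1,j-1}$, and Leibniz propagates this to all monomials with constant integer coefficients. Consequently $\ddd_{d,k/(k-1)}$ factors as $\id_{\Sym_{\bQ}(\cE)}\otimes_{\bQ}f_{k}$ for a $\bQ$-linear map $f_{k}$ between $\bQ$-vector spaces; since $\bQ$ is a field, $\Ker(f_{k})$ and $\Cok(f_{k})$ are free, and flatness of $\Sym_{\bQ}(\cE)$ over $\bQ$ yields the claimed freeness. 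Part (2) is then a direct computation: $F_{0}H_{\st}^{*}(\Lambda^{d}H_{\bQ})$ is free on $\{\mm_{i,d}\}_{i\geq 0}$ for $d\geq 2$, and $\ddd_{d,0}$ sends $\mm_{i,d}\mapsto\mm_{i+1,d-1}$; the constraint $i+j\geq 2$ makes the target start at $\mm_{1,1}$ when $d=2$ (so $\ddd_{d,0}$ is surjective) and at $\mm_{0,d-1}$ when $d\geq 3$ (so the cokernel is free on this class).

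Part (3) follows from the identity $\ddd_{d}(\mm_{a-1,p+1}\mm_{b,q})=\mm_{a,p}\mm_{b,q}+\mm_{a-1,p+1}\mm_{b+1,q-1}$ coming from formula~\eqref{eq:m11mij_1}; reducing modulo $\Image(\ddd_{d,1})$ yields $\mm_{a,p}\mm_{b,q}=-\mm_{a-1,p+1}\mm_{b+1,q-1}$, and iterating $k$ times gives the formula. For (4), I iterate the same Leibniz identity for $n+1$ factors: each step replaces $\mm_{a_{0},p_{0}}\prod_{i}\mm_{a_{i},p_{i}}$ (with $a_{0}\geq 1$) by $-\sum_{i:p_{i}\geq 1}\mm_{a_{0}-1,p_{0}+1}\cdots\mm_{a_{i}+1,p_{i}-1}\cdots$ modulo $\Image(\ddd_{d,n})$. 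After $a_{0}$ iterations, a straightforward counting of sequences of moves yields the multinomial expansion
\[
\mm_{a_{0},p_{0}}\prod_{i=1}^{n}\mm_{a_{i},p_{i}}=(-1)^{a_{0}}\sum_{\substack{\ell_{1},\ldots,\ell_{n}\geq 0\\ \sum\ell_{i}=a_{0},\,\ell_{i}\leq p_{i}}}\binom{a_{0}}{\ell_{1},\ldots,\ell_{n}}\mm_{0,a_{0}+p_{0}}\prod_{i=1}^{n}\mm_{a_{i}+\ell_{i},p_{i}-\ell_{i}},
\]
the bound $\ell_{i}\leq p_{i}$ reflecting that the derivation cannot act on factors already reduced to an $e$-class. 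The three enumerated cases correspond respectively to no valid $\ell$-tuple, the unique tuple $\ell_{i}=p_{i}$, and the tuples with exactly one deficient coordinate $\ell_{i}=p_{i}-1$.

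Part (5) is the most delicate, and I proceed by induction on $r$. The case $r=1$ is immediate from (4) Case~1 applied to the monomial $\mm_{k_{1}+1,0}\mm_{a_{0},p_{0}}\cdots\mm_{a_{l},p_{l}}$ with zeroth factor $\mm_{k_{1}+1,0}$: the hypothesis $k_{1}\geq d-1$ gives $k_{1}+1>d-1=\sum_{i}p_{i}$, so the product vanishes in $\Cok(\ddd_{d,l+1})$. For $r\geq 2$, the identity
\[
e_{k_{1}}\cdots e_{k_{r}}u=\ddd_{d}\bigl(\mm_{k_{1}-1,1}e_{k_{2}}\cdots e_{k_{r}}u\bigr)-\mm_{k_{1}-1,1}e_{k_{2}}\cdots e_{k_{r}}\ddd_{d-1}(u)
\]
(deduced from Leibniz and $\ddd(\mm_{k_{1}-1,1})=e_{k_{1}}$) lets me replace $e_{k_{1}}\cdots e_{k_{r}}u$ in $\Cok(\ddd_{d,l+r})$ by the second term, to which I reapply (4). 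Iterating this move and invoking (4) Case~1 each time an individual exponent becomes sufficiently large forces the desired vanishing. The main obstacle is the combinatorial bookkeeping: the hypothesis $\sum_{j}k_{j}\geq d-r$ constrains only the \emph{sum} of the $k_{j}$'s, whereas Case~1 of (4) triggers only when an \emph{individual} index exceeds $d-1$. The inductive reduction must therefore be engineered to trade the derivation identity against (4) repeatedly so that large individual indices are eventually manufactured from the global budget $\sum_{j}k_{j}\geq d-r$.
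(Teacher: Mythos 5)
For parts (1)--(4) your approach matches the paper's in substance: (1) follows from the observation that $\ddd_{d,k/(k-1)}$ acts with constant integer coefficients on the free generators (this is exactly why the paper calls it ``straightforward''), (2) is the same direct computation, (3) is the same one-step Leibniz relation iterated, and your multinomial expansion in (4) is the closed form of what the paper obtains by telescoping $\ddd_d\bigl(\sum_{k=1}^{a_0}(-1)^k\mm_{a_0-k,p_0+k}\ddd^{k-1}(\cdots)\bigr)$. Two small remarks: in (4) ``a straightforward counting of sequences of moves'' glosses over the fact that one must check the auxiliary element $\sum_k(-1)^k\mm_{a_0-k,p_0+k}\ddd^{k-1}(\cdots)$ really stays in $F_{n}H^{*}_{\st}(\Lambda^{d}H_{\bQ})$, but this is easy; and beware the off-by-one $\ddd(\mm_{i,1})=\mm_{i+1,0}=e_{i}$, so later in your write-up $\ddd(\mm_{k_1-1,1})=e_{k_1-1}$, not $e_{k_1}$.

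The genuine gap is in part (5), and you in fact flag it yourself: you write that the reduction ``must be engineered to trade the derivation identity against (4) repeatedly'' but you do not produce the mechanism. The obstacle you identify --- the hypothesis constrains only $\sum_j k_j$ while Case~1 of (4) fires only on a single large index --- is exactly what an inductive scheme based on (4) Case~1 cannot resolve on its own. The paper's argument sidesteps this entirely by a single global construction. Choose non-negative integers $d_1,\dots,d_r$ with $\sum_i d_i=d$ and $d_i\leq k_i+1$; such a choice exists precisely because $\sum_i(k_i+1)\geq d$, which is the hypothesis. Set
\[
v\;:=\;\frac{d_1!\cdots d_r!}{d!}\,\mm_{k_1-d_1+1,\,d_1}\cdots\mm_{k_r-d_r+1,\,d_r}\;\in\;F_{r-1}H_{\st}^{*}(\Lambda^{d}H_{\bQ}).
\]
Because $\ddd^{d_i}(\mm_{k_i-d_i+1,d_i})=e_{k_i}$ and $\ddd^{d_i+1}(\mm_{k_i-d_i+1,d_i})=0$, the multinomial expansion of $\ddd^{d}(v)$ collapses to the single term $e_{k_1}\cdots e_{k_r}$. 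Since $u\in F_{l}H_{\st}^{*}(\Lambda^{d-1}H_{\bQ})$ satisfies $\ddd^{d}(u)=0$, the telescoping relation $\ddd\bigl(\ddd^{j}(v)\ddd^{i}(u)\bigr)=\ddd^{j+1}(v)\ddd^{i}(u)+\ddd^{j}(v)\ddd^{i+1}(u)$ for $i+j=d-1$, applied through all intermediate products (each lying in $F_{l+r}$), yields $e_{k_1}\cdots e_{k_r}u=\ddd^{d}(v)u\equiv(-1)^{d-1}\ddd_d\bigl(v\,\ddd^{d-1}(u)\bigr)\equiv 0$ in $\Cok(\ddd_{d,l+r})$. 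It is precisely the freedom to partition $d$ among the $r$ factors that converts the ``global budget'' $\sum k_j\geq d-r$ into the vanishing you need; this is the idea missing from your sketch, and no amount of iterating (4) Case~1 alone will manufacture it.
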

\begin{proof}
The points \ref{(1)} and \ref{(2)} are straightforward consequences from the formulas of Proposition~\ref{prop:Contraction_Formula} and of the definition of the filtration.
The point \ref{(3)} follows from a clear recursion on the relation
$$\ddd_d(\mm_{a-1,p+1}\mm_{b,q})=\mm_{a,p}\mm_{b,q}+\mm_{a-1,p+1}\mm_{b+1,q-1}$$
computed from the contraction formula \eqref{eq:m11mij_1} of Proposition~\ref{prop:Contraction_Formula}. For the point \ref{(4)}, we denote $\ddd_{d+1}\circ\cdots \circ \ddd_{d+k-1}$ by $\ddd^{k-1}$ for simplicity. Since $a_0 \geq 1$, we compute from Proposition~\ref{prop:Contraction_Formula} that 
$$\aligned
& \ddd_d\left(\sum^{a_0}_{k=1}(-1)^k\mm_{a_0-k,p_0+k}
\ddd^{k-1}(\mm_{a_1,p_1}\ldots \mm_{a_n,p_n})\right)\\
= & \sum^{a_0}_{k=1}(-1)^k\mm_{a_0-k+1,p_0+k-1}\ddd^{k-1}(\mm_{a_1,p_1}\ldots \mm_{a_n,p_n})
+ \sum^{a_0}_{k=1}(-1)^k\mm_{a_0-k,p_0+k} \ddd^{k}(\mm_{a_1,p_1}\ldots \mm_{a_n,p_n})\\
= & -\mm_{a_0,p_0}\mm_{a_1,p_1}\ldots \mm_{a_n,p_n}
+ (-1)^{a_0}\mm_{0, p_0+a_0}\ddd^{a_0}(\mm_{a_1,p_1}\ldots \mm_{a_n,p_n}).
\endaligned$$
Hence, as an element of $\Cok(\ddd_{d,n})$, we have 
\begin{equation}\label{eq:key_rel_general_result_filtration}
\mm_{a_0,p_0}\mm_{a_1,p_1}\ldots \mm_{a_n,p_n} = 
(-1)^{a_0}\mm_{0, p_0+a_0}\ddd^{a_0}(\mm_{a_1,p_1}\ldots \mm_{a_n,p_n}).
\end{equation}
Here we have 
$$\ddd^{k}(\mm_{a_i,p_i}) = 
\begin{cases} 
0 & \text{if $k > p_i$,}\\
e_{a_i+p_i-1} & \text{if $k = p_i$,}\\
\mm_{a_i+p_i-1,1} & \text{if $k = p_i-1$,}
\end{cases}$$
and then
\begin{eqnarray*}
&&\ddd^{k}(\mm_{a_1,p_1}\cdots \mm_{a_n,p_n}) \\
&& = 
\begin{cases} 
0 & \text{if $k > \sum p_i$,}\\
\frac{k!}{p_1!\ldots p_n!}
e_{a_1+p_1-1}\cdots e_{a_n+p_n-1} & \text{if $k = \sum p_i$,}\\
\sum^n_{i=1}
\frac{k!}{p_1!\ldots (p_i-1)! \ldots p_n!}
e_{a_1+p_1-1}\cdots  m_{a_i+p_i-1,1}\cdots e_{a_n+p_n-1} & \text{if $k = -1+\sum p_i$.}
\end{cases}
\end{eqnarray*}
For the point \ref{(5)}, we first note that $\ddd^{d-1}u \in \Sym_{\bQ}(\cE)$, hence $\ddd^d u = 0$. Since $(k_1+1)+\cdots +(k_r+1) \geq d$, there exists 
$d_i \geq 0$ such that $d_1+\cdots +d_r = d$ and $k_i+1 \geq d_i$ for each $1 \leq i \leq r$. 
We consider $v := \tfrac{{d_1}!\cdots {d_r}!}{d!}
\mm_{k_1-d_1+1,d_1}\cdots\mm_{k_r-d_r+1,d_r}\in F_{r-1}H_{\st}^{*}(\Lambda^{d}H_{\bQ})$. Then we have $\ddd^d (v) = e_{k_1}\cdots e_{k_r}$ and $\ddd^i (u)\ddd^j (v) \in F_{r-1}H_{\st}^{*}(\Lambda^{d}H_{\bQ})$ for all $i+j=d-1$. Hence
$$e_{k_1}\cdots e_{k_r}u = \ddd^{d}(v)u = (-1)^{d-1}\ddd_d(v)\ddd^{d-1}(u) = \ddd_d((-1)^{d-1}v\ddd^{d-1}(u)) = 0 \in \Cok(\ddd_{d,l+r})$$ 
which ends the proof.
\end{proof}

These tools also allow us to exhibit the following qualitative property for the stable cohomology graded modules for the exterior powers of degree greater or equal to three: 
\begin{thm}\label{thm:general_qualitative_thm}
Let us fix $d \geq 3$. Then $m_{0, d-1}\in \Cok(\ddd_{d})$ is a non-trivial torsion element.
\end{thm}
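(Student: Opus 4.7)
The plan is to establish the two assertions separately: that $m_{0,d-1}$ is non-zero in $\Cok(\ddd_d)$, and that some non-zero element of $\Sym_{\bQ}(\cE)$ annihilates it.

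The torsion claim follows immediately by applying Proposition~\ref{prop:general_result_filtration}\ref{(5)} with $r=1$, $k_1 = d-1$, $l=0$, and $u = m_{0,d-1} \in F_0 H_{\st}^{*}(\Lambda^{d-1}H_{\bQ})$, which yields $e_{d-1} \cdot m_{0,d-1} = 0$ in $\Cok(\ddd_{d,1})$ and hence in $\Cok(\ddd_d)$. Since $d\geq 3$, the element $e_{d-1}$ lies in the augmentation ideal of $\Sym_{\bQ}(\cE)$ and is non-zero.

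For the non-triviality, I would proceed by contradiction: assume $m_{0,d-1} = \ddd_d(y)$ for some $y \in H_{\st}^{*}(\Lambda^d H_{\bQ})$, and write $y = \sum_{Q \in \cQ_d} p_Q\, m_Q$ in the free basis of Theorem~\ref{stablecohomologyKawazumi}. The key observation is that $\ddd_d(m_Q)$ consists of monomials of length either $|Q|$ (at positions with $j_s \geq 2$, yielding $m_{i_s+1, j_s-1}$ via \eqref{eq:m11mij}) or $|Q|-1$ (at positions with $j_s = 1$, via the collapse $m_{i_s+1, 0} = e_{i_s}$). Consequently, only the monomials $m_Q$ with $|Q| \leq 2$ can contribute length-$1$ terms to $\ddd_d(y)$. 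The case $|Q|=1$ contributes only classes $m_{j, d-1}$ with $j\geq 1$, while for $|Q|=2$ with $d \geq 3$ the weighted-partition ordering forces $Q = ((i_1, d-1), (i_2, 1))$ with $i_1 \geq 0$, $i_2 \geq 1$, and \eqref{eq:m11mij_1} shows that the length-$1$ part of $\ddd_d(m_{i_1,d-1} m_{i_2,1})$ equals $-e_{i_2}\, m_{i_1, d-1}$. Projecting the identity $\ddd_d(y)=m_{0,d-1}$ onto the summand $\Sym_{\bQ}(\cE)\, m_{0,d-1}$ of the free decomposition, the coefficient of $m_{0,d-1}$ equals $-\sum_{i_2\geq 1} p_{((0,d-1),(i_2,1))}\, e_{i_2}$, which lies in the augmentation ideal $(e_1, e_2, \ldots) \subset \Sym_{\bQ}(\cE)$ and therefore cannot equal $1$.

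The main obstacle is identifying precisely which monomials $m_Q$ can contribute length-$1$ terms under $\ddd_d$; once one observes that $\ddd_d$ reduces length by at most one and only via the collapse at a $j_s = 1$ factor, the incompatibility with the augmentation-ideal structure of $\Sym_{\bQ}(\cE)$ completes the argument.
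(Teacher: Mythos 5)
Your proof is correct. The torsion part is identical to the paper's (both invoke Proposition~\ref{prop:general_result_filtration}\ref{(5)}), and the non-triviality argument rests on the same key observation as the paper's — namely that $\ddd_d$ can only decrease the number of Mumford--Morita--Miller factors by dropping a $j_s=1$ factor, and doing so inevitably introduces a classical class $e_{i_s}$, landing the length-reducing terms in the augmentation ideal. Where the paper implements this observation inductively, climbing the filtration $F_k$ via the snake-lemma exact sequences \eqref{eq:snake} and arguing that $m_{0,d-1}$ survives each stage because $\Image(\delta_{d,k-1})$ is generated by elements with $e_i$-multipliers, you instead bypass the filtration entirely and work directly in the free $\Sym_{\bQ}(\cE)$-basis $\{m_Q\}_{Q\in\cQ_d}$ from Theorem~\ref{stablecohomologyKawazumi}, computing the coefficient of $m_{0,d-1}$ in $\ddd_d(y)$ explicitly and showing it lies in the augmentation ideal. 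Your route is more direct and arguably easier to verify, since it avoids tracking the class through the successive quotients $\Cok(\ddd_{d,k})$; the paper's route has the advantage of re-using machinery (the filtration and the exact sequences \eqref{eq:snake}) already needed for the concrete cokernel computations in \S\ref{s:covariant_coeff_syst_exterior_computations}. One small wording quibble: it is not the weighted-partition ordering alone that forces $Q=((i_1,d-1),(i_2,1))$ in the $|Q|=2$ case, but the combination of requiring a $j_s=1$ factor, the ordering $j_1\geq j_2$, and $d\geq 3$; the reasoning you give is correct, just compressed.
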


\begin{proof}
First, we recall from \ref{(2)} in Proposition~\ref{prop:general_result_filtration} that $m_{0, d-1}\in \Cok(\ddd_{d,0})$ is non-trivial. We consider the exact sequence \eqref{eq:snake} for each $k\geq1$. By definition, $\Sym_{\bQ}(\cE)$-modules $\Ker(\ddd_{d,k/(k-1)})$ is free on a linear combination of products of $k+1$ twisted Mumford-Morita-Miller classes $\{m_{i,j};i\geq0,j\geq1\}$. Then, using the formal definition from the snake lemma of a connecting homomorphism, we deduce that the image $\delta_{d,k-1}$ is generated by linear combinations of products of $k$ classes $\{m_{i,j};i\geq0,j\geq1\}$, each product being multiplied by at least one classical Mumford-Morita-Miller class $\{e_{i};i\geq1\}$. Hence the class $m_{0, d-1}$ does not belong to the image of any $\delta_{d,k-1}$ for all $1\leq k\leq d-2$. Therefore $m_{0, d-1}$ is a non-trivial class in each $\Cok(\ddd_{d,k})$ for all $0\leq k\leq d-1$, and in particular on $\Cok(\ddd_{d,d-1})=\Cok(\ddd_{d})$. That $m_{0, d-1}$ is a torsion element follows from \ref{(5)} in Proposition~\ref{prop:general_result_filtration}, whence the result.
\end{proof}

\begin{coro}\label{coro:stable_cohomo_algebra_not_free}
If $d \geq 3$, the $\Sym_{\bQ}(\cE)$-module $H_{\st}^{\ddagger}(\Lambda^{d}\tilde{H}_{\bQ})$ is a non-trivial and torsion.
\end{coro}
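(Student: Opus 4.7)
The plan is immediate because the two ingredients needed have just been established in the text. First, I would use the four-term exact sequence \eqref{ES_cohomology_exterior} of $\Sym_{\bQ}(\cE)$-modules to identify
$$H_{\st}^{\ddagger}(\Lambda^{d}\tilde{H}_{\bQ})\;\cong\;\Cok(\ddd_{d}),$$
so that the claim reduces to showing that $\Cok(\ddd_{d})$ is both non-trivial and torsion whenever $d\geq 3$.

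For the non-triviality part, I would directly invoke Theorem~\ref{thm:general_qualitative_thm}, which asserts that the class $m_{0,d-1}$ represents a non-trivial (torsion) element of $\Cok(\ddd_{d})$ as soon as $d\geq 3$; this at once witnesses $H_{\st}^{\ddagger}(\Lambda^{d}\tilde{H}_{\bQ})\neq 0$.

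For the torsion property, I would invoke Corollary~\ref{coro:cok_torsion}: the total cokernel $\Cok(\ddd)=\bigoplus_{d\geq 1}\Cok(\ddd_{d})$ is a torsion $\Sym_{\bQ}(\cE)$-module, since it vanishes after applying the exact localisation functor $\Sym_{\bQ}(\cE^{\pm})\otimes_{\Sym_{\bQ}(\cE)}-$ by Theorem~\ref{thm:full_stable_cohomology_extension}. Because the decomposition $\Cok(\ddd)=\bigoplus_{d\geq 1}\Cok(\ddd_{d})$ is a decomposition as $\Sym_{\bQ}(\cE)$-modules, the torsion property passes to each direct summand, and hence each $\Cok(\ddd_{d})\cong H_{\st}^{\ddagger}(\Lambda^{d}\tilde{H}_{\bQ})$ is torsion.

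There is essentially no genuine obstacle: this is a packaging corollary combining the qualitative non-triviality statement of Theorem~\ref{thm:general_qualitative_thm} with the localisation-vanishing statement of Corollary~\ref{coro:cok_torsion}, read through the parity-indexed identification coming from the exact sequence \eqref{ES_cohomology_exterior}. The only mildly subtle point to flag is the compatibility of the grading-by-$d$ with the localisation argument, which simply amounts to the observation that $\ddd$ is a direct sum of its components $\ddd_{d}$, so torsion for the total module is equivalent to torsion for each summand.
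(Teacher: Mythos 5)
Your proposal is correct and follows essentially the same route as the paper: it identifies $H_{\st}^{\ddagger}(\Lambda^{d}\tilde{H}_{\bQ})$ with $\Cok(\ddd_{d})$ via the exact sequence \eqref{ES_cohomology_exterior}, invokes Theorem~\ref{thm:general_qualitative_thm} for non-triviality of the class $m_{0,d-1}$, and invokes Corollary~\ref{coro:cok_torsion} for the torsion property. The paper's proof is identical in substance; your additional remark that torsion of the total cokernel $\Cok(\ddd)=\bigoplus_{d}\Cok(\ddd_{d})$ restricts to each graded piece is a (correct) small clarification that the paper leaves implicit.
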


\begin{proof}
We already know from Corollary~\ref{coro:cok_torsion} that the $\Sym_{\bQ}(\cE)$-module $H_{\st}^{\ddagger}(\Lambda^{d}\tilde{H}_{\bQ})=\Cok(\ddd_{d})$ is torsion. Now it follows from Theorem~\ref{thm:general_qualitative_thm} that $H_{\st}^{\ddagger}(\Lambda^{d}\tilde{H}_{\bQ})$ has a non-trivial torsion $\Sym_{\bQ}(\cE)$-module summand containing the class $m_{0,d-1}$.
\end{proof}

Finally, we have the following estimate for annihilators of each element in
$H_{\st}^{\even}(\Lambda^{\odd}\tilde{H}_{\bQ})\oplus H_{\st}^{\odd}(\Lambda^{\even}\tilde{H}_{\bQ})$, 
which is a supporting evidence for our computations in the following sections. This follows immediately from \ref{(5)} in Proposition~\ref{prop:general_result_filtration}.
\begin{coro}\label{coro:annihilator}
For any element $u \in H_{\st}^{\even}(\Lambda^{\odd}\tilde{H}_{\bQ})\oplus H_{\st}^{\odd}(\Lambda^{\even}\tilde{H}_{\bQ})$, there exists
a {\bf monomial} $v$ in $\cE$ such that $vu = 0$ in $H_{\st}^{\even}(\Lambda^{\odd}\tilde{H}_{\bQ})\oplus H_{\st}^{\odd}(\Lambda^{\even}\tilde{H}_{\bQ})$.
\end{coro}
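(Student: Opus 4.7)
The plan is straightforward and essentially reduces the statement to Proposition~\ref{prop:general_result_filtration}~\ref{(5)} (with $r=1$) together with the finite-support property of direct sums. First I would unfold the $\Sym_{\bQ}(\cE)$-module in question as the $d$-graded decomposition
\[
H_{\st}^{\even}(\Lambda^{\odd}\tilde{H}_{\bQ}) \oplus H_{\st}^{\odd}(\Lambda^{\even}\tilde{H}_{\bQ}) \;=\; \bigoplus_{d\geq 1}\Cok(\ddd_{d}),
\]
which is available from the exact sequence \eqref{ES_cohomology_exterior} and the definition of the parity conventions $\dagger,\ddagger$. Since the cup product with any class in $H^{*}(\Gamma_{\infty,1};\bQ)$ preserves the exterior power degree $d$, the $\Sym_{\bQ}(\cE)$-action respects this decomposition.

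Now, any element $u$ of this module is a \emph{finite} sum $u = \sum_{d=1}^{D} u_{d}$ with $u_{d}\in \Cok(\ddd_{d})$, where $D$ is the largest index for which $u_{d}\neq 0$. The final sentence of Proposition~\ref{prop:general_result_filtration}~\ref{(5)} asserts that, for any $d\geq 1$ and any lift of $u_{d}$ in $H_{\st}^{*}(\Lambda^{d-1}H_{\bQ})$, multiplication by $e_{k}$ with $k\geq d-1$ lands in $\Image(\ddd_{d})$, so $e_{k}u_{d}=0$ in $\Cok(\ddd_{d})$. Choosing the uniform index $k:=D-1$, the inequality $D-1\geq d-1$ holds for every $1\leq d\leq D$, so $e_{D-1}$ simultaneously annihilates each component $u_{d}$. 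Consequently the length-one monomial $v:=e_{D-1}\in\cE\subset\Sym_{\bQ}(\cE)$ satisfies $vu=0$, which is the conclusion.

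The main (and, in fact, only) subtlety is observing that the $\Sym_{\bQ}(\cE)$-action commutes with the formation of $\Cok(\ddd_{d})$, so that the annihilation statement for $\tilde{u}_{d}\in H_{\st}^{*}(\Lambda^{d-1}H_{\bQ})$ given by \ref{(5)} descends to the cokernel; this is immediate because $\ddd_{d}$ is $\Sym_{\bQ}(\cE)$-linear. Apart from this purely formal point, the corollary is a direct packaging of \ref{(5)} and requires no further computation.
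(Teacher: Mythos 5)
Your proof is correct and takes the same approach as the paper, which simply cites Proposition~\ref{prop:general_result_filtration}~\ref{(5)} as immediate; you have fleshed out the details (finite support in the $d$-grading, $\Sym_{\bQ}(\cE)$-linearity of $\ddd_d$, uniform choice of index). One small slip: when $D=1$ the index $D-1=0$ does not correspond to a generator of $\cE$; you should take $v=e_D$ instead of $v=e_{D-1}$, since $D\geq d-1$ and $D\geq 1$ for all $1\leq d\leq D$, so the same argument applies.
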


\subsubsection{First application: low-dimensional paired stable twisted cohomologies}\label{ss:low_dim_paired_cohomologies}

We now apply the filtration of \S\ref{ss:filtration_monomials} to prove the triviality of each low-dimensional stable cohomology group $H_{\st}^{i}(\Lambda^{d}\tilde{H}_{\bQ})$ when $i$ has the same parity as $d$. Beforehand, we introduce the following truncation of the derivation $\ddd_{d}$ and its filtration by the monomials.
We consider an integer $j\geq2$ and denote by $\ddd_{d}^{j}$ the direct sum
$$\bigoplus_{0\leq i\leq j-2} \mu_{d,i}(m_{1,1}, -): \bigoplus_{0\leq i\leq j-2} H_{\st}^{i}(\Lambda^{d}H_{\bQ}) \to \bigoplus_{0\leq i\leq j-2} H_{\st}^{i+1}(\Lambda^{d-1}H_{\bQ}).$$    
The filtration by the monomials of Definition~\ref{def:filtration_monomial} restricts well to $H_{\st}^{*\leq d-2}(\Lambda^{d}H_{\bQ})$ (i.e.~the direct sum $\bigoplus_{0\leq i\leq d-2} H_{\st}^{i}(\Lambda^{d}H_{\bQ})$). Then, by formula \eqref{eq:m11mij}, the map $\ddd_{d,k}^{d}$ induces a morphism $\ddd_{d,k}^{d}\colon F_{k}H_{\st}^{*\leq d-2}(\Lambda^{d}{H}_{\bQ}) \to F_{k}H_{\st}^{*\leq d-1}(\Lambda^{d-1}{H}_{\bQ})$ for each $k\geq 0$, and thus a morphism $\ddd_{d,k/(k-1)}^{d}$ analogous to \eqref{eq:ddd_k_k-1} for each $k\geq1$.

\begin{thm}\label{thm:ker_D_d_d}
We assume that $d\geq3$. Then $\Ker(\ddd_{d}^{d})=0$.
\end{thm}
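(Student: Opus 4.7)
The plan is to translate the statement into the algebraic model of Section~\ref{ss:first_localisation} and then argue by induction on the length filtration.

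Under the algebra isomorphism $\cR\colon R_{\infty}\overset{\sim}{\to}H_{\st}^{*}(\Lambda^{*}H_{\bQ})$ of Lemma~\ref{lem:isos_ker_coker_derivations}, the derivation $\ddd_{d}$ corresponds to $D_{\infty}$ restricted to the weight-$d$ summand. It therefore suffices to show that if $u \in R_{\infty}$ has weight $d$, total degree $\leq d-2$, and $D_{\infty}(u)=0$, then $u=0$. I would proceed by induction on the length $k$ such that $u \in F_{k}H_{\st}^{*}(\Lambda^{d}H_{\bQ})$, where $F_{k}$ is the filtration from Definition~\ref{def:filtration_monomial}, assuming $u$ non-zero and $k$ minimal.

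The base case $k=0$ is immediate: elements of $F_{0}$ of cohomological degree $\leq d-2$ are scalar multiples of $\mm_{0,d}$, and $\ddd(\mm_{0,d})=\mm_{1,d-1}\neq 0$ in $H_{\st}^{d-1}(\Lambda^{d-1}H_{\bQ})$ since $d-1\geq 2$. For the inductive step $k\geq 1$, applying the snake lemma to $0\to F_{k-1}\to F_{k}\to F_{k}/F_{k-1}\to 0$ with vertical maps induced by $\ddd$ extends the exact sequence~\eqref{eq:snake} on the left:
\[
0 \longrightarrow \Ker(\ddd_{d,k-1}) \longrightarrow \Ker(\ddd_{d,k}) \longrightarrow \Ker(\ddd_{d,k/(k-1)}) \xrightarrow{\,\delta_{d,k-1}\,} \Cok(\ddd_{d,k-1}).
\]
By the inductive hypothesis, $\Ker(\ddd_{d,k-1})$ vanishes in cohomological degree $\leq d-2$, so it is enough to show that the connecting map $\delta_{d,k-1}$ is injective on the degree $\leq d-2$ part of $\Ker(\ddd_{d,k/(k-1)})$.

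For the crux of the argument, take any non-zero $\bar u \in \Ker(\ddd_{d,k/(k-1)})$ in degree $\leq d-2$ and any lift $\hat u \in F_{k}$. Contracting a $\mm_{*,1}$-factor in $\hat u$ via $\ddd$ produces ``obstruction'' summands in $\ddd(\hat u)\in F_{k-1}$ of the form $e_{\alpha}\cdot m_{Q'}$ with $\alpha\geq 1$ and $m_{Q'}$ of length exactly $k$. The natural pre-image $v=\mm_{\alpha,1}\cdot m_{Q'}$ producing such a term under $\ddd$ has length $k+1$, so it lies in $F_{k}\setminus F_{k-1}$. A careful combinatorial check---driven by the degree constraint $\sum_{a}n_{a}\leq d+k$ derived from the bound $\deg\leq d-2$, and exploiting the structural restrictions of Proposition~\ref{prop:general_result_filtration}---rules out every alternative pre-image in $F_{k-1}$, so that at least one such obstruction term survives non-trivially in $\Cok(\ddd_{d,k-1})$. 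Hence $\delta_{d,k-1}(\bar u)\neq 0$, contradicting $\bar u \in \Ker(\delta_{d,k-1})$ and completing the induction.

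The main obstacle will be the final step: for each non-zero low-degree $\bar u$ in the associated graded kernel, one must exhibit an obstruction term in $\ddd(\hat u)$ that provably lies outside $\Image(\ddd|_{F_{k-1}})$. This requires a case analysis over the admissible $n$-profiles of weight $d$ permitted by the degree bound, whose complexity grows with $k$; it is the most delicate part of the proof.
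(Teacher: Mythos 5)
Your high-level strategy---inducting on the monomial-length filtration and running the snake lemma on the filtration quotients---matches the paper's, and your base case is correct. But the crucial inductive step diverges, and your sketch of it contains a genuine gap. The paper proves the strictly stronger statement that $\Ker(\ddd_{d,k/(k-1)}^{d})=0$: the low-degree associated-graded kernel is already trivial, so the connecting homomorphism has nothing to act on. You instead propose to allow this kernel to be nonzero and to show that $\delta_{d,k-1}$ is injective on it, by arguing that contracting the $\mm_{*,1}$-factors in a lift $\hat u$ produces ``obstruction'' terms $e_{\alpha}m_{Q'}\in F_{k-1}$ that survive in $\Cok(\ddd_{d,k-1})$.

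This mechanism fails entirely when no monomial appearing in $\hat u$ has a $\mm_{*,1}$-factor, i.e.\ when every factor is of the form $\mm_{0,b}$ with $b\geq2$---precisely the configuration the paper isolates as Case~A. There are then no $\mm_{*,1}$-contractions at all: every term of $\ddd(\hat u)$ has length $k+1$, so $\bar u\in\Ker(\ddd_{d,k/(k-1)})$ forces $\ddd(\hat u)=0$ outright, giving $\delta_{d,k-1}(\bar u)=0$, the opposite of what you need. Excluding this configuration requires proving that length-$(k+1)$ contraction terms coming from $\mm_{0,b}$-factors cannot cancel among themselves in low cohomological degree, which is exactly the content of the homogeneity analysis that occupies Steps~1--3 and Cases~A--C of the paper's proof. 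The ``careful combinatorial check'' you defer would therefore not refine your obstruction-term picture but replace it: to close the gap you would in effect have to establish $\Ker(\ddd_{d,k/(k-1)}^{d})=0$ directly, which is what the paper does.
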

\begin{proof}
Let us prove by induction on $0\leq k\leq d-2$ that $\Ker(\ddd_{d,k}^{d})=0$, and we will be done since $\ddd_{d,d-2}^{d}=\ddd_{d}^{d}$ because $F_{k}H_{\st}^{*\leq d-2}(\Lambda^{d}{H}_{\bQ})=F_{d-2}H_{\st}^{*\leq d-2}(\Lambda^{d}{H}_{\bQ})$) if $k\geq d-2$.
Indeed, a monomial $\mm_{a_{0},b_{0}}\mm_{a_{1},b_{1}}\cdots \mm_{a_{k},b_{k}} \in H_{\st}^{*\leq d-2}(\Lambda^{d}{H}_{\bQ})$ with $b_i \geq 1$ satisfies $\sum_{0\leq i\leq k} b_i = d$ and 
$\sum_{0\leq i\leq k} (2a_i + b_i - 2) \leq d-2$. A fortiori, we have $\sum_{0\leq i\leq k} a_i \leq k$, so there is some $i_0$ such that $a_{i_0} = 0$ and $b_{i_0} \geq 2$ (since there does not exist $m_{0,1}$), and thus $d = \sum_{0\leq i\leq k} b_i \geq k+2$.

By Definitions~\ref{def:mij} and \ref{def:weighted_partition}, we note that a generator $\mm_{a_{0},b_{0}}$ belongs to $F_{0}H_{\st}^{*\leq d-2}(\Lambda^{d}{H}_{\bQ})$ if and only if $b_{0}=d$ and $a_{0}=0$. So, by the formula \eqref{eq:m11mij}, we deduce that $\ddd_{d,0}^{d}$ is injective. Now, let us assume that $\Ker(\ddd_{d,k-1}^{d})=0$ for some $k\geq1$. By the snake lemma, there is an exact sequence for each $k\geq1$
\begin{equation}\label{eq:snake_ker}
\xymatrix{0\ar@{->}[r] & \Ker(\ddd_{d,k-1}^{d})\ar@{->}[r] & \Ker(\ddd_{d,k}^{d})\ar@{->}[r] & \Ker(\ddd_{d,k/(k-1)}^{d})\ar@{->}[r]^-{\delta_{d,k-1}^{d}} & \Cok(\ddd_{d,k-1}^{d}).}
\end{equation}
Hence, it is enough to prove that $\Ker(\ddd_{d,k/(k-1)}^{d})=0$ for $0\leq k\leq d-2$, because then we are done by using \eqref{eq:snake_ker} and the inductive assumption on $k$.

Beforehand, we note from Definitions~\ref{def:mij} and \ref{def:weighted_partition} that a generator $\mm_{a_{0},b_{0}}\mm_{a_{1},b_{1}}\cdots \mm_{a_{k},b_{k}}$ belongs to (and does not vanish in) $F_{k}H_{\st}^{*\leq d-2}(\Lambda^{d}{H}_{\bQ})/F_{k-1}H_{\st}^{*\leq d-2}(\Lambda^{d}{H}_{\bQ})$ if and only if $\sum_{0\leq i\leq k}a_{i}\leq k$ and $\sum_{0\leq i\leq k}b_{i}= d$ with $b_{i}\geq1$ for each $0\leq i\leq k$. In particular, there must then be at least one $a_{i}=0$.
We also recall from Definition~\ref{def:weighted_partition} that $b_{0}\geq b_{1}\geq \cdots \geq b_{k}$ and that $a_{i}\geq a_{i+1}$ if $b_{i} = b_{i+1}$.
An element of the kernel of $\ddd_{d,k/(k-1)}^{d}$ may be written as a formal sum $\mathsf{M}_{\ell}:=\sum_{1\leq j\leq \ell}\lambda_{j}\mm_{a_{0},b_{0}}^{(j)}\cdots \mm_{a_{k},b_{k}}^{(j)}$, where $\ell\geq1$, $\lambda_{j}\in \Sym_{\bQ}(\cE)$ and $\mm_{a_{0},b_{0}}^{(j)}\cdots \mm_{a_{k},b_{k}}^{(j)}$ belongs to (and does not vanish in) $F_{k}H_{\st}^{*\leq d-2}(\Lambda^{d}{H}_{\bQ})/F_{k-1}H_{\st}^{*\leq d-2}(\Lambda^{d}{H}_{\bQ})$ for each $1\leq j\leq \ell$. For convenience, we denote by $a_{i}^{(j)}$ and $b_{i}^{(j)}$ the indices of the $\mm_{a_{i},b_{i}}^{(j)}$.

Let us prove by induction on $\ell\geq1$ that any $\mathsf{M}_{\ell}$ in the kernel of $\ddd_{d,k/(k-1)}^{d}$ vanishes if its number of terms is not greater than $\ell$. Since $k\leq d-2$, note that $b_{0}^{(1)} \geq 2$ because $\sum_{0\leq i\leq k}b_{i}^{(1)}= d$, while $b_{0}^{(1)}\geq b_{1}^{(1)}\geq \cdots \geq b_{k}^{(1)}$. Hence $\sum_{0\leq i\leq k}\mm_{a_{0},b_{0}}^{(1)}\cdots  \mm_{a_{i}+1,b_{i}-1}^{(1)}\cdots \mm_{a_{k},b_{k}}^{(1)}$ does not vanish in $F_{k}H_{\st}^{*\leq d-1}(\Lambda^{d-1}{H}_{\bQ})/F_{k-1}H_{\st}^{*\leq d-1}(\Lambda^{d-1}{H}_{\bQ})$, and so $\ddd_{d,k/(k-1)}^{d}(\mathsf{M}_{1})=0$ implies that $\lambda_{1}=0$, and thus $\mathsf{M}_{1}=0$.

Let us now assume $\ell\geq 2$ and that
any element of the kernel of $\ddd_{d,k/(k-1)}^{d}$ vanishes if its number of terms is strictly smaller than $\ell$.
We denote by $L$ the set $\{1,\ldots,\ell\}$ and by $L_{c}$ the subset $\{l\in L \mid c^{(l)}\geq c^{(l')}, \forall l'\in L\}$ for $c\in\{a_{0},b_{0}\}$. The rest of the proofs is the following case disjunction.

\emph{\textbf{\underline{Step $1$:}} assume that $L_{b_{0}}\neq L$.} We denote the summand $\sum_{j\in L_{b_{0}}}\lambda_{j} \mm_{a_{0},b_{0}}^{(j)}\cdots \mm_{a_{k},b_{k}}^{(j)}$ of $\mathsf{M}_{\ell}$ by $\mathsf{M}_{L_{b_{0}}}$, and the other summand $\sum_{j'\in L\setminus L_{b_{0}}}\lambda_{j'}\mm_{a_{0},b_{0}}^{(j')}\cdots \mm_{a_{k},b_{k}}^{(j')}$ by $\mathsf{M}_{L\setminus L_{b_{0}}}$. Decomposing the equality $\ddd_{d,k/(k-1)}^{d}(\mathsf{M}_{\ell})=0$ with respect to these summands provides that:
\begin{equation}\label{eq:step_1_equality}
\sum_{j\in L_{b_{0}}}\lambda_{j}\sum_{0\leq i\leq k}\mm_{a_{0},b_{0}}^{(j)}\cdots  \mm_{a_{i}+1,b_{i}-1}^{(j)}\cdots \mm_{a_{k},b_{k}}^{(j)} = 
-\sum_{j'\in L\setminus L_{b_{0}}}\lambda_{j'}\sum_{0\leq i\leq k}\mm_{a_{0},b_{0}}^{(j')}\cdots  \mm_{a_{i}+1,b_{i}-1}^{(j')}\cdots \mm_{a_{k},b_{k}}^{(j')}.
\end{equation}

\emph{\underline{Case $A$:} for all $j\in L_{b_{0}}$ and all $1\leq i\leq k$, $a_{i}^{(j)}=0$ (and thus $b_{i}^{(j)}\geq2$ by Definition~\ref{def:mij}).}
For each $j\in L_{b_{0}}$, let $I_{j}$ be the maximum $i\in\{0,\ldots, k\}$ such that $b_{i}^{(j)}= b_{0}^{(j)}$. We denote by $I$ the maximum of the set $\{I_{j}\mid j\in L_{b_{0}} \}$ and by $L_{b_{0}}(I)$ the set $\{j\in L_{b_{0}}\mid I_{j}=I \}$. Then, by analysing the homogeneity of the $b_{0}$'s, we deduce that the following summand of the left-hand side of \eqref{eq:step_1_equality} for each $j\in L_{b_{0}}(I)$
$$\lambda_{j}\sum_{I+1\leq i\leq k}\mm_{0,b_{0}}^{(j)}\cdots  \mm_{1,b_{i}-1}^{(j)}\cdots \mm_{0,b_{k}}^{(j)}$$
cannot be cancelled by linear combinations of terms
\begin{itemizeb}
    \item of the left-hand side of \eqref{eq:step_1_equality} either indexed by $j_{1}\in L_{b_{0}}\setminus L_{b_{0}}(I)$ or those of the form $\lambda_{j_{2}}\sum_{0\leq i\leq I}\mm_{0,b_{0}}^{(j_{2})}\cdots  \mm_{1,b_{i}-1}^{(j_{2})}\cdots \mm_{0,b_{k}}^{(j_{2})}$ with $j_{2}\in L_{b_{0}}(I)$;
    \item of the right-hand side of \eqref{eq:step_1_equality}.
\end{itemizeb}
Therefore, denoting by $\mm_{0,b_{0}}$ the common class $\mm_{0,b_{0}}^{(j)}$ when $j\in L_{b_{0}}(I)$, we deduce that 
$$\mm_{0,b_{0}}^{\cup I}\sum_{j\in L_{b_{0}}(I)}\lambda_{j}\sum_{I+1\leq i\leq k}\mm_{0,b_{I+1}}^{(j)}\cdots  \mm_{1,b_{i}-1}^{(j)}\cdots \mm_{0,b_{k}}^{(j)} = 0.$$
Note that this is equal to $\mm_{0,b_{0}}^{\cup I}\ddd_{d,k-I-1/(k-I-2)}^{d}(\mathsf{M}_{L_{b_{0}},I})$, where $\mathsf{M}_{L_{b_{0}},I}$ denotes the element $\sum_{j\in L_{b_{0}}(I)}\lambda_{j}\mm_{0,b_{I+1}}^{(j)}\cdots \mm_{0,b_{k}}^{(j)}$. Then it follows from the inductive assumption on $k$ that $\mathsf{M}_{L_{b_{0}},I}=0$. Hence there is at least one coefficient $\lambda_{j}$ of $\mathsf{M}_{\ell}$ which is null and thus we are done by the inductive assumption on $\ell$.

\emph{\underline{Case $B$:} there exists $j\in L_{b_{0}}$ and $1\leq i\leq k$ such that $b_{i}^{(j)}\geq2$.}
By Case~$A$, we may assume that there exists $j\in L_{b_{0}}$ and $1\leq i\leq k$ such that $a_{i}^{(j)}\neq 0$.

Let $L_{b_{0},a_{0}}$ be the set $\{l\in L_{b_{0}} \mid a_{0}^{(l)}\geq a_{0}^{(l')}, \forall l'\in L_{b_{0}}\}$. For each $j\in L_{b_{0},a_{0}}$, let $I_{j}$ be the maximum $i\in\{0,\ldots, k\}$ such that $a_{i}^{(j)}= a_{0}^{(j)}$ and $b_{i}^{(j)}= b_{0}^{(j)}$.
We denote by $I$ the maximum of the set $\{I_{j}\mid j\in L_{b_{0},a_{0}} \}$ and by $L_{b_{0},a_{0}}(I)$ the set $\{j\in L_{b_{0},a_{0}}\mid I_{j}=I \}$.
By definition of the source of $\ddd_{d,k/(k-1)}^{d}$, note that $I\leq k-1$ since $\sum_{0\leq i\leq k}a_{i}^{(j)}\leq k$ and there is at least one $a_{i}^{(j)}\geq 1$. Then, by analysing the homogeneity of the $a_{0}$'s and $b_{0}$'s, we deduce that the following summand of the left-hand side of \eqref{eq:step_1_equality} for each $j\in L_{b_{0},a_{0}}(I)$
$$\lambda_{j}\sum_{I+1\leq i\leq k}\mm_{a_{0},b_{0}}^{(j)}\cdots  \mm_{a_{i}+1,b_{i}-1}^{(j)}\cdots \mm_{a_{k},b_{k}}^{(j)}$$
cannot be cancelled by linear combinations of terms
\begin{itemizeb}
    \item of the left-hand side of \eqref{eq:step_1_equality} either indexed by $j_{1}\in L_{b_{0}}\setminus L_{b_{0},a_{0}}(I)$ or those of the form $\lambda_{j_{2}}\sum_{0\leq i\leq I}\mm_{a_{0},b_{0}}^{(j_{2})}\cdots  \mm_{a_{i}+1,b_{i}-1}^{(j_{2})}\cdots \mm_{a_{k},b_{k}}^{(j_{2})}$ with $j_{2}\in L_{b_{0},a_{0}}(I)$;
    \item of the right-hand side of \eqref{eq:step_1_equality}.
\end{itemizeb}
Therefore, denoting by $\mm_{a_{0},b_{0}}$ the common class $\mm_{a_{0},b_{0}}^{(j)}$ when $j\in L_{b_{0},a_{0}}(I)$, we deduce that 
$$\mm_{a_{0},b_{0}}^{\cup I}\sum_{j\in L_{b_{0},a_{0}}(I)}\lambda_{j}\sum_{I+1\leq i\leq k}\mm_{a_{I+1},b_{I+1}}^{(j)}\cdots  \mm_{a_{i}+1,b_{i}-1}^{(j)}\cdots \mm_{a_{k},b_{k}}^{(j)} = 0.$$
Note that this is equal to $\mm_{a_{0},b_{0}}^{\cup I}\ddd_{d,k-I-1/(k-I-2)}^{d}(\mathsf{M}_{L_{b_{0}},I})$, where $\mathsf{M}_{L_{b_{0}},I}$ denotes the element $\sum_{j\in L_{b_{0},a_{0}}(I)}\lambda_{j}\mm_{a_{I+1},b_{I+1}}^{(j)}\cdots \mm_{a_{k},b_{k}}^{(j)}$. Then it follows from the inductive assumption on $k$ that $\mathsf{M}_{L_{b_{0}},I}=0$. Hence there is at least one coefficient $\lambda_{j}$ of $\mathsf{M}_{\ell}$ which is null and thus we are done by the inductive assumption on $\ell$.

\emph{\underline{Case $C$:} $b_{i}^{(j)}=1$ for all $j\in L_{b_{0}}$ and $1\leq i\leq k$.}
Recall that $\sum_{0\leq i\leq k}a_{i}^{(j)}\leq k$ by the above description of the source of $\ddd_{d,k/(k-1)}^{d}$ and that the cup products of monomials of length strictly smaller than $k+1$ vanish in the target of $\ddd_{d,k/(k-1)}^{d}$. Then, the only possibility for the left-hand side of \eqref{eq:step_1_equality} to not be null is that it is of the form
$$\sum_{j\in L_{b_{0}}}\lambda_{j}\mm_{1,b_{0}-1}^{(j)}\mm_{1,1}^{(j)}\cdots \mm_{1,1}^{(j)}.$$
In order to match each of these summands, the preimage $\mathsf{M}_{L\setminus L_{b_{0}}}$ of the right-hand side of \eqref{eq:step_1_equality} must contain the generator $\mm_{1,b_{0}-1}\mm_{0,2}\mm_{1,1}\cdots \mm_{1,1}$. But then, in order to cancel that element, $\mathsf{M}_{L\setminus L_{b_{0}}}$ must also contain either $\mm_{1,b_{0}-1}\mm_{0,2}\mm_{0,2}\mm_{1,1}\cdots \mm_{1,1}$ or $\mm_{2,b_{0}-2}\mm_{0,2}\mm_{1,1}\mm_{1,1}\cdots \mm_{1,1}$, whereas these elements do not belong to the source of $\ddd_{d,k/(k-1)}^{d}$. Therefore we must have $\eqref{eq:step_1_equality}=0$, and since each side of the equality is the image of some $\mathsf{M}_{\ell'}$ with $\ell' < \ell$, we are done by the inductive assumption on $\ell$.

\emph{\textbf{\underline{Step $2$:}} assume that $L_{a_{0}}\neq L$.} By Step~$1$, we may assume that $L_{b_{0}}=L$. It follows from the equality $\ddd_{d,k/(k-1)}^{d}(\mathsf{M}_{\ell})=0$ that:
\begin{equation}\label{eq:step_2_equality}
\sum_{j\in L_{a_{0}}}\lambda_{j}\sum_{0\leq i\leq k}\mm_{a_{0},b_{0}}^{(j)}\cdots  \mm_{a_{i}+1,b_{i}-1}^{(j)}\cdots \mm_{a_{k},b_{k}}^{(j)} = 
-\sum_{j'\in L\setminus L_{a_{0}}}\lambda_{j'}\sum_{0\leq i\leq k}\mm_{a_{0},b_{0}}^{(j')}\cdots  \mm_{a_{i}+1,b_{i}-1}^{(j')}\cdots \mm_{a_{k},b_{k}}^{(j')}.
\end{equation}
The reasoning is then analogous to that of Step~$1$.

\emph{\underline{Case $A$:}} Since we assume $L_{a_{0}}\neq L$, it is impossible that $a_{i}^{(j)}=0$ for all $j\in L_{b_{0}}$ and all $1\leq i\leq k$.

\emph{\underline{Case $B$:} there exists $j\in L_{a_{0}}$ and $1\leq i\leq k$ such that $b_{i}^{(j)}\geq2$.} By Case~$A$, we may assume that there exists $j\in L_{b_{0}}$ and $1\leq i\leq k$ such that $a_{i}^{(j)}\neq 0$.
For each $j\in L_{a_{0}}$, let $I_{j}$ be the maximum $i\in\{0,\ldots, k\}$ such that $a_{i}^{(j)}= a_{0}^{(j)}$ and $b_{i}^{(j)}= b_{0}^{(j)}$.
We denote by $I$ the maximum of the set $\{I_{j}\mid j\in L_{a_{0}} \}$ and by $L_{a_{0}}(I)$ the set $\{j\in L_{a_{0}}\mid I_{j}=I \}$. By definition of the source of $\ddd_{d,k/(k-1)}^{d}$, note that $I\leq k-1$ since $\sum_{0\leq i\leq k}a_{i}^{(j)}\leq k$ and there is at least one $a_{i}^{(j)}\geq 1$.
Then, by a clear analysis on the homogeneity of the $a_{0}$'s and $b_{0}$'s, we deduce that the following summand of the left-hand side of \eqref{eq:step_2_equality} for each $j\in L_{a_{0}}(I)$
$$\lambda_{j}\sum_{I+1\leq i\leq k}\mm_{a_{0},b_{0}}^{(j)}\cdots  \mm_{a_{i}+1,b_{i}-1}^{(j)}\cdots \mm_{a_{k},b_{k}}^{(j)}$$
cannot be cancelled by terms of the form $\lambda_{j}\sum_{0\leq i\leq I}\mm_{a_{0},b_{0}}^{(j)}\cdots  \mm_{a_{i}+1,b_{i}-1}^{(j)}\cdots \mm_{a_{k},b_{k}}^{(j)}$ from the left-hand side of \eqref{eq:step_2_equality}, or by terms of the right-hand side of \eqref{eq:step_2_equality}.
Therefore, denoting by $\mm_{a_{0},b_{0}}$ the common class $\mm_{a_{0},b_{0}}^{(j)}$ when $j\in L_{a_{0}}(I)$, we deduce that 
$$\mm_{a_{0},b_{0}}^{\cup I}\sum_{j\in L_{a_{0}}(I)}\lambda_{j}\sum_{I+1\leq i\leq k}\mm_{a_{I+1},b_{I+1}}^{(j)}\cdots  \mm_{a_{i}+1,b_{i}-1}^{(j)}\cdots \mm_{a_{k},b_{k}}^{(j)} = 0.$$
This is equal to $\mm_{a_{0},b_{0}}^{\cup I}\ddd_{d,k-I-1/(k-I-2)}^{d}(\mathsf{M}_{L})$, where $\mathsf{M}_{L}:=\sum_{j\in L_{a_{0}}(I)}\lambda_{j}\mm_{a_{I+1},b_{I+1}}^{(j)}\cdots \mm_{a_{k},b_{k}}^{(j)}$. Then, by the inductive assumption on $k$,
we deduce that $\mathsf{M}_{L}=0$. Hence there is at least one coefficient $\lambda_{j}$ of $\mathsf{M}_{\ell}$ which is null and thus we are done by the inductive assumption on $\ell$.

\emph{\underline{Case $C$:} $b_{i}^{(j)}=1$ for all $j\in L$ and $1\leq i\leq k$.}
By the properties of the source and target of $\ddd_{d,k/(k-1)}^{d}$, the left-hand side of \eqref{eq:step_2_equality} is of the form $\sum_{j\in L_{a_{0}}}\lambda_{j}\mm_{a_{0}+1,b_{0}-1}^{(j)}\mm_{1,1}^{(j)}\cdots \mm_{1,1}^{(j)}$.
Then, analysing the homogeneity of the $b_{0}$'s, we deduce from the right-hand side of \eqref{eq:step_2_equality} is of the form $-\sum_{j\in L\setminus L_{a_{0}}}\lambda_{j}\mm_{a_{0}+1,b_{0}-1}^{(j')}\mm_{1,1}^{(j')}\cdots \mm_{1,1}^{(j')}$. But $b_{0}^{(j')} < b_{0}^{(j)}$, so $\eqref{eq:step_2_equality}=0$. Hence we are done by the inductive assumption on $\ell$.

\emph{\textbf{\underline{Step $3$:}}} by Steps~$1$ and $2$, we may assume that $L_{a_{0}}=L_{b_{0}}=L$.
The element $\mathsf{M}_{\ell}$ may be rewritten as $\mm_{a_{0},b_{0}} M'_{\ell}$ where $M'_{\ell}:=\sum_{1\leq j\leq \ell}\lambda_{j}\mm_{a_{1},b_{1}}^{(j)}\cdots \mm_{a_{k},b_{k}}^{(j)}$.
Then $\ddd_{d,k/(k-1)}^{d}(\mathsf{M}_{\ell})=0$ implies that
\begin{equation}\label{eq:step_3_equality}
\mm_{a_{0},b_{0}} \sum_{1\leq j\leq \ell}\lambda_{j}\sum_{1\leq i\leq k}\mm_{a_{1},b_{1}}^{(j)}\cdots\mm_{a_{i}+1,b_{i}-1}^{(j)}\cdots \mm_{a_{k},b_{k}}^{(j)}=-\mm_{a_{0}+1,b_{0}-1} M'_{\ell}.
\end{equation}
Analysing the homogeneity of the $a_{0}$'s and $b_{0}$'s, the equality \eqref{eq:step_3_equality} implies that $b_{1}^{(j)}=b_{0}$ and $a_{1}^{(j)}=a_{0}$ for all $1\leq j\leq \ell$.
Then \eqref{eq:step_3_equality} is equivalent to:
$$\mm_{a_{0},b_{0}}^{2}\sum_{1\leq j\leq \ell}\lambda_{j}\sum_{2\leq i\leq k} \mm_{a_{2},b_{2}}^{(j)}\cdots\mm_{a_{i}+1,b_{i}-1}^{(j)}\cdots \mm_{a_{k},b_{k}}^{(j)}=
-2\mm_{a_{0},b_{0}}\mm_{a_{0}+1,b_{0}-1}\sum_{1\leq j\leq \ell}\lambda_{j}\mm_{a_{2},b_{2}}^{(j)}\cdots \mm_{a_{k},b_{k}}^{(j)}.$$
Then, by a clear recursion on the homogeneity of the $a$'s and $b$'s, this implies that $b_{i}^{(j)}=b_{0}$ and $a_{i}^{(j)}=a_{0}$ for all $2\leq i\leq k$ and $1\leq j\leq \ell$. Then we obtain that $\mm_{a_{0},b_{0}}^{k} \mm_{a_{0}+1,b_{0}-1}\sum_{1\leq j\leq \ell}\lambda_{j}=
-\mm_{a_{0},b_{0}}^{k}\mm_{a_{0}+1,b_{0}-1}\sum_{1\leq j\leq \ell}k\lambda_{j}$. But then this implies that $\sum_{1\leq j\leq \ell}\lambda_{j}= -k\sum_{1\leq j\leq \ell}\lambda_{j}$ and thus $\sum_{1\leq j\leq \ell}\lambda_{j}=0$. Hence $\mathsf{M}_{\ell}=0$, which ends the proof.
\end{proof}

\begin{coro}\label{coro:low_dim_paired}
We fix integers $d\geq1$ and $d'\geq2$. Then, $H_{\st}^{2i+1}(\Lambda^{2d+1}\tilde{H}_{\bQ})=H_{\st}^{2i'}(\Lambda^{2d'}\tilde{H}_{\bQ})=0$ for $i<d$ and $i'<d'$.
\end{coro}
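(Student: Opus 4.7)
The claim is an immediate consequence of Theorem~\ref{thm:ker_D_d_d} combined with the four-term exact sequence \eqref{ES_cohomology_exterior}. My plan is as follows.

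First, I would unpack the parity conventions. For $D=2d+1$ odd (with $d\geq 1$), one has $\dagger = \odd$, so the exact sequence \eqref{ES_cohomology_exterior} gives an injection $H_{\st}^{\odd}(\Lambda^{D}\tilde{H}_{\bQ}) \hookrightarrow \Ker(\ddd_{D})$ compatible with the cohomological grading. In particular, for any $i\geq 0$, $H_{\st}^{2i+1}(\Lambda^{D}\tilde{H}_{\bQ})$ sits inside the degree-$(2i+1)$ piece of $\Ker(\ddd_{D})$. Similarly, for $D=2d'$ even with $d'\geq 2$, $\dagger = \even$, and $H_{\st}^{2i'}(\Lambda^{D}\tilde{H}_{\bQ})$ injects into the degree-$2i'$ piece of $\Ker(\ddd_{D})$.

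Next, I would translate the hypothesis $i<d$ (respectively $i'<d'$) into the bound $2i+1 \leq 2d-1 = D-2$ (respectively $2i' \leq 2d'-2 = D-2$). This places the relevant cohomology groups inside the truncation $\Ker(\ddd_{D}^{D})$ introduced in \S\ref{ss:low_dim_paired_cohomologies}, where $\ddd_{D}^{D}$ is the direct sum of $\mu_{D,i}(m_{1,1},-)$ over $0\leq i \leq D-2$.

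Finally, since both cases require $D\geq 3$ (namely $D=2d+1\geq 3$ when $d\geq 1$, and $D=2d'\geq 4$ when $d'\geq 2$), Theorem~\ref{thm:ker_D_d_d} applies and gives $\Ker(\ddd_{D}^{D})=0$. Combining this with the injections of the first step yields $H_{\st}^{2i+1}(\Lambda^{2d+1}\tilde{H}_{\bQ})=0$ for $i<d$ and $H_{\st}^{2i'}(\Lambda^{2d'}\tilde{H}_{\bQ})=0$ for $i'<d'$, concluding the proof.

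There is no real obstacle here, since all the work has been done in Theorem~\ref{thm:ker_D_d_d}; the only care needed is to verify that the parity of the cohomological degree and the parity of $D$ match correctly, so that the relevant groups really land in the $\dagger$-part identified with $\Ker(\ddd_D)$ rather than in the $\ddagger$-part (which would correspond to $\Cok(\ddd_D)$ and need not vanish, as shown by Theorem~\ref{thm:general_qualitative_thm}).
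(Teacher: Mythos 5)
Your proof is correct and is exactly the argument the paper has in mind: the corollary is stated immediately after Theorem~\ref{thm:ker_D_d_d} with no separate proof, and the intended deduction is precisely to note that the degree-$\leq(D-2)$ piece of $H_{\st}^{\dagger}(\Lambda^{D}\tilde{H}_{\bQ})\cong\Ker(\ddd_D)$ coincides with $\Ker(\ddd_D^D)$, then apply the theorem with $D=2d+1\geq 3$ or $D=2d'\geq 4$. Your parity check (that one must land in the $\dagger$-part, not the $\ddagger$-part) is exactly the right point to be careful about, and your conversion of $i<d$ into $2i+1\leq D-2$ and $i'<d'$ into $2i'\leq D-2$ is correct.
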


\subsubsection{Computing the $\Tor$-groups}\label{ss:theory_Tor}
We finally detail the general methods we use in \S\ref{s:covariant_coeff_syst_exterior_computations} to compute the $\Tor$-groups of $H_{\st}^{*}(\Lambda^{d}\tilde{H}_{\bQ})$. We recall that we write $H_{*}(\Sym_{\bQ}(\cE);-)$ for $\Tor_{*}^{\Sym_{\bQ}(\cE)}(\bQ,-)$.

\paragraph*{General properties.}
Since $H_{\st}^{\dagger}(\Lambda^{d}{H}_{\bQ})$ and  $H_{\st}^{\ddagger}(\Lambda^{d-1}{H}_{\bQ})$ are free over $\Sym_{\bQ}(\cE)$, the concatenation of a $\Sym_{\bQ}(\cE)$-free resolution of $H_{\st}^{\dagger}(\Lambda^{d}\tilde{H}_{\bQ})$ and the exact sequence \eqref{ES_cohomology_exterior} is a $\Sym_{\bQ}(\cE)$-free resolution of $H_{\st}^{\ddagger}(\Lambda^{d}\tilde{H}_{\bQ})$. Hence we deduce that for any $j >0$ 
\begin{equation}\label{eq:exterior_Tor}
H_{j}(\Sym_{\bQ}(\cE);H_{\st}^{\dagger}(\Lambda^{d}\tilde{H}_{\bQ}))
\cong H_{j+2}(\Sym_{\bQ}(\cE); H_{\st}^{\ddagger}(\Lambda^{d}\tilde{H}_{\bQ})).
\end{equation}
In general, the module $H_{\st}^{\ddagger}(\Lambda^{d}\tilde{H}_{\bQ})$ is explicitly computable (at least for small $d$, see Theorems~\ref{prop:4th-ext_odd}, \ref{prop:3rd-ext-even} and \ref{thm:5th-ext-even}), and we are then able to fully deduce the $\Tor$-groups of $H_{\st}^{*}(\Lambda^{d}\tilde{H}_{\bQ})$ for $j >0$ from those of $H_{\st}^{\ddagger}(\Lambda^{d}\tilde{H}_{\bQ})$ thanks to \eqref{eq:exterior_Tor}; see Propositions~\ref{prop:3rd-ext_Tor}, \ref{prop:4th-ext_Tor} and Theorem~\ref{thm:5th-ext_Tor}.

Moreover, we have the following general result which refines Corollary~\ref{coro:stable_cohomo_algebra_not_free}:
\begin{thm}\label{thm:Tor_j_non-trivial}
If $d \geq 3$, for any $j \geq 0$, we have $H_{j}(\Sym_{\bQ}(\cE); H_{\st}^{\ddagger}(\Lambda^{d}\tilde{H}_{\bQ}))
\neq 0$, and a fortiori
$$H_{j}(\Sym_{\bQ}(\cE); H_{\st}^{*}(\Lambda^{d}\tilde{H}_{\bQ}))\neq 0.$$
Also, we have $H_{j'}(\Sym_{\bQ}(\cE); H_{\st}^{\dagger}(\Lambda^{d}\tilde{H}_{\bQ}))\neq 0$ for $j'\geq2$. In particular, the $\Sym_{\bQ}(\cE)$-modules $H_{\st}^{\dagger}(\Lambda^{d}\tilde{H}_{\bQ})$ is not free.
\end{thm}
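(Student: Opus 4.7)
The plan is to produce explicit non-trivial $\Tor$-classes in every degree $j \geq 0$ by packaging the torsion generator $m_{0,d-1}\in \Cok(\ddd_{d}) = H_{\st}^{\ddagger}(\Lambda^{d}\tilde{H}_{\bQ})$ identified in Theorem~\ref{thm:general_qualitative_thm} into cycles of the Koszul-type chain complex~\eqref{eq:chain_complex_Tor} of Lemma~\ref{lem:refined_Tor_method}. The statements for $H_{\st}^{\dagger}(\Lambda^{d}\tilde{H}_{\bQ})$ and the non-freeness will then follow formally from the shift isomorphism~\eqref{eq:exterior_Tor}.

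The crux of the argument, and the step I expect to be the main obstacle, will be showing that $m_{0,d-1}$ survives non-trivially in $H_{0}(\Sym_{\bQ}(\cE);\Cok(\ddd_{d}))=\Cok(\ddd_{d})/\cE\cdot\Cok(\ddd_{d})$. The strategy is to refine the inductive argument of the proof of Theorem~\ref{thm:general_qualitative_thm} after tensoring with $\bQ$ over $\Sym_{\bQ}(\cE)$. That proof already showed that the connecting homomorphism $\delta_{d,k-1}$ of the snake sequence~\eqref{eq:snake} has image contained in $\cE\cdot\Cok(\ddd_{d,k-1})$, while Proposition~\ref{prop:general_result_filtration}~\ref{(1)} ensures that $\Cok(\ddd_{d,k/(k-1)})$ is $\Sym_{\bQ}(\cE)$-free, so $\Tor_{1}^{\Sym_{\bQ}(\cE)}(\bQ, \Cok(\ddd_{d,k/(k-1)}))$ vanishes. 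Splitting the snake sequence into its two associated short exact sequences and tensoring both with $\bQ$ therefore yields injections $\Cok(\ddd_{d,k-1})\otimes_{\Sym_{\bQ}(\cE)}\bQ\hookrightarrow\Cok(\ddd_{d,k})\otimes_{\Sym_{\bQ}(\cE)}\bQ$ for every $k\geq 1$. Concatenating these and using that $\Cok(\ddd_{d,0})\otimes_{\Sym_{\bQ}(\cE)}\bQ\cong\bQ\cdot m_{0,d-1}$ by Proposition~\ref{prop:general_result_filtration}~\ref{(2)}, one deduces that $m_{0,d-1}$ is non-zero in $H_{0}(\Sym_{\bQ}(\cE);\Cok(\ddd_{d}))$.

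For $j\geq 1$, I would apply Lemma~\ref{lem:refined_Tor_method} with $M = \Cok(\ddd_{d})$ and $\cE'=\cE$ to represent $H_{j}(\Sym_{\bQ}(\cE);\Cok(\ddd_{d}))$ as the $j^{\ith}$ homology of the complex~\eqref{eq:chain_complex_Tor}, and exhibit the explicit chain $z_{j}:=m_{0,d-1}\otimes de_{d-1}\wedge de_{d}\wedge\cdots\wedge de_{d+j-2}$. Since Proposition~\ref{prop:general_result_filtration}~\ref{(5)} gives $e_{k}\cdot m_{0,d-1}=0$ in $\Cok(\ddd_{d})$ for every $k\geq d-1$, the chain $z_{j}$ is a cycle for the differential~\eqref{eq:iD}. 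If $z_{j}$ were equal to $\pa_{j+1}(w)$ for some $w$, reading off the coefficient of the wedge factor $de_{d-1}\wedge\cdots\wedge de_{d+j-2}$ in $\pa_{j+1}(w)$ would express $m_{0,d-1}$ as an element of $\cE\cdot\Cok(\ddd_{d})$, contradicting the preceding paragraph. Hence $H_{j}(\Sym_{\bQ}(\cE);\Cok(\ddd_{d}))\neq 0$ for every $j\geq 0$, and the claim for $H_{\st}^{\dagger}$ follows from~\eqref{eq:exterior_Tor}: for $j'\geq 2$ one has $H_{j'}(\Sym_{\bQ}(\cE);H_{\st}^{\dagger}(\Lambda^{d}\tilde{H}_{\bQ}))\cong H_{j'+2}(\Sym_{\bQ}(\cE);\Cok(\ddd_{d}))\neq 0$. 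The non-vanishing of a positive-degree $\Tor$-group then forces $H_{\st}^{\dagger}(\Lambda^{d}\tilde{H}_{\bQ})$ not to be $\Sym_{\bQ}(\cE)$-free.
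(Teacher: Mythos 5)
Your proof is correct, and it arrives at the result by a route that shares the two essential ingredients with the paper's proof (the survival of $m_{0,d-1}$ in degree $0$ and the annihilation $e_{k}\cdot m_{0,d-1}=0$ for $k\geq d-1$ from Proposition~\ref{prop:general_result_filtration}~\ref{(5)}), but packages them differently at both stages. For the $\Tor_{0}$ step you reuse the snake sequences~\eqref{eq:snake}: since the connecting maps $\delta_{d,k-1}$ land in $\cE\cdot\Cok(\ddd_{d,k-1})$ and the associated graded pieces $\Cok(\ddd_{d,k/(k-1)})$ are free, tensoring with $\bQ$ over $\Sym_{\bQ}(\cE)$ gives injections at each stage, and $m_{0,d-1}$ survives from $\Cok(\ddd_{d,0})\otimes\bQ$ all the way to $\Cok(\ddd_{d})\otimes\bQ$. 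That reasoning is sound, but it is noticeably heavier than what the paper does: the paper simply observes that a non-zero element of lowest cohomological degree in $H_{\st}^{\ddagger}(\Lambda^{d}\tilde{H}_{\bQ})$ cannot lie in the image of $\pa_{1}$ because the augmentation ideal acts by raising degree. For the higher $\Tor$-groups, the paper decomposes $\Sym_{\bQ}(\cE)\cong\Sym_{\bQ}(\cE_{\leq d-2})\otimes\Sym_{\bQ}(\cE_{\geq d-1})$ and invokes the K{\"u}nneth formula of Lemma~\ref{lem:general_properties_homology_algebra}, getting $\Lambda^{j}\cE_{\geq d-1}\neq 0$ as the engine of non-vanishing; you instead write down the concrete Koszul cycle $z_{j}=m_{0,d-1}\otimes de_{d-1}\wedge\cdots\wedge de_{d+j-2}$ and check directly that it is a cycle (annihilation) and not a boundary (by reading off the top coefficient, which reduces to your $\Tor_{0}$ claim). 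The explicit cycle construction is more elementary and bypasses the Künneth decomposition entirely; the trade-off is the longer filtration argument for $\Tor_{0}$, which the degree argument would have made unnecessary. The deduction for $H_{\st}^{\dagger}$ via the shift~\eqref{eq:exterior_Tor} and the non-freeness conclusion are handled exactly as in the paper.
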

\begin{proof}
Let $\bQ_{\cE_{\geq d-1}}$ be the trivial $\Sym_{\bQ}(\cE_{\geq d-1})$-module $\bQ$. Viewing $H_{\st}^{\ddagger}(\Lambda^{d}\tilde{H}_{\bQ})$ as a $\Sym_{\bQ}(\cE_{\leq d-2})$-module, it follows from \ref{(5)} in Proposition~\ref{prop:general_result_filtration} that $H_{\st}^{\ddagger}(\Lambda^{d}\tilde{H}_{\bQ})$ is isomorphic to $H_{\st}^{\ddagger}(\Lambda^{d}\tilde{H}_{\bQ})\otimes_{\bQ}\bQ_{\cE_{\geq d-1}}$.
Then, by Lemma~\ref{lem:general_properties_homology_algebra}, we have:
$$\aligned
H_{*}(\Sym_{\bQ}(\cE); H_{\st}^{\ddagger}(\Lambda^{d}\tilde{H}_{\bQ}))
& = H_{*}(\Sym_{\bQ}(\cE_{\leq d-2}); H_{\st}^{\ddagger}(\Lambda^{d}\tilde{H}_{\bQ}))\otimes \Lambda^{*}\cE_{\geq d-1}\\
& \supset H_{0}(\Sym_{\bQ}(\cE_{\leq d-2}); H_{\st}^{\ddagger}(\Lambda^{d}\tilde{H}_{\bQ}))\otimes \Lambda^{*}\cE_{\geq d-1}.
\endaligned$$
It follows from Theorem~\ref{thm:general_qualitative_thm} that the $\Sym_{\bQ}(\cE_{\leq d-2})$-module $H_{\st}^{\ddagger}(\Lambda^{d}\tilde{H}_{\bQ})$ has a non-trivial element $m$ of lowest cohomological degree. Using the resolution \eqref{eq:chain_complex_Tor} and formulas \eqref{eq:iD} to compute $H_{0}(\Sym_{\bQ}(\cE_{\leq d-2}); H_{\st}^{\ddagger}(\Lambda^{d}\tilde{H}_{\bQ}))$, the element $m$ does not belong to the image of the boundary map $\pa_{1}$. Indeed, $m$ would otherwise be equal to a linear combination of elements of the form $em'$ with $e\in\Sym_{\bQ}(\cE)$ and $m'\in H_{\st}^{\ddagger}(\Lambda^{d}\tilde{H}_{\bQ})$, where $e$ would have a strictly positive cohomological degree and thus $m'$ would have a lower degree than $m$, which contradicts the assumption on $m$.
Therefore, the image of $m$ in $ H_{0}(\Sym_{\bQ}(\cE_{\leq d-2}); H_{\st}^{\ddagger}(\Lambda^{d}\tilde{H}_{\bQ}))$ is not null, which gives the non-triviality of that $0^{\ith}$ cohomology group. Moreover, we have $\Lambda^j\cE_{\geq d-1}
\neq 0$ for any $j \geq 0$. Hence the right-hand side does not vanish for any degree $j\geq 0$, thus ending the first part of the proof.
Finally, we obtain the non-vanishing of the $\Tor$-group for $H_{\st}^{\dagger}(\Lambda^{d}\tilde{H}_{\bQ})$ by using \eqref{eq:exterior_Tor}.
\end{proof}

\paragraph*{Computation of $\Tor_{0}$.}
In contrast with \eqref{eq:exterior_Tor}, the computation of $H_{0}(\Sym_{\bQ}(\cE);H_{\st}^{*}(\Lambda^{d}\tilde{H}_{\bQ}))$ requires the following more subtle analysis.
We denote by $\Pi$ the surjection $H_{\st}^{\ddagger}(\Lambda^{d-1}{H}_{\bQ})\twoheadrightarrow H_{\st}^{\ddagger}(\Lambda^{d}\tilde{H}_{\bQ})$ of the sequence 
\eqref{ES_cohomology_exterior}. We then consider the decomposition of that sequence into the two short exact sequences where $\Ker(\Pi)$ appears:
\begin{equation}\label{eq:kerdelta_ses_1}
0 \to H_{\st}^{\dagger}(\Lambda^{d}\tilde{H}_{\bQ}) \to H_{\st}^{\dagger}(\Lambda^{d}H_{\bQ}) \to \Ker(\Pi)\to 0,
\end{equation}
\begin{equation}\label{eq:kerdelta_ses_2}
0 \to \Ker(\Pi) \to H_{\st}^{\ddagger}(\Lambda^{d-1}H_{\bQ}) \to H_{\st}^{\ddagger}(\Lambda^{d}\tilde{H}_{\bQ}) \to 0.
\end{equation}
\begin{lem}\label{lem:Tor_0_lemma}
Writing $H_{*}(-) := H_{*}({\Sym_{\bQ}(\cE)};-)$ for the sake of concision, we have an exact sequence:
\begin{equation}\label{eq:Tor_0_ES}
0 \to H_{2}(H_{\st}^{\ddagger}(\Lambda^{d}\tilde{H}_{\bQ}))\to
H_{0}(H_{\st}^{\dagger}(\Lambda^{d}\tilde{H}_{\bQ}))
\to \Ker(H_{0}(\ddd_{d})) \overset{\Delta_{d}}\to 
H_{1}(H_{\st}^{\ddagger}(\Lambda^{d}\tilde{H}_{\bQ})) \to 0.
\end{equation}
In particular, $H_{0}({\Sym_{\bQ}(\cE)};H_{\st}^{\dagger}(\Lambda^{d}\tilde{H}_{\bQ})) \cong H_{2}({\Sym_{\bQ}(\cE)};H_{\st}^{\ddagger}(\Lambda^{d}\tilde{H}_{\bQ})\oplus \Ker(\Delta_{d})$.
\end{lem}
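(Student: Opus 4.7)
The strategy is to apply the long exact sequences of $\Tor^{\Sym_{\bQ}(\cE)}_{*}(\bQ,-)$ to the two short exact sequences \eqref{eq:kerdelta_ses_1} and \eqref{eq:kerdelta_ses_2}, and then to splice the resulting low-degree pieces along the groups $H_{*}(\Ker(\Pi))$. The key simplification is that by Theorem~\ref{stablecohomologyKawazumi}, both $H_{\st}^{\dagger}(\Lambda^{d}H_{\bQ})$ and $H_{\st}^{\ddagger}(\Lambda^{d-1}H_{\bQ})$ are free $\Sym_{\bQ}(\cE)$-modules (their even/odd summands being spanned by homogeneous subsets of the basis $\{m_Q,Q\in\cQ_{*}\}$), so $H_{j}$ vanishes on them for all $j\geq 1$.

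First, I would unfold the Tor long exact sequence associated with \eqref{eq:kerdelta_ses_2}. Using the vanishing of $H_{j}(H_{\st}^{\ddagger}(\Lambda^{d-1}H_{\bQ}))$ for $j\geq 1$, this produces a canonical isomorphism $H_{1}(\Ker(\Pi))\cong H_{2}(H_{\st}^{\ddagger}(\Lambda^{d}\tilde{H}_{\bQ}))$ and identifies $H_{1}(H_{\st}^{\ddagger}(\Lambda^{d}\tilde{H}_{\bQ}))$ with the kernel of the natural map $\beta\colon H_{0}(\Ker(\Pi))\to H_{0}(H_{\st}^{\ddagger}(\Lambda^{d-1}H_{\bQ}))$. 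Then, applying the same procedure to \eqref{eq:kerdelta_ses_1} and using the vanishing of $H_{j}(H_{\st}^{\dagger}(\Lambda^{d}H_{\bQ}))$ for $j\geq 1$, the long exact sequence truncates to
\begin{equation*}
0\to H_{1}(\Ker(\Pi))\to H_{0}(H_{\st}^{\dagger}(\Lambda^{d}\tilde{H}_{\bQ}))\to H_{0}(H_{\st}^{\dagger}(\Lambda^{d}H_{\bQ}))\overset{\alpha}{\twoheadrightarrow} H_{0}(\Ker(\Pi))\to 0,
\end{equation*}
where $\alpha$ is induced by the quotient $H_{\st}^{\dagger}(\Lambda^{d}H_{\bQ})\twoheadrightarrow \Ker(\Pi)$ appearing in \eqref{eq:kerdelta_ses_1}.

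To splice the two pieces, I would use the observation that the composite $H_{\st}^{\dagger}(\Lambda^{d}H_{\bQ})\twoheadrightarrow \Ker(\Pi)\hookrightarrow H_{\st}^{\ddagger}(\Lambda^{d-1}H_{\bQ})$ equals $\ddd_{d}$ in the relevant parity of cohomological degree, so that $H_{0}(\ddd_{d})=\beta\circ\alpha$. Consequently $\Ker(H_{0}(\ddd_{d}))=\alpha^{-1}(\ker(\beta))$, and pulling back along the surjection $\alpha$ yields the short exact sequence
\begin{equation*}
0\to H_{0}(H_{\st}^{\dagger}(\Lambda^{d}\tilde{H}_{\bQ}))/H_{1}(\Ker(\Pi))\to \Ker(H_{0}(\ddd_{d}))\overset{\Delta_{d}}{\to} H_{1}(H_{\st}^{\ddagger}(\Lambda^{d}\tilde{H}_{\bQ}))\to 0,
\end{equation*}
which both defines the morphism $\Delta_{d}$ and, after substituting the isomorphism $H_{1}(\Ker(\Pi))\cong H_{2}(H_{\st}^{\ddagger}(\Lambda^{d}\tilde{H}_{\bQ}))$ from the first step, gives the required four-term exact sequence \eqref{eq:Tor_0_ES}. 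Finally, the extracted short exact sequence $0\to H_{2}(H_{\st}^{\ddagger}(\Lambda^{d}\tilde{H}_{\bQ}))\to H_{0}(H_{\st}^{\dagger}(\Lambda^{d}\tilde{H}_{\bQ}))\to \Ker(\Delta_{d})\to 0$ consists of $\bQ$-vector spaces and hence splits non-canonically, yielding the claimed direct sum decomposition. The main piece of care lies in verifying that the two Tor connecting morphisms compose to give a well-defined $\Delta_{d}$; this is a routine diagram chase using the naturality of the snake lemma applied to \eqref{eq:kerdelta_ses_1} and \eqref{eq:kerdelta_ses_2}.
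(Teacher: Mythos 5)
Your proposal is correct and follows essentially the same route as the paper's proof: both use the Tor long exact sequences of the two short exact sequences \eqref{eq:kerdelta_ses_1} and \eqref{eq:kerdelta_ses_2}, exploit the freeness of $H_{\st}^{\dagger}(\Lambda^{d}H_{\bQ})$ and $H_{\st}^{\ddagger}(\Lambda^{d-1}H_{\bQ})$ to truncate them and to identify $H_{1}(\Ker(\Pi))\cong H_{2}(H_{\st}^{\ddagger}(\Lambda^{d}\tilde{H}_{\bQ}))$, and then splice along $H_{0}(\Ker(\Pi))$ using the factorisation of $H_{0}(\ddd_{d})$ as $\beta\circ\alpha$. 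The paper presents the splicing compactly as a commutative diagram while you unfold it step by step, but the content and the final appeal to $\bQ$-linear splitting coincide.
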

\begin{proof}
The exact sequence \eqref{eq:Tor_0_ES} is constructed from the following combination of the $\Tor$-long exact sequences of the short exact sequences \eqref{eq:kerdelta_ses_1} and \eqref{eq:kerdelta_ses_2}
$$\xymatrix{H_{1}(\Ker(\Pi))\ar@{^{(}->}[r] & 
H_{0}(H_{\st}^{\dagger}(\Lambda^{d}\tilde{H}_{\bQ})) \ar[r] 
& H_{0}(H_{\st}^{\dagger}(\Lambda^{d}H_{\bQ})) \ar@{->>}[r] \ar[d]^-{H_{0}(\ddd_{d})} 
& H_{0}(\Ker(\Pi)) \ar@{=}[d] & \\
& H_{0}(H_{\st}^{\ddagger}(\Lambda^{d}\tilde{H}_{\bQ})) & \ar@{->>}[l] H_{0}(H_{\st}^{\ddagger}(\Lambda^{d-1}H_{\bQ}))
& \ar[l] H_{0}(\Ker(\Pi)) & \ar@{_{(}->}[l] H_{1}(H_{\st}^{\ddagger}(\Lambda^{d}\tilde{H}_{\bQ}))}$$
and the fact that $H_{2}(H_{\st}^{\ddagger}(\Lambda^{d}\tilde{H}_{\bQ})) \cong H_{1}(\Ker(\Pi))$ since $H_{\st}^{\ddagger}(\Lambda^{d-1}H_{\bQ})$ is a free $\Sym_{\bQ}(\cE)$-module.
\end{proof}
Since $H_{\st}^{\ddagger}(\Lambda^{d}\tilde{H}_{\bQ})$ is generally explicitly computable (at least for $d\leq 5$; see \S\ref{s:covariant_coeff_syst_exterior_computations}), it follows from \eqref{eq:Tor_0_ES} that the only remaining point in order to compute $H_{0}({\Sym_{\bQ}(\cE)};H_{\st}^{\dagger}(\Lambda^{d}\tilde{H}_{\bQ}))$ is to determine the kernel of the map $\Delta_{d}: \Ker(H_{0}(\ddd_{d})) \twoheadrightarrow
H_{1}(\Sym_{\bQ}(\cE);H_{\st}^{\ddagger}(\Lambda^{d}\tilde{H}_{\bQ}))$. In that respect, it follows from the freeness of $\Ker(\ddd_{d,k/(k-1)})$ as a $\Sym_{\bQ}(\cE)$-module for each $1\leq k\leq d-1$ (see \ref{(1)} in Proposition~\ref{prop:general_result_filtration}) and from the right-exactness of
$H_{0}({\Sym_{\bQ}(\cE)};-)$ that $$H_{0}(\ddd_{d})\cong \bigoplus_{1\leq k\leq d-1} H_{0}({\Sym_{\bQ}(\cE)};\ddd_{d,k/k-1}).$$
Then, since $H_{\st}^{\ddagger}(\Lambda^{d-1}\tilde{H}_{\bQ})$ is a free $\Sym_{\bQ}(\cE)$-module, we deduce the following isomorphism from the commutation of a kernel with a finite direct sum:
\begin{equation}\label{eq:decomposition_filtration_Ker_H_{0}}
\Ker(H_{0}(\ddd_{d})) \cong \bigoplus_{1\leq k\leq d-1} H_{0}({\Sym_{\bQ}(\cE)};\Ker(\ddd_{d,k/k-1})).
\end{equation}
The resolution \eqref{eq:chain_complex_Tor} with $\cE':=\cE$ induces a short exact sequence of chain complexes
\begin{equation}\label{eq:complex_SES}
\xymatrix{& \vdots \ar[d]& \vdots \ar[d]& \vdots \ar[d]\\
0 \ar[r] & \Ker(\Pi)\otimes\cE \ar[d]^-{\pa_{1}} \ar[r] & 
H_{\st}^{\ddagger}(\Lambda^{d-1}H_{\bQ})\otimes\cE \ar[d]^-{\pa_{1}} \ar[r]^-{\Pi\otimes \cE} &
H_{\st}^{\ddagger}(\Lambda^{d}\tilde{H}_{\bQ}) \otimes\cE\ar[d]^-{\pa_{1}}\ar[r] & 0\\
0 \ar[r] & \Ker(\Pi) \ar[r] & H_{\st}^{\ddagger}(\Lambda^{d-1}H_{\bQ}) \ar[r]^-{\Pi} &
H_{\st}^{\ddagger}(\Lambda^{d}\tilde{H}_{\bQ}) \ar[r] & 0.}
\end{equation}
The following result gives a sufficient condition to describe the image of $\Delta_{d}$, which will be of key use to compute $\Ker(\Delta_{d})$ in \S\ref{s:covariant_coeff_syst_exterior_computations}.
\begin{prop}\label{prop:Ker_Delta_method}
Let $\tilde u\in H_{\st}^{\dagger}(\Lambda^{d}H_{\bQ})$ be a lift of an element $u \in H_{0}(\Ker(\ddd_{d, k/k-1}))$. If there exists $\hat{u} \in H_{\st}^{\ddagger}(\Lambda^{d-1}H_{\bQ})\otimes\cE$ such that
$$\pa_{1}(\hat{u})=\ddd_{d}(\tilde u)\in H_{\st}^{\ddagger}(\Lambda^{d-1}H_{\bQ}),$$
then the homology class of $(\Pi\otimes\cE)(\hat{u})$ is equal to $\Delta_{d}(u)$.
\end{prop}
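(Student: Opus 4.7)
The plan is to realise $\Delta_d$ as the inverse of an injective connecting homomorphism, and then compute this inverse via a standard snake-lemma diagram chase carried out in the differential-forms resolution of $\bQ$ from Lemma~\ref{lem:refined_Tor_method}.

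First, by Theorem~\ref{stablecohomologyKawazumi} the $\Sym_{\bQ}(\cE)$-module $H_{\st}^{\ddagger}(\Lambda^{d-1}H_{\bQ})$ is free, so the $\Tor$ long exact sequence applied to \eqref{eq:kerdelta_ses_2} yields an injection
$$\delta\colon H_{1}(\Sym_{\bQ}(\cE);H_{\st}^{\ddagger}(\Lambda^{d}\tilde{H}_{\bQ})) \hookrightarrow H_{0}(\Sym_{\bQ}(\cE);\Ker(\Pi)).$$
Unwinding the diagram in the proof of Lemma~\ref{lem:Tor_0_lemma}, one checks that $\Delta_{d}(u)$ is characterised as the unique preimage under $\delta$ of the class $[\ddd_{d}(\tilde u)]$ in $H_{0}(\Sym_{\bQ}(\cE);\Ker(\Pi))$: this class is well-defined because $\Pi\circ\ddd_{d}=0$ by exactness of \eqref{ES_cohomology_exterior}, and it lies in $\Image(\delta)$ precisely because $H_{0}(\ddd_{d})(u)=0$.

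Next, I would compute $\delta^{-1}$ explicitly using the resolution of Lemma~\ref{lem:refined_Tor_method} with $\cE'=\cE$: tensoring \eqref{eq:kerdelta_ses_2} with this resolution produces exactly the short exact sequence of chain complexes \eqref{eq:complex_SES}, whose snake-lemma connecting map in degree $0$ coincides with $\delta$ by naturality. The standard recipe for $\delta^{-1}$ then reads as follows: given $[\alpha]\in H_{0}(\Sym_{\bQ}(\cE);\Ker(\Pi))\cap\Image(\delta)$, choose a lift $\hat{v}\in H_{\st}^{\ddagger}(\Lambda^{d-1}H_{\bQ})\otimes\cE$ with $\pa_{1}(\hat{v})=\alpha$ (such a lift exists exactly because $[\alpha]$ dies in $H_{0}(\Sym_{\bQ}(\cE);H_{\st}^{\ddagger}(\Lambda^{d-1}H_{\bQ}))$); then $(\Pi\otimes\cE)(\hat{v})$ is automatically a $1$-cycle in $H_{\st}^{\ddagger}(\Lambda^{d}\tilde{H}_{\bQ})\otimes\cE$, and its homology class represents $\delta^{-1}([\alpha])$.

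Applying this recipe to $\alpha:=\ddd_{d}(\tilde u)$ and to the element $\hat u$ furnished by the hypothesis gives the conclusion at once. The cycle condition
$$\pa_{1}((\Pi\otimes\cE)(\hat u)) \;=\; \Pi(\pa_{1}(\hat u)) \;=\; \Pi(\ddd_{d}(\tilde u)) \;=\; 0$$
follows from the naturality of $\pa_{1}$ in the module variable together with the exactness of \eqref{ES_cohomology_exterior}; the homology class of $(\Pi\otimes\cE)(\hat u)$ is then equal to $\delta^{-1}([\ddd_{d}(\tilde u)])=\Delta_{d}(u)$. There is no substantial obstacle: the whole argument is a direct unwinding of the snake-lemma definition of $\Delta_{d}$, the only point requiring care being the identification of the abstract connecting homomorphism with its incarnation in the Koszul-style complex \eqref{eq:chain_complex_Tor}, which is purely formal.
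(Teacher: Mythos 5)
Your proof is correct and is essentially the same argument the paper gives, only spelled out in more detail: the paper's proof is a single sentence observing that the connecting homomorphism of \eqref{eq:complex_SES} is formally equal to the connecting homomorphism $H_{1}(\Sym_{\bQ}(\cE);H_{\st}^{\ddagger}(\Lambda^{d}\tilde{H}_{\bQ})) \hookrightarrow H_{0}(\Sym_{\bQ}(\cE);\Ker(\Pi))$ of \eqref{eq:kerdelta_ses_2}. Your additional steps (characterising $\Delta_d(u)$ as $\delta^{-1}[\ddd_d(\tilde u)]$ via the diagram of Lemma~\ref{lem:Tor_0_lemma}, then computing $\delta^{-1}$ by the snake-lemma recipe in the Koszul-type complex) are exactly the unwinding the paper leaves implicit, and they are all valid.
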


\begin{proof}
The result follows from the fact that the connecting homomorphism of \eqref{eq:complex_SES} is formally equal to the connecting homomorphism $H_{1}(\Sym_{\bQ}(\cE);H_{\st}^{\ddagger}(\Lambda^{d}\tilde{H}_{\bQ})) \hookrightarrow H_{0}(\Sym_{\bQ}(\cE);\Ker(\Pi))$ of the short exact sequence \eqref{eq:kerdelta_ses_2}.
\end{proof}

\section{Stable cohomology in covariant coefficients: computations for small powers}\label{s:covariant_coeff_syst_exterior_computations}

We recall that we use the conventions and notation
of Convention~\ref{conv:section_5}.
In this section, we study the stable twisted cohomology graded modules $H_{\st}^{*}(\Lambda^{d}\tilde{H}_{\bQ})$ for $d\leq 5$. More precisely, we deal with the exact sequence \eqref{ES_cohomology_exterior} for each for $d\leq 5$ to calculate $H_{\st}^{\ddagger}(\Lambda^{d}\tilde{H}_{\bQ})$ where $\ddagger=``\even"$ if $d$ is odd and $\ddagger=``\odd"$ if $d$ is even: this corresponds to $\Cok(\ddd_{d})$, whose computation is always doable since this is a cokernel. This also allows us to fully compute the $\Tor$-groups of $H_{\st}^{*}(\Lambda^{d}\tilde{H}_{\bQ})$ as a $\Sym_{\bQ}(\cE)$-module via the techniques of \S\ref{ss:theory_Tor}.
We decompose our work in \S\ref{ss:second_exterior}--\S\ref{ss:fifth_exterior} following the degree of the exterior power.

\subsection{Second exterior power}\label{ss:second_exterior}
We consider the exact sequence \eqref{ES_cohomology_exterior} for $d=2$. We apply the contraction formulas of Proposition~\ref{prop:Contraction_Formula} to compute the image of the derivation $\ddd_{2}$ as follows. For all integers $a\geq0$ and $\alpha,\beta\geq1$, we have $\ddd_{2}(\mm_{a,2}) = \mm_{a+1,1}$ and $\ddd_{2}(\mm_{\alpha,1} \mm_{\beta,1})=e_{\alpha}\mm_{\beta,1}+e_{\beta}\mm_{\alpha,1}$.
We deduce from these formulas that the homomorphism $\ddd_{2}$ has 
a right inverse defined by $\mm_{j,1}\mapsto \mm_{j-1,2}$ for all $j\geq1$. Therefore, we have:
\begin{thm}\label{thm:result_second_exterior_power_contra}
The $\Sym_{\bQ}(\cE)$-module $H_{\st}^{*}(\Lambda^{2}\tilde{H}_{\bQ})$ is isomorphic to the free $\Sym_{\bQ}(\cE)$-module with basis
$$\left\{\mm_{j,1} \mm_{l,1}-e_{l}\mm_{j,2}-e_{j}\mm_{l,2};\, j\geq l\geq1 \right\}.$$
In particular, $H_{\st}^{*}(\Lambda^{2}\tilde{H}_{\bQ})=H_{\st}^{\even}(\Lambda^{2}\tilde{H}_{\bQ})$, $H_{\st}^{\odd}(\Lambda^{2}\tilde{H}_{\bQ})=0$ and $H_{j}(\Sym_{\bQ}(\cE);H_{\st}^{*}(\Lambda^{2}\tilde{H}_{\bQ}))=0$ for all $j > 0$.
\end{thm}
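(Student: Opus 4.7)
My plan is to fully exploit the observation already recorded in the excerpt that $\ddd_2$ admits a $\Sym_{\bQ}(\cE)$-linear right inverse. Once this is available, the specialisation of the four-term exact sequence \eqref{ES_cohomology_exterior} to $d=2$,
$$0 \to H_{\st}^{\even}(\Lambda^{2} \tilde{H}_{\bQ}) \to H_{\st}^{\even}(\Lambda^{2} H_{\bQ}) \xrightarrow{\ddd_{2}} H_{\st}^{\odd}(H_{\bQ}) \to H_{\st}^{\odd}(\Lambda^{2} \tilde{H}_{\bQ}) \to 0,$$
collapses to a split short exact sequence whose ends I can read off directly. The first step will therefore be to verify that the assignment $s\colon \mm_{j,1}\mapsto \mm_{j-1,2}$ genuinely defines a $\Sym_{\bQ}(\cE)$-linear map on the free basis of $H_{\st}^{\odd}(H_{\bQ})$ provided by Theorem~\ref{stablecohomologyKawazumi} (taking the convention that $\mm_{0,2}$ is the class corresponding to the weighted partition $(0,2)$), and that $\ddd_{2}\circ s=\id$; this is a direct consequence of the contraction formulas recalled just before the statement.

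Having obtained that $\ddd_{2}$ is split surjective, I immediately conclude $H_{\st}^{\odd}(\Lambda^{2} \tilde{H}_{\bQ})=\Cok(\ddd_{2})=0$, settling the vanishing and concentration claims. The next step is to identify $\Ker(\ddd_{2})$ as a free $\Sym_{\bQ}(\cE)$-module with the asserted basis. For this I apply the projection $p:=\id-s\circ\ddd_{2}$ to the free $\Sym_{\bQ}(\cE)$-basis $\{\mm_{i,2}\}_{i\geq 0}\cup\{\mm_{j,1}\mm_{l,1}\}_{j\geq l\geq 1}$ of $H_{\st}^{\even}(\Lambda^{2} H_{\bQ})$ given by Theorem~\ref{stablecohomologyKawazumi}: the $\mm_{i,2}$ map to $0$, while each $\mm_{j,1}\mm_{l,1}$ is sent to $\mm_{j,1}\mm_{l,1}-e_{l}\mm_{j-1,2}-e_{j}\mm_{l-1,2}$, which (up to the cosmetic index shift of the statement) is exactly the listed generator. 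Since $p$ is a projection onto $\Ker(\ddd_{2})$, its image is a spanning set; the $\Sym_{\bQ}(\cE)$-linear independence of these elements is inherited from the fact that their leading terms $\mm_{j,1}\mm_{l,1}$ belong to a free basis of the ambient module.

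The $\Tor$-vanishing is then automatic: the short exact sequence $0\to\Ker(\ddd_{2})\to H_{\st}^{\even}(\Lambda^{2} H_{\bQ})\to H_{\st}^{\odd}(H_{\bQ})\to 0$ is split by construction, so $\Ker(\ddd_{2})\cong H_{\st}^{*}(\Lambda^{2} \tilde{H}_{\bQ})$ is a direct summand of a free $\Sym_{\bQ}(\cE)$-module, and the long exact sequence for $\Tor_{*}^{\Sym_{\bQ}(\cE)}(\bQ,-)$ together with the freeness of the two other terms forces $H_{j}(\Sym_{\bQ}(\cE);H_{\st}^{*}(\Lambda^{2} \tilde{H}_{\bQ}))=0$ for all $j>0$. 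I do not anticipate any genuine obstacle: everything hinges on the existence of the explicit splitting $s$, after which the result follows by formal homological algebra. The only mild delicate point worth flagging in the write-up is the behaviour of $s$ at $j=1$, where one must check that $\mm_{0,2}$ really is a legitimate generator in the free basis coming from Definition~\ref{def:weighted_partition} and Theorem~\ref{stablecohomologyKawazumi}, so that $s$ is globally well-defined on $H_{\st}^{\odd}(H_{\bQ})$.
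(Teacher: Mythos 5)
Your proof is correct and takes essentially the same approach as the paper's (which is just as terse: compute $\ddd_{2}$ on the free basis, observe the right inverse $s\colon \mm_{j,1}\mapsto \mm_{j-1,2}$, and read off kernel and cokernel). The one substantive thing you added — the explicit computation of the projection $p=\id - s\circ\ddd_{2}$ — is worth making precise: your output $\mm_{j,1}\mm_{l,1}-e_{j}\mm_{l-1,2}-e_{l}\mm_{j-1,2}$ is what actually lies in $\Ker(\ddd_{2})$ (indeed $\ddd_{2}$ of the paper's displayed expression $\mm_{j,1}\mm_{l,1}-e_{l}\mm_{j,2}-e_{j}\mm_{l,2}$ equals $e_{j}\mm_{l,1}+e_{l}\mm_{j,1}-e_{l}\mm_{j+1,1}-e_{j}\mm_{l+1,1}\neq 0$), so what you flag as a ``cosmetic index shift'' is in fact a typo in the theorem statement; the subscripts on the $\mm_{\cdot,2}$ factors should be $j-1$ and $l-1$. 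Your linear-independence argument via the leading $\mm_{j,1}\mm_{l,1}$ coefficient relative to the free basis of Theorem~\ref{stablecohomologyKawazumi} is sound, and the $\Tor$-vanishing indeed follows immediately once $\Ker(\ddd_{2})$ is exhibited as a direct summand of the free module $H^{*}_{\st}(\Lambda^{2}H_{\bQ})$.
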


\subsection{Third exterior power}\label{ss:third_exterior}
We consider the exact sequence \eqref{ES_cohomology_exterior} for $d=3$.
By computing the image of the derivation $\ddd_{3}$ thanks to the contraction formulas of Proposition~\ref{prop:Contraction_Formula}, we prove the following result.
\begin{thm}\label{prop:3rd-ext-even}
The $\Sym_{\bQ}(\cE)$-module $H_{\st}^{\even}(\Lambda^{3}\tilde{H}_{\bQ})$ is isomorphic to
$$(\Sym_{\bQ}(\cE)/(e_{1}^{2},e_{\alpha},\alpha\geq2))\left\{\mm_{0,2}\right\}.$$
\end{thm}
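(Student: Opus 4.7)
My plan is to identify $H_{\st}^{\even}(\Lambda^{3}\tilde H_{\bQ})$ with the cokernel of the derivation $\ddd_{3}\colon H_{\st}^{\odd}(\Lambda^{3}H_{\bQ})\to H_{\st}^{\even}(\Lambda^{2}H_{\bQ})$ appearing in the four-term exact sequence \eqref{ES_cohomology_exterior} for $d=3$, and then to compute this cokernel directly. By Theorem~\ref{stablecohomologyKawazumi}, both source and target are free $\Sym_{\bQ}(\cE)$-modules on the twisted Mumford-Morita-Miller classes indexed by the relevant weighted partitions, so the whole question reduces to evaluating $\ddd_{3}$ on a finite list of generator types and reading off the quotient.

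The first step is to compute $\ddd_{3}$ on each of the three families of generators of $H_{\st}^{\odd}(\Lambda^{3}H_{\bQ})$. Applying the contraction formulas \eqref{eq:m11mij} and \eqref{eq:m11mij_1} of Proposition~\ref{prop:Contraction_Formula} and translating through Notation~\ref{nota:barm} yields
\begin{align*}
\ddd_{3}(\mm_{i,3})&=\mm_{i+1,2},\\
\ddd_{3}(\mm_{i,2}\mm_{j,1})&=\mm_{i+1,1}\mm_{j,1}+e_{j}\mm_{i,2},\\
\ddd_{3}(\mm_{i,1}\mm_{j,1}\mm_{k,1})&=e_{i}\mm_{j,1}\mm_{k,1}+e_{j}\mm_{i,1}\mm_{k,1}+e_{k}\mm_{i,1}\mm_{j,1}.
\end{align*}
The second step is to analyse the cokernel in three layers matching the monomial-length filtration of \S\ref{ss:filtration_monomials}. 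The first family kills $\mm_{j,2}$ for every $j\geq 1$, leaving $\mm_{0,2}$ as the only surviving weight-two class, in agreement with Proposition~\ref{prop:general_result_filtration}\,\ref{(2)}. The second family, specialised to $i\geq 1$, forces $\mm_{\alpha,1}\mm_{\beta,1}\equiv 0$ as soon as $\max(\alpha,\beta)\geq 2$, since $\mm_{\alpha,1}\mm_{\beta,1}\equiv -e_{\beta}\mm_{\alpha-1,2}\equiv 0$; the case $i=0$, $j\geq 2$ combined with graded-commutativity of the cup product then yields $e_{\beta}\mm_{0,2}\equiv 0$ for every $\beta\geq 2$, together with the baseline identity $\mm_{1,1}^{2}\equiv -e_{1}\mm_{0,2}$. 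Finally, the third family with $i=j=k=1$ gives $3e_{1}\mm_{1,1}^{2}\equiv 0$ in the cokernel, which combined with the previous baseline identity forces $e_{1}^{2}\mm_{0,2}\equiv 0$; a short case check confirms that the remaining specialisations of the third family produce no additional independent relations on $\mm_{0,2}$.

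I expect the main obstacle to be not the computation of these images but the verification that the induced surjection
$$(\Sym_{\bQ}(\cE)/(e_{1}^{2},e_{\alpha},\alpha\geq 2))\{\mm_{0,2}\}\twoheadrightarrow \Cok(\ddd_{3})$$
is in fact \emph{injective}. The left-hand side is the two-dimensional $\bQ$-vector space $\bQ\mm_{0,2}\oplus\bQ e_{1}\mm_{0,2}$, so what is needed is to certify that both of these classes are non-zero in $\Cok(\ddd_{3})$. The non-triviality of $\mm_{0,2}$ is delivered directly by Theorem~\ref{thm:general_qualitative_thm}. For the non-triviality of $e_{1}\mm_{0,2}$, I would argue by a direct inspection in cohomological degree $2$: the only source generators with image in that degree are $\mm_{0,3}$ and $\mm_{0,2}\mm_{1,1}$, whose images are $\mm_{1,2}$ and $\mm_{1,1}^{2}+e_{1}\mm_{0,2}$ respectively, so $e_{1}\mm_{0,2}$ is visibly independent of this pair in the free $\Sym_{\bQ}(\cE)$-module $H_{\st}^{\even}(\Lambda^{2}H_{\bQ})$ and hence survives to a non-zero class in the cokernel. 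Assembling these facts gives the claimed isomorphism.
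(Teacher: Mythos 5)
Your proof is correct and follows essentially the same route as the paper's: both identify $H_{\st}^{\even}(\Lambda^{3}\tilde H_{\bQ})$ with $\Cok(\ddd_{3})$ via the exact sequence \eqref{ES_cohomology_exterior}, evaluate $\ddd_{3}$ on each family of generators via the contraction formulas, and read off the resulting quotient of $\Sym_{\bQ}(\cE)\{\mm_{0,2}\}$. The one structural difference is how the argument is closed. The paper works layer by layer through the monomial-length filtration and the snake-lemma sequence \eqref{eq:snake}, computing $\Cok(\ddd_{3,k})$ exactly at each stage $k=0,1,2$; because each associated graded piece is explicit, no separate ``no further relations'' check is ever required. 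You instead exhibit a set of relations, reduce everything to the cyclic module on $\mm_{0,2}$, and then certify a posteriori that the kernel of the surjection $\Sym_{\bQ}(\cE)\{\mm_{0,2}\}\twoheadrightarrow\Cok(\ddd_{3})$ is exactly $(e_{1}^{2},e_{\alpha},\alpha\geq2)$. Your closing argument is sound: $\Sym_{\bQ}(\cE)/(e_{1}^{2},e_{\alpha},\alpha\geq2)$ is a two-dimensional graded local ring whose only nonzero proper graded submodule is its socle $\bQ e_{1}$, so the only possible extra relation is $e_{1}\mm_{0,2}=0$, which your degree-$2$ inspection (and Theorem~\ref{thm:general_qualitative_thm} for $\mm_{0,2}$ itself) rules out. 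This hand check is perfectly adequate here but would become unwieldy for the higher exterior powers treated in \S\ref{ss:fourth_exterior}--\S\ref{ss:fifth_exterior}, which is why the paper carries the filtration machinery throughout.
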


\begin{proof}
We progressively compute $\Cok(\ddd_{3,k})$ for $0\leq k\leq 2$ using the exact sequence \eqref{eq:snake} for $d=3$.

\underline{Computation of $\Cok(\ddd_{3,1})$:}
we compute that for all $a\geq 0$ and $\alpha\geq1$
$$\ddd_{3,1/0}(\mm_{a,2}\mm_{\alpha,1})=\mm_{a+1,1}\mm_{\alpha,1}=\ddd_{3,1/0}(\mm_{\alpha-1,2}\mm_{a+1,1}).$$
It follows from the structure of the filtration $F_{1}H_{\st}^{*}(\Lambda^{3}{H}_{\bQ})$ and the formulas for $\ddd_{3,1/0}$ that the above equality provides the only way to construct elements of $\Ker(\ddd_{3,1/0})$. Hence $\{[\mm_{a,2}\mm_{\alpha,1}-\mm_{\alpha-1,2}\mm_{a+1,1}];\alpha -1 > a \geq 0 \}$ defines a basis of $\Ker(\ddd_{3,1/0})$ as a free $\Sym_{\bQ}(\cE)$-module and $\Cok(\ddd_{3,1/0})=0$. By the formal definition of the connecting homomorphism defined by the snake lemma, each one of these generators is mapped to $[e_{\alpha}\mm_{a,2}-e_{a+1}\mm_{\alpha-1,2}]$ in $\Cok(\ddd_{3,0})$ by $\delta_{3,0}$. We recall from \ref{(2)} in Proposition~\ref{prop:general_result_filtration} that $\Cok(\ddd_{3,0})\cong \Sym_{\bQ}(\cE)[\mm_{0,2}]$. Hence we have $\mathrm{Im}(\delta_{3,0})= (e_{\alpha}; \alpha\geq2)[\mm_{0,2}]$, and a fortiori $\Cok(\ddd_{3,1}) \cong (\Sym_{\bQ}(\cE)/(e_{\alpha}; \alpha\geq2))[\mm_{0,2}]$.

\underline{Computation of $\Cok(\ddd_{3,2})$:}
since the target of $\ddd_{3,2/1}$ is zero, we deduce that $\Cok(\ddd_{3,2/1}) = 0$ and that $\Ker(\ddd_{3,2/1})$ is the free $\Sym_{\bQ}(\cE)$-module with basis $\{[\mm_{\alpha,1}\mm_{\beta,1}\mm_{\gamma,1}];1\leq\alpha\leq\beta\leq\gamma \}$.
The connecting homomorphism $\delta_{3,2}$ maps each generator $\mm_{\alpha,1}\mm_{\beta,1}\mm_{\gamma,1}$ to the class of $[e_{\alpha}\mm_{\beta,1} \mm_{\gamma,1}+e_{\beta}\mm_{\alpha,1} \mm_{\gamma,1}+e_{\gamma}\mm_{\alpha,1} \mm_{\beta,1}]$ in $\Cok(\ddd_{3,1})$. Since $[\mm_{\alpha,1}\mm_{\beta,1}] = 0$ in $\Cok(\ddd_{3,1})$ for $\beta > 1$ by \ref{(4)} in Proposition~\ref{prop:general_result_filtration} and $[e_{1}\mm_{0,2}]=-[\mm_{1,1}\mm_{1,1}]$ in $\Cok(\ddd_{3,1})$ by \ref{(3)} in Proposition~\ref{prop:general_result_filtration}, we compute that:
\begin{equation}\label{eq:cok_3,2}
[e_{\alpha}\mm_{\beta,1} \mm_{\gamma,1}+e_{\beta}\mm_{\alpha,1} \mm_{\gamma,1}+e_{\gamma}\mm_{\alpha,1} \mm_{\beta,1}]= \begin{cases}
-3[e_{\gamma}e_{1}\mm_{0,2}] & \text{if $\alpha=\beta=1$, }\\
0 & \text{otherwise.}\\
\end{cases}
\end{equation}
This implies the last additional relation $[e_{1}^2\mm_{0,2}]=0$ in $\Cok(\ddd_{3,2})$, which ends the proof.
\end{proof}

It seems complicated to fully compute the kernel 
$\Ker(\ddd_{3}) \cong H_{\st}^{\odd}(\Lambda^{3}\tilde{H}_{\bQ})$. Indeed, the simplest example of elements of that kernel is defined as follows. Denoting by $\chi_{a,b}$ the element $e_{a}\mm_{b-1,3}+\mm_{a,1} \mm_{b,2}$, then the element
$e_{c}\chi_{a,b-1}+e_{b}\chi_{c,a-1}+e_{a}\chi_{b,c-1}-\mm_{a,1} \mm_{b,1} \mm_{c,1}$ belongs to $\Ker(\ddd_{3})$. We also recall from Corollary~\ref{coro:low_dim_paired} that $H_{\st}^{1}(\Lambda^{3}\tilde{H}_{\bQ})=0$.

Furthermore, we compute the $\Tor$-groups of $H_{\st}^{*}(\Lambda^{3}\tilde{H}_{\bQ})$:
\begin{prop}\label{prop:3rd-ext_Tor}
For any $j > 0$, we have 
$$H_{j}(\Sym_{\bQ}(\cE);H_{\st}^{*}(\Lambda^{3}\tilde{H}_{\bQ}))\cong \Lambda^{j -1}\cE_{\geq 2}\oplus\Lambda^{j}\cE_{\geq 2}\oplus\Lambda^{j+1}\cE_{\geq 2}\oplus\Lambda^{j+2}\cE_{\geq 2}.$$
Moreover, we have
\begin{align*}
H_{0}(\Sym_{\bQ}(\cE);H_{\st}^{*}(\Lambda^{3}\tilde{H}_{\bQ}))  \;\cong\;& \bQ\oplus \cE_{\geq 2}\oplus\Lambda^{2}\cE_{\geq 2} \\  & \oplus \bQ\{[\mm_{\alpha-1,2}\mm_{\beta,1} - \mm_{\beta-1,2}\mm_{\alpha,1}];2 \leq \alpha < \beta \}\\  & \oplus \bQ\{[\mm_{\alpha,1}\mm_{\beta,1}\mm_{\gamma,1}];1 \leq \alpha \leq \beta \leq \gamma \geq 2 \}.
\end{align*}
\end{prop}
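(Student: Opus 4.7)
The plan is to combine the explicit description of $H_{\st}^{\even}(\Lambda^{3}\tilde{H}_{\bQ})$ from Theorem~\ref{prop:3rd-ext-even} with the machinery of \S\ref{ss:theory_Tor}. First, I would notice that the module of Theorem~\ref{prop:3rd-ext-even} factors as
$$(\Sym_{\bQ}(\cE_{\leq 1})/(e_{1}^{2}))\{\mm_{0,2}\}\otimes_{\bQ}\bQ_{\cE_{\geq 2}},$$
where $\bQ_{\cE_{\geq 2}}$ is the trivial $\Sym_{\bQ}(\cE_{\geq 2})$-module. Applying the second case of Lemma~\ref{lem:general_properties_homology_algebra} with $\ell=1$ and $m=e_{1}^{2}$, combined with $H_{*}(\Sym_{\bQ}(\cE_{\geq 2});\bQ)\cong\Lambda^{*}\cE_{\geq 2}$, yields
$$H_{j}(\Sym_{\bQ}(\cE);H_{\st}^{\even}(\Lambda^{3}\tilde{H}_{\bQ}))\cong\Lambda^{j-1}\cE_{\geq 2}\oplus\Lambda^{j}\cE_{\geq 2}$$
for all $j\geq 0$ (with $\Lambda^{-1}\cE_{\geq 2}:=0$). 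Then the isomorphism~\eqref{eq:exterior_Tor} translates this into $H_{j}(\Sym_{\bQ}(\cE);H_{\st}^{\odd}(\Lambda^{3}\tilde{H}_{\bQ}))\cong\Lambda^{j+1}\cE_{\geq 2}\oplus\Lambda^{j+2}\cE_{\geq 2}$ for $j\geq 1$, and summing the two contributions establishes the formula for $j\geq 1$.

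For $j=0$, I would use the four-term exact sequence~\eqref{eq:Tor_0_ES} of Lemma~\ref{lem:Tor_0_lemma}, which yields
$$H_{0}(\Sym_{\bQ}(\cE);H_{\st}^{\odd}(\Lambda^{3}\tilde{H}_{\bQ}))\cong H_{2}(\Sym_{\bQ}(\cE);H_{\st}^{\even}(\Lambda^{3}\tilde{H}_{\bQ}))\oplus\Ker(\Delta_{3}).$$
Step~1 contributes $H_{2}\cong\cE_{\geq 2}\oplus\Lambda^{2}\cE_{\geq 2}$, while a direct inspection of Theorem~\ref{prop:3rd-ext-even} gives $H_{0}(\Sym_{\bQ}(\cE);H_{\st}^{\even}(\Lambda^{3}\tilde{H}_{\bQ}))\cong\bQ\{\mm_{0,2}\}$. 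Everything thus reduces to the identification of $\Ker(\Delta_{3})$.

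The computation of $\Ker(\Delta_{3})$ will be the main obstacle. Using the decomposition~\eqref{eq:decomposition_filtration_Ker_H_{0}}, I would treat separately the bases of $\Ker(\ddd_{3,1/0})$ and $\Ker(\ddd_{3,2/1})$ extracted from the proof of Theorem~\ref{prop:3rd-ext-even}, applying Proposition~\ref{prop:Ker_Delta_method} to each generator. The crucial facts are that the projection $\Pi\colon H_{\st}^{\even}(\Lambda^{2}H_{\bQ})\twoheadrightarrow H_{\st}^{\even}(\Lambda^{3}\tilde{H}_{\bQ})$ sends $\mm_{a,2}$ to $0$ for $a\geq 1$ and, via~\ref{(3)} and~\ref{(4)} of Proposition~\ref{prop:general_result_filtration}, identifies $\mm_{1,1}\mm_{1,1}$ with $-e_{1}\mm_{0,2}$ while annihilating all other $\mm_{\alpha,1}\mm_{\beta,1}$. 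For a generator $[\mm_{a,2}\mm_{\alpha,1}-\mm_{\alpha-1,2}\mm_{a+1,1}]$ of $\Ker(\ddd_{3,1/0})$, the natural lift $\hat{u}=\mm_{a,2}\otimes e_{\alpha}-\mm_{\alpha-1,2}\otimes e_{a+1}$ satisfies $(\Pi\otimes\cE)(\hat{u})=\mm_{0,2}\otimes e_{\alpha}$ (a non-trivial cycle in $H_{1}$) when $a=0$ and $0$ when $a\geq 1$, producing exactly the family indexed by $2\leq\alpha<\beta$. For $[\mm_{\alpha,1}\mm_{\beta,1}\mm_{\gamma,1}]\in\Ker(\ddd_{3,2/1})$, the image through $\Pi\otimes\cE$ of the natural lift vanishes in $H_{1}$ unless $\alpha=\beta=\gamma=1$: when $\alpha=\beta=1$ and $\gamma\geq 2$ the image $-e_{1}\mm_{0,2}\otimes e_{\gamma}$ is the boundary $\pa_{2}(\mm_{0,2}\otimes e_{1}\wedge e_{\gamma})$ in the complex of Lemma~\ref{lem:refined_Tor_method}, while for $\alpha\geq 2$ every length-$2$ subproduct already vanishes in $\Cok(\ddd_{3})$; the surviving case $\alpha=\beta=\gamma=1$ produces $-3e_{1}\mm_{0,2}\otimes e_{1}$, the unique non-trivial class of that weight. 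This yields the family indexed by $1\leq\alpha\leq\beta\leq\gamma\geq 2$ as claimed, and linear independence is automatic since each surviving generator corresponds to a distinct basis vector of the free summands identified in Step~1.
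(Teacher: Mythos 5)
Your proof is correct and follows essentially the same approach as the paper's: both compute the $j\geq1$ cases via Lemma~\ref{lem:general_properties_homology_algebra} together with Theorem~\ref{prop:3rd-ext-even} and then the shift \eqref{eq:exterior_Tor}, and the $j=0$ case via the four-term sequence \eqref{eq:Tor_0_ES}, the decomposition \eqref{eq:decomposition_filtration_Ker_H_{0}}, and Proposition~\ref{prop:Ker_Delta_method} applied to the filtration summands $\Ker(\ddd_{3,1/0})$ and $\Ker(\ddd_{3,2/1})$. One small slip in the case analysis for $\Ker(\ddd_{3,2/1})$: you treat ``$\alpha=\beta=1,\gamma\geq2$'' and ``$\alpha\geq2$'' but skip $\alpha=1,\,\beta\geq2$; the correct dichotomy is on whether $\gamma\geq2$ (equivalently, whether all three indices equal $1$), and the omitted case is handled by the same observation as $\alpha\geq2$ (every length-$2$ subproduct $\mm_{1,1}\mm_{\beta,1}$, $\mm_{1,1}\mm_{\gamma,1}$, $\mm_{\beta,1}\mm_{\gamma,1}$ is killed by $\Pi$ since at least one index is $\geq2$).
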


\begin{proof}
By the computations \eqref{eq:computation_Tor_general} and Theorem~\ref{prop:3rd-ext-even}, we deduce that $H_{j}(\Sym_{\bQ}(\cE);H_{\st}^{\even}(\Lambda^{3}\tilde{H}_{\bQ}))\cong\Lambda^{j-1}\cE_{\geq 2}\oplus\Lambda^{j}\cE_{\geq 2}$
for all $j \geq1$,  while $H_{0}(\Sym_{\bQ}(\cE);H_{\st}^{\even}(\Lambda^{3}\tilde{H}_{\bQ}))\cong \bQ$. Combining this computation with the isomorphism \eqref{eq:exterior_Tor}, we deduce that $H_{j}(\Sym_{\bQ}(\cE);H_{\st}^{\odd}(\Lambda^{3}\tilde{H}_{\bQ}))\cong \Lambda^{j+2}\cE_{\geq 2}\oplus\Lambda^{j+1}\cE_{\geq 2}$ for $j\geq1$. This proves the first part of the proposition.

For the second part of the proposition, we first have the contribution of $H_{0}(\Sym_{\bQ}(\cE);H_{\st}^{\even}(\Lambda^{3}\tilde{H}_{\bQ}))$ computed above.
By Lemma~\ref{lem:Tor_0_lemma}, we compute $H_{0}(\Sym_{\bQ}(\cE);H_{\st}^{\odd}(\Lambda^{3}\tilde{H}_{\bQ}))$ thanks to the exact sequence \eqref{eq:Tor_0_ES}. We recall that $H_{2}(\Sym_{\bQ}(\cE);H_{\st}^{\even}(\Lambda^{3}\tilde{H}_{\bQ})$ is computed above.
Following the decomposition \eqref{eq:decomposition_filtration_Ker_H_{0}}, we now have to progressively determine the generators of $\Ker(\Delta_{3})$ associated to each summand $\Ker(\ddd_{3,k/k-1})$ for $1\leq k\leq 2$ to finish the proof. For this purpose, we use the chain complex \eqref{eq:complex_SES} and Proposition~\ref{prop:Ker_Delta_method}.

\underline{For the summand $\Ker(\ddd_{3, 1/0})$:} for each $1 \leq \alpha < \beta$, the element $\ddd_{3}(\mm_{\alpha-1,2}\mm_{\beta,1} - \mm_{\beta-1,2}\mm_{\alpha,1})$ is equal to $\pa_{1}(\mm_{\alpha-1,2}\otimes e_{\beta} - \mm_{\beta-1,2}\otimes e_{\alpha})$ in $H_{\st}^{\even}(\Lambda^{2}H_{\bQ})$. By Theorem~\ref{prop:3rd-ext-even}, we have in $H_{\st}^{\even}(\Lambda^{3}\tilde{H}_{\bQ})\otimes\cE$
$$\Pi(\mm_{\alpha-1,2})\otimes e_{\beta}
- \Pi(\mm_{\beta-1,2})\otimes e_{\alpha}
= \begin{cases}
0 & \text{if $\alpha\geq 2$,}\\
[\mm_{0,2}]\otimes e_{\beta} & \text{if $\alpha = 1$ and $\beta \geq 2$.}
\end{cases}$$
The element $[\mm_{0,2}]\otimes e_{\beta}$ corresponds to the non-trivial class $e_{\beta} \in \cE_{\geq2} \subset H_{1}(\Sym_{\bQ}(\cE);H_{\st}^{\even}(\Lambda^{3}\tilde{H}_{\bQ}))$. Hence, by Proposition~\ref{prop:Ker_Delta_method}, the restriction of $\Ker(\Delta_{3})$ to $H_{0}({\Sym_{\bQ}(\cE)};\Ker(\ddd_{3,1/0}))$ is isomorphic to the free $\bQ$-module generated by $\{[\mm_{\alpha-1,2}\mm_{\beta,1} - \mm_{\beta-1,2}\mm_{\alpha,1}];2 \leq \alpha < \beta \}$.

\underline{For the summand $\Ker(\ddd_{3,2/1})$:} for each $1\leq\alpha\leq\beta\leq\gamma$, the element $\ddd_{3}(\mm_{\alpha,1}\mm_{\beta,1}\mm_{\gamma,1})$ is equal to $\pa_{1}(\mm_{\alpha,1}\mm_{\beta,1}\otimes e_{\gamma} +\mm_{\beta,1}\mm_{\gamma,1}\otimes e_{\alpha} +\mm_{\alpha,1}\mm_{\gamma,1}\otimes e_{\beta})$ in $H_{\st}^{\even}(\Lambda^{2}H_{\bQ})$. 
Similarly to relations \eqref{eq:cok_3,2} and by Theorem~\ref{prop:3rd-ext-even}, the element $(\Pi\otimes\cE)(\mm_{\alpha,1}\mm_{\beta,1}\otimes e_{\gamma} +\mm_{\beta,1}\mm_{\gamma,1}\otimes e_{\alpha} +\mm_{\alpha,1}\mm_{\gamma,1}\otimes e_{\beta})$ of $H_{\st}^{\even}(\Lambda^{3}\tilde{H}_{\bQ})\otimes\cE$ is equal to $-3[e_{1}\mm_{0,2}\otimes e_{1}] \neq 0\in H_{1}(\Sym_{\bQ}(\cE);H_{\st}^{\even}(\Lambda^{3}\tilde{H}_{\bQ}))$ if $1 = \alpha = \beta =\gamma$, and it vanishes if $\gamma \geq 2$ because $\pa_{2}(\mm_{0,2}\otimes e_{1}\wedge e_{\gamma}) = e_{1}\mm_{0,2}\otimes e_{\gamma}$.
This element is also equal to $\Delta_{3}([\mm_{\alpha,1}\mm_{\beta,1}\mm_{\gamma,1}])$ by Proposition~\ref{prop:Ker_Delta_method}, so the restriction of $\Ker(\Delta_{3})$ to $H_{0}({\Sym_{\bQ}(\cE)};\Ker(\ddd_{3,2/1}))$ is the free $\bQ$-module generated by $\{[\mm_{\alpha,1}\mm_{\beta,1}\mm_{\gamma,1}];1\leq \alpha\leq \beta\leq \gamma \geq 2 \}$.
\end{proof}

\subsection{Fourth exterior power}\label{ss:fourth_exterior}

We consider the exact sequence \eqref{ES_cohomology_exterior} for $d=4$. We apply the contraction formulas of Proposition~\ref{prop:Contraction_Formula} to compute the following images of the derivation $\ddd_{4}$ and prove the following result.
\begin{thm}\label{prop:4th-ext_odd}
The $\Sym_{\bQ}(\cE)$-module $H_{\st}^{\odd}(\Lambda^{4}\tilde{H}_{\bQ})$ is isomorphic to $$(\Sym_{\bQ}(\cE)/(e_{1}^{2},e_{\alpha},\alpha\geq2)) \left\{\mm_{0,3}\right\}.$$
\end{thm}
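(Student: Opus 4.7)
The plan is to mirror the proof of Theorem~\ref{prop:3rd-ext-even}, progressively computing $\Cok(\ddd_{4,k})$ for $0 \leq k \leq 3$ via the snake lemma exact sequence \eqref{eq:snake} applied to the filtration by monomials of \S\ref{ss:filtration_monomials}. By \ref{(2)} in Proposition~\ref{prop:general_result_filtration}, $\Cok(\ddd_{4,0}) \cong \Sym_{\bQ}(\cE)\{\mm_{0,3}\}$, giving the base case.

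For $\Cok(\ddd_{4,1})$, I first describe $F_1 H_{\st}^{*}(\Lambda^{4}{H}_{\bQ})/F_{0}H_{\st}^{*}(\Lambda^{4}{H}_{\bQ})$ as the free $\Sym_{\bQ}(\cE)$-module on $\{\mm_{a,3}\mm_{\alpha,1}; a\geq 0, \alpha\geq 1\}\cup\{\mm_{a,2}\mm_{b,2}; a\geq b\geq 0\}$. Using \eqref{eq:m11mij_1}, the induced map $\ddd_{4,1/0}$ is surjective (so $\Cok(\ddd_{4,1/0}) = 0$), with kernel freely generated by
$$X_{a,b} := \mm_{a,2}\mm_{b,2} - \mm_{a-1,3}\mm_{b+1,1} - \mm_{b-1,3}\mm_{a+1,1},\quad a\geq b\geq 1.$$
A direct application of Proposition~\ref{prop:Contraction_Formula} gives $\ddd_{4}(X_{a,b}) = -e_{b+1}\mm_{a-1,3} - e_{a+1}\mm_{b-1,3}$. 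In $\Cok(\ddd_{4,0})$ the classes $\mm_{k,3}$ vanish for $k\geq 1$, so the nontrivial images of $\delta_{4,0}$ occur only when $b=1$ (or $a=b=1$), producing exactly the relations $\{e_{\alpha}\mm_{0,3}; \alpha\geq 2\}$. This yields $\Cok(\ddd_{4,1}) \cong (\Sym_{\bQ}(\cE)/(e_{\alpha}; \alpha\geq 2))\{\mm_{0,3}\} \cong \bQ[e_{1}]\{\mm_{0,3}\}$.

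For $\Cok(\ddd_{4,2})$, the map $\ddd_{4,2/1}$ on length-$3$ monomials $\mm_{a,2}\mm_{\alpha,1}\mm_{\beta,1}$ is surjective, so $\Cok(\ddd_{4,2/1}) = 0$. The new relation comes from \ref{(5)} in Proposition~\ref{prop:general_result_filtration} with $r=2$ and $k_{1}=k_{2}=1$, whose proof uses the explicit element $v = \tfrac{1}{6}\mm_{0,2}^{2} \in F_{1}H_{\st}^{*}(\Lambda^{4}H_{\bQ})$ satisfying $\ddd^{4}(v) = e_{1}^{2}$; this imposes $e_{1}^{2}\mm_{0,3} = 0$ in $\Cok(\ddd_{4,2})$. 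To conclude $\Cok(\ddd_{4,2})\cong (\bQ[e_{1}]/(e_{1}^{2}))\{\mm_{0,3}\}$, I will show that every other element in the image of $\delta_{4,1}$ already vanishes in $\bQ[e_{1}]\{\mm_{0,3}\}$: the pairs of preimages of each target monomial $\mm_{\rho,1}\mm_{\sigma,1}\mm_{\tau,1}$ produce kernel elements whose $\ddd_{4}$-image is a linear combination of terms $e_{\mu}\mm_{p,2}\mm_{q,1}$, and the reductions available from \ref{(3)} and \ref{(4)} in Proposition~\ref{prop:general_result_filtration} together with the relations $e_{\alpha}\mm_{0,3}=0$ ($\alpha\geq 2$) established in Step~2 will kill every such contribution except the multiples of $e_{1}^{2}\mm_{0,3}$. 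Finally, for $\Cok(\ddd_{4,3})$: since $F_{3}H_{\st}^{*}(\Lambda^{3}H_{\bQ}) = F_{2}H_{\st}^{*}(\Lambda^{3}H_{\bQ})$, the kernel $\Ker(\ddd_{4,3/2})$ is the full module spanned by length-$4$ monomials $\mm_{\alpha_{1},1}\cdots\mm_{\alpha_{4},1}$, and the images of $\delta_{4,2}$ are sums $\sum_{i}e_{\alpha_{i}}\mm_{\alpha_{1},1}\cdots\widehat{\mm_{\alpha_{i},1}}\cdots\mm_{\alpha_{4},1}$ which, via \ref{(4)} and \ref{(5)}, already vanish in $\Cok(\ddd_{4,2})$; hence $\Cok(\ddd_{4,3}) = \Cok(\ddd_{4,2})$.

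The main obstacle is Step~3: it is not sufficient to know from \ref{(5)} that $e_{1}^{2}\mm_{0,3}$ lies in the image of $\delta_{4,1}$; one must verify that $e_{1}\mm_{0,3}$ does \emph{not}, i.e., that no other relation squeezes the cokernel further. This requires a systematic parameterisation of $\Ker(\ddd_{4,2/1})$ (relating the different preimages of each ordered triple $\mm_{\rho,1}\mm_{\sigma,1}\mm_{\tau,1}$), followed by a careful reduction of $\ddd_{4}$ on these kernel elements modulo the image $\mathrm{Im}(\ddd_{4,1})$. This bookkeeping — while strictly more involved than the analogous step for $d=3$ — is bounded in complexity and is the genuine computational heart of the argument.
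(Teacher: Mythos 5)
Your strategy is the paper's: compute $\Cok(\ddd_{4,k})$ for $0 \leq k \leq 3$ progressively via the snake lemma \eqref{eq:snake}. Your steps for $\Cok(\ddd_{4,0})$ and $\Cok(\ddd_{4,1})$ are correct and agree with the paper's computation, including the identification of the free basis $\{X_{a,b}\}$ of $\Ker(\ddd_{4,1/0})$ and the consequence $\Cok(\ddd_{4,1}) \cong \bQ[e_{1}]\{\mm_{0,3}\}$. Your use of \ref{(5)} in Proposition~\ref{prop:general_result_filtration} to obtain the relation $e_{1}^{2}\mm_{0,3}=0$ in $\Cok(\ddd_{4,2})$ is also valid, and your explicit witness $v = \tfrac{1}{6}\mm_{0,2}^{2}$ with $\ddd^{4}(v)=e_{1}^{2}$ is the one appearing in the proof of \ref{(5)}.

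However, as you acknowledge, the heart of the argument is missing: you must show that $\mathrm{Im}(\delta_{4,1}) = e_{1}^{2}\bQ[e_{1}]\{\mm_{0,3}\}$ exactly, not just that $e_{1}^{2}\mm_{0,3}$ lies in the image. You describe the intended bookkeeping as ``the genuine computational heart,'' but the proposal never executes it; as written, nothing rules out, say, $e_{1}\mm_{0,3}$ also lying in the image. The paper closes this gap by exhibiting an explicit free $\Sym_{\bQ}(\cE)$-basis of $\Ker(\ddd_{4,2/1})$ (equation \eqref{eq:basis_Ker_D_4,2/1}, the four families of alternating differences $\mm_{\gamma-1,2}\mm_{\alpha,1}\mm_{\beta,1} - \mm_{\alpha-1,2}\mm_{\beta,1}\mm_{\gamma,1}$, etc.) and then evaluating $\delta_{4,1}$ on each basis element inside $\Cok(\ddd_{4,1})$. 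The outcome is that every basis element maps to $0$ when $\gamma \geq 3$ (using \ref{(4)} together with the already established relations $e_{\alpha}\mm_{0,3}=0$, $\alpha\geq 2$), while in the sole remaining case $(\alpha,\gamma)=(1,2)$ one has $\delta_{4,1}(\mm_{1,2}\mm_{1,1}\mm_{1,1} - \mm_{0,2}\mm_{1,1}\mm_{2,1}) = -3e_{1}^{2}\mm_{0,3}$ and $\delta_{4,1}(\mm_{1,2}\mm_{1,1}\mm_{2,1} - \mm_{0,2}\mm_{2,1}\mm_{2,1}) = 0$. Without this explicit parameterisation and computation the claimed isomorphism is unproven.

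A secondary caveat: your $\Cok(\ddd_{4,3})$ step is also glib. The images of $\delta_{4,3}$ do vanish, but this is not a direct consequence of \ref{(4)} and \ref{(5)} alone; for $\delta = 2$ the vanishing depends on the coefficient cancellation $(3\cdot 2! - 3!) = 0$, and for $\delta = 1$ one needs to invoke the auxiliary relations \eqref{eq:rel_D_4,1} in $\Cok(\ddd_{4,1})$ (notably $[\mm_{0,2}\mm_{1,1}] = 0$) rather than \ref{(5)} directly. These are short but not automatic, and the paper spells them out.
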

\begin{proof}
We progressively compute $\Cok(\ddd_{4,k})$ for $0\leq k\leq 3$ using the exact sequence \eqref{eq:snake}.

\underline{Computation of $\Cok(\ddd_{4,1})$:}
we compute that for all $a,b,c\geq 0$ and $\alpha\geq1$
\begin{eqnarray*}
   &&\ddd_{4,1/0}(\mm_{a,3}\mm_{\alpha,1})=\mm_{a+1,2}\mm_{\alpha,1}\,;\\&&\ddd_{4,1/0}(\mm_{b,2}\mm_{c,2}) = \mm_{b,2}\mm_{c+1,1} + \mm_{c,2}\mm_{b+1,1}.
\end{eqnarray*}
Also, we define a section $\sigma_{4,1/0}$ of the map $\ddd_{4,1/0}$ by assigning
$$\sigma_{4,1/0}(\mm_{a,2}\mm_{\alpha,1})
= \begin{cases}
\tfrac{1}{2}\mm_{0,2}\mm_{0,2}, & \text{if $a=0$ and $\alpha = 1$,}\\
\mm_{0,2}\mm_{\alpha-1,2} - \mm_{\alpha-2,3}\mm_{1,1},
& \text{if $a=0$ and $\alpha \geq 2$,}\\
\mm_{a-1,3}\mm_{\alpha,1}, 
& \text{if $a\geq1$ and $\alpha \geq 1$.}\\
\end{cases}$$
Hence $\Cok(\ddd_{4,1/0})=0$. Also, by using the section $\sigma_{4,1/0}$, we deduce from the formulas of $\ddd_{4,1/0}$ that $\{[\mm_{a,2}\mm_{b,2}-\mm_{a-1,3}\mm_{b+1,1}-\mm_{b-1,3}\mm_{a+1,1}];1\leq a\leq b\}$ defines a basis of $\Ker(\ddd_{4,1/0})$ as a free $\Sym_{\bQ}(\cE)$-module.
By the formal definition of the connecting homomorphism defined by the snake lemma, each one of these generators is mapped by $\delta_{4,0}$ to $-[e_{b+1}\mm_{a-1,3}+e_{a+1}\mm_{b-1,3}]$ in $\Cok(\ddd_{4,0})$. Also, we recall from \ref{(2)} in Proposition~\ref{prop:general_result_filtration} that $\Cok(\ddd_{4,0})\cong\Sym_{\bQ}(\cE)[\mm_{0,3}]$. Hence we have $\mathrm{Im}(\delta_{4,0}) = (e_{\alpha}; \alpha\geq2)[\mm_{0,3}]$, and thus $\Cok(\ddd_{4,1}) \cong  (\Sym_{\bQ}(\cE)/(e_{\alpha}; \alpha\geq2))[\mm_{0,3}]$.
Furthermore, we note from \ref{(3)} in Proposition~\ref{prop:general_result_filtration} that in $\Cok(\ddd_{4,1})$
\begin{equation}\label{eq:rel_D_4,1}
 [\mm_{a,2}\mm_{\alpha,1}] = \begin{cases}
0 & \text{if $a \geq 2$, $\alpha\geq 3$ or $(a,\alpha) = (0,1)$},\\
-[e_{\alpha}\mm_{0,3}] & \text{if $a=1$},\\
[e_{a+1}\mm_{0,3}] & \text{if $\alpha=2$},
\end{cases}
\end{equation}
and $\ddd_{4}(\mm_{0,2}\mm_{0,2}) = 2\mm_{0,2}\mm_{1,1}$.

\underline{Computation of $\Cok(\ddd_{4,2})$:}
we compute that for all $a\geq 0$ and $\alpha,\beta\geq1$
\begin{eqnarray*}
\ddd_{4,2/1}(\mm_{a,2} \mm_{\alpha,1} \mm_{\beta,1})=\mm_{a+1,1} \mm_{\alpha,1} \mm_{\beta,1}.
\end{eqnarray*}
We deduce from this formula that $\Cok(\ddd_{4,2/1})=0$ and $\Ker(\ddd_{4,2/1})$ is a free $\Sym_{\bQ}(\cE)$-module with basis:
\begin{equation}\label{eq:basis_Ker_D_4,2/1}
\aligned
\{[\mm_{\gamma-1,2}\mm_{\alpha,1}\mm_{\beta,1} - \mm_{\alpha-1,2}\mm_{\beta,1}\mm_{\gamma,1}], [\mm_{\gamma-1,2}\mm_{\alpha,1}\mm_{\beta,1} - \mm_{\beta-1,2}\mm_{\alpha,1}\mm_{\gamma,1}]; 1\leq \alpha < \beta < \gamma\}\\
\sqcup \{[\mm_{\gamma-1,2}\mm_{\alpha,1}\mm_{\alpha,1} - \mm_{\alpha-1,2}\mm_{\alpha,1}\mm_{\gamma,1}], [\mm_{\gamma-1,2}\mm_{\alpha,1}\mm_{\gamma,1} - \mm_{\alpha-1,2}\mm_{\gamma,1}\mm_{\gamma,1}]; 1\leq \alpha < \gamma\}.
\endaligned
\end{equation}
If $\gamma \geq 3$, using the relation \ref{(4)} of Proposition~\ref{prop:general_result_filtration}, we deduce from the above relations in $\Cok(\ddd_{4,1})$ that the connecting homomorphism $\delta_{3,1}$ sends each one of these generators to $0$ in $\Cok(\ddd_{4,1})$.
For $(\alpha, \gamma) = (1,2)$, by the relations \eqref{eq:rel_D_4,1} and \ref{(3)} of Proposition~\ref{prop:general_result_filtration}, we compute that $\delta_{3,1}(\mm_{1,2}\mm_{1,1}\mm_{1,1} - \mm_{0,2}\mm_{1,1}\mm_{2,1}) = -3e_{1}^2\mm_{0,3}$ and that $\delta_{3,1}(\mm_{1,2}\mm_{1,1}\mm_{2,1} - \mm_{0,2}\mm_{2,1}\mm_{2,1}) = 0$.
Thus $\Cok(\ddd_{4,2}) \cong  (\Sym_{\bQ}(\cE)/(e_{1}^2, e_{\alpha}; \alpha\geq2))[\mm_{0,3}]$.

\underline{Computation of $\Cok(\ddd_{4,3})$:}
since the target of $\ddd_{4,3/2}$ is zero, we deduce that $\Cok(\ddd_{4,3/2}) = 0$ and that $\Ker(\ddd_{4,3/2})$ is the free $\Sym_{\bQ}(\cE)$-module with basis $\{[\mm_{\alpha,1}\mm_{\beta,1}\mm_{\gamma,1}\mm_{\delta,1}];1\leq \alpha\leq\beta\leq\gamma\leq\delta \}$.
The connecting homomorphism $\delta_{4,3}$ maps each generator $\mm_{\alpha,1}\mm_{\beta,1}\mm_{\gamma,1}\mm_{\delta,1}$ to the class of $[e_{\alpha}\mm_{\beta,1} \mm_{\gamma,1}\mm_{\delta,1}+e_{\beta}\mm_{\alpha,1} \mm_{\gamma,1}\mm_{\delta,1}+e_{\gamma}\mm_{\alpha,1} \mm_{\beta,1}\mm_{\delta,1}+e_{\delta}\mm_{\alpha,1} \mm_{\beta,1}\mm_{\gamma,1}]$ in $\Cok(\ddd_{4,2}) \cong  (\Sym_{\bQ}(\cE)/(e_{1}^2, e_{\alpha}; \alpha\geq2))[\mm_{0,3}]$. Using \ref{(4)} and \ref{(5)} of Proposition~\ref{prop:general_result_filtration}, we deduce that each one of these generators vanish in $\Cok(\ddd_{4,3})$ if $\delta\geq3$, and it is equal to $(3\cdot2!-3!)[e_{\alpha}e_{\beta}e_{\gamma}\mm_{0,3}]= 0$ if $\delta=2$.
If $\delta=1$, the generator is equal to $4e_{1}[\mm_{1,1}\mm_{1,1}\mm_{1,1}] = -8e_{1}^2[\mm_{0,2}\mm_{1,1}]$, which vanishes by the relation \eqref{eq:rel_D_4,1}.
Hence there is no additional relation, and thus ends the proof.
\end{proof}
Fully determining the kernel of $\Ker(\ddd_{4})\cong H_{\st}^{\even}(\Lambda^{4}\tilde{H}_{\bQ})$ seems to be a difficult task. However, we recall from Corollary~\ref{coro:low_dim_paired} that $H_{\st}^{0}(\Lambda^{4}\tilde{H}_{\bQ})=H_{\st}^{2}(\Lambda^{4}\tilde{H}_{\bQ})=0$.
Also, we compute the $\Tor$-groups of $H_{\st}^{*}(\Lambda^{4}\tilde{H}_{\bQ})$ as follows.
\begin{prop}\label{prop:4th-ext_Tor}
For any $j > 0$, we have 
$$H_{j}(\Sym_{\bQ}(\cE);H_{\st}^{*}(\Lambda^{4}\tilde{H}_{\bQ}))\cong\Lambda^{j-1}\cE_{\geq 2}\oplus\Lambda^{j}\cE_{\geq 2}\oplus\Lambda^{j+1}\cE_{\geq 2}\oplus\Lambda^{j+2}\cE_{\geq 2}.$$
Moreover, we have 
\begin{align*}
H_{0}(\Sym_{\bQ}(\cE);H_{\st}^{*}(\Lambda^{4}\tilde{H}_{\bQ}))  \;\cong\;& \bQ\{[\mm_{a,2}\mm_{b,2}-\mm_{a-1,3}\mm_{b+1,1}-\mm_{b-1,3}\mm_{a+1,1}];2 \leq a \leq b\} \\  
& \oplus\bQ\{[\mm_{\gamma-1,2}\mm_{\alpha,1}\mm_{\beta,1} - \mm_{\alpha-1,2}\mm_{\beta,1}\mm_{\gamma,1}]
; 1\leq \alpha < \beta < \gamma\}
\\ 
& \oplus\bQ\{
[\mm_{\gamma-1,2}\mm_{\alpha,1}\mm_{\beta,1} - \mm_{\beta-1,2}\mm_{\alpha,1}\mm_{\gamma,1}]
; 1\leq \alpha < \beta < \gamma\}
\\ 
& \oplus\bQ\{[\mm_{\gamma-1,2}\mm_{\alpha,1}\mm_{\alpha,1} - \mm_{\alpha-1,2}\mm_{\alpha,1}\mm_{\gamma,1}]
; 1\leq \alpha < \gamma\geq 3\}\\
& \oplus\bQ\{ 
[\mm_{\gamma-1,2}\mm_{\alpha,1}\mm_{\gamma,1} - \mm_{\alpha-1,2}\mm_{\gamma,1}\mm_{\gamma,1}]
; 1\leq \alpha < \gamma\}\\
&\oplus\bQ\{[\mm_{\alpha,1}\mm_{\beta,1}\mm_{\gamma,1}\mm_{\delta,1}];1\leq \alpha\leq \beta\leq \gamma\leq \delta \} \oplus \bQ\oplus \cE_{\geq 2}\oplus\Lambda^{2}\cE_{\geq 2}.
\end{align*}
\end{prop}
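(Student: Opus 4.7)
The plan is to follow closely the pattern of Proposition~\ref{prop:3rd-ext_Tor}, replacing the use of Theorem~\ref{prop:3rd-ext-even} by Theorem~\ref{prop:4th-ext_odd}. First, the computation for $j>0$ will come from Theorem~\ref{prop:4th-ext_odd} and Proposition~\ref{lem:general_properties_homology_algebra}: since $H_{\st}^{\odd}(\Lambda^{4}\tilde{H}_{\bQ})\cong (\Sym_{\bQ}(\cE_{\leq 1})/(e_1^2))\otimes_{\bQ} \bQ_{\cE_{\geq 2}}\{\mm_{0,3}\}$, applying the second case of \eqref{eq:computation_Tor_general} with $\ell=1$ and $m=e_1^2$ yields $H_{j}(\Sym_{\bQ}(\cE);H_{\st}^{\odd}(\Lambda^{4}\tilde{H}_{\bQ}))\cong \Lambda^{j-1}\cE_{\geq 2}\oplus\Lambda^{j}\cE_{\geq 2}$ for every $j\geq 0$ (with the convention $\Lambda^{-1}\cE_{\geq2}=0$). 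Combining this with the degree shift \eqref{eq:exterior_Tor}, which gives $H_{j}(\Sym_{\bQ}(\cE);H_{\st}^{\even}(\Lambda^{4}\tilde{H}_{\bQ}))\cong H_{j+2}(\Sym_{\bQ}(\cE);H_{\st}^{\odd}(\Lambda^{4}\tilde{H}_{\bQ}))$ for $j>0$, one obtains the announced formula in positive degrees.

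For the computation in degree $j=0$, we apply Lemma~\ref{lem:Tor_0_lemma} to $d=4$. The term $H_{2}(\Sym_{\bQ}(\cE);H_{\st}^{\odd}(\Lambda^{4}\tilde{H}_{\bQ}))$ contributes $\cE_{\geq 2}\oplus \Lambda^{2}\cE_{\geq 2}$ by the first paragraph, while $H_{0}(\Sym_{\bQ}(\cE);H_{\st}^{\odd}(\Lambda^{4}\tilde{H}_{\bQ}))\cong\bQ$ generated by $[\mm_{0,3}]$. The remaining work is to determine $\Ker(\Delta_{4})$, using the splitting \eqref{eq:decomposition_filtration_Ker_H_{0}} into three summands indexed by $k=1,2,3$. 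The generators of each $H_{0}(\Sym_{\bQ}(\cE);\Ker(\ddd_{4,k/k-1}))$ are already available: from the computation of $\Cok(\ddd_{4,1})$ in the proof of Theorem~\ref{prop:4th-ext_odd}, $\Ker(\ddd_{4,1/0})$ has basis $\{[\mm_{a,2}\mm_{b,2}-\mm_{a-1,3}\mm_{b+1,1}-\mm_{b-1,3}\mm_{a+1,1}];1\leq a\leq b\}$; $\Ker(\ddd_{4,2/1})$ has the basis \eqref{eq:basis_Ker_D_4,2/1}; and $\Ker(\ddd_{4,3/2})$ has basis $\{[\mm_{\alpha,1}\mm_{\beta,1}\mm_{\gamma,1}\mm_{\delta,1}];1\leq\alpha\leq\beta\leq\gamma\leq\delta\}$.

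For each of these generators $u$, I would lift it to $\tilde u\in H_{\st}^{\even}(\Lambda^{4}H_{\bQ})$, compute $\ddd_{4}(\tilde u)\in H_{\st}^{\odd}(\Lambda^{3}H_{\bQ})$ using the contraction formulas of Proposition~\ref{prop:Contraction_Formula}, and exhibit a preimage $\hat u\in H_{\st}^{\odd}(\Lambda^{3}H_{\bQ})\otimes \cE$ under $\pa_{1}$; then $\Delta_{4}(u)$ is the class of $(\Pi\otimes\cE)(\hat u)$ in $H_{1}(\Sym_{\bQ}(\cE);H_{\st}^{\odd}(\Lambda^{4}\tilde{H}_{\bQ}))$ by Proposition~\ref{prop:Ker_Delta_method}. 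The target group was identified in the first paragraph with $\bQ\oplus \cE_{\geq 2}$ sitting inside $(\Sym_{\bQ}(\cE_{\leq 1})/(e_1^2))\otimes\Lambda^{1}\cE_{\geq 2}\{\mm_{0,3}\}$ and generated (up to the $\pa_{1}$-boundaries) by elements of the form $[\mm_{0,3}\otimes e_{\gamma}]$ and $[e_{1}\mm_{0,3}\otimes e_{\gamma}]$ for $\gamma\geq 2$. For the first family ($k=1$), Theorem~\ref{prop:4th-ext_odd} together with the relations \eqref{eq:rel_D_4,1} gives that $\Delta_{4}$ annihilates the element iff $a\geq 2$; this isolates $2\leq a\leq b$ as surviving generators. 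For the second family ($k=2$), combining \eqref{eq:rel_D_4,1} with the relations $\ddd_{4,3/2}$ produces among the four sub-families of \eqref{eq:basis_Ker_D_4,2/1} the restrictions $\gamma\geq 3$ (for the third sub-family) which appear in the statement. For the third family ($k=3$), a direct application of \ref{(4)}--\ref{(5)} in Proposition~\ref{prop:general_result_filtration} shows (analogously to the corresponding step in the proof of Theorem~\ref{prop:4th-ext_odd}) that each such generator lies in $\Ker(\Delta_{4})$.

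The main obstacle will be the bookkeeping in the middle step $k=2$: there are four sub-families of generators and the structure of $H_{\st}^{\odd}(\Lambda^{4}\tilde{H}_{\bQ})$ forces me to keep track of all $\pa_{1}$-boundaries in $H_{\st}^{\odd}(\Lambda^{3}H_{\bQ})\otimes \cE$ that become trivial after applying $\Pi\otimes\cE$, in particular the relation $\pa_{2}(\mm_{0,3}\otimes e_{1}\wedge e_{\gamma})=e_{1}\mm_{0,3}\otimes e_{\gamma}$ which will be responsible for the discrepancy between the indices $\gamma\geq 2$ and $\gamma\geq 3$ in the final list. Once this bookkeeping is carried out, assembling the surviving generators with the contributions $\bQ\oplus\cE_{\geq 2}\oplus\Lambda^{2}\cE_{\geq 2}$ gives exactly the stated decomposition of $H_{0}(\Sym_{\bQ}(\cE);H_{\st}^{*}(\Lambda^{4}\tilde{H}_{\bQ}))$.
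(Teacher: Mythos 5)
Your proposal follows the paper's proof essentially verbatim: positive degrees via Proposition~\ref{lem:general_properties_homology_algebra} and the shift \eqref{eq:exterior_Tor}, then degree zero via Lemma~\ref{lem:Tor_0_lemma}, the splitting \eqref{eq:decomposition_filtration_Ker_H_{0}}, and Proposition~\ref{prop:Ker_Delta_method} applied summand by summand. The one caveat, which you flag yourself, is that the $k=2$ bookkeeping is left as a plan; note that the restriction $\gamma\geq 3$ on the third sub-family arises because its image lands on the non-boundary $e_1\mm_{0,3}\otimes e_1$ (where the relation you cite gives no help), whereas the fourth sub-family lands on $e_1\mm_{0,3}\otimes e_2$ which your relation does kill — so the $\pa_2$-relation explains the \emph{difference} between the two sub-families rather than directly producing the restriction.
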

\begin{proof}
By the computations \eqref{eq:computation_Tor_general} and Theorem~\ref{prop:4th-ext_odd}, we deduce that $H_{j}(\Sym_{\bQ}(\cE);H_{\st}^{\odd}(\Lambda^{4}\tilde{H}_{\bQ})) \cong\Lambda^{j-1}\cE_{\geq 2}\oplus\Lambda^{j}\cE_{\geq 2}$
for all $j \geq1$, and $H_{0}(\Sym_{\bQ}(\cE);H_{\st}^{\odd}(\Lambda^{4}\tilde{H}_{\bQ}))\cong \bQ$. Combining this computation with the isomorphism \eqref{eq:exterior_Tor} for $d=4$, we deduce that $H_{j}(\Sym_{\bQ}(\cE);H_{\st}^{\even}(\Lambda^{4}\tilde{H}_{\bQ}))\cong \Lambda^{j+2}\cE_{\geq 2}\oplus\Lambda^{j+1}\cE_{\geq 2}$, and thus ends the proof of the first part of the proposition.

For the second part of the proposition, we first have the contribution of $H_{0}(\Sym_{\bQ}(\cE);H_{\st}^{\odd}(\Lambda^{4}\tilde{H}_{\bQ}))$ computed above.
By Lemma~\ref{lem:Tor_0_lemma}, we compute $H_{0}(\Sym_{\bQ}(\cE);H_{\st}^{\even}(\Lambda^{4}\tilde{H}_{\bQ}))$ thanks to the exact sequence \eqref{eq:Tor_0_ES}. We recall that $H_{2}(\Sym_{\bQ}(\cE);H_{\st}^{\odd}(\Lambda^{4}\tilde{H}_{\bQ}))$ is computed above.
Following the decomposition \eqref{eq:decomposition_filtration_Ker_H_{0}}, we finish the proof by progressively determining the generators of $\Ker(\Delta_{4})$ associated to each summand $\Ker(\ddd_{4,k/k-1})$ for $1\leq k\leq 3$, via the chain complex \eqref{eq:complex_SES} and Proposition~\ref{prop:Ker_Delta_method}.

\underline{For the summand $\Ker(\ddd_{4, 1/0})$:} for each $1 \leq a \leq b$, the element $\ddd_{4}(\mm_{a-1,3}\mm_{b+1,1}+\mm_{b-1,3}\mm_{a+1,1}-\mm_{a,2}\mm_{b,2})$ is equal to $\pa_{1}(\mm_{a-1,3}\otimes e_{b+1} + \mm_{b-1,3}\otimes e_{a+1}) \in H_{\st}^{\odd}(\Lambda^{3}H_{\bQ})$. By Theorem~\ref{prop:4th-ext_odd}, we have $\Pi(\mm_{a-1,3})\otimes e_{b+1} + \Pi(\mm_{b-1,3})\otimes e_{a+1}= 0$ for all $2 \leq a \leq b$, while $\Pi(\mm_{0,3})\otimes e_{b+1} + \Pi(\mm_{b-1,3})\otimes e_{2}$ is equal to $2\Pi(\mm_{0,3})\otimes e_{2}$ for $b=1$, and $\Pi(\mm_{0,3})\otimes e_{b+1}$ for $b \geq 2$.
These elements are linearly independent in $H_{1}(\Sym_{\bQ}(\cE);H_{\st}^{\odd}(\Lambda^4\tilde{H}_Q))$.
Hence, by Proposition~\ref{prop:Ker_Delta_method}, the restriction of $\Ker(\Delta_{4})$ to $H_{0}({\Sym_{\bQ}(\cE)};\Ker(\ddd_{4,1/0}))$ is isomorphic to the free $\bQ$-module generated by $\{[\mm_{a,2}\mm_{b,2}-\mm_{a-1,3}\mm_{b+1,1}-\mm_{b-1,3}\mm_{a+1,1}];2 \leq a \leq b \}$.

\underline{For the summand $\Ker(\ddd_{4, 2/1})$:} for each $1\leq \alpha < \beta < \gamma$, we compute similarly to relations \eqref{eq:rel_D_4,1} that $\ddd_{4}([\mm_{\gamma-1,2}\mm_{\alpha,1}\mm_{\beta,1} - \mm_{\alpha-1,2}\mm_{\beta,1}\mm_{\gamma,1}])=\pa_{1}(-\mm_{\alpha-1,2}\mm_{\beta,1}\otimes e_{\gamma})$ and $\ddd_{4}([\mm_{\gamma-1,2}\mm_{\alpha,1}\mm_{\beta,1} - \mm_{\beta-1,2}\mm_{\alpha,1}\mm_{\gamma,1}])=\pa_{1}(-\mm_{\alpha-1,2}\mm_{\beta,1}\otimes e_{\gamma})$.
In $H_{\st}^{\odd}(\Lambda^4\tilde{H}_Q)$, we note that $\Pi(\mm_{\alpha-1,2}\mm_{\beta,1})= - \Pi(\mm_{\beta-1,2}\mm_{\alpha,1})$, we recall from \eqref{eq:rel_D_4,1} that we have $\Pi(\mm_{\alpha-1,2}\mm_{\beta,1})=0$ if $\beta \geq 3$ and $\Pi(\mm_{0,2}\mm_{2,1}) = -\Pi(e_{2}\mm_{0,3}) = 0$ for the remaining case $(\alpha, \beta) = (1, 2)$. Hence, by Proposition~\ref{prop:Ker_Delta_method}, all the classes $[\mm_{\gamma-1,2}\mm_{\alpha,1}\mm_{\beta,1} - \mm_{\alpha-1,2}\mm_{\beta,1}\mm_{\gamma,1}]$ and $[\mm_{\gamma-1,2}\mm_{\alpha,1}\mm_{\beta,1} - \mm_{\beta-1,2}\mm_{\alpha,1}\mm_{\gamma,1}]$ belong to $\Ker(\Delta_{4})$.

We now consider some $1\leq \alpha < \gamma$. For $\gamma \geq 3$, we compute similarly to relations \eqref{eq:rel_D_4,1} that $\ddd_{4}([\mm_{\gamma-1,2}\mm_{\alpha,1}\mm_{\alpha,1} - \mm_{\alpha-1,2}\mm_{\alpha,1}\mm_{\gamma,1}])=\pa_{1}(-\mm_{\alpha-1,2}\mm_{\alpha,1}\otimes e_{\gamma})$ and $\ddd_{4}([\mm_{\gamma-1,2}\mm_{\alpha,1}\mm_{\gamma,1} - \mm_{\alpha-1,2}\mm_{\gamma,1}\mm_{\gamma,1}])=0=\pa_{1}(0)$. Now $\Pi(\mm_{\alpha-1,2}\mm_{\alpha,1}) = 0$ for all $\alpha\geq 1$ by Theorem~\ref{prop:4th-ext_odd} and the relations \eqref{eq:rel_D_4,1}, so all these classes for $\gamma \geq 3$ belong to $\Ker(\Delta_{4})$ by Proposition~\ref{prop:Ker_Delta_method}.

The only remaining case is that of $(\alpha, \gamma) = (1, 2)$. We compute similarly to relations \eqref{eq:rel_D_4,1} that $\ddd_{4}(
[\mm_{1,2}\mm_{1,1}\mm_{1,1} - \mm_{0,2}\mm_{1,1}\mm_{2,1}])=\pa_{1}((2\mm_{1,2}\mm_{1,1}-\mm_{0,2}\mm_{2,1})\otimes e_{1}
- \mm_{0,2}\mm_{1,1}\otimes e_{2})$ and  
$\ddd_{4}([\mm_{1,2}\mm_{1,1}\mm_{2,1} - \mm_{0,2}\mm_{2,1}\mm_{2,1}])=\pa_{1}(\mm_{1,2}\mm_{2,1}\otimes e_{1} + (\mm_{1,2}\mm_{1,1} - 2\mm_{0,2}\mm_{2,1})\otimes e_{2})$. 
Here, similarly to relations \eqref{eq:rel_D_4,1} in $H_{\st}^{\odd}(\Lambda^{4}\tilde{H}_{\bQ})$, we compute that in $H_{1}(\Sym_{\bQ}(\cE);H_{\st}^{\odd}(\Lambda^4\tilde{H}_Q))$
\begin{align*}
&\Pi(2\mm_{1,2}\mm_{1,1}-\mm_{0,2}\mm_{2,1})\otimes e_{1} - \Pi(\mm_{0,2}\mm_{1,1})\otimes e_{2} \\ \;=\;& \Pi(-2e_{1}\mm_{0,3}+e_{2}\mm_{0,3})\otimes e_{1} - \Pi(\frac{1}{2}\ddd_{4}(\mm_{0,2}\mm_{0,2}))\otimes e_{2} \\
\;=\;& -2\Pi(e_{1}\mm_{0,3})\otimes e_{1}
\end{align*}
which is non-null because $\Pi(e_{1}\mm_{0,3})\otimes e_{1}$ cannot belong to the image of $\pa_{2}$ (thanks to the formal definition of that differential, see \eqref{eq:iD}), while
\begin{align*}
&\Pi(\mm_{1,2}\mm_{2,1})\otimes e_{1} + \Pi(\mm_{1,2}\mm_{1,1} - 2\mm_{0,2}\mm_{2,1})\otimes e_{2} \\
\;=\;& -\Pi(e_{2}\mm_{0,3})\otimes e_{1} + \Pi(-e_{1}\mm_{0,3} +  2e_{1}\mm_{0,3})\otimes e_{2} \\
\;=\;& \Pi(e_{1}\mm_{0,3})\otimes e_{2} \\
\;=\;& -\Pi(e_{2}\mm_{0,3})\otimes e_{1} = 0.
\end{align*}
Therefore all the classes introduced above of the type $[\mm_{\gamma-1,2}\mm_{\alpha,1}\mm_{\alpha,1} - \mm_{\alpha-1,2}\mm_{\alpha,1}\mm_{\gamma,1}]$ and  
$[\mm_{\gamma-1,2}\mm_{\alpha,1}\mm_{\gamma,1} - \mm_{\alpha-1,2}\mm_{\gamma,1}\mm_{\gamma,1}]$,
except $[\mm_{1,2}\mm_{1,1}\mm_{1,1} - \mm_{0,2}\mm_{1,1}\mm_{2,1}]$, are in the kernel 
$\Ker(\Delta_{4})$. Hence, by Proposition~\ref{prop:Ker_Delta_method}, the restriction of $\Ker(\Delta_{4})$ to $H_{0}({\Sym_{\bQ}(\cE)};\Ker(\ddd_{4,2/1}))$ is isomorphic to the free $\bQ$-module generated by all the above classes.

\underline{For the summand $\Ker(\ddd_{4,3/2})$:} for all $1\leq\alpha\leq\beta\leq\gamma\leq\delta$, 
the element $\ddd_{4}(\mm_{\alpha,1}\mm_{\beta,1}\mm_{\gamma,1}\mm_{\delta,1})$ is equal to $\pa_{1}(\mm_{\alpha,1}\mm_{\beta,1}\mm_{\gamma,1}\otimes e_{\delta} +\mm_{\alpha,1}\mm_{\beta,1}\mm_{\delta,1}\otimes e_{\gamma}
+\mm_{\alpha,1}\mm_{\gamma,1}\mm_{\delta,1}\otimes e_{\beta}
+\mm_{\beta,1}\mm_{\gamma,1}\mm_{\delta,1}\otimes e_{\alpha}\in H_{\st}^{\odd}(\Lambda^{3}H_{\bQ})\otimes\cE)$.

Our further computations are based on the relations proved in the paragraph on the computation of $\Cok(\ddd_{4,3})$ in the proof of Theorem~\ref{prop:4th-ext_odd}.
If $\gamma \geq 3$, then the image by $\Pi\otimes \cE$ of this class vanishes in $H_{1}(\Sym_{\bQ}(\cE);H_{\st}^{\odd}(\Lambda^4\tilde{H}_Q)$. Since we have $\Pi(\mm_{\alpha,1}\mm_{\beta,1}\mm_{2,1}) = \Pi(2e_{\alpha} e_{\beta}\mm_{0,3}) = 0$, the image by $\Pi\otimes \cE$ of this class also vanishes when $\gamma = 2$. 
Finally if $\gamma = 1$, then  $\alpha = \beta = 1$. Hence the image by $\Pi\otimes \cE$ of this class is equal to $4\Pi(\mm_{1,1}\mm_{1,1}\mm_{1,1})\otimes e_{1}$ for $\delta= 1$, and $\Pi(\mm_{1,1}\mm_{1,1}\mm_{1,1})\otimes e_\delta$ for $\delta \geq 2$. Since $\Pi(\mm_{1,1}\mm_{1,1}\mm_{1,1}) = \Pi(2e_{1}^{2}\mm_{0,3}) = 0$, it follows from Proposition~\ref{prop:Ker_Delta_method} that the restriction of $\Ker(\Delta_{4})$ to $H_{0}({\Sym_{\bQ}(\cE)};\Ker(\ddd_{4,3/2}))$ is isomorphic to the free $\bQ$-module generated by $\{[\mm_{\alpha,1}\mm_{\beta,1}\mm_{\gamma,1}\mm_{\delta,1}];1\leq\alpha\leq\beta\leq\gamma\leq\delta \}$.
\end{proof}

\subsection{Fifth exterior power}\label{ss:fifth_exterior}

We consider the exact sequence \eqref{ES_cohomology_exterior} for $d=5$ and apply the contraction formulas of Proposition~\ref{prop:Contraction_Formula} to compute the images of the derivation $\ddd_{5}$. We introduce the following notations in order to compute $H_{\st}^{\even}(\Lambda^{5}\tilde{H}_{\bQ})$.

\begin{notation}\label{not:Lambda_5}
We denote by $\fL$ the torsion $\Sym_{\bQ}(\cE)$-module
$$\begin{array}{c}
(\Sym_{\bQ}(\cE)/(e_{1}^{3}, e_{\alpha}e_{\beta},e_{\gamma};\alpha,\beta\geq1\,\, \text{except $\alpha = \beta = 1$}, \gamma\geq4))\{\mm_{0,4};\mm_{0,2}\mm_{0,2}\}\\
\oplus(\Sym_{\bQ}(\cE)/(
e_{\alpha}, e_{\beta} e_{\gamma} ; \alpha\geq3, \beta, \gamma \geq 1))\{\mm_{0,3}\mm_{1,1}\}
\\\oplus(\Sym_{\bQ}(\cE)/(e_{\alpha}, e_{\beta} e_{\gamma} ;\alpha\geq2, \beta, \gamma \geq 1))\{\mm_{0,3}\mm_{2,1}\}.
\end{array}$$
Also, we denote by $\fK$ the submodule of $\fL$ defined by the direct sum of the trivial $\Sym_{\bQ}(\cE)$-modules $\{2e_{2}\mm_{0,3}\mm_{1,1}+3e_{1}\mm_{0,3}\mm_{2,1}\}$, $\{e_{2}\mm_{0,2}\mm_{0,2}+6e_{1}\mm_{0,3}\mm_{1,1}\}$ and $\{e_{3}\mm_{0,2}\mm_{0,2}-6e_{1}^{2}\mm_{0,4}\}$.
\end{notation}

\begin{thm}\label{thm:5th-ext-even}
The $\Sym_{\bQ}(\cE)$-module $H_{\st}^{\even}(\Lambda^{5}\tilde{H}_{\bQ})$ is isomorphic to the quotient $\fL/\fK$.
\end{thm}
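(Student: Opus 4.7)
The plan is to mimic the strategy used for Theorem~\ref{prop:3rd-ext-even} and Theorem~\ref{prop:4th-ext_odd}, by progressively computing $\Cok(\ddd_{5,k})$ for $0\leq k\leq 4$ via the exact sequence \eqref{eq:snake}, starting from $\Cok(\ddd_{5,0})\cong \Sym_{\bQ}(\cE)[\mm_{0,4}]$ (see \ref{(2)} in Proposition~\ref{prop:general_result_filtration}). At each level $k$, one applies the contraction formulas of Proposition~\ref{prop:Contraction_Formula} to compute $\ddd_{5,k/(k-1)}$ on the generators of $F_{k}H_{\st}^{*}(\Lambda^{5}H_{\bQ})/F_{k-1}H_{\st}^{*}(\Lambda^{5}H_{\bQ})$, then determine $\Ker(\ddd_{5,k/(k-1)})$ and $\Cok(\ddd_{5,k/(k-1)})$, and finally compute the connecting homomorphism $\delta_{5,k-1}$ to obtain $\Cok(\ddd_{5,k})$.

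More precisely, for $k=1$ the relevant monomials are $\mm_{a,3}\mm_{\alpha,1}$ and $\mm_{b,2}\mm_{c,2}$; one computes that $\ddd_{5,1/0}(\mm_{a,3}\mm_{\alpha,1})=\mm_{a+1,2}\mm_{\alpha,1}$ and $\ddd_{5,1/0}(\mm_{b,2}\mm_{c,2})=\mm_{b,2}\mm_{c+1,1}+\mm_{c,2}\mm_{b+1,1}$, and then shows (similarly to the $d=4$ case) that $\Cok(\ddd_{5,1/0})=0$ while $\Ker(\ddd_{5,1/0})$ is generated by the classes $[\mm_{a,3}\mm_{b+1,1}+\mm_{b,3}\mm_{a+1,1}-\mm_{a+1,2}\mm_{b,2}]$. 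Applying $\delta_{5,0}$ should kill the generators $e_{\alpha}\mm_{0,4}$ for $\alpha\geq 4$, leaving the free $\Sym_{\bQ}(\cE)/(e_{\alpha};\alpha\geq4)$-module generated by $\mm_{0,4}$. For $k=2$, we treat two families of monomials, $\mm_{a,3}\mm_{\alpha,1}\mm_{\beta,1}$ and $\mm_{a,2}\mm_{b,2}\mm_{\alpha,1}$, and among the resulting relations in $\Cok(\ddd_{5,1})$ one introduces the new generator $\mm_{0,3}\mm_{1,1}$ and $\mm_{0,3}\mm_{2,1}$, and also the class $\mm_{0,2}\mm_{0,2}$ (which does not reduce to a multiple of $\mm_{0,4}$ at this stage). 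For $k=3$, the relevant monomials are $\mm_{a,2}\mm_{\alpha,1}\mm_{\beta,1}\mm_{\gamma,1}$ with $\ddd_{5,3/2}(\mm_{a,2}\mm_{\alpha,1}\mm_{\beta,1}\mm_{\gamma,1})=\mm_{a+1,1}\mm_{\alpha,1}\mm_{\beta,1}\mm_{\gamma,1}$; applying \ref{(3)} and \ref{(4)} of Proposition~\ref{prop:general_result_filtration} one reads off the consequent relations on $e_{\alpha}e_{\beta}\mm_{0,3}\mm_{1,1}$, $e_{\alpha}e_{\beta}\mm_{0,3}\mm_{2,1}$ and relations between them and $e_{\alpha}e_{\beta}\mm_{0,2}\mm_{0,2}$ and $e_{\alpha}e_{\beta}\mm_{0,4}$. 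Finally for $k=4$, the target of $\ddd_{5,4/3}$ is zero, so $\Ker(\ddd_{5,4/3})$ is spanned by $\mm_{\alpha,1}\mm_{\beta,1}\mm_{\gamma,1}\mm_{\delta,1}\mm_{\epsilon,1}$ and applying $\delta_{5,3}$ yields the remaining relations (most notably those producing $e_1^3\mm_{0,4}=0$ and the three defining relations of $\fK$).

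The main obstacle is the very rapid proliferation of cases at the levels $k=2,3$: one has two distinct families of monomials at level $2$ whose images in $\Cok(\ddd_{5,1})$ interact via non-trivial identifications coming from \ref{(3)} and \ref{(4)} of Proposition~\ref{prop:general_result_filtration}, and the connecting maps $\delta_{5,k-1}$ will produce elements involving all three generators $\mm_{0,4}$, $\mm_{0,2}\mm_{0,2}$, $\mm_{0,3}\mm_{1,1}$, $\mm_{0,3}\mm_{2,1}$ of $\fL$ simultaneously. A careful bookkeeping is required to show that (i) these four generators are $\bQ$-linearly independent modulo what has already been killed, (ii) the relations produced by $\delta_{5,k-1}$ are exactly the ones defining $\fL$ (namely $e_1^3=0$, $e_\alpha e_\beta=0$ except $\alpha=\beta=1$, $e_\gamma=0$ for $\gamma\geq 4$ on $\mm_{0,4},\mm_{0,2}\mm_{0,2}$; $e_\alpha=0$ for $\alpha\geq3$ and $e_\beta e_\gamma=0$ on $\mm_{0,3}\mm_{1,1}$; $e_\alpha=0$ for $\alpha\geq2$ and $e_\beta e_\gamma=0$ on $\mm_{0,3}\mm_{2,1}$), and (iii) the three additional identifications $2e_{2}\mm_{0,3}\mm_{1,1}+3e_{1}\mm_{0,3}\mm_{2,1}=0$, $e_{2}\mm_{0,2}\mm_{0,2}+6e_{1}\mm_{0,3}\mm_{1,1}=0$ and $e_{3}\mm_{0,2}\mm_{0,2}-6e_{1}^{2}\mm_{0,4}=0$ (i.e.~the generators of $\fK$) arise exactly at level $k=3$ from applying \ref{(4)} of Proposition~\ref{prop:general_result_filtration} to compute $\ddd^{k}$ on appropriate triple products.

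Once the relations of $\fL/\fK$ have been identified, verifying minimality (i.e.~that no further relations arise at $k=4$) reduces to checking, using \ref{(4)} and \ref{(5)} of Proposition~\ref{prop:general_result_filtration}, that $\delta_{5,3}(\mm_{\alpha,1}\mm_{\beta,1}\mm_{\gamma,1}\mm_{\delta,1}\mm_{\epsilon,1})$ always lies in the $\Sym_{\bQ}(\cE)$-submodule of $\Cok(\ddd_{5,3})$ generated by the already-identified relations, with the only potentially delicate case being $\alpha=\beta=\gamma=1$, $\delta\in\{1,2,3\}$, which should be handled by a direct computation analogous to the case $(\alpha,\gamma)=(1,2)$ in the proof of Theorem~\ref{prop:4th-ext_odd}.
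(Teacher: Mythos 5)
Your proposal correctly identifies the overall strategy of the paper's proof (progressively compute $\Cok(\ddd_{5,k})$ via the exact sequence \eqref{eq:snake}), but it contains a genuine error at the very first step $k=1$ that propagates forward and misplaces the origin of the key generators of $\fL$.

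The formulas you write for $\ddd_{5,1/0}$, namely $\ddd_{5,1/0}(\mm_{a,3}\mm_{\alpha,1})=\mm_{a+1,2}\mm_{\alpha,1}$ and $\ddd_{5,1/0}(\mm_{b,2}\mm_{c,2})=\mm_{b,2}\mm_{c+1,1}+\mm_{c,2}\mm_{b+1,1}$, are actually the $d=4$ formulas. For $d=5$, the source $F_{1}H_{\st}^{*}(\Lambda^{5}H_{\bQ})/F_{0}$ is generated by length-$2$ products of total exterior degree $5$, i.e.\ $\mm_{a,4}\mm_{\alpha,1}$ and $\mm_{a,3}\mm_{b,2}$, not by $\mm_{a,3}\mm_{\alpha,1}$ and $\mm_{b,2}\mm_{c,2}$ (those have exterior degree $4$ and belong to the \emph{target}). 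The correct formulas are $\ddd_{5,1/0}(\mm_{a,4}\mm_{\alpha,1})=\mm_{a+1,3}\mm_{\alpha,1}$, $\ddd_{5,1/0}(\mm_{0,3}\mm_{b,2})=\mm_{1,2}\mm_{b,2}+\mm_{0,3}\mm_{b+1,1}$, and $\ddd_{5,1/0}(\mm_{a+1,3}\mm_{b,2}-\mm_{a,4}\mm_{b+1,1})=\mm_{a+2,2}\mm_{b,2}$. The crucial consequence is that $\Cok(\ddd_{5,1/0})$ is \emph{not} zero (contrary to your claim); it is a \emph{free} $\Sym_{\bQ}(\cE)$-module of rank $3$ with basis $\{[\mm_{0,2}\mm_{0,2}];\,[\mm_{0,2}\mm_{1,2}]=-[\mm_{0,3}\mm_{1,1}];\,[\mm_{1,2}\mm_{1,2}]=-[\mm_{0,3}\mm_{2,1}]\}$. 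This is precisely where the three generators of $\fL$ other than $\mm_{0,4}$ arise — already at level $k=1$, not at $k=2$ as you assert. Your description of $\Ker(\ddd_{5,1/0})$ is likewise incorrect; the paper shows it is free on $\{\mm_{a-1,3}\mm_{b,2}-\mm_{a-2,4}\mm_{b+1,1}-\mm_{b-1,3}\mm_{a,2}+\mm_{b-2,4}\mm_{a+1,1};\;2\leq a<b\}$.

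This structural difference is not cosmetic: for $d=3,4$ one has $\Cok(\ddd_{d,1/0})=0$, so $\Cok(\ddd_d)$ is cyclic on $\mm_{0,d-1}$; for $d=5$ the non-vanishing of $\Cok(\ddd_{5,1/0})$ is exactly what forces $\fL$ to have four independent generators, and the three $\fK$-relations express nontrivial identifications among them that arise from the connecting maps $\delta_{5,1}$ (the classes $P_{b}$ and $M_{a,b,\alpha}$ of the paper's proof already produce the $\fK$-type identifications at level $k=2$, not $k=3$ as you suggest). Moreover, $e_{1}^{3}[\mm_{0,4}]=0$ enters at level $k=3$, while $k=4$ contributes $e_{1}^{3}[\mm_{0,2}\mm_{0,2}]=0$. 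Without correcting the $k=1$ step your bookkeeping at every subsequent level is off, and in particular you cannot obtain a free rank-$4$ module at level $1$ that the later relations are then cut out of.
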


\begin{proof}
We progressively compute $\Cok(\ddd_{5,k})$ for $0\leq k\leq 4$ using the exact sequence \eqref{eq:snake}.

\underline{Computation of $\Cok(\ddd_{5,1})$:}
we compute that for $a,b\geq 0$ and $\alpha\geq1$:
\begin{eqnarray*}
   &&\ddd_{5,1/0}(\mm_{a,4}\mm_{\alpha,1})=\mm_{a+1,3}\mm_{\alpha,1}\,;\\&&\ddd_{5,1/0}(\mm_{0,3}\mm_{b,2})=\mm_{1,2}\mm_{b,2}+\mm_{0,3}\mm_{b+1,1}\,;\\&&\ddd_{5,1/0}(\mm_{a+1,3}\mm_{b,2} - \mm_{a,4}\mm_{b+1,1}) = \mm_{a+2,2}\mm_{b,2}.
\end{eqnarray*}
Hence $\{[\mm_{0,2}\mm_{0,2}];
[\mm_{0,2}\mm_{1,2}] = -[\mm_{0,3}\mm_{1,1}]; 
[\mm_{1,2}\mm_{1,2}] = -[\mm_{0,3}\mm_{2,1}]\}$ 
is a $\Sym_{\bQ}(\cE)$-free basis of 
$\Cok(\ddd_{5,1/0})$. We compute that for $a, b\geq 2$:
$$\ddd_{5,1/0}(\mm_{a-1,3}\mm_{b,2}-\mm_{a-2,4}\mm_{b+1,1}) 
= \mm_{a,2}\mm_{b,2} = 
\ddd_{5,1/0}(\mm_{b-1,3}\mm_{a,2}-\mm_{b-2,4}\mm_{a+1,1}).$$
It follows from the structure of the filtration $F_{1}H_{\st}^{*}(\Lambda^{5}{H}_{\bQ})$ and the formulas for $\ddd_{5,1/0}$ that the above equality is the only possibility to get an element of $\Ker(\ddd_{5,1/0})$. Hence $\{\mm_{a-1,3}\mm_{b,2} - \mm_{a-2,4}\mm_{b+1,1}
- \mm_{b-1,3}\mm_{a,2} + \mm_{b-2,4}\mm_{a+1,1};
2\leq a < b\}$ is a $\Sym_{\bQ}(\cE)$-free basis of 
$\Ker(\ddd_{5,1/0})$. By the formal definition of the connecting homomorphism defined by the snake lemma, each one of these generators is mapped by $\delta_{5,0}$ to $[e_{a+1}\mm_{b-2,4} - e_{b+1}\mm_{a-2,4}]$ in $\Cok(\ddd_{5,0})$. Also, we recall from \ref{(2)} in Proposition~\ref{prop:general_result_filtration} that $\Cok(\ddd_{5,0})\cong\Sym_{\bQ}(\cE)[\mm_{0,4}]$. Hence we have $\mathrm{Im}(\delta_{5,0}) = (e_{\alpha}; \alpha\geq4)[\mm_{0,4}]$ and thus $\Cok(\ddd_{5,1})
\cong (\Sym_{\bQ}(\cE))^{\oplus 3}\oplus \Sym_{\bQ}(\cE)[\mm_{0,4}]/(e_{\alpha}; \alpha\geq4)$ where the first three summands correspond to $\Cok(\ddd_{5,1/0})$.

\underline{Computation of $\Cok(\ddd_{5,2})$:}
we compute that for $0\leq a\leq b$ and $\alpha,\beta\geq1$:
\begin{eqnarray*}
   &&\ddd_{5,2/1}(\mm_{a,3}\mm_{\alpha,1}\mm_{\beta,1}) = \mm_{a+1,2}\mm_{\alpha,1}\mm_{\beta,1}\,;\\&&\ddd_{5,2/1}(\mm_{0,2}\mm_{0,2}\mm_{\alpha,1}) = 2\mm_{0,2}\mm_{1,1}\mm_{\alpha,1}\,;\\
   &&\ddd_{5,2/1}(\mm_{0,2}\mm_{b+1,2}\mm_{\alpha,1}-\mm_{b,3}\mm_{1,1}\mm_{\alpha,1}) = \mm_{0,2}\mm_{b+2,1}\mm_{\alpha,1}\,;\\
   &&\ddd_{5,2/1}(\mm_{a+1,2}\mm_{b+1,2}\mm_{\alpha,1}-\mm_{a,3}\mm_{b+2,1}\mm_{\alpha,1}-\mm_{b,3}\mm_{a+2,1}\mm_{\alpha,1} ) = 0.
\end{eqnarray*}
It follows from these formulas shows that $\Cok(\ddd_{5,2/1}) = 0$, and that $\Ker(\ddd_{5,2/1})$ is a $\Sym_{\bQ}(\cE)$-free module with basis
\begin{eqnarray*}
&&\{\mm_{a+1,2}\mm_{b+1,2}\mm_{\alpha,1} 
   -\mm_{a,3}\mm_{b+2,1}\mm_{\alpha,1}
   -\mm_{b,3}\mm_{a+2,1}\mm_{\alpha,1};\,\, 0 \leq a\leq b, \alpha \geq 1\}\\
&&\sqcup \{\mm_{0,2}\mm_{b+1,2}\mm_{c+2,1}-\mm_{b,3}\mm_{1,1}\mm_{c+2,1}
-\mm_{0,2}\mm_{c+1,2}\mm_{b+2,1}
+\mm_{c,3}\mm_{1,1}\mm_{b+2,1}; \,\, 0 \leq b < c\}
\\
&&\sqcup \{\mm_{0,2}\mm_{0,2}\mm_{b+2,1}
-2\mm_{0,2}\mm_{b+1,2}\mm_{1,1}+2\mm_{b,3}\mm_{1,1}\mm_{1,1}; \,\, b \geq 0\}.
\end{eqnarray*}
Then the connecting homomorphism $\delta_{5,1}$ respectively maps each one of these generators to the classes in $\Cok(\ddd_{5,1})$ of following elements
\begin{eqnarray*}
   &&e_{\alpha}(\mm_{a+1,2}\mm_{b+1,2}- \mm_{a,3}\mm_{b+2,1} - \mm_{b,3}\mm_{a+2,1}) - e_{b+2}\mm_{a,3}\mm_{\alpha,1} - e_{a+2}\mm_{b,3}\mm_{\alpha,1};
   \\&& e_{1}(\mm_{c,3}\mm_{b+2,1}-\mm_{b,3}\mm_{c+2,1}) 
   - e_{b+2}(\mm_{0,2}\mm_{c+1,2} -  \mm_{c,3}\mm_{1,1}) 
   + e_{c+2}(\mm_{0,2}\mm_{b+1,2} - \mm_{b,3}\mm_{1,1});
   \\&& e_{b+2}\mm_{0,2}\mm_{0,2} + 2e_{1}(2\mm_{b,3}\mm_{1,1} - \mm_{0,2}\mm_{b+1,2}).
\end{eqnarray*}
We denote these elements by $M_{a,b,\alpha}$, $N_{b,c}$ and $P_{b}$ respectively. Using \ref{(3)} and \ref{(4)} in Proposition~\ref{prop:general_result_filtration}, we compute them explicitly in $\Cok(\ddd_{5,1})$ to determine further relations in $\Cok(\ddd_{5,2})$ as follows.
\begin{equation}\label{eq:M_a_b_alpha}
M_{a,b,\alpha} =
\begin{cases}
0 & \text{if $2 \leq a (\leq b)$},\\
(4+\delta_{a,b})e_{\alpha} e_{b+2}\mm_{0,4} & \text{if $1 = a (\leq b)$},\\
-e_{b+2}\mm_{0,3}\mm_{\alpha,1} & \text{if $a = 0$ and $b \geq 2$},\\
4e_{\alpha} e_{2}\mm_{0,4}-e_{3}\mm_{0,3}\mm_{\alpha,1} & \text{if $a = 0$ and $b = 1$},\\
-3e_{\alpha}\mm_{0,3}\mm_{2,1} -2e_{2}\mm_{0,3}\mm_{\alpha,1} & \text{if $a = b = 0$},
\end{cases}
\end{equation}
for $0 \leq a\leq b, \alpha \geq 1$, 
\begin{equation}\label{eq:N_b_c}
N_{b,c}=
\begin{cases}
0 & \text{if $2\leq b(<c)$},\\
2e_{1}e_{c+2}\mm_{0,4} & \text{if $1=b (<c)$},\\
-2e_{c+2}\mm_{0,3}\mm_{1,1} & \text{if $b=0$, $c \geq 2$},\\
-2e_{1}e_{2}\mm_{0,4} - 2e_{3}\mm_{0,3}\mm_{1,1} & \text{if $b=0$, $c=1$},
\end{cases}
\end{equation}
for $0 \leq b < c$, and finally for $b\geq 0$: 
\begin{equation}\label{eq:P_b}
P_b=
\begin{cases}
e_{b+2}\mm_{0,2}\mm_{0,2} & \text{if $b\geq 2$},\\
e_{3}\mm_{0,2}\mm_{0,2} - 6e_{1}^2\mm_{0,4} & \text{if $b=1$},\\
e_{2}\mm_{0,2}\mm_{0,2} + 6e_{1}\mm_{0,3}\mm_{1,1} & \text{if $b = 0$}.
\end{cases}
\end{equation}
Furthermore, we deduce from \ref{(3)} and \ref{(4)} in Proposition~\ref{prop:general_result_filtration} that:
$$[\mm_{0,3}\mm_{\alpha,1}] = 
\begin{cases}
0 & \text{if $\alpha \geq 4$},\\
-e_{2}[\mm_{0,4}] & \text{if $\alpha = 3$},\\
-[\mm_{1,2}\mm_{1,2}] & \text{if $\alpha = 2$},\\
-[\mm_{0,2}\mm_{1,2}] & \text{if $\alpha = 1$}.
\end{cases}$$
Combining all the above relations, we deduce that $\Cok(\ddd_{5,2})$ is isomorphic to the quotient of $\Cok(\ddd_{5,1})$ by the following relations where we consider integers $\ell\geq 4$ and $\alpha\geq 1$:
\begin{equation}\label{eq:D53relation}
\aligned
&e_{\ell}[\mm_{0,3}\mm_{2,1}]=e_{\ell}[\mm_{0,3}\mm_{1,1}]=e_{\ell}[\mm_{0,2}\mm_{0,2}]=0\,;\\
&e_{2}^{2}[\mm_{0,4}]=e_{1}e_{2}[\mm_{0,4}]=e_{\alpha}e_{3}[\mm_{0,4}]=0\,;\\
&e_{3}[\mm_{0,3}\mm_{2,1}]=e_{3}[\mm_{0,3}\mm_{1,1}]=e_{2}[\mm_{0,3}\mm_{2,1}]=0\,;\\
&3e_{1}[\mm_{0,3}\mm_{2,1}]=-2e_{2}[\mm_{0,3}\mm_{1,1}]\,;\\
&e_{3}[\mm_{0,2}\mm_{0,2}] = 6e_{1}^2[\mm_{0,4}]\,;\\
&e_{2}[\mm_{0,2}\mm_{0,2}] = - 6e_{1}[\mm_{0,3}\mm_{1,1}].
\endaligned
\end{equation}
In addition, we deduce from the combinations of the relations \eqref{eq:D53relation} that for all $\alpha\geq2$ and $\beta\geq3$:
\begin{equation}\label{eq:D53relation2}
\aligned
& e_{2}e_{\alpha}[\mm_{0,3}\mm_{1,1}]= -\tfrac32 e_{\alpha} e_{1}[\mm_{0,3}\mm_{2,1}]= 0,\\
& e_{3}e_{\alpha}[\mm_{0,2}\mm_{0,2}] = 6e_{1}^2e_{\alpha} [\mm_{0,4}] = 0,\\
& e_{2}e_{\beta}[\mm_{0,2}\mm_{0,2}] = -6e_{1}e_{\beta}[\mm_{0,3}\mm_{1,1}] = 0.
\endaligned
\end{equation}

\underline{Computation of $\Cok(\ddd_{5,3})$:} 
we compute that for $a\geq 0$ and $\alpha,\beta,\gamma\geq1$: $$\ddd_{5,3/2}(\mm_{a,2} \mm_{\alpha,1} \mm_{\beta,1}\mm_{\gamma,1})=\mm_{a+1,1}\mm_{\alpha,1}\mm_{\beta,1}\mm_{\gamma,1}.$$
Hence the derivation $\ddd_{5,3/2}$ has a section which maps each element $\mm_{\alpha,1}\mm_{\beta,1}\mm_{\gamma,1}\mm_{\delta,1}$ where $\alpha,\beta,\gamma,\delta\geq1$
to the element $\tfrac14(\mm_{\alpha-1,2}\mm_{\beta,1}\mm_{\gamma,1}\mm_{\delta,1}
+\mm_{\alpha,1}\mm_{\beta-1,2}\mm_{\gamma,1}\mm_{\delta,1}
+\mm_{\alpha,1}\mm_{\beta,1}\mm_{\gamma-1,2}\mm_{\delta,1}
+\mm_{\alpha,1}\mm_{\beta,1}\mm_{\gamma,1}\mm_{\delta-1,2})$.
Therefore, we have $\Cok(\ddd_{5,3/2})= 0$ and $\Ker(\ddd_{5,3/2})$ is a free $\Sym_{\bQ}(\cE)$-module with basis
$$\left\{3\mm_{a,2}\mm_{\alpha,1}\mm_{\beta,1}\mm_{\gamma,1}-\mm_{a+1,1}Q_{\alpha,\beta,\gamma};a\geq0, 1\leq \alpha \leq \beta  \leq \gamma \right\}$$
where $Q_{\alpha,\beta,\gamma}:=\mm_{\alpha-1,2}\mm_{\beta,1}\mm_{\gamma,1}+\mm_{\alpha,1}\mm_{\beta-1,2}\mm_{\gamma,1}+\mm_{\alpha,1}\mm_{\beta,1}\mm_{\gamma-1,2}$.

From now on, we fix some $1\leq\alpha \leq\beta \leq\gamma$.
From the relations \ref{(3)}, \ref{(4)} and \eqref{eq:key_rel_general_result_filtration} of Proposition~\ref{prop:general_result_filtration} and from the above computation of $\Cok(\ddd_{5,2})$, we have 
\begin{equation}\label{eqn:Qabc}
Q_{\alpha,\beta,\gamma}=\begin{cases}
0 & \text{if $\gamma \geq 4$},\\
-4e_{\alpha} e_{\beta}\mm_{0,4} & \text{if $\gamma = 3$},\\
2\mm_{0,3}(e_{\alpha}\mm_{\beta,1}+ e_{\beta}\mm_{\alpha,1}) & \text{if $\gamma = 2$},\\
-\frac{3}{2}e_{1}\mm_{0,2}\mm_{0,2}  & \text{if $\alpha = \beta = \gamma = 1$},
\end{cases}
\end{equation}

Then, the connecting homomorphism $\delta_{5,2}$ maps each $3\mm_{a,2}\mm_{\alpha,1}\mm_{\beta,1}\mm_{\gamma,1}-\mm_{a+1,1}Q_{\alpha,\beta,\gamma}$ to the class in $\Cok(\ddd_{5,2})$ of the following element:
\begin{eqnarray*}
   &R_{a,\alpha,\beta,\gamma}:=&3\mm_{a,2}(e_{\alpha}\mm_{\beta,1}\mm_{\gamma,1}+e_{\beta}\mm_{\gamma,1}\mm_{\alpha,1}+e_{\gamma}\mm_{\alpha,1}\mm_{\beta,1})\\
   &&-\mm_{a+1,1}(e_{\beta}\mm_{\alpha-1,2}\mm_{\gamma,1}+ e_{\gamma}\mm_{\alpha-1,2}\mm_{\beta,1}+ e_{\gamma}\mm_{\beta-1,2}\mm_{\alpha,1}\\
   &&\,\,\,\,\,\,\,\,\,\,\,\,\,\,\,\,\,\,\,\,\,\,\,\,\,+e_{\alpha}\mm_{\beta-1,2}\mm_{\gamma,1}+ e_{\alpha}\mm_{\gamma-1,2}\mm_{\beta,1}+ e_{\beta}\mm_{\gamma-1,2}\mm_{\alpha,1})\\
   &&-e_{a+1}Q_{\alpha,\beta,\gamma}.
\end{eqnarray*}
Since we work in $\Cok(\ddd_{5,2})$ and as $\mm_{0,2}\ddd_{4}(e_{\alpha}\mm_{\beta-1,2}\mm_{\gamma-1,2}
+ e_{\beta}\mm_{\gamma-1,2}\mm_{\alpha-1,2}
+ e_{\gamma}\mm_{\alpha-1,2}\mm_{\beta-1,2})$ belongs to $F_{2}H^{*}_{st}(\Lambda^5H_{\bQ})$, we have:
$$\aligned
&R_{a, \alpha, \beta, \gamma} + e_{a+1}Q_{\alpha, \beta, \gamma}\\
= & ~ 3\mm_{a,2}(e_{\alpha}\mm_{\beta,1}\mm_{\gamma,1}+
e_{\beta}\mm_{\gamma,1}\mm_{\alpha,1}+
e_{\gamma}\mm_{\alpha,1}\mm_{\beta,1})\\
& 
- \ddd_{2}(\mm_{a,2})\ddd_{4}(e_{\alpha}\mm_{\beta-1,2}\mm_{\gamma-1,2}
+ e_{\beta}\mm_{\gamma-1,2}\mm_{\alpha-1,2}
+ e_{\gamma}\mm_{\alpha-1,2}\mm_{\beta-1,2})\\
= & ~ 3\mm_{a,2}(e_{\alpha}\mm_{\beta,1}\mm_{\gamma,1}+
e_{\beta}\mm_{\gamma,1}\mm_{\alpha,1}+
e_{\gamma}\mm_{\alpha,1}\mm_{\beta,1})\\
& 
+ \mm_{a,2}(\ddd_{3}\circ\ddd_{4})(e_{\alpha}\mm_{\beta-1,2}\mm_{\gamma-1,2}
+ e_{\beta}\mm_{\gamma-1,2}\mm_{\alpha-1,2}
+ e_{\gamma}\mm_{\alpha-1,2}\mm_{\beta-1,2})\\
= & 5\mm_{a,2}(e_{\alpha}\mm_{\beta,1}\mm_{\gamma,1}+
e_{\beta}\mm_{\gamma,1}\mm_{\alpha,1}+
e_{\gamma}\mm_{\alpha,1}\mm_{\beta,1})\\
& 
+ 2\mm_{a,2}(e_{\alpha} e_{\beta}\mm_{\gamma-1,2}
+ e_{\beta} e_{\gamma}\mm_{\alpha-1,2}
+ e_{\gamma} e_{\alpha}\mm_{\beta-1,2})\\
= & ~ \mm_{a,2}S_{\alpha, \beta, \gamma},
\endaligned
$$
where $S_{\alpha, \beta, \gamma} := 5(e_{\alpha}\mm_{\beta,1}\mm_{\gamma,1}+
e_{\beta}\mm_{\gamma,1}\mm_{\alpha,1}+
e_{\gamma}\mm_{\alpha,1}\mm_{\beta,1})
+ 2(e_{\alpha} e_{\beta}\mm_{\gamma-1,2}
+ e_{\beta} e_{\gamma}\mm_{\alpha-1,2}
+ e_{\gamma} e_{\alpha}\mm_{\beta-1,2})$.
Hence, recalling that $\ddd_{2-a+1}\circ\cdots \circ \ddd_{2}$ is denoted by $\ddd^{a}$ for $a \geq 1$, we deduce from the general formula \eqref{eq:key_rel_general_result_filtration} that $R_{a, \alpha, \beta, \gamma} + e_{a+1}Q_{\alpha, \beta, \gamma}
= (-1)^a\mm_{0,a+2}\ddd^a(S_{\alpha, \beta, \gamma})$ and thus by the relations \eqref{eq:D53relation} that:
$$R_{a, \alpha, \beta, \gamma} + e_{a+1}Q_{\alpha, \beta, \gamma}
= \begin{cases}
0 & \text{if $a \geq 3$}, \\
36e_{\alpha} e_{\beta} e_{\gamma} [\mm_{0,4}] & \text{if $a = 2$}, \\
-12\mm_{0,3}(e_{\alpha} e_{\beta} \mm_{\gamma,1}
+ e_{\beta} e_{\gamma} \mm_{\alpha,1}
+ e_{\gamma} e_{\alpha} \mm_{\beta,1})
&  \text{if $a = 1$.}
\end{cases}$$
We recall that we assume $1 \leq \alpha \leq \beta \leq \gamma$. All the following computations for $R_{a,\alpha,\beta,\gamma}$ are done thanks to the above computation of $\Cok(\ddd_{5,2})$, in particular using the relations in \eqref{eq:D53relation} and \eqref{eq:D53relation2}, and the equalities \ref{(3)}, \ref{(4)} and \eqref{eq:key_rel_general_result_filtration} of Proposition~\ref{prop:general_result_filtration}. For $a \geq 3$, we have $e_{a+1}Q_{\alpha,\beta, \gamma} = 0$, and therefore $R_{a, \alpha, \beta, \gamma} = 0$.
For $a=2$, we compute that
\begin{equation}\label{eq:R_2_alpha_beta_gamma}
R_{2,\alpha,\beta,\gamma} = 36e_{\alpha} e_{\beta} e_{\gamma}\mm_{0,4} - e_{3}Q_{\alpha,\beta, \gamma} = 
\begin{cases}
45 e_{1}^3 [\mm_{0,4}] & \text{if $\alpha = \beta = \gamma = 1$, }\\
0 & \text{otherwise.}
\end{cases}
\end{equation}
For $a = 1$, we first compute that:
$$-12\mm_{0,3}(e_{\alpha} e_{\beta} \mm_{\gamma,1}
+ e_{\beta} e_{\gamma} \mm_{\alpha,1}
+ e_{\gamma} e_{\alpha} \mm_{\beta,1})
= \begin{cases}
-16e_{1}e_{2}[\mm_{0,3}\mm_{1,1}] & \text{if $\alpha =\beta=1$, $\gamma = 2$,}\\
-36e_{1}^2[\mm_{0,3}\mm_{1,1}] & \text{if $\alpha = \beta = \gamma = 1$,}\\
0 & \text{otherwise,}
\end{cases}$$
Then, since $R_{1,\alpha,\beta,\gamma} = -12\mm_{0,3}(e_{\alpha} e_{\beta} \mm_{\gamma,1}
+ e_{\beta} e_{\gamma} \mm_{\alpha,1}
+ e_{\gamma} e_{\alpha} \mm_{\beta,1}) - e_{2}Q_{\alpha,\beta,\gamma}$, we deduce that:
\begin{equation}\label{eq:R_1_alpha_beta_gamma}
R_{1,\alpha,\beta,\gamma} =\begin{cases}
-20e_{1}e_{2}[\mm_{0,3}\mm_{1,1}] =30e_{1}^2[\mm_{0,3}\mm_{2,1}]
 & \text{if $\alpha =\beta=1$, $\gamma = 2$, }\\
- 45 e_{1}^2[\mm_{0,3}\mm_{1,1}] & \text{if $\alpha = \beta = \gamma = 1$,}\\
0 & \text{otherwise.}
\end{cases}
\end{equation}

For $a=0$, we first compute that:
$$\mm_{0,2}S_{\alpha, \beta, \gamma}
= \begin{cases}
-19e_{1}^3[\mm_{0,4}] & \text{if $\alpha=\beta=1$, $\gamma = 3$,}\\
\frac{62}3e_{1}e_{2}[\mm_{0,3}\mm_{1,1}] & \text{if $\alpha =1$, $\beta = \gamma = 2$, }\\
19 e_{1}^2[\mm_{0,3}\mm_{1,1}] & \text{if $\alpha =\beta=1$, $\gamma = 2$, }\\
-\frac{3}{2} e_{1}^{2}[\mm_{0,2}\mm_{0,2}] & \text{if $\alpha = \beta = \gamma = 1$}\\
0 & \text{otherwise.}
\end{cases}$$
Therefore, we obtain that:
\begin{equation}\label{eq:R_0_alpha_beta_gamma}
R_{0,\alpha,\beta,\gamma} 
=\mm_{0,2}S_{\alpha, \beta, \gamma}- e_{1}Q_{\alpha, \beta,\gamma}
=\begin{cases}
-15e_{1}^3[\mm_{0,4}] & \text{if $\alpha=\beta=1$, $\gamma = 3$,}\\
20 e_{1}e_{2}[\mm_{0,3}\mm_{1,1}]
& \text{if $\alpha =1$, $\beta = \gamma = 2$,}\\
15 e_{1}^2[\mm_{0,3}\mm_{1,1}]
& \text{if $\alpha =\beta=1$, $\gamma = 2$, }\\
0 & \text{otherwise.}
\end{cases}
\end{equation}

Furthermore, by \ref{(5)} in Proposition~\ref{prop:general_result_filtration}, we have $e_{\alpha} e_{\beta} e_{\gamma} [\mm_{0,4}] = 0 \in 
\Cok(\ddd_{5,3})$ for any $\alpha, \beta, \gamma \geq 1$, 
and 
$e_{\alpha} e_{\beta} u = 0\in \Cok(\ddd_{5,3})$ except for $\alpha = \beta = 1$.
In particular, we have $3e_{1}^2[\mm_{0,3}\mm_{2,1}] = -2e_{1}e_{2}[\mm_{0,3}\mm_{1,1}]= 0$.

Therefore, combining all these new relations to \eqref{eq:D53relation} and \eqref{eq:D53relation2}, $\Cok(\ddd_{5,3})$ is isomorphic to the quotient of $\Cok(\ddd_{5,1})$ by the following relations where we consider integers $\ell\geq 4$ and $\alpha,\beta\geq 1$:
\begin{eqnarray*}
&& e_{\ell}[\mm_{0,3}\mm_{2,1}]=e_{\ell}[\mm_{0,3}\mm_{1,1}]=e_{\ell}[\mm_{0,2}\mm_{0,2}]=0\,;\\
&& e_{1}^{3}[\mm_{0,4}]=e_{\alpha}e_{\beta}[\mm_{0,4}]=0 \,\,{\text{except $\alpha = \beta=1$}}\,;\\
&& e_{3}[\mm_{0,3}\mm_{2,1}]=e_{2}[\mm_{0,3}\mm_{2,1}]=e_{\alpha} e_{\beta}[\mm_{0,3}\mm_{2,1}]=0\,;\\
&& e_{3}[\mm_{0,3}\mm_{1,1}]
=e_{\alpha} e_{\beta}[\mm_{0,3}\mm_{1,1}]=0\,;\\
&& e_{\alpha}e_{\beta}[\mm_{0,2}\mm_{0,2}]=0 \,\, {\text{except $\alpha = \beta=1$}}\,;\\&&3e_{1}[\mm_{0,3}\mm_{2,1}]=-2e_{2}[\mm_{0,3}\mm_{1,1}]\,;\\&&e_{3}[\mm_{0,2}\mm_{0,2}] = 6e_{1}^2[\mm_{0,4}]\,;\\&&e_{2}[\mm_{0,2}\mm_{0,2}] = - 6e_{1}[\mm_{0,3}\mm_{1,1}].
\end{eqnarray*}

\underline{Computation of $\Cok(\ddd_{5,4})$:}
since the target of $\ddd_{5,4/3}$ is zero, we have $\Cok(\ddd_{5,4/3}) = 0$ and 
$\Ker(\ddd_{5,4/3})$ is generated by the elements of type
$\mm_{\alpha_1,1}\mm_{\beta,1}\mm_{\gamma,1}
\mm_{\epsilon,1}\mm_{\xi,1}$.
The connecting homomorphism $\delta_{5,3}$ maps them to 
$\frac1{24}\sum_{\sigma\in\mathfrak{S}_{5}}e_{\sigma(\alpha)}\mm_{\sigma(\beta),1}\mm_{\sigma(\gamma),1}\mm_{\sigma(\epsilon),1}\mm_{\sigma(\xi),1} \in \Cok(\ddd_{5,3})$.
If $\epsilon = \max(\alpha, \beta, \gamma,\epsilon)$, then we compute from \ref{(4)} in Proposition~\ref{prop:general_result_filtration} that 
\begin{equation}\label{eq:cok_5/4}
[\mm_{\alpha,1}\mm_{\beta,1}\mm_{\gamma,1}\mm_{\epsilon,1}] = \begin{cases}
0 & \text{if $\epsilon \geq 3$, }\\
2e_{\alpha} e_{\beta}[\mm_{0,3}\mm_{\gamma,1}]+ 2 e_{\beta} e_{\gamma} [\mm_{0,3}\mm_{\alpha,1}] +
2e_{\gamma} e_{\alpha} [\mm_{0,3}\mm_{\beta,1}] & \text{if $\epsilon = 2$, }\\
\tfrac32e_{1}^2[\mm_{0,2}\mm_{0,2}] & \text{if $\epsilon = 1$.}\\
\end{cases}    
\end{equation}
In particular, we have $e_{1}^3\mm_{0,2}\mm_{0,2}
= 0 \in \Cok(\ddd_{5,4})$.
This provides the last additional relations in $\Cok(\ddd_{5,4})=H^{\mathrm{even}}(\Lambda^{5}\tilde{H}_{\bQ})$, which ends the proof.
\end{proof}

Computing $\Ker(\ddd_{5})\cong H_{\st}^{\odd}(\Lambda^{5}\tilde{H}_{\bQ})$ seems very difficult and out of reach with our methods.
However, we recall from Corollary~\ref{coro:low_dim_paired} that $H_{\st}^{1}(\Lambda^{5}\tilde{H}_{\bQ})=H_{\st}^{3}(\Lambda^{5}\tilde{H}_{\bQ})=0$.

\subsubsection{$\Tor$-group computations}\label{s:Tor_groups_lambda_5}
We now compute the $\Tor$-goups of the $\Sym_{\bQ}(\cE)$-module $H_{\st}^{\even}(\Lambda^{5}\tilde{H}_{\bQ})$.
Namely, our goal is to show the following result, whose proof is decomposed in several computations and occupies the rest of the present section.
\begin{thm}\label{thm:5th-ext_Tor}
For the $\Sym_{\bQ}(\cE)$-module $H_{\st}^{*}(\Lambda^{5}\tilde{H}_{\bQ})$, with the convention that $\Lambda^{l}\cE_{\geq 4}=0$ for $l<0$, we have for any $j \geq1$
\begin{eqnarray*}
   H_{j}(\Sym_{\bQ}(\cE);H_{\st}^{*}(\Lambda^{5}\tilde{H}_{\bQ}))&\cong& (\Lambda^{j-3}\cE_{\geq 4})^{\oplus 6}\oplus(\Lambda^{j-2}\cE_{\geq 4})^{\oplus 17}\oplus (\Lambda^{j-1}\cE_{\geq 4})^{\oplus 21}\\
   &&\oplus(\Lambda^{j}\cE_{\geq 4})^{\oplus 21}\oplus (\Lambda^{j+1}\cE_{\geq 4})^{\oplus 15}\oplus(\Lambda^{j+2}\cE_{\geq 4})^{\oplus 4}.
\end{eqnarray*}
Moreover, $H_{0}(\Sym_{\bQ}(\cE);H_{\st}^{*}(\Lambda^{5}\tilde{H}_{\bQ}))\cong\bQ^{\oplus 21}\oplus \cE_{\geq 4}^{\oplus 15}\oplus(\Lambda^{2}\cE_{\geq 4})^{\oplus 4} \oplus\mathscr{S}_{5}$ with
\begin{align*}
\mathscr{S}_{5} \;:=\;& \bQ\{[\mm_{a-1,3}\mm_{b,2} - \mm_{a-2,4}\mm_{b+1,1} - \mm_{b-1,3}\mm_{a,2} + \mm_{b-2,4}\mm_{a+1,1}];b  > a \geq 3 \}
\\  & \oplus\bQ\{[\hat{M}_{a,b,\alpha}]\mid 2 \leq a \leq b, \alpha\geq 1 \} \oplus\bQ\{[\hat{M}_{1,b,\alpha}]\mid b \geq3, \alpha\geq 1 \, ;\, b\in\{1,2\},\alpha\geq 4 \} \\
& \oplus\bQ\{[\hat{M}_{0,b,\alpha}]\mid b\geq 2, \alpha \geq 4 \} \oplus \bQ\{[3\hat{M}_{0,b,2}-\hat{M}_{0,1,b+1}]\mid b\geq 3 \}\\
& \oplus \bQ\{[\hat{M}_{1,2,2}-\hat{M}_{1,1,3}] ; [\hat{M}_{1,2,2}-\hat{M}_{0,2,3}];[\hat{M}_{0,1,3}+20M_{1,1,2}-15\hat{M}_{0,2,2}-8M_{1,2,2}]\}\\
& \oplus\bQ\{[\hat{N}_{b,c}]\mid c > b\geq1 \}\oplus \bQ\{[2\hat{M}_{0,b,1}-\hat{N}_{0,b-1}]\mid b\geq 3 \}
\\  & \oplus\bQ\{[\hat{R}_{a, \alpha, \beta, \gamma}]\mid a\geq 0, 1\leq \alpha\leq \beta\leq \gamma,\\
&\hskip 25mm (a, \alpha, \beta, \gamma) \neq (2,1,1,1), (1,1,2,2), (1,1,1,2), (1,1,1,1), \\
& \hskip 45.5mm (0,1,1,3), (0,2,2,2), (0,1,2,2), (0,1,1,2)\}\\
& \oplus \bQ\{[\hat{R}_{2,1,1,1}+3\hat{R}_{0,1,1,3}]; [12\hat{R}_{1,1,2,2}+7\hat{R}_{0,2,2,2}]\}\\
& \oplus \bQ\{[5\hat{R}_{1,1,1,2}+2\hat{R}_{0,1,2,2}]; [\hat{R}_{1,1,1,1}+3\hat{R}_{0,1,1,2}]\}\\
& \oplus\bQ\{[\mm_{\alpha,1}\mm_{\beta,1}\mm_{\gamma,1}
\mm_{\epsilon,1}\mm_{\xi,1}]\mid 1\leq  \alpha \leq \beta \leq \gamma \leq \epsilon \leq \xi\,\,\text{ such that }\,\,\xi\geq2\}\\
& \oplus \bQ^{\oplus 21}\oplus \cE_{\geq 4}^{\oplus 15}\oplus(\Lambda^{2}\cE_{\geq 4})^{\oplus 4},
\end{align*}
where $\hat{M}_{a,b,\alpha} := \mm_{a+1,2}\mm_{b,2}\mm_{\alpha,1}-\mm_{b-1,3}\mm_{a+2,1}\mm_{\alpha,1} - \mm_{a,3} \mm_{b+1,1} \mm_{\alpha,1}$, 
$\hat{N}_{b,c} := \mm_{0,2}\mm_{b+1,2}\mm_{c+2,1}-\mm_{b,3}\mm_{1,1}\mm_{c+2,1} -\mm_{0,2}\mm_{c+1,2}\mm_{b+2,1}+\mm_{c,3}\mm_{1,1}\mm_{b+2,1}$
and $\hat{R}_{a, \alpha, \beta, \gamma} := 3\mm_{a,2}\mm_{\alpha,1}\mm_{\beta,1}\mm_{\gamma,1}-\mm_{a+1,1}Q_{\alpha,\beta,\gamma}$.
\end{thm}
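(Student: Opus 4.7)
The computation follows the same global strategy as Propositions~\ref{prop:3rd-ext_Tor} and \ref{prop:4th-ext_Tor}, combining the explicit description of the $\Sym_{\bQ}(\cE)$-module $H_{\st}^{\even}(\Lambda^{5}\tilde{H}_{\bQ})\cong \fL/\fK$ from Theorem~\ref{thm:5th-ext-even} with the shift isomorphism \eqref{eq:exterior_Tor} for $j\geq1$, and with the four-term exact sequence of Lemma~\ref{lem:Tor_0_lemma} for $j=0$. However, the scale of the bookkeeping is much larger than in the previous sections, which accounts for the length and intricacy of the statement.

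For $j\geq 1$, the isomorphism \eqref{eq:exterior_Tor} yields
\[
H_{j}(\Sym_{\bQ}(\cE);H_{\st}^{\odd}(\Lambda^{5}\tilde{H}_{\bQ}))\cong H_{j+2}(\Sym_{\bQ}(\cE);\fL/\fK),
\]
so the problem reduces to computing $H_{*}(\Sym_{\bQ}(\cE);\fL/\fK)$ for all $*\geq 0$. The plan is to use the long exact sequence induced by the short exact sequence $0\to\fK\to\fL\to\fL/\fK\to 0$. The submodule $\fK$ is by construction a direct sum of three trivial $\Sym_{\bQ}(\cE)$-modules, so $H_{*}(\Sym_{\bQ}(\cE);\fK)\cong(\Lambda^{*}\cE)^{\oplus 3}$ by Lemma~\ref{lem:refined_Tor_method}. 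For $\fL$, each of its four cyclic summands is annihilated by $e_{\alpha}$ for all $\alpha\geq k$ with $k\in\{2,3,4\}$, so the K\"unneth formula of Proposition~\ref{lem:general_properties_homology_algebra} factors off an exterior algebra $\Lambda^{*}\cE_{\geq k}$ and leaves a finite-dimensional $\Tor$-group over $\Sym_{\bQ}(\cE_{\leq k-1})$, computable explicitly via the Koszul-type resolution of Lemma~\ref{lem:refined_Tor_method}. Assembling these pieces and tracing the connecting maps (dictated by the three defining relations of $\fK$) produces the identification
\[
H_{j}(\Sym_{\bQ}(\cE);H_{\st}^{\even}(\Lambda^{5}\tilde{H}_{\bQ}))\cong(\Lambda^{j}\cE_{\geq 4})^{\oplus 4}\oplus(\Lambda^{j-1}\cE_{\geq 4})^{\oplus 15}\oplus(\Lambda^{j-2}\cE_{\geq 4})^{\oplus 17}\oplus(\Lambda^{j-3}\cE_{\geq 4})^{\oplus 6},
\]
and the stated formula for $H_{j}$ of the full stable cohomology is then obtained by adding the degree-shifted copy coming from $H_{\st}^{\odd}$.

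For $j=0$, I would invoke the four-term exact sequence \eqref{eq:Tor_0_ES}. The summand $H_{2}(\Sym_{\bQ}(\cE);H_{\st}^{\even}(\Lambda^{5}\tilde{H}_{\bQ}))\cong\bQ^{\oplus 17}\oplus\cE_{\geq 4}^{\oplus 15}\oplus(\Lambda^{2}\cE_{\geq 4})^{\oplus 4}$ is already available from the step above, while $\Ker(H_{0}(\ddd_{5}))$ decomposes by \eqref{eq:decomposition_filtration_Ker_H_{0}} into the direct sum over $k=1,2,3,4$ of the $\bQ$-vector spaces with bases the explicit families extracted from the proof of Theorem~\ref{thm:5th-ext-even}, namely the antisymmetrised $\mm_{a-1,3}\mm_{b,2}$-combinations, the elements $\hat{M}_{a,b,\alpha}$, $\hat{N}_{b,c}$, $\hat{R}_{a,\alpha,\beta,\gamma}$, and the length-five monomials $\mm_{\alpha,1}\cdots\mm_{\xi,1}$. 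For each such generator, Proposition~\ref{prop:Ker_Delta_method} reduces the evaluation of $\Delta_{5}$ to lifting along $\ddd_{5}$, writing the result as $\pa_{1}$ of some chain in $H_{\st}^{\odd}(\Lambda^{4}H_{\bQ})\otimes\cE$, and then projecting modulo $\Image(\pa_{2})$ into $H_{1}(\Sym_{\bQ}(\cE);H_{\st}^{\odd}(\Lambda^{5}\tilde{H}_{\bQ}))$.

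The hard part is precisely this last step. Unlike in the $d=3,4$ cases where the image of $\Delta$ sat inside a very small subspace and most generators trivially belonged to the kernel, the target here is rich enough that many genuine relations intervene. The analysis of which linear combinations of $\hat{M}$'s, $\hat{N}$'s and $\hat{R}$'s survive in $\Ker(\Delta_{5})$ proceeds by the same nested case distinctions that produced the relations \eqref{eq:M_a_b_alpha}--\eqref{eq:R_0_alpha_beta_gamma} in the proof of Theorem~\ref{thm:5th-ext-even}, but performed one level higher in the Koszul resolution so that every relation of $\fL/\fK$ has to be ``absorbed'' as a $\pa_{2}$-boundary or else contributes a non-trivial class to $H_{1}$. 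The exceptional combinations $[\hat{M}_{1,2,2}-\hat{M}_{1,1,3}]$, $[\hat{R}_{2,1,1,1}+3\hat{R}_{0,1,1,3}]$, $[12\hat{R}_{1,1,2,2}+7\hat{R}_{0,2,2,2}]$, $[\hat{R}_{1,1,1,1}+3\hat{R}_{0,1,1,2}]$, etc.\ listed in $\mathscr{S}_{5}$ correspond precisely to the instances where this absorption can be arranged in a non-trivial way using the three defining relations of $\fK$, and enumerating them exhaustively is what causes the statement to be so long.
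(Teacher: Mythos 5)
Your strategy matches the paper's proof in every essential respect: for $j\geq 1$ you shift via \eqref{eq:exterior_Tor} and compute the $\Tor$-groups of $\fL/\fK$ through the long exact sequence for $\fK\hookrightarrow\fL\twoheadrightarrow\fL/\fK$ together with Koszul and K\"unneth decompositions, exactly as the paper does by first factoring out $\bQ_{\cE_{\geq 4}}$ uniformly and working over $\Sym_{\bQ}(\cE_{\leq 3})$ in Lemmas~\ref{lem:Tor_computations_L'_2}--\ref{lem:Tor_computations_L'_3} and Proposition~\ref{lem:Tor_computations_L'_final}; and for $j=0$ you invoke Lemma~\ref{lem:Tor_0_lemma}, the decomposition \eqref{eq:decomposition_filtration_Ker_H_{0}}, and Proposition~\ref{prop:Ker_Delta_method}, again precisely as in the paper. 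One small parity slip: since $d=5$ is odd so that $\ddagger$ denotes the even part, the lift $\hat{u}$ of Proposition~\ref{prop:Ker_Delta_method} lives in $H_{\st}^{\even}(\Lambda^{4}H_{\bQ})\otimes\cE$ and $\Delta_{5}$ lands in $H_{1}(\Sym_{\bQ}(\cE);H_{\st}^{\even}(\Lambda^{5}\tilde{H}_{\bQ}))$, not the odd summands as you wrote.
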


\begin{notation}
Following Notation~\ref{not:Lambda_5}, we respectively denote by $\fL'$ and $\fK'$ the canonical torsion $\Sym_{\bQ}(\cE_{\leq 3})$-modules such that $\fL\cong \fL'\otimes \bQ_{\cE_{\geq 4}}$ and $\fK\cong\fK'\otimes \bQ_{\cE_{\geq 4}}$, where $\bQ_{\cE_{\geq 4}}$ denotes the trivial $\Sym_{\bQ}(\cE_{\geq 4})$-module. Then, following the decomposition of $\fL$ in Notation~\ref{not:Lambda_5}, we denote by $\fL'_{m}$ the summand of $\fL'$ corresponding to $m\in\{\mm_{0,4}; \mm_{0,3}\mm_{1,1}; \mm_{0,3}\mm_{2,1}; \mm_{0,2}\mm_{0,2}\}$.
\end{notation}

The proof of Theorem~\ref{thm:5th-ext_Tor} begins with the observation from Theorem~\ref{thm:5th-ext-even} that the $\Sym_{\bQ}(\cE)$-module $H_{\st}^{\even}(\Lambda^{5}\tilde{H}_{\bQ})$ is isomorphic to $(\fL'/\fK')\otimes_{\bQ}\bQ_{\cE_{\geq 4}}$. Hence it is enough to compute the $\Tor$-groups of $\fL'/\fK'$ and we will be done using the formulas \eqref{eq:computation_Tor_general}.

\paragraph*{$\Tor$-groups for $\fL'_{\mm_{0,3}\mm_{2,1}}$.}
We consider the torsion $\Sym_{\bQ}(\cE_{\leq 3})$-module
$$\fL'_{\mm_{0,3}\mm_{2,1}}:=\Sym_{\bQ}(\cE_{\leq 3})/(e_{1}^2, e_{2}, e_{3}).$$
\begin{lem}\label{lem:Tor_computations_L'_2}
The group $H_{j}(\Sym_{\bQ}(\cE_{\leq 3}); \fL'_{\mm_{0,3}\mm_{2,1}})$ is equal to
$$\begin{cases}
\bQ[1] \cong \bQ & \text{if $j = 0$}, \\
\langle [de_{2}], [de_{3}], [e_{1}de_{1}]\rangle\cong \bQ^3 & \text{if $j = 1$}, \\
\langle [e_{1}de_{1}\wedge de_{2}], [e_{1}de_{1}\wedge de_{3}], 
[de_{2}\wedge de_{3}]\rangle\cong \bQ^3 & \text{if $j = 2$}, \\
\langle [e_{1}de_{1}\wedge de_{2}\wedge de_{3}]\rangle \cong \bQ & \text{if $j = 3$}, \\
0 & \text{otherwise}.
\end{cases}$$
\end{lem}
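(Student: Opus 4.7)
The plan is to apply the differential-forms resolution of Lemma \ref{lem:refined_Tor_method} with $\cE' = \cE_{\leq 3}$ and $M = \fL'_{\mm_{0,3}\mm_{2,1}}$. The key observation is that the ideal $(e_1^2, e_2, e_3)$ makes $M$ a \emph{very} small $\bQ$-vector space: as a $\bQ$-module, $M$ has basis $\{1, e_1\}$, since every monomial in $e_1, e_2, e_3$ of total length $\geq 2$ vanishes, as does $e_2$ and $e_3$ themselves. Consequently the action of $e_2$ and $e_3$ on $M$ is identically zero, while $e_1$ acts by $1 \mapsto e_1$ and $e_1 \mapsto 0$. The chain complex \eqref{eq:chain_complex_Tor} becomes a finite-dimensional complex of $\bQ$-vector spaces of dimensions $2, 6, 6, 2, 0, \ldots$ in degrees $0, 1, 2, 3, 4, \ldots$, and the computation reduces to linear algebra.

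First I would write down $\partial_n$ explicitly on each basis element using formula \eqref{eq:iD}. For $\partial_1$, only $1 \otimes de_1$ is sent to a nonzero element (namely $e_1$), so $\Image(\partial_1) = \bQ e_1$ and $H_0 \cong \bQ[1]$. For $\partial_2$, the only nonzero images among the six basis vectors are $\partial_2(1 \otimes de_1 \wedge de_2) = e_1 \otimes de_2$ and $\partial_2(1 \otimes de_1 \wedge de_3) = e_1 \otimes de_3$ (terms involving $e_2, e_3$ or $e_1^2$ all vanish). Hence $\Image(\partial_2)$ is $2$-dimensional and $\Ker(\partial_1)$ is $5$-dimensional, giving $H_1 \cong \bQ^3$ with the advertised basis $\{[de_2], [de_3], [e_1\, de_1]\}$.

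Next, for $\partial_3$, only $1 \otimes de_1 \wedge de_2 \wedge de_3 \mapsto e_1 \otimes de_2 \wedge de_3$ survives, so $\Image(\partial_3)$ is $1$-dimensional. Computing $\Ker(\partial_2)$ directly from the list in the previous paragraph, it has dimension $4$ with basis $\{1 \otimes de_2 \wedge de_3,\, e_1 \otimes de_1 \wedge de_2,\, e_1 \otimes de_1 \wedge de_3,\, e_1 \otimes de_2 \wedge de_3\}$; quotienting by $\Image(\partial_3) = \bQ\, e_1 \otimes de_2 \wedge de_3$ yields $H_2 \cong \bQ^3$ with the stated basis. Finally, $\Ker(\partial_3) = \bQ\, e_1 \otimes de_1 \wedge de_2 \wedge de_3$ because $e_1 \otimes de_1 \wedge de_2 \wedge de_3$ is annihilated (every term in its image vanishes due to $e_1^2 = e_2 = e_3 = 0$ in $M$), and $\Lambda^n\langle de_1, de_2, de_3\rangle = 0$ for $n \geq 4$, so $H_3 \cong \bQ$ and $H_j = 0$ for $j \geq 4$.

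There is no real obstacle here: once the identification $M = \bQ \oplus \bQ e_1$ is made and the action of each $e_i$ on $M$ is spelled out, the computation is purely mechanical linear algebra on a four-term complex. The only mild care needed is bookkeeping of signs and making sure no term hidden in the expansion of $\partial_n$ is forgotten, but since $e_2$ and $e_3$ act trivially and $e_1$ acts by a very simple rule, all but a handful of the potentially contributing summands vanish automatically.
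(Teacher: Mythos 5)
Your proof is correct and takes essentially the same approach as the paper: both apply the differential-forms resolution of Lemma~\ref{lem:refined_Tor_method} with $\cE' = \cE_{\leq 3}$, identify $\fL'_{\mm_{0,3}\mm_{2,1}}$ with the two-dimensional $\bQ$-vector space $\bQ\{1,e_1\}$, and compute the boundary maps of the finite chain complex \eqref{eq:chain_complex_Tor} explicitly by elementary linear algebra.
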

\begin{proof}
Using the finite chain complex \eqref{eq:chain_complex_Tor} and formulas \eqref{eq:iD}, the result follows from Lemma~\ref{lem:refined_Tor_method} via the following computations for all $c, c', b_{k}, b'_{k}, a_{k}, a'_{k} \in \bQ$ with $1\leq k\leq 3$:
\begin{eqnarray*}
&& \pa_{3}((c+c'e_{1})(de_{1}\wedge de_{2}\wedge de_{3}))
= ce_{1}(de_{2}\wedge de_{3}),\\
&& \pa_{2}((b_1+b'_1e_{1})(de_{2}\wedge de_{3}))
= 0,\\
&& \pa_{2}((b_2+b'_2e_{1})(de_{3}\wedge de_{1}))
= -b_2e_{1}de_{3},\\
&& \pa_{2}((b_3+b'_3e_{1})(de_{1}\wedge de_{2}))
= b_3 e_{1} de_{2},\\
&& \pa_{1}(\sum^3_{k=1}(a_{k}+a'_{k}e_{1})de_{k}) 
= a_1e_{1}.
\end{eqnarray*}
\end{proof}

\paragraph*{$\Tor$-groups for $\fL'_{\mm_{0,3}\mm_{1,1}}$.}
We consider the torsion $\Sym_{\bQ}(\cE_{\leq 3})$-module $$\fL'_{\mm_{0,3}\mm_{1,1}}:=\Sym_{\bQ}(\cE_{\leq 3})/(e_{1}^2, e_{2}^2, e_{1}e_{2}, e_{3}).$$
\begin{lem}\label{lem:Tor_computations_L'_1}
The group $H_{j}(\Sym_{\bQ}(\cE_{\leq 3}); \fL'_{\mm_{0,3}\mm_{1,1}})$ is equal to
$$\begin{cases}
\bQ[1] \cong \bQ & \text{if $j = 0$}, \\
\langle [e_{1}de_{1}], [e_{2}de_{2}], [de_{3}], 
\tfrac{1}{2}[e_{1}de_{2}+e_{2}de_{1}]\rangle\cong \bQ^4 & \text{if $j = 1$}, \\
\langle [e_{1} de_{1}\wedge de_{2}], [e_{2} de_{2}\wedge de_{1}], 
[e_{1} de_{1}\wedge de_{3}], [e_{2} de_{2}\wedge de_{3}], 
[(e_{1}de_{2}+e_{2}de_{1})\wedge de_{3}]
\rangle\cong \bQ^5 & \text{if $j = 2$}, \\
\langle [e_{1}de_{1}\wedge de_{2}\wedge de_{3}], 
[e_{2}de_{1}\wedge de_{2}\wedge de_{3}]\rangle \cong \bQ^2 & \text{if $j = 3$}, \\
0 & \text{otherwise}.
\end{cases}$$
\end{lem}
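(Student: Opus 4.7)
The plan is to mirror the proof of the preceding Lemma~\ref{lem:Tor_computations_L'_2}: apply Lemma~\ref{lem:refined_Tor_method} with $\cE' = \cE_{\leq 3}$, reducing the computation of $H_{*}(\Sym_{\bQ}(\cE_{\leq 3}); \fL'_{\mm_{0,3}\mm_{1,1}})$ to the homology of the finite chain complex \eqref{eq:chain_complex_Tor} with $M := \fL'_{\mm_{0,3}\mm_{1,1}}$. The defining relations $e_{1}^{2}=e_{2}^{2}=e_{1}e_{2}=e_{3}=0$ collapse $M$ to a three-dimensional $\bQ$-vector space with basis $\{1,e_{1},e_{2}\}$, and the only nontrivial multiplications by the classical Mumford--Morita--Miller classes are $e_{1}\cdot 1 = e_{1}$ and $e_{2}\cdot 1 = e_{2}$; every other product $e_{k}\cdot m$ vanishes in $M$.

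With this multiplication table in hand, I would evaluate $\pa_{n}$ on a generic chain $\sum_{I}(a_{I}+b_{I}e_{1}+c_{I}e_{2})\otimes de_{I}$ using the formula \eqref{eq:iD}. Only the constant-in-$M$ coefficients $a_{I}$ survive, so each differential becomes a very small $\bQ$-linear map. In particular,
\begin{equation*}
\pa_{1}\!\left(\sum_{k=1}^{3}(a_{k}+b_{k}e_{1}+c_{k}e_{2})de_{k}\right) = a_{1}e_{1} + a_{2}e_{2},
\end{equation*}
from which $\Image(\pa_{1})=\bQ\{e_{1},e_{2}\}$ and $H_{0}\cong\bQ\{1\}$ are immediate. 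Analogously short formulas give $\pa_{2}$ and $\pa_{3}$; for instance $\pa_{2}(m\otimes de_{i}\wedge de_{j})$ is nonzero only when $m=1$, and $\pa_{3}(m\otimes de_{1}\wedge de_{2}\wedge de_{3})$ is nonzero only when $m=1$, in which case it equals $e_{1}\otimes de_{2}\wedge de_{3} - e_{2}\otimes de_{1}\wedge de_{3}$.

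The homology is then read off by dimension counting and by exhibiting explicit cycle representatives. A direct check gives $\dim \Ker(\pa_{1}) = 7$, $\dim \Image(\pa_{2}) = 3$, $\dim \Ker(\pa_{2}) = 6$, $\dim \Image(\pa_{3}) = 1$, $\dim \Ker(\pa_{3}) = 2$, which yields the ranks $(1,4,5,2,0,\dots)$ claimed in the lemma and is consistent with the Euler characteristic $3-9+9-3=0$. The only mildly delicate point, and the sole item requiring care rather than routine bookkeeping, is the choice of representatives that exhibit the symmetry between $e_{1}$ and $e_{2}$: the boundary $\pa_{2}(1\otimes de_{1}\wedge de_{2}) = e_{1}\otimes de_{2} - e_{2}\otimes de_{1}$ identifies $e_{1}de_{2}$ with $e_{2}de_{1}$ in $H_{1}$, which justifies the symmetrised representative $\tfrac{1}{2}[e_{1}de_{2}+e_{2}de_{1}]$, and the boundary $\pa_{3}(1\otimes de_{1}\wedge de_{2}\wedge de_{3})$ plays the analogous role for the class $[(e_{1}de_{2}+e_{2}de_{1})\wedge de_{3}]$ in $H_{2}$. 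No obstacle of substance is anticipated; this is elementary finite-dimensional linear algebra once the module $M$ and the maps $\pa_{n}$ are made explicit.
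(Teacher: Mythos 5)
Your proposal is correct and follows essentially the same route as the paper: reduce via Lemma~\ref{lem:refined_Tor_method} to the finite chain complex \eqref{eq:chain_complex_Tor} with $M=\fL'_{\mm_{0,3}\mm_{1,1}}\cong\bQ\{1,e_{1},e_{2}\}$, compute the three differentials $\pa_{1},\pa_{2},\pa_{3}$ explicitly from \eqref{eq:iD} (they act nontrivially only on the $M$-constant coefficients), and read off ranks and representatives by elementary linear algebra. Your dimension counts ($\dim\Ker\pa_{1}=7$, $\dim\Image\pa_{2}=3$, $\dim\Ker\pa_{2}=6$, $\dim\Image\pa_{3}=1$, $\dim\Ker\pa_{3}=2$) agree with the paper's displayed formulas, and your identification of the symmetrised representatives $\tfrac12[e_{1}de_{2}+e_{2}de_{1}]$ and $[(e_{1}de_{2}+e_{2}de_{1})\wedge de_{3}]$ as arising from $\pa_{2}(1\otimes de_{1}\wedge de_{2})$ and $\pa_{3}(1\otimes de_{1}\wedge de_{2}\wedge de_{3})$ respectively is exactly the point made implicitly by the paper's list of boundary formulas.
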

\begin{proof}
Using the finite chain complex \eqref{eq:chain_complex_Tor} and formulas \eqref{eq:iD}, the result follows from Lemma~\ref{lem:refined_Tor_method} by computing that for all $c, c', c'', b_{k}, b'_{k}, b''_{k}, a_{k}, a'_{k}, a''_{k} \in \bQ$ with $1\leq k\leq 3$:
\begin{eqnarray*}
&& \pa_{3}((c+c'e_{1}+c''e_{2})(de_{1}\wedge de_{2}\wedge de_{3}))
= c e_{1}(de_{2}\wedge de_{3})+c e_{2}(de_{3}\wedge de_{1}),\\
&& \pa_{2}((b_1+b'_1e_{1}+b''_1e_{2})(de_{2}\wedge de_{3}))
= b_1e_{2}de_{3},\\
&& \pa_{2}((b_2+b'_2e_{1}+b''_2e_{2})(de_{3}\wedge de_{1}))
= -b_2e_{1}de_{3},\\
&& \pa_{2}((b_3+b'_3e_{1}+b''_3e_{2})(de_{1}\wedge de_{2}))
= b_3(e_{1}de_{2} - e_{2}de_{1}),\\
&& \pa_{1}(\sum^3_{k=1}(a_{k}+a'_{k}e_{1}+a''_{k}e_{2})de_{k}) 
= a_1e_{1}+a_2e_{2}.
\end{eqnarray*}
\end{proof}

\paragraph*{$\Tor$-groups for $\fL'_{\mm_{0,4};\mm_{0,2}\mm_{0,2}}$.}
The $\Sym_{\bQ}(\cE_{\leq 3})$-module $\fL'_{\mm_{0,4};\mm_{0,2}\mm_{0,2}}$ is isomorphic to the torsion $\Sym_{\bQ}(\cE_{\leq 3})$-module $\Sym_{\bQ}(\cE_{\leq 3})/(e_{1}^3, e_{2}^2, e_{3}^2, e_{1}e_{2}, e_{1}e_{3}, e_{2}e_{3})$.
\begin{lem}\label{lem:Tor_computations_L'_3}
The group $H_{j}(\Sym_{\bQ}(\cE_{\leq 3}); \fL'_{\mm_{0,4};\mm_{0,2}\mm_{0,2}})$ is equal to
$$\begin{cases}
\bQ[1] \cong \bQ & \text{if $j = 0$}, \\
\langle [e_{1}^2de_{1}], [e_{2}de_{2}], [e_{3}de_{3}], 
\tfrac{1}{2}[e_{1}de_{2}+e_{2}de_{1}], 
\tfrac{1}{2}[e_{2}de_{3}+e_{3}de_{2}], 
\tfrac{1}{2}[e_{3}de_{1}+e_{1}de_{3}]\rangle\cong \bQ^6, 
& \text{if $j = 1$}, \\
\langle [e_{1}^2 de_{1}\wedge de_{2}],[e_{1}^2 de_{1}\wedge de_{3}],[e_{2} de_{2}\wedge de_{1}],[e_{2} de_{2}\wedge de_{3}],[e_{3} de_{3}\wedge de_{1}],[e_{3} de_{3}\wedge de_{2}]\rangle\\ 
\oplus \langle [e_{2}de_{3}\wedge de_{1}], [e_{3}de_{1}\wedge de_{2}]\rangle\cong \bQ^8, & \text{if $j = 2$}, \\
\langle [e_{1}^2de_{1}\wedge de_{2}\wedge de_{3}], 
[e_{2}de_{1}\wedge de_{2}\wedge de_{3}],
[e_{3}de_{1}\wedge de_{2}\wedge de_{3}]\rangle \cong \bQ^3 & \text{if $j = 3$}, \\
0 & \text{otherwise}.
\end{cases}$$

\end{lem}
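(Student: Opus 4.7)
The plan is to apply Lemma~\ref{lem:refined_Tor_method} with $\cE' := \cE_{\leq 3}$, in the same spirit as the proofs of Lemmas~\ref{lem:Tor_computations_L'_2} and \ref{lem:Tor_computations_L'_1}. The first step is to record that the defining relations $e_1^3 = e_2^2 = e_3^2 = e_1e_2 = e_1e_3 = e_2e_3 = 0$ force $\fL'_{\mm_{0,4};\mm_{0,2}\mm_{0,2}}$ to have $\bQ$-basis $\{1, e_1, e_2, e_3, e_1^2\}$, so the chain complex \eqref{eq:chain_complex_Tor} is concentrated in degrees $0$ through $3$, with total $\bQ$-dimensions $5$, $15$, $15$, $5$ respectively.

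Next, I would write down the boundaries $\pa_1$, $\pa_2$, $\pa_3$ explicitly using \eqref{eq:iD}. The key simplifying feature is that, in this module, $e_2$ and $e_3$ annihilate every basis element except the constant, while $e_1$ only sends $1\mapsto e_1$ and $e_1 \mapsto e_1^2$, killing $e_2$, $e_3$ and $e_1^2$. For instance, on a generic element,
$$\pa_{3}\bigl((c+c_1e_1+c_2e_2+c_3e_3+c_{11}e_1^2)\otimes de_1\wedge de_2\wedge de_3\bigr) = (ce_1+c_1e_1^2)\otimes de_2\wedge de_3 - ce_2\otimes de_1\wedge de_3 + ce_3\otimes de_1\wedge de_2,$$
and analogous closed-form expressions hold for $\pa_2$ and $\pa_1$. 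The remaining task is then pure linear algebra: compute the kernel and image in each degree and read off the homology. The expected dimensions are $1$, $6$, $8$, $3$, matching the statement; the kernel of $\pa_3$ is spanned by the three elements $e_2, e_3, e_1^2$ (times $de_1\wedge de_2\wedge de_3$), giving the claimed $H_3$, while in degree $1$ the kernel of $\pa_1$ has dimension $11$ and the image of $\pa_2$ has dimension $5$, yielding $H_1$ of dimension $6$.

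The main obstacle, as in the two previous lemmas, is purely bookkeeping rather than conceptual: one must carefully track coefficients across the $40$-dimensional total space and then verify that the specific representatives listed in the statement form a basis of the quotient. The symmetrised expressions $\tfrac{1}{2}(e_ide_j+e_jde_i)$ appearing in $H_1$ arise precisely because the antisymmetrisations $e_ide_j-e_jde_i$ constitute part of the image of $\pa_2$, and a similar observation accounts for the seemingly asymmetric choice of $[e_2de_3\wedge de_1]$ and $[e_3de_1\wedge de_2]$ in $H_2$: modulo the image of $\pa_3$, one has the cyclic relation $e_1de_2\wedge de_3+e_2de_3\wedge de_1+e_3de_1\wedge de_2\sim 0$, so only two of these three classes survive independently. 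No theoretical subtlety is expected; this is a finite-dimensional linear algebra computation once the simplified boundary formulas are in hand.
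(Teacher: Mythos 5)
Your proposal is correct and follows exactly the paper's approach: apply Lemma~\ref{lem:refined_Tor_method} with $\cE' := \cE_{\leq 3}$, observe that $\fL'_{\mm_{0,4};\mm_{0,2}\mm_{0,2}}$ has $\bQ$-basis $\{1, e_1, e_2, e_3, e_1^2\}$, write the boundary maps of the finite complex \eqref{eq:chain_complex_Tor} explicitly via \eqref{eq:iD}, and read off kernels and images degree by degree. Your dimension bookkeeping ($\dim\Image(\pa_1)=4$, $\dim\Image(\pa_2)=5$, $\dim\Image(\pa_3)=2$) and your explanation of the symmetrised $H_1$-representatives and the cyclic relation in $H_2$ all agree with what the paper's explicit boundary formulas yield.
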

\begin{proof}
Using the finite chain complex \eqref{eq:chain_complex_Tor} and formulas \eqref{eq:iD}, the result follows from Lemma~\ref{lem:refined_Tor_method} by computing that for all $c, \tilde{c}, c', c'', c''', b_{k}, \tilde{b}_{k}, b'_{k}, b''_{k}, b'''_{k}, a_{k}, \tilde{a}_{k}, a'_{k}, a''_{k}, a'''_{k} \in \bQ$ with $1\leq k\leq 3$:
\begin{eqnarray*}
&& \pa_{3}((c+\tilde{c} e_{1}^2+c'e_{1}+c''e_{2}+c'''e_{3})(de_{1}\wedge de_{2}\wedge de_{3}))\\
&& = c(e_{1}de_{2}\wedge de_{3}+e_{2}de_{3}\wedge de_{1}+e_{3}de_{1}\wedge de_{2})
+ c'e_{1}^2de_{2}\wedge de_{3},\\
&& \pa_{2}((b_1+\tilde{b}_1e_{1}^2+b'_1e_{1}+b''_1e_{2}+b'''_1e_{3})(de_{2}\wedge de_{3}))
= b_1(e_{2}de_{3} - e_{3}de_{2}),\\
&& \pa_{2}((b_2+\tilde{b}_2e_{1}^2+b'_2e_{1}+b''_2e_{2}+b'''_2e_{3})(de_{3}\wedge de_{1}))
= b_2(e_{3}de_{1} - e_{1}de_{3}) - b'_2e_{1}^2de_{3},\\
&& \pa_{2}((b_3+\tilde{b}_3e_{1}^2+b'_3e_{1}+b''_3e_{2}+b'''_3e_{3})(de_{1}\wedge de_{2}))
= b_3(e_{1}de_{2} - e_{2}de_{1}) + b'_3e_{1}^2de_{2},\\
&& \pa_{1}(\sum^3_{k=1}(a_{k}
+\tilde{a}_{k}e_{1}^2+a'_{k}e_{1}+a''_{k}e_{2}+a'''_{k}e_{3})de_{k}) 
= a'_1e_{1}^2+a_1e_{1}+a_2e_{2}+a_3e_{3}.
\end{eqnarray*}
\end{proof}

\paragraph*{$\Tor$-groups for $\fL'/\fK'$.}
Using Lemmas~\ref{lem:Tor_computations_L'_1}--\ref{lem:Tor_computations_L'_3}, we can now compute the $\Tor$-groups for the $\Sym_{\bQ}(\cE_{\leq 3})$-module $\fL'/\fK'$.
\begin{prop}\label{lem:Tor_computations_L'_final}
We have
$$H_{j}(\Sym_{\bQ}(\cE_{\leq 3}); \fL'/\fK')
\cong \begin{cases}
\bQ^4 & \text{if $j = 0$}, \\
\bQ^{15} & \text{if $j = 1$}, \\
\bQ^{17} & \text{if $j = 2$}, \\
\bQ^6 & \text{if $j = 3$}, \\
0 & \text{otherwise}.
\end{cases}$$
\end{prop}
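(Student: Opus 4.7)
The strategy is to compute these $\Tor$-groups via the long exact sequence associated with the short exact sequence of $\Sym_\bQ(\cE_{\leq 3})$-modules
$$0 \to \fK' \to \fL' \to \fL'/\fK' \to 0.$$
First I would verify that $\fK' \cong (\bQ_{\cE_{\leq 3}})^{\oplus 3}$ as a $\Sym_\bQ(\cE_{\leq 3})$-module (writing $\bQ_{\cE_{\leq 3}}$ for the trivial module): each of the three relators $k_1 := 2e_2\mm_{0,3}\mm_{1,1} + 3e_1\mm_{0,3}\mm_{2,1}$, $k_2 := e_2\mm_{0,2}\mm_{0,2} + 6e_1\mm_{0,3}\mm_{1,1}$ and $k_3 := e_3\mm_{0,2}\mm_{0,2} - 6e_1^2\mm_{0,4}$ is annihilated by every $e_i$ in $\fL'$, as a termwise check against the annihilator ideal of each summand of $\fL'$ confirms. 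By Lemma~\ref{lem:refined_Tor_method}, this yields
$$H_j(\Sym_\bQ(\cE_{\leq 3}); \fK') \cong (\Lambda^j\cE_{\leq 3})^{\oplus 3},$$
of dimensions $(3,9,9,3)$ for $j = 0,1,2,3$.

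Next, the direct sum decomposition $\fL' = \fL'_{\mm_{0,4}} \oplus \fL'_{\mm_{0,2}\mm_{0,2}} \oplus \fL'_{\mm_{0,3}\mm_{1,1}} \oplus \fL'_{\mm_{0,3}\mm_{2,1}}$ (with the first two summands isomorphic as $\Sym_\bQ(\cE_{\leq 3})$-modules) combined with Lemmas~\ref{lem:Tor_computations_L'_2}, \ref{lem:Tor_computations_L'_1} and \ref{lem:Tor_computations_L'_3} gives the total dimensions $\dim H_j(\Sym_\bQ(\cE_{\leq 3}); \fL') = (4,19,24,9)$, together with explicit $\bQ$-bases in each degree that are crucial for the next step.

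The core of the proof is the computation of the induced map $\iota_*\colon H_j(\fK') \to H_j(\fL')$. Using the differential forms resolution from Lemma~\ref{lem:refined_Tor_method}, a generator of $H_j(\fK')$ has the form $k_i \otimes (de_{\alpha_1} \wedge \cdots \wedge de_{\alpha_j})$, and its image is obtained by expanding $k_i$ along the direct sum decomposition of $\fL'$ and reducing modulo the $\pa_{j+1}$-boundary relations identified in the proofs of Lemmas~\ref{lem:Tor_computations_L'_2}--\ref{lem:Tor_computations_L'_3}. I expect $\dim\Image(\iota_*)$ to equal $(0,7,9,3)$ in degrees $(0,1,2,3)$: the degree-$0$ case is immediate since each $k_i$ lies in $(e_1,e_2,e_3)\cdot \fL'$; the degree-$3$ injectivity is clear because the ``leading terms'' of $k_1,k_2,k_3$ lie in pairwise distinct direct summands of $H_3(\fL')$ (for example, $k_1$ contributes only to $H_3(\fL'_{\mm_{0,3}\mm_{1,1}}) \oplus H_3(\fL'_{\mm_{0,3}\mm_{2,1}})$, and the three images give a $3\times 9$ matrix of rank $3$); the degree-$2$ injectivity similarly follows from the block structure together with a dimension count against the bases of Lemmas~\ref{lem:Tor_computations_L'_1} and \ref{lem:Tor_computations_L'_3}.

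The main obstacle is the degree-$1$ computation, where one must show that the nine classes $k_i \otimes de_m$ span only a $7$-dimensional subspace of $H_1(\fL')$. The two kernel relations arise from the identifications $[e_\alpha \mm \otimes de_\beta] \equiv [e_\beta \mm \otimes de_\alpha]$ valid in each $\fL'_m$ (coming from $\pa_2(\mm \otimes de_\alpha \wedge de_\beta) = e_\alpha \mm \otimes de_\beta - e_\beta \mm \otimes de_\alpha$), which force linear dependencies among the nine images; pinning down exactly two such relations requires careful bookkeeping with the explicit $H_1$-bases of the three preceding lemmas. Once $\dim\Image(\iota_*) = (0,7,9,3)$ is established, the long exact sequence yields
$$\dim H_j(\fL'/\fK') = \bigl(\dim H_j(\fL') - \dim\Image(\iota_*|_{H_j})\bigr) + \bigl(\dim H_{j-1}(\fK') - \dim\Image(\iota_*|_{H_{j-1}})\bigr),$$
which evaluates to $(4,15,17,6)$ for $j = 0,1,2,3$ and vanishes outside this range since $\Sym_\bQ(\cE_{\leq 3})$ has global dimension $3$.
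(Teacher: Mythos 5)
Your proposal is correct and takes essentially the same route as the paper: the $\Tor$-long exact sequence for $\fK' \hookrightarrow \fL' \twoheadrightarrow \fL'/\fK'$, with $\fK'$ recognized as a sum of three trivial modules, $H_*(\fL')$ read off from the three preceding lemmas, and the image of $\iota_*$ computed to have dimensions $(0,7,9,3)$ in degrees $0$--$3$. The paper carries out exactly the degree-by-degree computation of $(\iota_k)_j$ that you sketch (including identifying the two degree-$1$ kernel elements as $de_3 \in \bQ_{\rho_1}$ and $de_2 - de_3 \in \bQ_{\rho_3}\oplus\bQ_{\rho_2}$), so the only thing missing from your proposal is the explicit verification of those ranks in degrees $1$ and $2$ — which you correctly flag as the point requiring bookkeeping.
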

\begin{proof}
We consider the three elements $\rho_1 := 2 e_{2}\mm_{0,3}\mm_{1,1} + 3 e_{1}\mm_{0,3}\mm_{2,1}$, $\rho_2 :=  e_{2}\mm_{0,2}\mm_{0,2} + 6 e_{1}\mm_{0,3}\mm_{1,1}$ and $\rho_3 :=  e_{3}\mm_{0,2}\mm_{0,2} {-6}e_{1}^2\mm_{0,4}$ in $\fL'$. For each $1\leq k\leq 3$, we denote by $\bQ_{\rho_{k}}$ the trivial $\Sym_{\bQ}(\cE_{\leq 3})$-submodule of $\fL'$ generated by $\rho_{k}$, by $\iota_{k}: \bQ_{\rho_{k}} \to \fL'$ the map sending $1$ to $\rho_{k}$, by $(\iota_{k})_{j}$ the induced map $H_{j}(\Sym_{\bQ}(\cE_{\leq 3}); \iota_{k})$, and by $\iota$ the direct sum $\iota_{1}\oplus\iota_{2}\oplus\iota_{3}$. In particular, we note that $\fK'\cong \bQ_{\rho_{1}}\oplus \bQ_{\rho_{2}}\oplus \bQ_{\rho_{3}}\cong \langle\rho_1,\rho_2,\rho_3\rangle$ and that $H_{j}(\Sym_{\bQ}(\cE_{\leq 3}); \bQ_{\rho_{k}})\cong \Lambda^{j}\langle \{de_{1},de_{2},de_{3} \}\rangle$ by Lemma~\ref{lem:refined_Tor_method}. 

We recall that the $\Tor$-groups $H_{*}(\Sym_{\bQ}(\cE_{\leq 3}); \fL')$ are computed from Lemmas~\ref{lem:Tor_computations_L'_2}, \ref{lem:Tor_computations_L'_1} and \ref{lem:Tor_computations_L'_3}.
So we compute the $\Tor$-groups $H_{*}(\Sym_{\bQ}(\cE_{\leq 3}); \fL'/\fK')$ by using the $H_{*}(\Sym_{\bQ}(\cE_{\leq 3}); -)$-long exact sequence associated with short exact sequences of $\Sym_{\bQ}(\cE_{\leq 3})$-modules $\fK'\hookrightarrow\fL'\twoheadrightarrow\fL'/\fK'$ as follows. Each map $\iota_{k}$ induces a chain map from the \eqref{eq:differential_forms_resolution}-type chain complex for $\bQ_{\rho_{k}}$ to the \eqref{eq:chain_complex_Tor}-type chain complex for $\fL'$, that we then use to make the calculations below via formal straightforward computations on formulas \eqref{eq:iD}.
We will also make use of the following filtration on $\fL'$ to compute the cokernel of each map $(\iota)_{k}$:
\begin{equation}\label{eq:filtration_L'}
\langle \mm_{0,3}\mm_{2,1}\rangle \subset 
\langle \mm_{0,3}\mm_{2,1}, \mm_{0,3}\mm_{1,1}\rangle\subset
\langle \mm_{0,3}\mm_{2,1}, \mm_{0,3}\mm_{1,1}, \mm_{0,2}\mm_{0,2}\rangle \subset
\fL'.
\end{equation}

\underline{For $H_{0}(\Sym_{\bQ}(\cE_{\leq 3}); \fL'/\fK')$:} It is clear from the computations in the proofs of Lemmas~\ref{lem:refined_Tor_method} that the image of each $\rho_{k}\in \fK'$ is hit by $\pa_{1}$. Therefore, we have $(\iota_{1})_{0}(1) = (\iota_{2})_{0}(1) = (\iota_{3})_{0}(1) = 0$.
Hence we obtain the $\bQ$-basis $\{[\mm_{0,3}\mm_{2,1}], [\mm_{0,3}\mm_{1,1}], [\mm_{0,2}\mm_{0,2}], [\mm_{0,4}]\}$ for $H_{0}(\Sym_{\bQ}(\cE_{\leq 3});\fL'/\fK')$, and that the connecting homomorphism $H_{1}(\Sym_{\bQ}(\cE_{\leq 3});\fL'/\fK') \to H_{0}(\Sym_{\bQ}(\cE_{\leq 3});\fK'))$ is surjective.

\underline{For $H_{1}(\Sym_{\bQ}(\cE_{\leq 3});\fL'/\fK')$:} using the results and relations for $H_{1}(\Sym_{\bQ}(\cE_{\leq 3}); \fL')$ deduced Lemmas~\ref{lem:Tor_computations_L'_2}, \ref{lem:Tor_computations_L'_1} and \ref{lem:Tor_computations_L'_3}, we compute that:
\begin{eqnarray*}
&& (\iota_{1})_{1}(de_{1}) =  3 [e_{1}de_{1}]\mm_{0,3}\mm_{2,1}+ [e_{1}de_{2}+e_{2}de_{1}]\mm_{0,3}\mm_{1,1};\\
&& (\iota_{1})_{1}(de_{2}) =  2 [e_{2}de_{2}]\mm_{0,3}\mm_{1,1};\\
&& (\iota_{1})_{1}(de_{3}) = 0;\\
&& (\iota_{2})_{1}(de_{1}) =  6 [e_{1}de_{1}]\mm_{0,3}\mm_{1,1}+\tfrac{1}{2} [e_{1}de_{2}+e_{2}de_{1}]\mm_{0,2}\mm_{0,2};\\
&& (\iota_{2})_{1}(de_{2}) = 3 [e_{1}de_{2}+e_{2}de_{1}]\mm_{0,3}\mm_{1,1}+ [e_{2}de_{2}]\mm_{0,2}\mm_{0,2};\\
&& (\iota_{2})_{1}(de_{3}) =  \tfrac{1}{2} [e_{2}de_{3}+e_{3}de_{2}]\mm_{0,2}\mm_{0,2};\\
&& (\iota_{3})_{1}(de_{1}) = \tfrac{1}{2} [e_{1}de_{3}+e_{3}de_{1}]\mm_{0,2}\mm_{0,2} -6 [e_{1}^2de_{1}]\mm_{0,4};\\
&& (\iota_{3})_{1}(de_{2}) = \tfrac{1}{2} [e_{2}de_{3}+e_{3}de_{2}]\mm_{0,2}\mm_{0,2}=(\iota_{2})_{1}(de_{3});\\
&& (\iota_{3})_{1}(de_{3}) =   [e_{3}de_{3}]\mm_{0,2}\mm_{0,2}.
\end{eqnarray*}
We deduce that the kernel of the map $(\iota)_{1}\colon H_{1}(\Sym_{\bQ}(\cE_{\leq 3});\fK') \to H_{1}(\Sym_{\bQ}(\cE_{\leq 3});\fL'))$ is isomorphic to $\bQ^{2}$.
Furthermore, by using the filtration \eqref{eq:filtration_L'}, we deduce that the kernel of the connecting homomorphism $H_{1}(\Sym_{\bQ}(\cE_{\leq 3});\fL'/\fK') \twoheadrightarrow H_{0}(\Sym_{\bQ}(\cE_{\leq 3});\fK'))\cong \bQ^{3}$ has the following basis:
\begin{eqnarray*}
&& [e_{1}de_{1}]\mm_{0,3}\mm_{2,1};\,\,[de_{2}]\mm_{0,3}\mm_{2,1};\,\, [de_{3}]\mm_{0,3}\mm_{2,1};\\
&& [e_{1}de_{1}]\mm_{0,3}\mm_{1,1};\,\, [de_{3}]\mm_{0,3}\mm_{1,1};\\
&&[e_{1}^2de_{1}]\mm_{0,2}\mm_{0,2};\,\, \tfrac{1}{2}[e_{3}de_{1}+e_{1}de_{3}]\mm_{0,2}\mm_{0,2}; \\
&& [e_{2}de_{2}]\mm_{0,4};\,\, [e_{3}de_{3}]\mm_{0,4};\,\,  \tfrac{1}{2}[e_{1}de_{2}+e_{2}de_{1}]\mm_{0,4};\,\,  \tfrac{1}{2}[e_{2}de_{3}+e_{3}de_{2}]\mm_{0,4};\,\, \tfrac{1}{2}[e_{3}de_{1}+e_{1}de_{3}]\mm_{0,4}.
\end{eqnarray*}
This provides the computation of $H_{1}(\Sym_{\bQ}(\cE_{\leq 3}); \fL'/\fK')$.

\underline{For $H_{2}(\Sym_{\bQ}(\cE_{\leq 3});\fL'/\fK')$:} using the results and relations for $H_{2}(\Sym_{\bQ}(\cE_{\leq 3}); \fL')$ deduced Lemmas~\ref{lem:Tor_computations_L'_2}, \ref{lem:Tor_computations_L'_1} and \ref{lem:Tor_computations_L'_3}, we compute that:
\begin{eqnarray*}
&& (\iota_{1})_{2}(de_{2}\wedge de_{3}) =  2 [e_{2}de_{2}\wedge de_{3}]\mm_{0,3}\mm_{1,1};\\
&& (\iota_{1})_{2}(de_{3}\wedge de_{1}) =  3 [e_{1}de_{3}\wedge de_{1}]\mm_{0,3}\mm_{2,1} -  [(e_{1}de_{2}+e_{2}de_{1})\wedge de_{3}]\mm_{0,3}\mm_{1,1};\\
&& (\iota_{1})_{2}(de_{1}\wedge de_{2}) =  3 [e_{1}de_{1}\wedge de_{2}]\mm_{0,3}\mm_{2,1} + 2 [e_{2}de_{1}\wedge de_{2}]\mm_{0,3}\mm_{1,1};\\
&& (\iota_{2})_{2}(de_{2}\wedge de_{3}) =  3 [(e_{1}de_{2}+e_{2}de_{1})\wedge de_{3}]\mm_{0,3}\mm_{1,1}+ [e_{2}de_{2}\wedge de_{3}]\mm_{0,2}\mm_{0,2};\\
&& (\iota_{2})_{2}(de_{3}\wedge de_{1}) = 6 [e_{1}de_{3}\wedge de_{1}]\mm_{0,3}\mm_{1,1} + [e_{2}de_{3}\wedge de_{1}]\mm_{0,2}\mm_{0,2}, \\
&& (\iota_{2})_{2}(de_{1}\wedge de_{2}) =   6 [e_{1}de_{1}\wedge de_{2}]\mm_{0,3}\mm_{1,1} + [e_{2}de_{1}\wedge de_{2}]\mm_{0,2}\mm_{0,2};\\
&& (\iota_{3})_{2}(de_{2}\wedge de_{3}) =  [e_{3}de_{2}\wedge de_{3}]\mm_{0,2}\mm_{0,2};\\
&& (\iota_{3})_{2}(de_{3}\wedge de_{1}) =  [e_{3}de_{3}\wedge de_{1}]\mm_{0,2}\mm_{0,2} - 6 [e_{1}^2de_{3}\wedge de_{1}]\mm_{0,4};\\
&& (\iota_{3})_{2}(de_{1}\wedge de_{2}) =  [e_{3}de_{1}\wedge de_{2}]\mm_{0,2}\mm_{0,2} - 6 [e_{1}^2de_{1}\wedge de_{2}]\mm_{0,4}.
\end{eqnarray*}
Hence the kernel of the map $(\iota)_{2}\colon H_{2}(\Sym_{\bQ}(\cE_{\leq 3});\fK') \to H_{2}(\Sym_{\bQ}(\cE_{\leq 3});\fL'))$ vanishes.
Also, using the filtration \eqref{eq:filtration_L'}, we obtain the following basis for the kernel of the connecting homomorphism $H_{2}(\Sym_{\bQ}(\cE_{\leq 3});\fL'/\fK') \twoheadrightarrow \bQ^{2}$:
\begin{eqnarray*}
&& [e_{1} de_{1}\wedge de_{2}]m_{0,3}m_{2,1}; \,\, [e_{1} de_{1}\wedge de_{3}]m_{0,3}m_{2,1}; \,\, [de_{2}\wedge de_{3}]m_{0,3}m_{2,1};\\
&& [e_{1} de_{1}\wedge de_{2}]m_{0,3}m_{1,1}; \,\, [e_{1} de_{1}\wedge de_{3}]m_{0,3}m_{1,1}; \\
&& [e_{1}^2 de_{1}\wedge de_{2}]m_{0,2}m_{0,2}; \,\, [e_{1}^2 de_{1}\wedge de_{3}]m_{0,2}m_{0,2}; \,\, [e_{3} de_{3}\wedge de_{1}]m_{0,2}m_{0,2}; [e_{3}de_{1}\wedge de_{2}]m_{0,2}m_{0,2}; \,\,\\
&& [e_{2} de_{2}\wedge de_{3}]m_{0,4}; \,\, [e_{2} de_{2}\wedge de_{1}]m_{0,4}; \,\, [e_{3} de_{3}\wedge de_{1}]m_{0,4}; \\
&& [e_{3} de_{3}\wedge de_{2}]m_{0,4}; \,\, [e_{2}de_{3}\wedge de_{1}]m_{0,4};\,\, [e_{3}de_{1}\wedge de_{2}]m_{0,4};\,\,
\end{eqnarray*}

\underline{For $H_{3}(\Sym_{\bQ}(\cE_{\leq 3}); \fL'/\fK')$:} using the results and relations for $H_{3}(\Sym_{\bQ}(\cE_{\leq 3}); \fL')$ deduced Lemmas~\ref{lem:Tor_computations_L'_2}, \ref{lem:Tor_computations_L'_1} and \ref{lem:Tor_computations_L'_3}, we compute that:
\begin{eqnarray*}
&& (\iota_{1})_{3}(de_{1}\wedge de_{2}\wedge de_{3}) =  3 [e_{1}de_{1}\wedge de_{2}\wedge de_{3}]\mm_{0,3}\mm_{2,1} +  2 [e_{2}de_{1}\wedge de_{2}\wedge de_{3}]\mm_{0,3}\mm_{1,1};\\
&& (\iota_{2})_{3}(de_{1}\wedge de_{2}\wedge de_{3}) = 6 [e_{1}de_{1}\wedge de_{2}\wedge de_{3}]\mm_{0,3}\mm_{1,1} + [e_{2}de_{1}\wedge de_{2}\wedge de_{3}]\mm_{0,2}\mm_{0,2};\\
&& (\iota_{3})_{3}(de_{1}\wedge de_{2}\wedge de_{3}) = [e_{3}de_{1}\wedge de_{2}\wedge de_{3}]\mm_{0,2}\mm_{0,2} -6[e_{1}^2de_{1}\wedge de_{2}\wedge de_{3}]\mm_{0,4}.
\end{eqnarray*}
In particular, the kernel of the map $(\iota)_{3}\colon H_{3}(\Sym_{\bQ}(\cE_{\leq 3});\fK') \to H_{3}(\Sym_{\bQ}(\cE_{\leq 3});\fL'))$ vanishes.
Furthermore, by using the filtration \eqref{eq:filtration_L'}, we deduce that the kernel of the connecting homomorphism $H_{3}(\Sym_{\bQ}(\cE_{\leq 3});\fL'/\fK') \to H_{2}(\Sym_{\bQ}(\cE_{\leq 3});\fK'))\cong \bQ^{3}$ has the following basis:
\begin{eqnarray*}
&& [e_{1}de_{1}\wedge de_{2}\wedge de_{3}]\mm_{0,3}\mm_{2,1}; \\
&& [e_{1}de_{1}\wedge de_{2}\wedge de_{3}]\mm_{0,3}\mm_{1,1}; \\
&& [e_{1}^2de_{1}\wedge de_{2}\wedge de_{3}]\mm_{0,2}\mm_{0,2};\,\,
[e_{3}de_{1}\wedge de_{2}\wedge de_{3}]\mm_{0,2}\mm_{0,2}; \\
&& [e_{2}de_{1}\wedge de_{2}\wedge de_{3}]\mm_{0,4};\,\,
[e_{3}de_{1}\wedge de_{2}\wedge de_{3}]\mm_{0,4}.
\end{eqnarray*}

\underline{For $H_{j}(\Sym_{\bQ}(\cE_{\leq 3}); \fL'/\fK')$ with $j\geq4$:} since $H_{j}(\Sym_{\bQ}(\cE_{\leq 3});\fK'))=H_{j}(\Sym_{\bQ}(\cE_{\leq 3});\fL'))=0$ for $j\geq4$ while the kernel of the map $(\iota)_{3}\colon H_{3}(\Sym_{\bQ}(\cE_{\leq 3});\fK') \to H_{3}(\Sym_{\bQ}(\cE_{\leq 3});\fL'))$ is trivial, we deduce from the long exact sequnce for the $\Tor$-groups that $H_{j}(\Sym_{\bQ}(\cE_{\leq 3});\fL'/\fK'))=0$ which ends the proof.
\end{proof}

\paragraph*{Proof of Theorem~\ref{thm:5th-ext_Tor}.}
We are now ready to compute the $\Tor$-groups for the $\Sym_{\bQ}(\cE)$-module $H_{\st}^{*}(\Lambda^{5}\tilde{H}_{\bQ})$.
Using the computations \eqref{eq:computation_Tor_general}, it follows from Proposition~\ref{lem:Tor_computations_L'_final} that for all $j \geq0$
$$H_{j}(\Sym_{\bQ}(\cE);H_{\st}^{\even}(\Lambda^{5}\tilde{H}_{\bQ}))\cong (\Lambda^{j-3}\cE_{\geq 4})^{\oplus 6}\oplus(\Lambda^{j-2}\cE_{\geq 4})^{\oplus 17}\oplus (\Lambda^{j-1}\cE_{\geq 4})^{\oplus 15}\oplus(\Lambda^{j}\cE_{\geq 4})^{\oplus 4}.$$
In particular, we have
$H_{0}(\Sym_{\bQ}(\cE);H_{\st}^{\even}(\Lambda^{5}\tilde{H}_{\bQ}))\cong \bQ^{\oplus 4}$.
Combining this computation with the isomorphism \eqref{eq:exterior_Tor}, we deduce that for $i>0$
$$H_{i}(\Sym_{\bQ}(\cE);H_{\st}^{\odd}(\Lambda^{5}\tilde{H}_{\bQ}))\cong (\Lambda^{i-1}\cE_{\geq 4})^{\oplus 6}\oplus(\Lambda^{i}\cE_{\geq 4})^{\oplus 17}\oplus (\Lambda^{i+1}\cE_{\geq 4})^{\oplus 15}\oplus(\Lambda^{i+2}\cE_{\geq 4})^{\oplus 4}.$$
This ends the first part of the theorem.

For the computation of $H_{0}(\Sym_{\bQ}(\cE);H_{\st}^{*}(\Lambda^{5}\tilde{H}_{\bQ}))$, we first have the contribution of the $\Tor$-group $H_{0}(\Sym_{\bQ}(\cE);H_{\st}^{\even}(\Lambda^{5}\tilde{H}_{\bQ}))$ computed above.
By Lemma~\ref{lem:Tor_0_lemma}, we compute the $\Tor_{0}$-group $H_{0}(\Sym_{\bQ}(\cE);H_{\st}^{\odd}(\Lambda^{5}\tilde{H}_{\bQ}))$ thanks to the exact sequence \eqref{eq:Tor_0_ES}. We have already computed $H_{2}(\Sym_{\bQ}(\cE);H_{\st}^{\even}(\Lambda^{5}\tilde{H}_{\bQ})$ above.
Following the decomposition \eqref{eq:decomposition_filtration_Ker_H_{0}}, we now progressively determine the generators of $\Ker(\Delta_{4})$ associated to each summand $\Ker(\ddd_{4,k/k-1})$ for $1\leq k\leq 4$ for $1\leq k\leq 2$ to finish the proof. For this aim, we use the chain complex \eqref{eq:complex_SES} and Proposition~\ref{prop:Ker_Delta_method}.

\underline{For the summand $\Ker(\ddd_{5, 1/0})$:} for each $2 \leq a < b$, the image by $\ddd_{5}$ of the element $\mm_{a-1,3}\mm_{b,2} - \mm_{a-2,4}\mm_{b+1,1} - \mm_{b-1,3}\mm_{a,2} + \mm_{b-2,4}\mm_{a+1,1}$ is equal to $\pa_{1}(\mm_{b-2,4}\otimes e_{a+1} - \mm_{a-2,4}\otimes e_{b+1})$ in $H_{\st}^{\even}(\Lambda^{4}\tilde{H}_{\bQ})$. We deduce from Theorem~\ref{thm:5th-ext-even}, we have $\Pi(\mm_{b-2,4})\otimes e_{a+1} - \Pi(\mm_{a-2,4})\otimes e_{b+1}\neq 0$ if and only if $a=2$ and $b>2$. In that case, they are equal to $\Pi(\mm_{0,4})\otimes e_{b+1}$, and so they are linearly independent.
Hence, by Proposition~\ref{prop:Ker_Delta_method}, the kernel $\Ker(\ddd_{5, 1/0})$ associated to that summand is isomorphic to the free $\bQ$-module generated by $\{[\mm_{a-1,3}\mm_{b,2} - \mm_{a-2,4}\mm_{b+1,1} - \mm_{b-1,3}\mm_{a,2} + \mm_{b-2,4}\mm_{a+1,1}];b  > a \geq 3 \}$.

\underline{For the summand $\Ker(\ddd_{5, 2/1})$:}
Firs of all, we stress that all the following computations are made by using Theorem~\ref{thm:5th-ext-even} and the fact that $\pa_{2}(m\otimes de_{k}\wedge de_l) = e_{k}m\otimes de_{l} - e_{l}m\otimes de_{k}$.

For all integers $0 \leq a \leq b \geq 1$ and $\alpha\geq 1$, we begin with setting $\hat{M}_{a,b,\alpha} := \mm_{a+1,2}\mm_{b,2}\mm_{\alpha,1} - \mm_{a,3} \mm_{b+1,1} \mm_{\alpha,1} -\mm_{b-1,3}\mm_{a+2,1}\mm_{\alpha,1}$. We compute that $\ddd_{5}(\hat{M}_{a,b,\alpha})=\pa_{1}(M_{a,b,\alpha}^{\otimes})$ in $H_{\st}^{\even}(\Lambda^{4}\tilde{H}_{\bQ})$, where
$$M_{a,b,\alpha}^{\otimes}:=(\mm_{a+1,2}\mm_{b,2} -\mm_{a,3}\mm_{b+1,1}  - \mm_{b-1,3}\mm_{a+2,1})\otimes e_{\alpha} -\mm_{a,3}\mm_{\alpha,1}\otimes e_{b+1}  -\mm_{b-1,3}\mm_{\alpha,1}\otimes e_{a+2}.$$
We compute in a similar way to relations \eqref{eq:M_a_b_alpha} that:
$$(\Pi\otimes \cE)(M_{a,b,\alpha}^{\otimes}) =
\begin{cases}
5e_{3}\mm_{0,4}\otimes e_{\alpha}  & \text{if $(a, b, \alpha) = (1,2,\alpha)$ with $\alpha \leq 3$},\\
5e_{2}\mm_{0,4}\otimes e_{3} & \text{if $(a, b, \alpha) = (1,1,3)$},\\
(3e_{2}+2e_{3})\mm_{0,4}\otimes e_{\alpha} & \text{if $(a, b, \alpha) = (1,1,\alpha)$ with $\alpha\leq2$},\\
-\mm_{0,3}\mm_{\alpha,1}\otimes e_{b+1} & \text{if $(a, b, \alpha) = (0,b,\alpha)$ with $b \geq 3$, $\alpha\leq2$},\\
5e_{2}\mm_{0,4}\otimes e_{3}  & \text{if $(a, b, \alpha) = (0,2,3)$},\\
4e_{\alpha}\mm_{0,4}\otimes e_{\alpha} - \mm_{0,3}\mm_{\alpha,1}\otimes e_{3} & \text{if $(a, b, \alpha) = (0,2,\alpha)$ with $\alpha\leq2$},\\
-3\mm_{0,3}\mm_{2,1}\otimes e_{\alpha} & \text{if $(a, b, \alpha) = (0,1,\alpha)$ with $\alpha \geq 3$},\\
-5\mm_{0,3}\mm_{2,1}\otimes e_{2} & \text{if $(a, b, \alpha) = (0,1,2)$},\\
-3\mm_{0,3}\mm_{2,1}\otimes e_{1}-2\mm_{0,3}\mm_{1,1}\otimes e_{2} & \text{if $(a, b, \alpha) = (0,1,1)$},\\
0 & \text{otherwise}.
\end{cases}$$
Now, we set $\hat{N}_{b,c} := \mm_{0,2}\mm_{b+1,2}\mm_{c+2,1}-\mm_{b,3}\mm_{1,1}\mm_{c+2,1} -\mm_{0,2}\mm_{c+1,2}\mm_{b+2,1}+\mm_{c,3}\mm_{1,1}\mm_{b+2,1}$ for all integers $0 \leq b < c$. Then we compute that $\ddd_{5}(\hat{N}_{b,c})=\pa_{1}(N_{b,c}^{\otimes})$ in $H_{\st}^{\even}(\Lambda^{4}\tilde{H}_{\bQ})$, where
$$N_{b,c}^{\otimes}:=(\mm_{c,3}\mm_{1,1}-\mm_{0,2}\mm_{c+1,2})\otimes e_{b+2}-(\mm_{b,3}\mm_{1,1}-\mm_{0,2}\mm_{b+1,2})\otimes e_{c+2}-(\mm_{b,3}\mm_{c+2,1}-\mm_{c,3}\mm_{b+2,1})\otimes e_{1}.$$
We compute in a similar way to relations \eqref{eq:N_b_c} that
$$(\Pi\otimes \cE)(N_{b,c}^{\otimes})=
\begin{cases}
-2\mm_{0,3}\mm_{1,1}\otimes e_{c+2} & \text{if $b = 0$ and $c \geq 2$},\\
-2e_{2}\mm_{0,4}\otimes e_{1} - 2\mm_{0,3}\mm_{1,1}\otimes e_{3} & \text{if $b=0$, $c=1$},\\
0 & \text{otherwise.}
\end{cases}$$
Hence, by Proposition~\ref{prop:Ker_Delta_method}, the contribution of the generators $\hat{M}_{a,b,\alpha}$ and $\hat{N}_{b,c}$ to the kernel $\Ker(\ddd_{5, 2/1})$ is isomorphic to the free $\bQ$-module with basis
$$\aligned
&\{\hat{M}_{a,b,\alpha}\mid 2 \leq a \leq b, \alpha\geq 1\}\cup \{\hat{M}_{1,b,\alpha}\mid b \geq3, \alpha\geq 1\}\cup \{\hat{M}_{1,b,\alpha}\mid 1\leq b\leq2,\alpha\geq 4\}\\
& \cup \{\hat{M}_{0,b,\alpha}\mid b \geq2,\alpha\geq 4\} \cup\{3\hat{M}_{0,b,2}-\hat{M}_{0,1,b+1} ; 2\hat{M}_{0,b,1}-\hat{N}_{0,b-1} \mid b\geq 3\}\\
&\cup\{\hat{M}_{1,2,2}-\hat{M}_{1,1,3} ; \hat{M}_{1,2,2}-\hat{M}_{0,2,3} ; \hat{M}_{0,1,3}+20M_{1,1,2}-15\hat{M}_{0,2,2}-8M_{1,2,2}\}\\
&\cup \{\hat{N}_{b,c}\mid b\geq 1, c\geq2\}.
\endaligned$$

Finally, we set $\hat{P}_{b}:=\mm_{0,2}\mm_{0,2}\mm_{b+2,1}-2\mm_{0,2}\mm_{b+1,2}\mm_{1,1}+2\mm_{b,3}\mm_{1,1}\mm_{1,1}$ for each $b \geq 0$. We have $\ddd_{5}(\hat{P}_{b})=\pa_{1}(P_{b}^{\otimes})$, where
$$P_{b}^{\otimes}:=\mm_{0,2}\mm_{0,2}\otimes e_{b+2} + (4\mm_{b,3}\mm_{1,1} - 2\mm_{0,2}\mm_{b+1,2})\otimes e_{1}.$$
Then, we compute in a similar way to relations \eqref{eq:P_b} that
$$(\Pi\otimes \cE)(P_{b}^{\otimes}) =
\begin{cases}
\mm_{0,2}\mm_{0,2}\otimes e_{b+2} & \text{if $b \geq 2$,}\\
\mm_{0,2}\mm_{0,2}\otimes e_{3} - 6e_{1}\mm_{0,4}\otimes e_{1} & \text{if $b = 1$,}\\
\mm_{0,2}\mm_{0,2}\otimes e_{2} + 6 \mm_{0,3}\mm_{1,1}\otimes e_{1} & \text{if $b = 0$.}
\end{cases}$$
These elements are linearly independent modulo the generators $\hat{M}_{a,b,\alpha}$ and $\hat{N}_{b,c}$ stated above.
Hence, by Proposition~\ref{prop:Ker_Delta_method}, for any $b\geq 0$, the generator $\hat{P}_{b}$ does not contribute to the kernel $\Ker(\ddd_{5, 2/1})$.

\underline{For the summand $\Ker(\ddd_{5, 3/2})$:} we set $\hat{R}_{a, \alpha, \beta, \gamma} := 3\mm_{a,2}\mm_{\alpha,1}\mm_{\beta,1}\mm_{\gamma,1}-\mm_{a+1,1}Q_{\alpha,\beta,\gamma}$  where $Q_{\alpha,\beta,\gamma}$ is defined in \eqref{eqn:Qabc}, for all $a\geq0$ and $1 \leq \alpha \leq \beta \leq \gamma$.
We compute that $\ddd_{5}(\hat{R}_{a, \alpha,\beta, \gamma} ))=\pa_{1}(R_{a,\alpha,\beta,\gamma}^{\otimes})$ in $H_{\st}^{\even}(\Lambda^{4}\tilde{H}_{\bQ})$, where
\begin{eqnarray*}
   &R_{a,\alpha,\beta,\gamma}^{\otimes}:=&3\mm_{a,2}(\mm_{\beta,1}\mm_{\gamma,1}\otimes e_{\alpha}+\mm_{\gamma,1}\mm_{\alpha,1}\otimes e_{\beta}+\mm_{\alpha,1}\mm_{\beta,1}\otimes e_{\gamma})\\
   &&-\mm_{a+1,1}(\mm_{\alpha-1,2}\mm_{\gamma,1}\otimes e_{\beta}+ \mm_{\alpha-1,2}\mm_{\beta,1}\otimes e_{\gamma}+ \mm_{\beta-1,2}\mm_{\alpha,1}\otimes e_{\gamma}\\
   &&\,\,\,\,\,\,\,\,\,\,\,\,\,\,\,\,\,\,\,\,\,\,\,\,\,+\mm_{\beta-1,2}\mm_{\gamma,1}\otimes e_{\alpha})+ \mm_{\gamma-1,2}\mm_{\beta,1}\otimes e_{\alpha}+ \mm_{\gamma-1,2}\mm_{\alpha,1}\otimes e_{\beta})\\
   &&-Q_{\alpha,\beta,\gamma}\otimes e_{a+1}.
\end{eqnarray*}
We recall that we denote by $\ddd_{2-a+1}\circ\cdots \circ \ddd_{2}$ by $\ddd^{a}$ for $a \geq 1$. By similar computations to that of $R_{a, \alpha, \beta, \gamma}$ in the proof of Theorem~\ref{thm:5th-ext-even}, we have:
$$\aligned
& (\Pi\otimes \cE)(R_{a,\alpha,\beta,\gamma}^{\otimes}
+Q_{\alpha, \beta, \gamma}\otimes e_{a+1})\\
& = (-1)^a\mm_{0,a+2}\ddd^a(5\mm_{\beta,1}\mm_{\gamma,1}
+e_{\beta}\mm_{\gamma-1,2} +e_{\gamma}\mm_{\beta-1,2})\otimes e_{\alpha} \\
& \qquad +  (-1)^a\mm_{0,a+2}\ddd^a(5\mm_{\gamma,1}\mm_{\alpha,1}
+e_{\gamma}\mm_{\alpha-1,2} +e_{\alpha}\mm_{\gamma-1,2})\otimes e_{\beta} \\
& \qquad +  (-1)^a\mm_{0,a+2}\ddd^a(5\mm_{\alpha,1}\mm_{\beta,1}
+e_{\alpha}\mm_{\beta-1,2} +e_{\beta}\mm_{\alpha-1,2})\otimes e_{\gamma}
\endaligned$$
All the following computations are made by using Theorem~\ref{thm:5th-ext-even}, the equality \eqref{eqn:Qabc} and the fact that $\pa_{2}(m\otimes de_{k}\wedge de_l) = e_{k}m\otimes de_{l} - e_{l}m\otimes de_{k}$.

If $a \geq 3$, we deduce that $(\Pi\otimes \cE)(R_{a,\alpha,\beta,\gamma}^{\otimes})
= - Q_{\alpha, \beta, \gamma}\otimes e_{a+1}$ by Theorem~\ref{thm:5th-ext-even}. Then, using the computations \eqref{eqn:Qabc}, we may re-write $Q_{\alpha, \beta, \gamma}\otimes e_{a+1}$ so that $e_{a+1}$ is on the left-hand side of the tensor product, multiplying some $\mm_{i,j}$. But for $a\geq3$, the element $e_{a+1}$ annihilates any element of $H_{\st}^{\even}(\Lambda^{5}\tilde{H}_{\bQ})$, so $Q_{\alpha, \beta, \gamma}\otimes e_{a+1}=0$.
Therefore, we have $(\Pi\otimes \cE)(R_{a,\alpha,\beta,\gamma}^{\otimes}) = 0$ if $a \geq 3$.

If $a = 2$, we compute similarly to relations \eqref{eq:R_2_alpha_beta_gamma} that:
$$(\Pi\otimes \cE)(R_{2,\alpha,\beta,\gamma}^{\otimes})= 36e_{\beta} e_{\gamma} \mm_{0,4}\otimes e_{\alpha} - Q_{\alpha, \beta, \gamma}\otimes e_{3}=\begin{cases}
45e_{1}^2\mm_{0,4}\otimes e_{1} & \text{if $(\alpha, \beta, \gamma) = (1,1,1)$}, \\
0 & \text{otherwise}.
\end{cases}$$

If $a = 1$, we compute similarly to relations \eqref{eq:R_1_alpha_beta_gamma} that:
$$(\Pi\otimes \cE)(R_{1,\alpha,\beta,\gamma}^{\otimes})=
\begin{cases}
-14e_{2}\mm_{0,3}\mm_{1,1}\otimes e_{2} & \text{if $(\alpha, \beta, \gamma) = (1,2,2)$}, \\
-16e_{2}\mm_{0,3}\mm_{1,1}\otimes e_{1} & \text{if $(\alpha, \beta, \gamma) = (1,1,2)$}, \\
-45e_{1}\mm_{0,3}\mm_{1,1}\otimes e_{1} & \text{if $(\alpha, \beta, \gamma) = (1,1,1)$}, \\
0 & \text{otherwise}.
\end{cases}$$

If $a = 0$, we compute similarly to relations \eqref{eq:R_0_alpha_beta_gamma} that:

$$(\Pi\otimes \cE)(R_{0,\alpha,\beta,\gamma}^{\otimes})=\begin{cases}
-15e_{1}^2\mm_{0,4}\otimes e_{1} & \text{if $(\alpha, \beta, \gamma) = (1,1,3)$}, \\
24e_{2}\mm_{0,3}\mm_{1,1}\otimes e_{2} & \text{if $(\alpha, \beta, \gamma) = (2,2,2)$}, \\
20e_{2}\mm_{0,3}\mm_{1,1}\otimes e_{1} & \text{if $(\alpha, \beta, \gamma) = (1,2,2)$}, \\
15e_{1}\mm_{0,3}\mm_{1,1}\otimes e_{1} & \text{if $(\alpha, \beta, \gamma) = (1,1,2)$}, \\
0 & \text{otherwise}.
\end{cases}$$
Hence, by Proposition~\ref{prop:Ker_Delta_method}, the kernel $\Ker(\ddd_{5, 3/2})$ associated to that summand is isomorphic to the free $\bQ$-module with basis:
$$\aligned
& \{\hat{R}_{a, \alpha, \beta, \gamma}\mid a\geq 0, 1\leq \alpha\leq \beta\leq \gamma, (a, \alpha, \beta, \gamma) \neq 
(2,1,1,1), (1,1,2,2), (1,1,1,2), (1,1,1,1), \\
& \hskip 70.9mm (0,1,1,3), (0,2,2,2), (0,1,2,2), (0,1,1,2)\}\\
& \cup \{\hat{R}_{2,1,1,1}+3\hat{R}_{0,1,1,3}; 12\hat{R}_{1,1,2,2}+7\hat{R}_{0,2,2,2};
5\hat{R}_{1,1,1,2}+2\hat{R}_{0,1,2,2}; \hat{R}_{1,1,1,1}+3\hat{R}_{0,1,1,2}\}.
\endaligned$$

\underline{For the summand $\Ker(\ddd_{5, 4/3})$:} for each $1\leq \alpha\leq \beta\leq \gamma\leq \epsilon\leq\xi$, we compute that
$$\ddd_{5}(\mm_{\alpha,1}\mm_{\beta,1}\mm_{\gamma,1}\mm_{\epsilon,1}\mm_{\xi,1})=\pa_{1}(C^{\otimes}_{\alpha,\beta,\gamma,\epsilon,\xi})$$
in $H_{\st}^{\even}(\Lambda^{4}H_{\bQ})\otimes\cE$, where
\begin{eqnarray*}    &C^{\otimes}_{\alpha,\beta,\gamma,\epsilon,\xi}:=&\mm_{\alpha,1}\mm_{\beta,1}\mm_{\gamma,1}\mm_{\epsilon,1}\otimes e_{\xi}+\mm_{\alpha,1}\mm_{\beta,1}\mm_{\gamma,1}\mm_{\xi,1}\otimes e_{\epsilon}
+\mm_{\alpha,1}\mm_{\beta,1}\mm_{\epsilon,1}\mm_{\xi,1}\otimes e_{\gamma}\\
&&+\mm_{\alpha,1}\mm_{\gamma,1}\mm_{\epsilon,1}\mm_{\xi,1}\otimes e_{\beta}
+\mm_{\beta,1}\mm_{\gamma,1}\mm_{\epsilon,1}\mm_{\xi,1}\otimes e_{\alpha}.
\end{eqnarray*}

Then, we make the following computations in a similar way to those of the relations \eqref{eq:cok_5/4}, by using Theorem~\ref{thm:5th-ext-even} and the fact that $\pa_{2}(m\otimes de_{k}\wedge de_l) = e_{k}m\otimes de_{l} - e_{l}m\otimes de_{k}$.

If $\xi \geq 4$, since $e_{\xi}$ annihilates any element of $H_{\st}^{\even}(\Lambda^{5}\tilde{H}_{\bQ})$, we have:
\begin{eqnarray*}
(\Pi\otimes \cE)(C^{\otimes}_{\alpha,\beta,\gamma,\epsilon,\xi})&= & \mm_{\alpha,1}\mm_{\beta,1}\mm_{\gamma,1}\mm_{\epsilon,1}\otimes e_{\xi} \\
&=&-e_{\xi}\mm_{\beta,1}\mm_{\gamma,1}\mm_{\epsilon-1,2}\otimes e_{\alpha}
-e_{\xi}\mm_{\gamma,1}\mm_{\alpha,1}\mm_{\epsilon-1,2}\otimes e_{\beta}
-e_{\xi}\mm_{\alpha,1}\mm_{\beta,1}\mm_{\epsilon-1,2}\otimes e_{\gamma}\\
&=&0.
\end{eqnarray*}
If $\xi = 3$, since $e_{3}$ annihilates any element of $H_{\st}^{\even}(\Lambda^{5}\tilde{H}_{\bQ})$, then we have:
\begin{eqnarray*}
(\Pi\otimes \cE)(C^{\otimes}_{\alpha,\beta,\gamma,\epsilon,3})&= & \mm_{\alpha,1}\mm_{\beta,1}\mm_{\gamma,1}\mm_{\epsilon,1}\otimes e_{3}
+12e_{\alpha} e_{\beta} e_{\gamma} \mm_{0,4}\otimes e_{\epsilon} \\
&=&-e_{3}\mm_{\beta,1}\mm_{\gamma,1}\mm_{\epsilon-1,2}\otimes e_{\alpha}
-e_{3}\mm_{\gamma,1}\mm_{\alpha,1}\mm_{\epsilon-1,2}\otimes e_{\beta}
-e_{3}\mm_{\alpha,1}\mm_{\beta,1}\mm_{\epsilon-1,2}\otimes e_{\gamma}\\
&=&0.
\end{eqnarray*}
If $\xi = 2$, we note that each product $e_{a}e_{b}$ for any $a,b\geq1$ annihilates the generators $\mm_{0,3}\mm_{1,1}$ and  $\mm_{0,3}\mm_{2,1}$ of $H_{\st}^{\even}(\Lambda^{5}\tilde{H}_{\bQ})$. Then it is an easy routine to check by analogous computations to those of \eqref{eq:cok_5/4} that $(\Pi\otimes \cE)(C^{\otimes}_{\alpha,\beta,\gamma,\epsilon,2})=0$.

Finally, for $\xi=1$, we compute that $(\Pi\otimes \cE)(C^{\otimes}_{1,1,1,1,1})= - \frac{15}{2}e_{1}^2\mm_{0,2}\mm_{0,2}\otimes e_{1}\neq0$.

Then it follows from Proposition~\ref{prop:Ker_Delta_method} that the kernel $\Ker(\ddd_{5, 3/2})$ associated to the summand $\Ker(\ddd_{5,3/2})$ is isomorphic to the free $\bQ$-module with basis $\{[\mm_{\alpha,1}\mm_{\beta,1}\mm_{\gamma,1}
\mm_{\epsilon,1}\mm_{\xi,1}]\mid 1\leq  \alpha \leq \beta \leq \gamma \leq \epsilon \leq \xi\,\,\text{ such that }\,\,\xi\geq2\}$.

\phantomsection
\addcontentsline{toc}{section}{References}
\renewcommand{\bibfont}{\normalfont\small}
\setlength{\bibitemsep}{0pt}
\printbibliography

\noindent {Nariya Kawazumi, \itshape Department of Mathematical Sciences, 
University of Tokyo, 3-8-1 Komaba, Meguro-ku, Tokyo 153-
8914, Japan}. \noindent {Email address: kawazuminariya@g.ecc.u-tokyo.ac.jp}

\noindent {Arthur Souli{\'e}, \itshape Normandie Univ., UNICAEN, CNRS, LMNO, 14000 Caen, France.}. \noindent {Email address: \tt artsou@hotmail.fr, arthur.soulie@unicaen.fr}

\end{document}